\newcommand{\R}{\mathbb{R}}
\newcommand{\N}{\mathbb{N}}
\newcommand{\Z}{\mathbb{Z}}
\newcommand{\C}{\mathbb{C}}
\DeclareMathOperator\supp{supp}
\DeclareMathOperator\im{Im}
\DeclareMathOperator\spn{span}
\DeclareMathOperator{\Tr}{Tr}
\DeclareMathOperator{\diag}{diag}
\newtheorem{thm}{Theorem}[section]
\newtheorem{prop}[thm]{Proposition}
\newtheorem{assump}[thm]{Assumptions}
\newtheorem{assumption}[thm]{Assumption}
\newtheorem{defi}[thm]{Definition}
\newtheorem{lemma}[thm]{Lemma}
\newtheorem{cor}[thm]{Corollary}
\pgfplotsset{compat=1.15}
\definecolor{ccqqqq}{rgb}{0.8,0.,0.}
\definecolor{ududff}{rgb}{0.30196078431372547,0.30196078431372547,1.}
\definecolor{xdxdff}{rgb}{0.49019607843137253,0.49019607843137253,1.}
\title{Asymptotics of Green functions for Markov-additive processes: \\
an approach via dyadic splitting of integrals
}
\author{Théo Ballu\thanks{Théo Ballu, Univ Angers, CNRS, LAREMA, SFR MATHSTIC, F-49000 Angers, France.}}
\date{}
\begin{document}

\maketitle

\begin{abstract}
    We consider a discrete Markov-additive process, that is a Markov chain on a state space $\Z^d \times E$ with invariant jumps along the $\Z^d$ component. In the case where the set $E$ is finite, we derive an asymptotic equivalent of the Green function of the process, providing a new proof of a result obtained by Dussaule in 2020. This result generalizes the famous theorem of Ney and Spitzer of 1966, that deals with the sum of independent and identically distributed random variables, to a spatially non-homogeneous case.
    
    In this new proof, we generalize the arguments used in Woess's book \cite{Wo-00} to prove Ney and Spitzer's theorem, that consists in establishing an integral formula of the Green function from which we get the asymptotic equivalent. To do so, we use techniques developed by Babillot. In particular, we use \emph{dyadic splitting of integrals}, a powerful Fourier analysis tool that enables us to control the Fourier transform of a function that has a singularity at the origin.
\end{abstract}

{\bf{Keywords:}} Markov-additive process, Green function, Dyadic splitting of integrals, Martin boundary.

\tableofcontents

\section{Introduction}\label{sec:intro}
\subsection{Context}\label{subsec:context}
In their article \cite{NeSp-66} of 1966, Ney and Spitzer studied random walks, that is the sum~$S_n = X_1 + \ldots + X_n$ of independent and identically distributed random variables $\left(X_k\right)_{k \geq 1}$ on $\Z^d$, with finite range. They proved that in the non-zero drift case, its \emph{Green function}
\[G(x,y) := \sum_{n=0}^{+\infty} \mathbb{P}_x(S_n = y), \]
where $\mathbb{P}_x(S_n = y):=  \mathbb{P}(x + S_n = y)$,
decreases like $\|y\|^{- \frac{d-1}{2}} e^{-c_y \cdot y}$ as $\|y\|$ tends to infinity, where the exponential decay constant $c_y$ depends on the direction $\frac{y}{\|y\|}$. See \cite[Theorem 2.2]{NeSp-66} which we will call \emph{Ney and Spitzer's theorem} throughout this article. As a consequence of these asymptotics, they described the Martin boundary of the random walk. We will call this process \emph{spatially homogeneous} as for every $x,y \in \Z^d$, $\mathbb P\left(S_{n+1} = y ~|~S_n = x \right) = \mathbb P(X_1 = y-x)$ only depends on $y-x$.
The study of this spatially homogeneous process was the first of many works on asymptotics of Green functions of random walks and descriptions of their Martin boundaries. Staying in the homogeneous case, Woess's book \cite{Wo-00} contains another proof of Ney and Spitzer's result. 

In order to solve \emph{spatially inhomogeneous} cases, many techniques were developed, as inhomogeneities take numerous forms. 
Analytic methods based on elliptic curves were used by Kurkova and Malyshev in \cite{kourkova_malyshev} to compute the asymptotics of Green functions of homogeneous random walks on $\Z^2$ modified on a finite number of vertices, for which the Martin boundary is the same as in the homogeneous case. They also studied random walks on half-planes and quarter planes in the reflected case, with jumps to nearest neighbors. Regarding quarter planes, this analytic approach was used by Kurkova and Raschel in \cite{Kurkova_raschel_2011} to study Green functions of random walks to nearest neighbors with a drift on a quarter plane, absorbed at the axes. Using large deviations, Ignatiouk-Robert studied the Martin boundary of random walks on the half-space $\Z^d \times \N$ that are killed \cite{ignatiouk_killed} or reflected \cite{ignatiouk_reflected, ignatiouk_t_martin_reflected} at the boundary.  A combination of analytic and probabilistic methods allowed the authors of \cite{ignatiouk_kourkova_raschel} to derive the asymptotics of Green functions of random walks on quarter planes that are reflected at the boundary.
On more complicated state spaces than subsets of $\Z^d$, we mention the work of Boutillier and Raschel \cite{BR22} about random walks on isoradial graphs and the more algebraic articles \cite{biane_su2, kilian_sp4}.

In this article, we study \emph{Markov-additive} processes, that is, Markov chains on a Cartesian product $\Z^d \times E$ whose jumps are invariant by translation along the $\Z^d$ component, see \eqref{eq:prop_markov_addit} for a precise definition. These processes are sometimes called \emph{Markov modulated} processes, \emph{Markov walks} or \emph{semi-Markovian} processes. The works \cite{ignatiouk_killed, ignatiouk_reflected, ignatiouk_t_martin_reflected} we mentioned on half-spaces are examples of such processes. Indeed, they are Markov chains on $\Z^d \times\N $ whose jumps only depend on whether or not the process lies on the boundary of the half-space, and are thus invariant along the $\Z^d$ component. In \cite{MR0890364}, Guivarc'h gave transience and recurrence criteria for centered Markov-additive processes. Many of the techniques we use were developed in Babillot's article \cite{MR0978023}, dealing with asymptotics of Green functions of Markov-additive processes on continuous state spaces, which was followed by Guibourg in \cite{these_guibourg}. In the discrete setting, Dussaule generalized Ney and Spitzer's theorem in Sections 3 and 4 of \cite{Du-20}, in the case of the state space~$\Z^d \times E$ for a finite set $E$, which he called a thickened lattice. In particular, he studied the asymptotics of the Green functions of non-centered processes in his Proposition 3.27. 

In this paper, we provide a different proof of this result, which is our Theorem \ref{thm:main}. While Dussaule generalized the original proof of Ney and Spitzer from \cite{NeSp-66}, we use Babillot's techniques from \cite{MR0978023} to generalize the proof of Ney and Spitzer's theorem from Woess's book \cite[(25.15)]{Wo-00}. We mention two of these techniques, namely \emph{changes of sections}, which are modifications of the process that will help us to differentiate twice the Fourier transform of the process, see Section \ref{subsec:spectra_rad_fourier}, and a little known method of advanced Fourier analysis named \emph{dyadic splitting of integrals}, which we think could be of wider interest, see Appendix \ref{subsec:dyadic_integrals}. This method enables us to isolate the leading term in an integral formula of the Green function.  More details on the proof strategy and comments on the original contributions are given in Sections \ref{subsec:outline} and \ref{subsec:contributions}. We mention that the proof of Ney and Spitzer's theorem in Woess's book was adapted by Ignatiouk-Robert in \cite{Ig-24} to deal with killed random walks in cones.

The traditional motivation to study the Green function of a process is the description of its \emph{Martin boundary}. Martin boundaries for countable transient Markov chains were introduced by Doob in his article \cite{doob} of 1959. The books \cite{sawyer, Wo-00} contain an introduction to the theory of Martin boundaries in the discrete setting. We briefly remind basic results of this theory. If an origin $o$ of the state space is fixed, the Martin kernel of the process is \[K(x,y) = \frac{G(x,y)}{G(o,y)}.\] The Martin compactification of the state space is the smallest compactification that allows to extend continuously the functions $K(x, \cdot)$. The set of points added to get the Martin compactification is called the Martin boundary of the process. The two main theorems related to Martin boundaries are the Poisson-Martin representation theorem, which states that harmonic functions of the process can be represented as integrals of the extended Martin kernel on the Martin boundary, and the convergence theorem which states that for every state $x$, the process converges $\mathbb{P}_x$-almost surely to a random variable that takes its values on the Martin boundary.

Another interest towards Green functions comes from statistical mechanics models. The \emph{transfer impedance theorem} of \cite{Burton_Pemantle} expresses the distribution of a random spanning tree on a periodic graph with determinants of matrices involving Green functions. A similar result appears in \cite[Section 6.2]{BdTR} for random forests on isoradial graphs.
An unexpected application of the asymptotics of the Green functions of random walks appeared in \cite{AM22}, in which the authors use it to compute the limit shape of the \emph{leaky abelian sandpile model}, a cellular automaton that first arose in statistical physics.

\subsection{Description of the model}\label{subsec:model}

In this work, we consider a discrete time Markov-additive process \sloppy $\left(Z_n\right)_{n \geq 0} = \left( A_n, M_n \right)_{n \geq 0}$ on the state space $\Z^d \times \{1, \ldots, p\}$ where $d,p \in \N := \{1, 2, \ldots\}$, that is, a Markov chain on $\Z^d \times \{1, \ldots, p\}$ such that for each $x,x' \in \Z^d$ and $i,i' \in \{1, \ldots, p \}$,
\begin{equation}\label{eq:prop_markov_addit}
\mathbb{P}_{(x,i)}\big( (A_{1}, M_{1}) = (x',i') \big) = \mathbb{P}_{(0,i)}\big( (A_{1}, M_{1}) = (x'-x,i') \big).
\end{equation}
The process $\left(A_n\right)_{n \geq 0}$ is called the \emph{additive} part of the process $\left(Z_n\right)_{n \geq 0}$  while $\left(M_n\right)_{n \geq 0}$ is called its \emph{Markovian} part. The set $\{1, \ldots, p \}$ will be called the \emph{modulating set}.

Generalities on Markov-additive processes and their applications to queuing theory can be found in \cite[Chapter XI]{Asm03}.

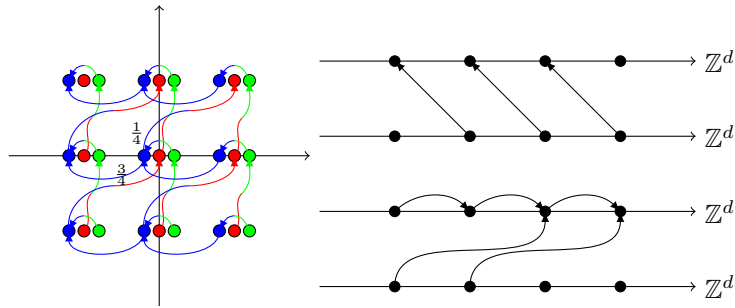
\begin{figure}[htbp]
    \centering
    \begin{tikzpicture}
        \draw[->](-2,0) -- (2,0);
        \draw[->] (0,-2) -- (0,2);
        \draw[fill=blue] (-0.2, 0) circle (0.08);
        \draw[fill=red] (0, 0) circle (0.08);
        \draw[fill=green] (0.2, 0) circle (0.08);
        
        \draw[fill=blue] (-0.2, 1) circle (0.08);
        \draw[fill=red] (0, 1) circle (0.08);
        \draw[fill=green] (0.2, 1) circle (0.08);
        
        \draw[fill=blue] (-0.2, -1) circle (0.08);
        \draw[fill=red] (0, -1) circle (0.08);
        \draw[fill=green] (0.2, -1) circle (0.08);

        \draw[fill=blue] (-1.2, 0) circle (0.08);
        \draw[fill=red] (-1, 0) circle (0.08);
        \draw[fill=green] (-0.8, 0) circle (0.08);

        \draw[fill=blue] (0.8, 0) circle (0.08);
        \draw[fill=red] (1, 0) circle (0.08);
        \draw[fill=green] (1.2, 0) circle (0.08);

        \draw[fill=blue] (-1.2, 1) circle (0.08);
        \draw[fill=red] (-1,1) circle (0.08);
        \draw[fill=green] (-0.8, 1) circle (0.08);

        \draw[fill=blue] (-1.2, -1) circle (0.08);
        \draw[fill=red] (-1,-1) circle (0.08);
        \draw[fill=green] (-0.8, -1) circle (0.08);

        \draw[fill=blue] (0.8, 1) circle (0.08);
        \draw[fill=red] (1, 1) circle (0.08);
        \draw[fill=green] (1.2, 1) circle (0.08);

        \draw[fill=blue] (0.8, -1) circle (0.08);
        \draw[fill=red] (1, -1) circle (0.08);
        \draw[fill=green] (1.2, -1) circle (0.08);

        \draw[red] (0,0) to[out=45,in=-135] (0.1,0.5);
        \draw[->, >=latex, green] (0.1,0.5) to[out=45,in=-90] (0.2,1);

        \draw[red, shift={(-1 ,0)}] (0,0) to[out=45,in=-135] (0.1,0.5);
        \draw[->, >=latex, green, shift={(-1 ,0)}] (0.1,0.5) to[out=45,in=-90] (0.2,1);

        \draw[red, shift={(-1 ,-1)}] (0,0) to[out=45,in=-135] (0.1,0.5);
        \draw[->, >=latex, green, shift={(-1 ,-1)}] (0.1,0.5) to[out=45,in=-90] (0.2,1);

        \draw[red, shift={(0 ,-1)}] (0,0) to[out=45,in=-135] (0.1,0.5);
        \draw[->, >=latex, green, shift={(0 ,-1)}] (0.1,0.5) to[out=45,in=-90] (0.2,1);

        \draw[red, shift={(1 ,0)}] (0,0) to[out=45,in=-135] (0.1,0.5);
        \draw[->, >=latex, green, shift={(1 ,0)}] (0.1,0.5) to[out=45,in=-90] (0.2,1);

        \draw[red, shift={(1 ,-1)}] (0,0) to[out=45,in=-135] (0.1,0.5);
        \draw[->, >=latex, green, shift={(1 ,-1)}] (0.1,0.5) to[out=45,in=-90] (0.2,1);

        \draw[green] (0.2,0) to[out=115,in=0] (0,0.2);
        \draw[->, >=latex, blue] (0,0.2) to[out=180,in=65] (-0.2,0);

        \draw[green, shift={(-1 ,-1)}] (0.2,0) to[out=115,in=0] (0,0.2);
        \draw[->, >=latex, blue, shift={(-1 ,-1)}] (0,0.2) to[out=180,in=65] (-0.2,0);

        \draw[green,shift={(-1 ,0)}] (0.2,0) to[out=115,in=0] (0,0.2);
        \draw[->, >=latex, blue,shift={(-1 ,0)}] (0,0.2) to[out=180,in=65] (-0.2,0);

        \draw[green, shift={(0 ,-1)}] (0.2,0) to[out=115,in=0] (0,0.2);
        \draw[->, >=latex, blue, shift={(0 ,-1)}] (0,0.2) to[out=180,in=65] (-0.2,0);

        \draw[green, shift={(-1 ,1)}] (0.2,0) to[out=115,in=0] (0,0.2);
        \draw[->, >=latex, blue, shift={(-1 ,1)}] (0,0.2) to[out=180,in=65] (-0.2,0);

        \draw[green, shift={(0 ,1)}] (0.2,0) to[out=115,in=0] (0,0.2);
        \draw[->, >=latex, blue, shift={(0 ,1)}] (0,0.2) to[out=180,in=65] (-0.2,0);

        \draw[green, shift={(1 ,-1)}] (0.2,0) to[out=115,in=0] (0,0.2);
        \draw[->, >=latex, blue, shift={(1 ,-1)}] (0,0.2) to[out=180,in=65] (-0.2,0);

        \draw[green, shift={(1 ,0)}] (0.2,0) to[out=115,in=0] (0,0.2);
        \draw[->, >=latex, blue, shift={(1 ,0)}] (0,0.2) to[out=180,in=65] (-0.2,0);

        \draw[green, shift={(1 ,1)}] (0.2,0) to[out=115,in=0] (0,0.2);
        \draw[->, >=latex, blue, shift={(1 ,1)}] (0,0.2) to[out=180,in=65] (-0.2,0);

        \draw[blue] (-0.2,0) to[out=100,in=-180] (0.4,0.6);
        \draw[->, >=latex, red] (0.4,0.6) to[out=0,in=-90] (1,1);

        \draw[blue, shift={(-1,0)}] (-0.2,0) to[out=100,in=-180] (0.4,0.6);
        \draw[->, >=latex, red, shift={(-1,0)}] (0.4,0.6) to[out=0,in=-90] (1,1);

        \draw[blue, shift={(-1,-1)}] (-0.2,0) to[out=100,in=-180] (0.4,0.6);
        \draw[->, >=latex, red, shift={(-1,-1)}] (0.4,0.6) to[out=0,in=-90] (1,1);

        \draw[blue, shift={(0,-1)}] (-0.2,0) to[out=100,in=-180] (0.4,0.6);
        \draw[->, >=latex, red, shift={(0,-1)}] (0.4,0.6) to[out=0,in=-90] (1,1);

        \draw[->, >=latex, blue] (-0.2,0) to[out = -90,in = -90] (-1.2,0);
        \draw[->, >=latex, blue, shift={(1,0)}] (-0.2,0) to[out = -90,in = -90] (-1.2,0);
        \draw[->, >=latex, blue, shift={(0,1)}] (-0.2,0) to[out = -90,in = -90] (-1.2,0);
        \draw[->, >=latex, blue, shift={(0,-1)}] (-0.2,0) to[out = -90,in = -90] (-1.2,0);
        \draw[->, >=latex, blue, shift={(1,1)}] (-0.2,0) to[out = -90,in = -90] (-1.2,0);
        \draw[->, >=latex, blue, shift={(1,-1)}] (-0.2,0) to[out = -90,in = -90] (-1.2,0);

        \draw (-0.5,-0.25) node{\tiny{$\frac{3}{4}$}};
        \draw (-0.3,0.3) node{\tiny{$\frac{1}{4}$}};
    \end{tikzpicture}
    \begin{tikzpicture}
         \draw[->] (-2,0) -- (3,0);
         \draw (3,0) node[right]{$\Z^d$};

         \draw[->, shift={(0,1)}] (-2,0) -- (3,0);
         \draw[ shift={(0,1)}] (3,0) node[right]{$\Z^d$};

         \draw[->, shift={(0,2)}] (-2,0) -- (3,0);
         \draw[ shift={(0,2)}] (3,0) node[right]{$\Z^d$};

         \draw[->, shift={(0,3)}] (-2,0) -- (3,0);
         \draw[ shift={(0,3)}] (3,0) node[right]{$\Z^d$};

         \draw[fill] (-1,0) circle (0.07);
         \draw[fill] (0,0) circle (0.07);
         \draw[fill] (1,0) circle (0.07);
         \draw[fill] (2,0) circle (0.07);

         \draw[fill] (-1,1) circle (0.07);
         \draw[fill] (0,1) circle (0.07);
         \draw[fill] (1,1) circle (0.07);
         \draw[fill] (2,1) circle (0.07);

         \draw[fill] (-1,2) circle (0.07);
         \draw[fill] (0,2) circle (0.07);
         \draw[fill] (1,2) circle (0.07);
         \draw[fill] (2,2) circle (0.07);

         \draw[fill] (-1,3) circle (0.07);
         \draw[fill] (0,3) circle (0.07);
         \draw[fill] (1,3) circle (0.07);
         \draw[fill] (2,3) circle (0.07);

         \draw[->,  >=latex] (-1,0) to[out=90, in=-90] (1,1);
         \draw[->,  >=latex, shift={(1,0)}] (-1,0) to[out=90, in=-90] (1,1);

         \draw[->,  >=latex, shift={(1,0)}] (-1,2) to (-2,3);
         \draw[->,  >=latex, shift={(2,0)}] (-1,2) to (-2,3);
         \draw[->,  >=latex, shift={(3,0)}] (-1,2) to (-2,3);

         \draw[->, >=latex] (-1,1) to[out=45,in=135] (0,1);
         \draw[->, >=latex,shift={(1,0)}] (-1,1) to[out=45,in=135] (0,1);
         \draw[->, >=latex,shift={(2,0)}] (-1,1) to[out=45,in=135] (0,1);
    \end{tikzpicture}
    \caption{On the left side, $d = 2$ and $p = 3$. On the right side, $d = 1$, $p = 4$.}
    \label{fig:visualizing_the_process}
\end{figure}

Figure~\ref{fig:visualizing_the_process} provides two convenient ways of visualizing the kind of process studied. On the left, we see it as $\Z^d$ where every vertice appears $p$ times (in $p$ colors). On the right, we see it as $p$ layers of $\Z^d$, like a random walk in $\Z^{d+1}$ confined between two parallel hyperplanes (although in the process studied here, the order of layers is irrelevant). In both cases, the jumps are invariant by translation along $\Z^d$, as indicated by \eqref{eq:prop_markov_addit}.

\begin{figure}[htbp]
    \centering
    \includegraphics[scale=0.3]{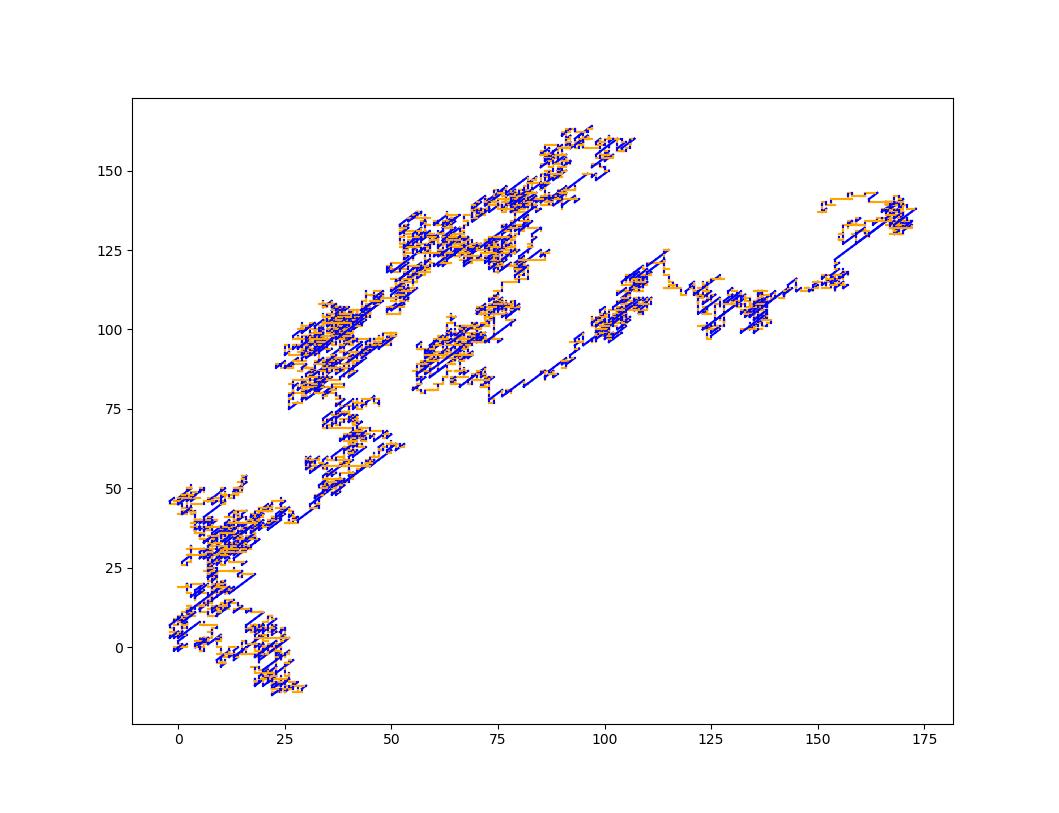}
    \includegraphics[scale=0.3]{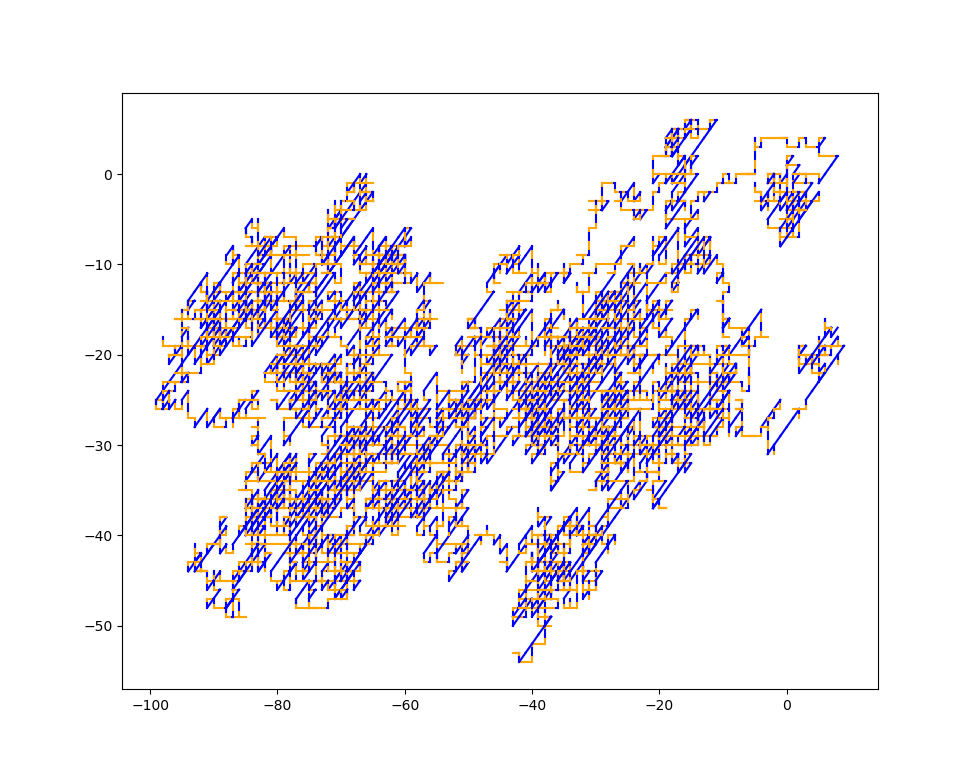}
    \caption{Two trajectories of 10.000 iterations of the process, with $p=d=2$, using the visualization of the left image of Figure \ref{fig:visualizing_the_process} with colors black and grey. The left-hand one has a north-east oriented drift while the right-hand one has a smaller drift oriented south-west.}
    \label{fig:trajectoires}
\end{figure}

\subsection{Notations and assumptions}

We start with general notations and then introduce the main definitions of this article. Throughout the paper:
\begin{itemize}
    \item $\mathbf{i}$ denotes the usual imaginary number, while $i$ denotes an index of a vector or matrix;
    \item $x \cdot y$ denotes the usual scalar product of the vectors $x$ and $y$;
    \item for $(x,i) \in \Z^d \times \{1, \ldots, p \}$, $\mathbb{P}_{(x,i)}$ denotes the probability when the process starts from the state $(x,i)$;
    \item $\nabla f(x)$ denotes the gradient of the function $f$ at $x$ and $\mathbf{H}_f(x)$ the Hessian matrix of $f$ at $x$;
    \item if we do not precise which norm we use, $\| x \|$ denotes the euclidean norm of a vector~$x$ and $\|M\|$ denotes the operator norm of the matrix $M$, induced by the euclidean norm.
\end{itemize}

The \emph{Green function} $G$ of the process is defined for $z,x \in \Z^d$ and $i,j \in \{1, \ldots, p\}$ by
\begin{equation}\label{eq:def_green}
G\big( (z,i), (x,j) \big) := \sum_{n=0}^{+ \infty} \mathbb{P}_{(z,i)}\left( Z_n = (x,j) \right).
\end{equation}
It is also the expected number of visits in $(x,j)$ starting from $(z,i)$, because \[G\big( (z,i), (x,j) \big) =  \sum_{n=0}^{+ \infty} \mathbb{E}_{(z,i)}\left[ \mathds{1}_{\left\{Z_n = (x,j)\right\}} \right] =   \mathbb{E}_{(z,i)}\left[ \sum_{n=0}^{+ \infty} \mathds{1}_{\left\{Z_n = (x,j)\right\}} \right].\]
The Green function takes values in $[0, + \infty]$. If the process is irreducible, then $G$ does not vanish, and it is finite if and only if the process is transient. In the non-centered case, the process is transient, but it is not as easy to prove as in the case of a sum of independent and identically distributed variables. It will be a consequence of the integral formula in Theorem \ref{th:form_int}.
As a consequence of \eqref{eq:prop_markov_addit}, $G\big( (z,i), (x,j) \big) = G\big( (0,i), (x-z,j) \big)$, so we will always work with values of the Green function of the form $G\big( (0,i), (x,j) \big)$.

Because of the property \eqref{eq:prop_markov_addit} of Markov-additive processes, the transitions of $\left(Z_n\right)_{n \geq 0}$ are fully described by the sub-probability measures on $\Z^d$ 
\begin{equation}
\mu_{i,j} := \mathbb{P}_{(0,i)}\big( (A_{1}, M_{1}) = (\cdot,j) \big),~~~~~~ 1 \leq i,j \leq p.
\end{equation}
Those measures can be stored in a \emph{jump matrix} $\mu = \left( \mu_{i,j}\right)_{1 \leq i,j \leq p}$. The \emph{support} of $\mu$ is defined by \[{\supp(\mu) := \bigcup_{i,j =1}^p \supp(\mu_{i,j})}.\]
For example, on the left side of Figure~\ref{fig:visualizing_the_process}, if the colors are ordered from lightest to darkest, then the jump matrix is  \[{\mu = \begin{pmatrix}
    \frac{3}{4} \delta_{(-1,0)}&\frac{1}{4}\delta_{(1,1)}&0\\
    0&0&\delta_{(0,1)} \\
    \delta_{(0,0)}&0&0
\end{pmatrix}}.\] 

The additive part $\left(A_n\right)_{n \geq 0}$ is not necessarily a Markov chain, however it is well known that the Markovian part $\left(M_n\right)_{n \geq 0}$ is a Markov chain on $\{1, \ldots , p\}$ with transition matrix
\begin{equation}\label{eq:markovian_part_transition_matrix}
\left( \displaystyle \sum_{x\in\Z^d} \mu_{i,j}(x) \right)_{1 \leq i,j \leq p}.
\end{equation}

We define the first and second moments of the process, adapting the functional definitions of \cite[(1.16)]{MR0978023} to our discrete case. Since we have $p^2$ types of jumps (one for each of the $\mu_{i,j}$) we have $p^2$ types of first and second order moments. They will be combined to define the \emph{global} drift in Definition \ref{def:global_drift} and the \emph{energy matrix} in Definition \ref{def:energy_matrix}.
\begin{defi}\label{def:moments}
Provided the following sums are finite:
    \begin{itemize}
        \item the \emph{local drifts} are the $m_{i,j} :=  \displaystyle\sum_{x \in \Z^d}  \mu_{i,j}(x) x $ for $1 \leq i,j \leq p$;
    \item the \emph{second order moments} are the ${\Sigma_{i,j} :=  \left( \displaystyle \sum_{x \in \Z^d} x_k x_l \mu_{i,j}(x) \right)_{1 \leq k,l \leq d}}$ for $1 \leq i,j \leq p$, where $x = (x_1, \ldots, x_d)$.
    \end{itemize}
\end{defi}

We can now state the assumptions under which we will work. From now on, we assume the following.
\begin{assump}\label{assump:main_assumptions}
\begin{enumerate}
    \item The process $\left(Z_n\right)_{n \geq 0}$ is \emph{irreducible}.
    \item The process $\left(Z_n\right)_{n \geq 0}$ is \emph{aperiodic}.
    \item\label{it:assump:expo_moments} The process has finite exponential moments, that is, for every $i,j \in \{1, \ldots, p \}$ and $\alpha >0$, \begin{equation}
        \sum_{x \in \Z^d} e^{\alpha \|x\|} \mu_{i,j}(x) < +\infty.
    \end{equation}
\end{enumerate}
\end{assump}

By aperiodicity, we mean that for all $(x,i) \in \Z^d \times \{1, \ldots, p \}$, \[\mathrm{gcd} \left\{ n \geq 1~|~ \mathbb P_{(x,i)}\big( Z_n = (x,i) \big) > 0 \right\} = 1.\] Some authors, like \cite{MR0978023, MR0890364}, use a different definition of aperiodicity involving changes of sections (see the remark after Proposition~\ref{prop:inv_change_sect}).

A special case of process with finite exponential moments is when the support of $\mu$ is finite.

Here are a few easy consequences of Assumptions \ref{assump:main_assumptions}:
\begin{itemize}
    \item The existence of moments of order $1$ and $2$ is a consequence of the finite exponential moments.
    \item The Markovian part $(M_n)$ is an irreducible aperiodic Markov chain on a finite state space. Therefore, it has a unique stationary distribution, denoted by $\pi$, whose all entries are non-zero. We will identify $\pi$ with a row vector of $\R^p$. 
    \item Besides, $1$ is the simple leading eigenvalue of the transition matrix of $(M_n)$, as a consequence of the Perron-Frobenius theorem.
\end{itemize}

Using the stationary distribution $\pi$, we can define the global drift of the process, like in Babillot's work \cite[(1.17)]{MR0978023} or in Theorem 6.7 of \cite{Wo-00}.

\begin{defi}\label{def:global_drift}
    The \emph{global drift} $m$ of the walk is obtained by weighting the local drifts by the stationary distribution~$\pi$, that is 
    \begin{equation}\label{eq:drift}
   m := \sum_{i,j =1}^p \pi_i m_{i,j}.
    \end{equation}
\end{defi}
The longer the process stays on the layer $\Z^d \times \{i\}$, the more local drifts starting from this layer count in the global drift.

In this paper, the asymptotics of the Green function will be obtained in the non-centered case, which leads to the following assumption. 

\begin{assumption}\label{assump:non_centered}
    The process is non-centered, i.e., the drift $m$ defined in \eqref{eq:drift} is non-zero.
\end{assumption}

Unlike Assumptions~\ref{assump:main_assumptions}, we will not always work under Assumption \ref{assump:non_centered}. Indeed, in Section \ref{sec:laplace_doob}, we will perform a Doob transform, which we will \emph{at first} not know if it yields a centered process, hence the need for results without this assumption. It will eventually be established in item \ref{it:non_centered_doob} of Proposition \ref{prop:prop_doob_transform} that the Doob transform is non-centered. Up to Section \ref{sec:fourier_laplace}, we will mention in the statement of results when Assumption~\ref{assump:non_centered} is needed, but starting from Section \ref{sec:proof_main}, Assumption \ref{assump:non_centered} will always be made.

\subsection{Main result}\label{subsec:main_result}

Our main theorem, similar to the Proposition 3.27 of \cite{Du-20}, provides the asymptotics of the Green function in the non-centered case. It is an extension of Theorem 2.2 of \cite{NeSp-66} and a lattice case version of Theorem (2.9) of \cite{MR0978023}.

\begin{thm} \label{thm:main}
    Under Assumptions~\ref{assump:main_assumptions} and~\ref{assump:non_centered}, there exist a continuous function $c : \mathbb{S}^{d-1} \to \R^d$ and for each $i,j \in \{1, \ldots, p\}$ a continuous function $\chi_{i,j} : \mathbb{S}^{d-1} \to \R$ and positive constants $C_1, C_2$ such that $C_1 \leq \chi_{i,j} \leq C_2$ and 
    \begin{equation}\label{eq:main}
        G\big( (0,i), (x,j) \big) \sim \chi_{i,j} \left( \frac{x}{\|x\|} \right) \|x\|^{-\frac{d-1}{2}} e^{- c\left( \frac{x}{\|x\|} \right) \cdot x}
    \end{equation}
    as $\|x\|$ tends to infinity.
\end{thm}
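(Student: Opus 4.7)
My plan is to follow the Fourier–Laplace strategy of Babillot \cite{MR0978023}, adapting Woess's proof of Ney and Spitzer's theorem \cite[(25.15)]{MR1743100} to the matrix-valued setting. The starting point is the matrix of Fourier transforms $\widehat{\mu}(\theta) := \left( \sum_{x \in \Z^d} e^{\mathbf{i} \theta \cdot x} \mu_{i,j}(x) \right)_{1 \leq i,j \leq p}$, which, by \eqref{eq:prop_markov_addit} and item \ref{it:assump:expo_moments} of Assumptions \ref{assump:main_assumptions}, extends analytically to a complex neighborhood of $\R^d$. Summing the $n$-step convolutions yields, at least formally, the identity $\widehat{G}(\theta) = (I - \widehat{\mu}(\theta))^{-1}$, so that after Fourier inversion
\begin{equation*}
G\bigl( (0,i),(x,j) \bigr) = \frac{1}{(2\pi)^d} \int_{[-\pi,\pi]^d} e^{-\mathbf{i} \theta \cdot x} \bigl[ (I - \widehat{\mu}(\theta))^{-1} \bigr]_{i,j} \, d\theta,
\end{equation*}
which, after a suitable deformation of contour into the strip of analyticity of $\widehat{\mu}$, should give the integral formula of Theorem \ref{th:form_int} and, as a byproduct, transience of the process.

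To extract the exponential decay rate $c(x/\|x\|)$, I would associate to each direction $u \in \mathbb{S}^{d-1}$ a real vector $c(u) \in \R^d$ such that the Perron–Frobenius eigenvalue $\lambda$ of the Laplace-tilted matrix $\left( \sum_x e^{-c(u) \cdot x} \mu_{i,j}(x) \right)_{1 \leq i,j \leq p}$ equals $1$, and such that the corresponding Doob-transformed Markov-additive process has global drift pointing in the direction $u$. Concretely, $c(u)$ is the point on the level set $\{\lambda = 1\}$ where $\nabla \lambda$ is parallel to $u$; existence and uniqueness rely on the strict convexity of $\log \lambda$ (a consequence of the finite exponential moments combined with Perron–Frobenius) and on Assumption \ref{assump:non_centered}. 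The associated $h$-transform using the right Perron eigenvector of the tilted matrix factors out the exponential prefactor $e^{-c(u) \cdot x}$, together with a component-dependent ratio of Perron eigenvectors that will be absorbed into $\chi_{i,j}(u)$, and reduces the problem to the asymptotic analysis of the Green function $\widetilde{G}$ of a Markov-additive process whose Perron eigenvalue at $\theta = 0$ equals $1$ and whose global drift lies in the direction $u$. This is essentially the content of Section \ref{sec:laplace_doob}.

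The core analytic step is then to estimate the inverse Fourier integral representing $\widetilde{G}\bigl( (0,i),(x,j) \bigr)$ for $x = \|x\|\, u$. A spectral perturbation of $(I - \widehat{\widetilde{\mu}}(\theta))^{-1}$ near $\theta = 0$ isolates the singular contribution, which is essentially of the form $\frac{h_j \pi_i}{1 - \widetilde{\lambda}(\theta)}$ plus an analytic remainder contributing only an $O(\|x\|^{-N})$ error for every $N$. To make this rigorous, I would first apply the \emph{changes of sections} of Section \ref{subsec:spectra_rad_fourier} in order to make $\widetilde{\lambda}$ twice differentiable at the origin with a positive-definite Hessian; this $C^{2}$ regularity is not automatic in the Markov-additive setting because of the non-scalar nature of $\widehat{\mu}(\theta)$. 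Then the \emph{dyadic splitting of integrals} of Appendix \ref{subsec:dyadic_integrals} is applied: the torus is partitioned into dyadic shells $\{\|\theta\| \sim 2^{-k}\}$, on each of which the integrand is controlled separately; after rescaling $\theta \mapsto \theta/\sqrt{\|x\|}$, only shells with $2^{-k} \sim \|x\|^{-1/2}$ contribute to leading order, and their sum yields a Gaussian integral producing both the factor $\|x\|^{-(d-1)/2}$ and the directional prefactor $\chi_{i,j}(u)$.

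The main obstacle I expect is precisely the interaction between the matrix structure of $\widehat{\mu}(\theta)$ and the regularity requirements at $\theta = 0$: in the scalar Ney–Spitzer case one reads off the Hessian directly from the characteristic function, but here one has to perturb a simple Perron eigenvalue of a parameter-dependent matrix and control its $C^{2}$ behavior, which is exactly what the changes of sections are designed to provide. Once this regularity is secured, dyadic splitting handles the singular integral, and the remaining conclusions -- continuity of $c$ and of the $\chi_{i,j}$ on $\mathbb{S}^{d-1}$ and the uniform bounds $C_1 \leq \chi_{i,j} \leq C_2$ -- follow from continuity of the Perron data in the parameter $u$ together with compactness of $\mathbb{S}^{d-1}$.
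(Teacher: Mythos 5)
Your overall route coincides with the paper's: integral formula via Fourier inversion, Doob transform at the point $c(u)\in\partial\mathcal{C}$ where the tilted drift points in the direction $u$ (existence and uniqueness via convexity of the spectral radius, as in Theorem~\ref{thm:hennequin}), spectral isolation of the Perron eigenvalue to reduce to the singular kernel $1/(1-k_c(\theta))$, and changes of sections plus dyadic splitting to control the second-order data. Two remarks on the preliminaries: the change of section is not what makes the leading eigenvalue twice differentiable (that is automatic from analytic perturbation theory); it is what lets one identify the Hessian explicitly as the energy matrix and prove it is positive-definite. And since $c(x/\|x\|)$ moves with $x$, every error estimate in the scheme must be proved \emph{uniformly} in $c\in\partial\mathcal{C}$ (uniform neighborhoods for the spectral decomposition, uniform constants in all remainders); invoking continuity and compactness only at the very end, for $c$ and $\chi_{i,j}$, does not cover this.

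The genuine gap is in the final extraction of the main term. You propose to obtain both the factor $\|x\|^{-(d-1)/2}$ and the prefactor from dyadic shells $\{\|\theta\|\sim 2^{-k}\}$ followed by the isotropic rescaling $\theta\mapsto\theta/\sqrt{\|x\|}$. But the singularity is anisotropic: after the Doob transform, $1-k_c(\theta)\approx -\mathbf{i}\,m_c\cdot\theta+\tfrac12\,\theta\cdot\sigma_c\theta$ vanishes to first order along the drift and to second order transversally, so the relevant scales are $\theta\cdot m_c\sim\|x\|^{-1}$ and $\theta'\sim\|x\|^{-1/2}$; a purely isotropic shell decomposition with $\sqrt{\|x\|}$ rescaling does not resolve this and would not produce the exponent $(d-1)/2$. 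In the paper the dyadic annuli are accordingly parabolic, of the form $\{4^{-k-1}\le|\mathbf{i}(R_c\theta)_1+\|(R_c\theta)'\|^2|<4^{-k}\}$, and — importantly — the dyadic splitting is used \emph{only} to show that replacing $p_c(\theta)$ by $p_c(0)$ costs $o(\|x\|^{-(d-1)/2})$ uniformly in $c$; it does not yield the leading term. The main term is then computed by a Weierstrass preparation factorization $1-\kappa_u(z)=(z_1-\mathcal{A}_u(z'))\mathcal{B}_u(z)$ (Proposition~\ref{prop:weierstrass_prep}), the identity $1/(\mathbf{i}\omega_1-\mathcal{A}_u(\mathbf{i}\omega'))=-\int_0^{+\infty}e^{s(\mathbf{i}\omega_1-\mathcal{A}_u(\mathbf{i}\omega'))}\,\mathrm{d}s$, and a Gaussian integral over the $d-1$ transverse variables, which is where $\|x\|^{-(d-1)/2}$ and the constant $\|m_{c(u)}\|^{\frac{d-1}{3}}/\sqrt{\det\sigma_u^{(1)}}$ actually come from. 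As written, your plan is missing this step (or an equivalent anisotropic saddle-point/quadratic-model computation à la Babillot), and the isotropic dyadic argument cannot substitute for it.
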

The functions $c$ and $\chi_{i,j}$ will be characterized in the precise formulation of Theorem \ref{thm:main_precise}. 

Another formulation, like the one used in Theorem (2.9) of \cite{MR0978023}, is to say that if a direction $u \in \mathbb{S}^{d-1}$ is fixed, then \[t^{\frac{d-1}{2}} e^{t c(u) \cdot u} G\big( (0,i), (t u,j) \big) \xrightarrow[t \to +\infty]{}\chi_{i,j}(u),\] and the convergence is uniform in $u$, which comes to saying that the asymptotic equivalent of \eqref{eq:main} is not an equivalent in a fixed direction, but a ``global'' equivalent.

As mentioned in Section \ref{subsec:context}, the usual consequence of the asymptotic expansion of the Green function is the description of the Martin boundary of the Markov chain. For the process studied here, Dussaule proved in \cite{Du-20} that the Martin boundary is homeomorphic to \emph{one} sphere of $\R^d$. As we do not have an alternative proof of this result, which is a consequence of Theorem \ref{thm:main}, we will not get into more details.

\subsection{Outline of the paper}\label{subsec:outline}

The general idea of the proof of Theorem \ref{thm:main} is to establish an integral formula of the Green function using Fourier tools, see Theorem \ref{th:form_int}, and then to get the asymptotics of the Green function by studying the integral.

\vspace{0.3cm}

In Section \ref{sec:fourier_laplace}, we first generalize convolution and the Fourier and Laplace transforms to matrices of measures. 

The Fourier transform is studied in Section~\ref{subsec:spectra_rad_fourier}. The decomposition of Proposition~\ref{prop:jordan_chevalley} isolates the leading eigenvalue of the Fourier transform. A Taylor expansion of order~$2$ of this leading eigenvalue at $0$ will be crucial to obtain the integral formula of the Green function. To get this expansion, we compute the two first derivatives of the leading eigenvalue at $0$ in Theorem \ref{thm:diff_k}. Although the first derivative at $0$ is directly related to the global drift $m$, we will need a \emph{change of section} that comes from \cite{MR0978023, MR0890364}, see Definition \ref{defi:change_section}, to compute the second derivative.

In Section \ref{sec:laplace_doob}, we study the spectral radius of the (real) Laplace transform in order to use a \emph{Doob transform} (also called $h$-process) introduced in Definition \ref{def:doob_transform}. The Doob transform kills the exponential decay of the Green function, see Proposition \ref{prop:link_green_functions}, which leaves us to prove the polynomial decay of \eqref{eq:main}.

\vspace{0.3cm}

Thanks to these results, we can establish the integral formula and study its asymptotics in Section \ref{sec:proof_main}.

Section \ref{subsec:integral_formula} is devoted to the proof of the integral formula, using Fourier analysis tools. Then in Section \ref{sec:dominating_part_without_proofs}, we identify the dominating part of the integral to make it easier to study. This step of the proof is more delicate than in the case of Ney and Spitzer's theorem. We use \emph{dyadic splitting of integrals}, an advanced Fourier analysis technique coming from Babillot in \cite{MR0978023}. As we find this technique has its own interest, independently of the problem studied here, we separated this section from the rest of the text in Appendix \ref{subsec:dyadic_integrals}.

Once the integral has been transformed into a simpler one, Woess's proof of Ney and Spitzer's theorem from his book \cite{Wo-00} is easily adapted, see Theorem \ref{thm:main_with_doob_transform}. It is based on a Weierstrass decomposition theorem that we adapt from Woess in Section \ref{sec:complex_analysis_ingredient_without_proofs}. As the proof of this complex analysis result is rather similar to Woess's one, it is transferred to Appendix \ref{sec:app:complex_analysis}.

\subsection{Original contributions}\label{subsec:contributions}

As mentioned earlier, Theorem \ref{thm:main} already appeared in Dussaule's work \cite[Proposition 3.27]{Du-20} with another proof. In this section, we explain the differences between the two proofs and more generally the original contributions of this article.

\vspace{0.3cm}

We start with Section \ref{sec:fourier_laplace}, which consists of preliminary results on the Fourier and Laplace transforms.

In Section \ref{subsec:spectra_rad_fourier}, the use of a \emph{change of section}, which comes from \cite{MR0978023, MR0890364}, allows us to give an explicit formula for the Hessian matrix of the leading eigenvalue of the Fourier transform, called the \emph{energy matrix} in \cite[(1.19)]{MR0978023}, while Dussaule proves that it is positive-definite in \cite[Proposition 3.5]{Du-20} without such an explicit formula. We insist that in our setting where the modulating set is finite, if the support of the process is also finite, then one can compute explicitly the energy matrix. Indeed, as seen in Definition~\ref{def:energy_matrix}, it requires to compute the stationary distribution $\pi$ and an \emph{appropriate} change of section $g$. Computing $\pi$ comes down to solving the linear equation~$\pi \left( \widehat \mu(0) - \mathrm{I}_p \right) = 0$, while equation \eqref{eq:appropriate_change_section} in the proof of Proposition \ref{prop:specific_g} shows that computing an appropriate change of section comes down to solving a linear equation of the form $\left( \mathrm{I}_p - \widehat \mu(0) \right) g = M$ where the unknown is $g$. Besides, using a change of section allows us to give a probabilistic interpretation of this Hessian with the $g$-changed process, as in the proof of Proposition \ref{prop:energy_mat_def_pos}.

In Section \ref{sec:laplace_doob}, the main result is the Theorem \ref{thm:hennequin}, which is a classical result of Hennequin, \cite[Proposition 4.4]{He-63}. We give more details, using the Lagrange multiplier theorem to complete Hennequin's proof.

\vspace{0.3cm}

Let us move to Section \ref{sec:proof_main}, which proves the main theorem. The proof strategy we use is quite different from Dussaule's. His approach generalizes the proof of Ney and Spitzer in \cite{NeSp-66}. He first establishes a local limit theorem, that is the study of the asymptotics of $\mathbb P_{(0,i)} \left( Z_n = (x,j) \right)$ as $x$ goes to infinity, in \cite[Proposition 3.14]{Du-20}, and then uses it to study the Green function. Our approach, which comes from Woess's proof of Ney and Spitzer's theorem in \cite[Chapter 25]{Wo-00}, is to study directly the Green function.

In Section \ref{subsec:integral_formula}, we establish an integral formula of the Green function, which is the analog of {\cite[(25.17)]{Wo-00}.} In Section \ref{sec:dominating_part_without_proofs}, our main contribution is the adaptation of Babillot's dyadic splitting of integrals from \cite[(2.38)-(2.42)]{MR0978023} to our lattice case in order to identify the dominating part of the integral. We hope that this little-known technique will benefit from this presentation in English and from the details we have added to the proofs, see Appendix \ref{subsec:dyadic_integrals}.

Finally, we adapt the proof of Theorem (25.15) from \cite{Wo-00}, which uses a Weierstrass preparation theorem, to establish the main result of the article.

\section{Fourier and Laplace transforms}\label{sec:fourier_laplace}

    \subsection{Generalized Fourier and Laplace transforms, generalized convolution}
        
    We extend convolution, the Fourier and Laplace transforms of finite measures to ${p\times p}$~matrices of finite measures. We do it coefficient by coefficient for the Fourier and Laplace transforms, and using the matrix multiplication formula for the convolution.
    
    More precisely, the Fourier and Laplace transforms of $\mu$, denoted respectively $\widehat{\mu}$ and $L\mu$, are complex matrices whose coefficients are:
    \begin{equation}\label{eq:def_fourier}
        \forall \theta \in \R ^d , ~~~~~~ \widehat{\mu}_{i,j}(\theta) := \displaystyle \sum_{x \in \Z ^d} e^{\mathbf{i} x \cdot \theta} \mu_{i,j}(x),
    \end{equation}
    \begin{equation}\label{eq:def_laplace}
        \forall c \in \C^d,~~~~(L\mu)_{i,j}(c) :=  \sum_{x \in \Z^d} e^{c \cdot x} \mu_{i,j}(x).
    \end{equation}
    The Laplace transform is defined on the whole of $\mathbb{C}^d$, and it is even an entire function, as a consequence of the assumption of finite exponential moments, that is point \ref{it:assump:expo_moments} of Assumptions \ref{assump:main_assumptions}.
    For $\theta \in \R^d$, we have \[\widehat{\mu}(\theta) = L\mu (\mathbf{i} \theta).\]
    Therefore, for any function $f$ analytic in a neighborhood of $\widehat{\mu}(0) = L\mu(0)$, we have
    \begin{equation}\label{eq:link_deriv_fourier_laplace}
    \left( f \circ \widehat{\mu} \right)^{(k)} (0) = \mathbf{i}^k \left( f \circ L\mu \right)^{(k)} (0).
    \end{equation}
    Besides, according to \eqref{eq:markovian_part_transition_matrix}, 
    \begin{equation}\label{eq:fourier_en_zero_matrice_transition}
        \widehat{\mu}(0) = \left( \sum_{x \in \Z^d} \mu_{i,j}(x) \right)_{1\leq i,j \leq p} \text{ is the transition matrix of the Markovian part } \left(M_n\right)_{n \geq 0}.
    \end{equation}
    Therefore,
    \begin{equation}\label{eq:invariance_pi}
        \pi \widehat \mu(0) = \widehat \mu (0).
    \end{equation}

    The convolution of matrices of measures is defined by
    \begin{equation}
        \left(\mu \star \nu\right)_{i,j} := \displaystyle \sum_{k = 1}^p \mu_{i,k} \star \nu_{k,j}.
    \end{equation}
    The distribution of $Z_n$ is given by the extended convolution powers of $\mu$, like in the simple case of sums of independent and identically distributed variables: 
    \begin{equation}\label{eq:loi_conv}
       \forall n \in \N, ~~~~~~  \mathbb{P}_{(0,i)}\left( Z_n = (x,j) \right) = \left(\mu^{\star n}\right)_{i,j} (x).
    \end{equation}
    The link between convolution and Fourier transform is easily generalized to extended convolution and Fourier transform. It becomes $\widehat{\mu \star \nu} = \widehat{\mu} \widehat{\nu}$, where the right-hand side is a product of complex matrices. In particular, 
    \begin{equation}\label{eq:fourier_convol_link}
    \forall n \in \N, ~~~~~~ \widehat{\mu^{\star n}} = \widehat{\mu}^n.
    \end{equation}
    A similar result holds for the Laplace transform.

    \subsection{Leading eigenvalue of the Fourier transform, changes of sections}\label{subsec:spectra_rad_fourier}

     The first step in the asymptotical study of the Green function is to establish an integral formula, like in \cite{MR0978023, Du-20, NeSp-66,Wo-00}. The heuristic behind the formula is to sum \eqref{eq:loi_conv} to obtain \[\sum_{n=0}^N \mathbb{P}_{(0,i)}\left( Z_n = (x,j) \right) = \sum_{n=0}^N \left(\mu^{\star n}\right)_{i,j}(x), \]
     and using Fourier and inverse Fourier in the right-hand side, get 
     \begin{equation}\label{eq:integral_formula_finite}
     \sum_{n=0}^N \mathbb{P}_{(0,i)}\left( Z_n = (x,j) \right) = \frac{1}{(2\pi)^d} \int_{\left[- \pi,\pi\right]^d} \sum_{n=0}^N (\widehat{\mu}^n (\theta))_{i,j} e^{- \mathbf{i} x \cdot \theta} \mathrm{d}\theta.
     \end{equation}
     Taking the limit as $N$ goes to infinity leads to 
     \begin{equation}\label{eq:integral_formula}
         G \left( (0,i), (x,j) \right) = \frac{1}{(2\pi)^d} \int_{[-\pi, \pi] ^d} {\left( \mathrm{I}_p - \widehat{\mu}(\theta) \right)^{-1}}_{i,j} e^{- \mathbf{i} x \cdot \theta} \mathrm{d} \theta.
     \end{equation}
     However, taking the limit of the right-hand side of \eqref{eq:integral_formula_finite} is not that obvious, as $\left(\mathrm{I}_p - \widehat{\mu}(\theta)\right)^{-1}$ might explode when $\theta$ gets close to $0$, because eigenvalues of $\hat{\mu}(\theta)$ may approach $1$. Therefore, we will need to study more carefully the eigenvalues of the Fourier transform to use properly the dominated convergence theorem.


    We mentioned a possible explosion of $\left(\mathrm{I}_p - \widehat{\mu}(\theta)\right)^{-1}$ when 
    $\theta$ gets close to $0$. Indeed, according to \eqref{eq:fourier_en_zero_matrice_transition}, $\widehat{\mu}(0)$ is the transition matrix of the Markovian part $\left(M_n\right)_{n \geq 0}$. Therefore, $1$ is an eigenvalue of $\widehat{\mu}(0)$, thus $\mathrm{I}_p - \widehat{\mu}(0)$ is not invertible. The following lemma shows that the only possible explosion is indeed around $\theta = 0$.
    
    \begin{lemma}\label{lemme:spec_rad_aper}
    For every $\theta \in [ - \pi, \pi]^d \setminus \{ 0 \}$, the spectral radius of $\widehat{\mu}(\theta)$ is strictly smaller than $1$.
\end{lemma}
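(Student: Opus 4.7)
The plan is the following. Let $\theta \in [-\pi,\pi]^d\setminus\{0\}$, let $\lambda$ be an eigenvalue of $\widehat{\mu}(\theta)$ with $|\lambda|=\rho(\widehat{\mu}(\theta))$, and let $v\in\C^p\setminus\{0\}$ be an associated eigenvector. Denote by $|v|$ the vector of componentwise moduli. Taking absolute values in $\widehat\mu(\theta)v=\lambda v$ and using the entrywise bound $|\widehat\mu_{i,j}(\theta)|\le \widehat\mu_{i,j}(0)$ gives the componentwise inequality $|\lambda|\,|v|\le \widehat\mu(0)\,|v|$. Multiplying on the left by the row vector $\pi$ and using $\pi\widehat\mu(0)=\pi$ from \eqref{eq:invariance_pi}, we deduce $|\lambda|\,\pi|v|\le \pi|v|$. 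Since $\pi$ has strictly positive entries and $|v|\neq 0$, we have $\pi|v|>0$, hence $|\lambda|\le 1$.

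Now assume for contradiction that $|\lambda|=1$. Then the previous computation forces equality throughout, so $\widehat\mu(0)|v|=|v|$ componentwise, and $|\widehat\mu_{i,j}(\theta)|=\widehat\mu_{i,j}(0)$ for every $i,j$. Since $\widehat\mu(0)$ is an irreducible stochastic matrix (irreducibility of $(M_n)$ follows from that of $(Z_n)$), the Perron--Frobenius theorem gives $|v_i|>0$ for every $i$. The equality $|\widehat\mu_{i,j}(\theta)|=\widehat\mu_{i,j}(0)$ is a saturated triangle inequality, which forces $e^{\mathbf{i} x\cdot\theta}$ to take the same value for every $x\in\supp(\mu_{i,j})$. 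A second triangle saturation inside $\lambda v_i=\sum_j \widehat\mu_{i,j}(\theta)v_j$ forces all nonzero summands to share a common argument. Writing $\lambda=e^{\mathbf{i}\beta}$ and $v_j=|v_j|e^{\mathbf{i}\alpha_j}$, combining both saturations yields the key phase relation
\begin{equation*}
\mu_{i,j}(x)>0 \;\Longrightarrow\; x\cdot\theta \equiv \beta+\alpha_i-\alpha_j \pmod{2\pi}.
\end{equation*}

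Iterating this relation along a trajectory $(0,i_0)\to(x_1,i_1)\to\cdots\to(x_1+\cdots+x_n,i_n)$ of positive probability shows that whenever $\mathbb{P}_{(0,i_0)}(Z_n=(y,i_n))>0$, one has $y\cdot\theta\equiv n\beta+\alpha_{i_0}-\alpha_{i_n}\pmod{2\pi}$. Specializing to return loops at $(0,i)$ and invoking the aperiodicity of $(Z_n)$ (the set of admissible return times is an additive sub-semigroup of $\N$ with gcd $1$, hence contains two consecutive integers $n_0,n_0+1$), we get $\beta\in 2\pi\Z$, so $\lambda=1$. The key relation then reduces to $x\cdot\theta\equiv\alpha_i-\alpha_j\pmod{2\pi}$, and applying it to loops $(0,i)\to(y,i)$ gives $y\cdot\theta\equiv 0\pmod{2\pi}$. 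The irreducibility of $(Z_n)$ ensures that every $y\in\Z^d$ is reachable from $(0,i)$ while returning to the same modulating state $i$. Thus $y\cdot\theta\in 2\pi\Z$ for every $y\in\Z^d$, which forces $\theta\in 2\pi\Z^d$. Combined with $\theta\in[-\pi,\pi]^d$, this gives $\theta=0$, a contradiction.

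The main obstacle is the careful unpacking of the two nested layers of triangle-inequality equality---one over the index $j$ in the matrix product and one over the lattice variable $x$ inside each $\widehat\mu_{i,j}$---to extract the phase relation, and then leveraging both the aperiodicity and the full-state-space irreducibility of $(Z_n)$ (rather than just of $(M_n)$) to convert that phase information into $\theta=0$.
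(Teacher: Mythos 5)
Your proof is correct and follows essentially the same route as the paper's: saturation of the triangle inequalities yields a phase relation along admissible jumps, this relation telescopes along paths, aperiodicity then forces $\lambda=1$, and irreducibility of $(Z_n)$ forces $\theta=0$. The differences are only in the scaffolding — you get $|\lambda|\le 1$ and the componentwise equality $\widehat{\mu}(0)|v|=|v|$ by weighting with $\pi$ and invoking Perron--Frobenius to obtain $|v|>0$, where the paper uses Gershgorin and propagates from a coordinate of maximal modulus via irreducibility, and you use the consecutive-integers property of a gcd-one sub-semigroup of $\N$ where the paper intersects groups of roots of unity — and both variants are sound.
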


\begin{proof}
    Gershgorin's theorem shows that the spectral radius of the Fourier transform is not larger than $1$, which leaves us with the case when it is $1$.
    
    Let $\theta \in [-\pi, \pi]^d$, and $\alpha$ an eigenvalue of $\widehat{\mu}(\theta)$ such that $|\alpha| = 1$, associated with an eigenvector \sloppy ${u = ( u_1 , \ldots, u_p )^{\mathsf{T}} \in \mathbb C^p}$. We prove that $\theta =0$.
    
    The equality $\widehat{\mu}(\theta) u = \alpha u$ means that for all $k \in \{ 1, \ldots, p \}$, \[\alpha u_k = \sum_{j = 1}^p \widehat{\mu}_{k,j}(\theta) u_j = \sum_{j = 1}^p \sum_{x \in \Z^d} \mu_{k,j}(x) e^{\mathbf{i} x \cdot \theta  } u_j.\] We choose $k$ such that $|u_k| = \|u\|_\infty := \max(|u_1|, \ldots, |u_p|)$. Using triangle inequality and $|\alpha| = 1$, we get
    \[ |u_k| \leq \sum_{j=1}^p \sum_{x \in \Z^d} \left| \mu_{k,j}(x) e^{\mathbf{i} x \cdot \theta } u_j \right| = \sum_{j=1}^p  \sum_{x \in \Z^d} \mu_{k,j}(x) |u_j| \leq \sum_{j=1}^p  \sum_{x \in \Z^d} \mu_{k,j}(x) |u_k| = |u_k|. \] 
    The equality case of the triangle inequality shows that all the complex numbers~$\mu_{k,j}(x) e^{\mathbf{i}x \cdot \theta } u_j$ for \sloppy ${j \in \{ 1,\ldots, p \}}$ and $x \in \Z^d$ have the same argument. The equality case of the second inequality shows that for all $j$ such that $\sum_{x \in \Z^d} \mu_{k,j}(x) > 0$, $|u_j| = |u_k|$. Therefore all the $e^{\mathbf{i}x \cdot \theta } u_j$ such that $\mu_{k,j}(x) >0$ are equal, because they have the same modulus and argument. Since $\sum_{j=1}^p \sum_{x \in \Z^d} \mu_{k,j}(x) e^{\mathbf{i} x \cdot \theta } u_j = \alpha u_k$, their common value is~$\alpha u_k$:
    \begin{equation}\label{eq:spec_rad_0}
        \text{if $\mu_{k,j} (x) >0$, then $e^{\mathbf{i} x \cdot \theta} u_j = \alpha u_k$.}
    \end{equation}

    Let $y,z \in \Z^d$, $k\in \{1 , \ldots, p \}$ such that $|u_k| = \|u\|_\infty$ and $j \in \{1, \ldots, p\}$. Applying~\eqref{eq:spec_rad_0} to $x = z-y$ and multiplying by $e^{\mathbf{i} y \cdot \theta}$ shows that
    \begin{equation}\label{eq:spec_rad_1}
    \textnormal{if the process can go from $(y,k)$ to $(z,j)$ in one jump, then $ e^{\mathbf{i}  z \cdot \theta } u_j = \alpha e^{ \mathbf{i}  y \cdot \theta } u_k$.}
    \end{equation}

    In particular, for such a $j$, $|u_j| = |u_k| = \|u\|_\infty$, so we can apply \eqref{eq:spec_rad_1} starting from~$j$ instead of $k$ and go on like this. Since the process is irreducible, we eventually find that all the components of $u$ have the same modulus and that for any $y,z \in \Z^d$ and~$k,j \in \{1, \ldots,p\}$:
    \begin{equation}\label{eq:spec_rad_2}
    \textrm{if the process can go from $(y,k)$ to $(z,j)$ in $t$ jumps, then $ e^{\mathbf{i}  z \cdot \theta } u_j = \alpha^t e^{ \mathbf{i}  y \cdot \theta } u_k$.}
    \end{equation}
    In particular, taking $(z,j) = (y,k)$, we get that for any $t$ such that the process can loop from $(k,y)$ to 
    $(k,y)$ in $t$ jumps, then $\alpha^t = 1$, which we note $\alpha \in \mathbb{U}_t$.  But if ${\mathrm{gcd}(t_1, \ldots, t_n) = 1}$, then $\bigcap_{i = 1}^n \mathbb{U}_{t_i} = \{1\}$. Therefore, the assumption of aperiodicity leads to $\alpha = 1$, hence \eqref{eq:spec_rad_2} combined with irreducibility show that the $ e^{\mathbf{i}  z \cdot \theta } u_j$ for $z \in \Z^d$ and $j \in \{ 1, \ldots, p \}$ are all equal. Taking $z = 0$ and $z = e_i$ (the $i^{\mathrm{th}}$ vector of the canonical basis), we get $u_j = e^{\mathbf{i} \theta_i} u_j$, so $e^{\mathbf{i} \theta_i} =1$ (because $u_j \neq 0$) hence $\theta_i = 0$. In conclusion, $\theta = 0$.
    \end{proof}

    The following decomposition of the Fourier transform $\widehat{\mu}$ isolates its dominating part, given by its leading eigenvalue, which will appear in the leading term in the integral formula of the Green function in \eqref{eq:dom_part}.
    
\begin{prop}\label{prop:jordan_chevalley}
    There exists a neighborhood $V$ of $0$ such that for $\theta \in V$, we can write \[\widehat{\mu}(\theta) = k(\theta) p(\theta) + Q(\theta),\]
    where:
    \begin{itemize}
        \item $k(\theta) \in \mathbb{C}$ is the simple leading eigenvalue of $\widehat{\mu}(\theta)$;
        \item $p(\theta)$ is the projection onto the eigenspace associated with the eigenvalue $k(\theta)$, along the sum of the other generalized eigenspaces;
        \item $p(\theta) Q(\theta) = Q(\theta)p(\theta) = 0$;
        \item the spectral radius of $Q(\theta)$ is strictly smaller than $|k(\theta)|$.
    \end{itemize}
\end{prop}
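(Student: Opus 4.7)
The starting point is that $\widehat{\mu}(0)$ coincides, by \eqref{eq:fourier_en_zero_matrice_transition}, with the transition matrix of the Markovian part $(M_n)$, which is irreducible and aperiodic on the finite set $\{1,\ldots,p\}$. The Perron--Frobenius theorem, already invoked after Assumptions~\ref{assump:main_assumptions}, then gives that $1$ is a \emph{simple} eigenvalue of $\widehat{\mu}(0)$ and that all other eigenvalues lie in the open unit disk. The proposition amounts to propagating this spectral picture to a full neighborhood of the origin via analytic perturbation theory.

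My plan is to use the Dunford--Riesz holomorphic functional calculus. Since the finite exponential moments assumption makes $L\mu$ entire, $\theta \mapsto \widehat{\mu}(\theta) = L\mu(\mathbf{i}\theta)$ is real-analytic. Fix a small positively oriented circle $\Gamma \subset \C$ centered at $1$ whose interior contains no eigenvalue of $\widehat{\mu}(0)$ other than $1$; this is possible by simplicity. By continuity of $(z,\theta)\mapsto (z\,\mathrm{I}_p - \widehat{\mu}(\theta))^{-1}$, there exists a neighborhood $V$ of $0$ such that for all $\theta\in V$ no eigenvalue of $\widehat{\mu}(\theta)$ lies on $\Gamma$ and exactly one eigenvalue, which we call $k(\theta)$, lies inside $\Gamma$. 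The Riesz projector
\[
p(\theta) := \frac{1}{2\pi \mathbf{i}} \oint_\Gamma \bigl(z\,\mathrm{I}_p - \widehat{\mu}(\theta)\bigr)^{-1}\, \mathrm{d}z
\]
is then analytic in $\theta$, has rank $1$ (by continuity of the rank of analytic families of projections together with $\mathrm{rank}\,p(0)=1$), commutes with $\widehat{\mu}(\theta)$, and projects onto the eigenspace of $k(\theta)$ along the sum of the remaining generalized eigenspaces. The eigenvalue $k(\theta)$ itself is analytic, recovered for instance as $\Tr\bigl(\widehat{\mu}(\theta) p(\theta)\bigr)$.

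Now set $Q(\theta) := \widehat{\mu}(\theta) - k(\theta) p(\theta)$. From $\widehat{\mu}(\theta) p(\theta) = k(\theta) p(\theta) = p(\theta)\widehat{\mu}(\theta)$ and $p(\theta)^2 = p(\theta)$ one reads off immediately that $p(\theta) Q(\theta) = Q(\theta) p(\theta) = 0$. On $\ker p(\theta)$, the operator $Q(\theta)$ agrees with $\widehat{\mu}(\theta)$, whose spectrum on that complementary invariant subspace is precisely the spectrum of $\widehat{\mu}(\theta)$ minus $\{k(\theta)\}$, while on $\im p(\theta)$ the operator $Q(\theta)$ is zero. Hence the spectral radius of $Q(\theta)$ equals the largest modulus of the remaining eigenvalues of $\widehat{\mu}(\theta)$.

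The final step is the strict inequality on spectral radii. At $\theta = 0$ this maximum is strictly less than $1 = |k(0)|$ by Perron--Frobenius, so there is a positive gap. The hard part, and the only point that requires a little care, is the uniform control of this gap on a neighborhood: I would shrink $V$ so that, on the one hand, $|k(\theta)|$ stays arbitrarily close to $1$ by continuity of $k$, and on the other hand the eigenvalues of $\widehat{\mu}(\theta)$ outside $\Gamma$ stay uniformly inside a disk of radius strictly smaller than $1$. The latter follows from continuity of the spectrum as a set-valued map (or, more concretely, from continuity of the roots of the characteristic polynomial), combined with compactness of $V$. Together these give $\mathrm{spectral\ radius}(Q(\theta))<|k(\theta)|$ throughout $V$, completing the proof.
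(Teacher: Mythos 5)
Your proof is correct and follows essentially the same route as the paper: Perron--Frobenius simplicity of the eigenvalue $1$ of $\widehat{\mu}(0)$ plus continuity of the spectrum gives a simple leading eigenvalue on a neighborhood of $0$, and the decomposition with the stated properties of $p(\theta)$ and $Q(\theta)$ is the standard spectral (Jordan--Chevalley) decomposition, which you simply realize explicitly through the Riesz contour-integral projector. The only cosmetic slip is the appeal to ``compactness of $V$'' for the uniform spectral gap --- an open $V$ is not compact, but continuity of the roots of the characteristic polynomial at $\theta = 0$ already yields the gap after shrinking $V$, exactly as you describe.
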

\begin{proof}
    We have seen that $\widehat{\mu}(0)$, which is the transition matrix of the Markovian part according to \eqref{eq:fourier_en_zero_matrice_transition}, has a simple leading eigenvalue as a consequence of the Perron-Frobenius theorem. By continuity of the spectrum, there exists a neighborhood $V$ of $0$ in which $\widehat{\mu}(\theta)$ has one simple leading eigenvalue, hence the existence of $k(\theta)$.

    The rest of the proposition is a direct consequence of the Jordan-Chevalley decomposition.
\end{proof}

We highlight some easy and useful relations in the previous decomposition.

\begin{lemma}\label{lem:ker_im_perp_jordan}
    The following equalities hold:
    \begin{equation}\label{eq:ker_im_perp_jordan}
        \ker p(0) = \im Q(0) = \im\left( \widehat{\mu}(0) - \mathrm{I}_p \right) = \pi^{\perp},
    \end{equation}

    \begin{equation}\label{eq:pi_p_0}
        \pi p(0) = \pi
    \end{equation}
    and
    \begin{equation}\label{eq:im_p_0}
        \im p(0) = \spn (\mathds 1)
    \end{equation}
    where $\mathds 1$ is the vector whose all entries are $1$. In particular, since $\mathds 1$ is in the image of the projection $p(0)$, we have $p(0) \mathds 1 = \mathds 1$.
    
\end{lemma}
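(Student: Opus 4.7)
The plan is to identify each of the four subspaces in \eqref{eq:ker_im_perp_jordan} separately, exploiting the simplicity of the leading eigenvalue $k(0) = 1$, the algebraic identities of Proposition~\ref{prop:jordan_chevalley}, and the two distinguished $1$-eigenvectors of the stochastic matrix $\widehat{\mu}(0)$ coming from \eqref{eq:fourier_en_zero_matrice_transition}: the right eigenvector $\mathds 1$ and the left eigenvector $\pi$.

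The first step would be to establish \eqref{eq:im_p_0} together with the consequence $p(0)\mathds 1 = \mathds 1$. Since $\widehat{\mu}(0)\mathds 1 = \mathds 1$ and the $1$-eigenspace of $\widehat{\mu}(0)$ is one-dimensional by simplicity, this eigenspace equals $\spn(\mathds 1)$, hence $\im p(0) = \spn(\mathds 1)$, and $p(0)\mathds 1 = \mathds 1$ follows because $p(0)$ is a projection onto its image. Next, for \eqref{eq:pi_p_0}, I multiply $\widehat{\mu}(0) = p(0) + Q(0)$ by $p(0)$ on either side and use $p(0)^2 = p(0)$ together with $p(0)Q(0) = Q(0)p(0) = 0$ to obtain $p(0)\widehat{\mu}(0) = \widehat{\mu}(0)p(0) = p(0)$. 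Combined with \eqref{eq:invariance_pi}, this shows that $\pi p(0)$ is itself a left $1$-eigenvector of $\widehat{\mu}(0)$, hence equal to $\lambda \pi$ by simplicity; testing against $\mathds 1$ fixes $\lambda = \pi p(0)\mathds 1 = \pi \mathds 1 = 1$.

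With \eqref{eq:im_p_0} and \eqref{eq:pi_p_0} in hand, the chain \eqref{eq:ker_im_perp_jordan} falls out by dimension counting. If $p(0)v = 0$, then $\pi v = (\pi p(0)) v = \pi (p(0) v) = 0$, so $\ker p(0) \subset \pi^{\perp}$; both have dimension $p-1$ (the first because $\im p(0)$ is one-dimensional, the second because $\pi$ is a nonzero linear functional), forcing equality. The identity $p(0)(\widehat{\mu}(0) - \mathrm{I}_p) = p(0) - p(0) = 0$ yields $\im(\widehat{\mu}(0) - \mathrm{I}_p) \subset \ker p(0)$, and since $\ker(\widehat{\mu}(0) - \mathrm{I}_p) = \spn(\mathds 1)$ has dimension $1$, rank-nullity forces equality. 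Finally, $p(0)Q(0) = 0$ gives $\im Q(0) \subset \ker p(0)$, while $\mathbb{C}^p = \im p(0) \oplus \ker p(0)$ and $Q(0)p(0) = 0$ let me write $\im Q(0) = \widehat{\mu}(0)(\ker p(0))$.

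The step I expect to be the most delicate is the reverse inclusion $\ker p(0) \subset \im Q(0)$: it reduces to showing that $\widehat{\mu}(0)|_{\ker p(0)}$ is a bijection on $\ker p(0)$, whose spectrum is exactly the set of non-leading eigenvalues of $\widehat{\mu}(0)$. These all lie strictly inside the unit disk by Proposition~\ref{prop:jordan_chevalley}, so the remaining point is to rule out $0$ among them, which is where I expect a short additional observation specific to the Markov-additive setting to be used; once this is granted, rank-nullity on $Q(0)$ closes the chain of equalities.
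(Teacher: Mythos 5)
Most of your argument is correct and close in spirit to the paper's: \eqref{eq:im_p_0}, the identity $p(0)\mathds 1=\mathds 1$, your derivation of \eqref{eq:pi_p_0} (simplicity of the left $1$-eigenvector plus normalization against $\mathds 1$ is a perfectly good alternative to the paper's route, which instead deduces $\pi Q(0)=0$ first), and the equalities $\ker p(0)=\pi^{\perp}=\im\left(\widehat{\mu}(0)-\mathrm{I}_p\right)$ together with the inclusion $\im Q(0)\subseteq\ker p(0)$ are all established soundly. The genuine gap is exactly the step you flagged, and it cannot be filled the way you hope. Your reduction is accurate: since $Q(0)=\widehat{\mu}(0)\left(\mathrm{I}_p-p(0)\right)$, one has $\im Q(0)=\widehat{\mu}(0)\left(\ker p(0)\right)$, so the reverse inclusion $\ker p(0)\subseteq\im Q(0)$ is equivalent to $0$ not being an eigenvalue of $\widehat{\mu}(0)$. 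But nothing in Assumptions~\ref{assump:main_assumptions} rules this out: $\widehat{\mu}(0)$ is just an irreducible aperiodic stochastic matrix, and such matrices can be singular. For instance one can realize, with an irreducible aperiodic Markov-additive process having finite exponential moments, $\widehat{\mu}(0)=\begin{pmatrix}1/2&1/2\\1/2&1/2\end{pmatrix}$; its eigenvalues are $1$ and $0$, $p(0)=\mathds 1\pi$, hence $Q(0)=\widehat{\mu}(0)-p(0)=0$ and $\im Q(0)=\{0\}\subsetneq\ker p(0)$. So there is no ``short additional observation specific to the Markov-additive setting'' that makes $\widehat{\mu}(0)\big|_{\ker p(0)}$ invertible, and your plan for the last equality breaks down.

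It is worth noting that this is also the fragile point of the statement itself: the sum of the non-leading generalized eigenspaces equals $\im\left(\widehat{\mu}(0)-\mathrm{I}_p\right)$ (because $\widehat{\mu}(0)-\mathrm{I}_p$ is invertible on each generalized eigenspace for $\lambda\neq1$), which is how the paper identifies $\ker p(0)$ with $\im\left(\widehat{\mu}(0)-\mathrm{I}_p\right)$; but for $\im Q(0)$ only the inclusion into $\ker p(0)$ is automatic, with equality precisely when $0\notin\mathrm{Sp}\left(\widehat{\mu}(0)\right)$. What the paper actually uses downstream is only $\im Q(0)\subseteq\pi^{\perp}$ (equivalently $\pi Q(0)=0$, needed for \eqref{eq:pi_p_0}) and the equalities $\ker p(0)=\pi^{\perp}=\im\left(\widehat{\mu}(0)-\mathrm{I}_p\right)$, all of which your argument does prove; so your proof covers every consequence that is needed later, but it does not, and cannot by this route, deliver the literal equality $\im Q(0)=\ker p(0)$ in full generality.
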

\begin{proof}
    The equalities $\ker p(0) = \im Q(0) = \im\left( \widehat{\mu}(0) - \mathrm{I}_p \right)$ are a direct consequence of the Jordan-Chevalley decomposition: this space is simply the sum of the generalized eigenspaces of $\widehat \mu(0)$ associated with eigenvalues different from $1$. We have $\pi \widehat{\mu}(0) = \pi$ by \eqref{eq:invariance_pi}, so $\im\left( \widehat{\mu}(0) - \mathrm{I}_p \right) \subset \pi^{\perp}$. Besides, since $1$ is a simple eigenvalue of $\widehat{\mu}(0)$, $\dim \im\left( \widehat{\mu}(0) - \mathrm{I}_p \right) = d-1 = \dim \pi^{\perp}$, hence $\im\left( \widehat{\mu}(0) - \mathrm{I}_p \right) = \pi^{\perp}$, which concludes the proof of \eqref{eq:ker_im_perp_jordan}.

    Because of $\im Q(0) = \pi^\perp$, we get $\pi Q(0) = 0$. Therefore, the equation of invariance leads to \sloppy${\pi = \pi \widehat{\mu}(0) = \pi p(0) + \pi Q(0) = \pi p(0)}$, hence \eqref{eq:pi_p_0}.
    
    Since $\mathds 1$ is an eigenvector of $\widehat{\mu}(0)$ for the simple eigenvalue $1$, and $p(0)$ is a projection onto the eigenspace associated with the eigenvalue $1$, we have \eqref{eq:im_p_0}. 
\end{proof}

We will need the second order asymptotics of $k$ around $0$, or equivalently its two first derivatives at $0$.
As we will see, the gradient of $k$ is directly related to the drift $m$ defined in \eqref{eq:drift}. Its Hessian, however, is not as easily expressed with its second order moments~$\Sigma_{i,j}$. It uses a ``change of section'', which was introduced in \cite{MR0890364} and used in \cite{MR0978023}.

\begin{defi}\label{defi:change_section}
    A \emph{change of section}, denoted by $g$, is a $p \times d$ real matrix. The \emph{$g$-changed process} is the Markov-additive process with transitions $^g\mu$ defined by: \[\forall x \in \R^d,~~~~\forall i,j \in \{1, \ldots, p \},~~~~ ^g \mu_{i,j}(x) = \mu_{i,j} \left( x + g_j -g_i \right) \] where $g_i$ is the $i^{\mathrm{th}}$ row of $g$.
\end{defi}
    The additive part of the $g$-changed process no longer necessarily lies in $\Z^d$, but it is still in a countable subset of $\R^d$, namely the subgroup generated by the $\supp(\mu_{i,j}) + g_j - g_i$ for $1 \leq i,j \leq p$.

    Everything we defined for the original process has an analog for a $g$-changed process, which will be denoted with a left superscript ``$g$'', for example  $^g \pi$, $^g m$, $^g \widehat{\mu}$, $^g k$\ldots

Changes of sections preserve some properties of the process, as stated in the following proposition.

\begin{prop}\label{prop:inv_change_sect}
Let $g$ be any change of section.

\begin{enumerate}
    \item\label{it:changement_preserve_vp_fourier} For all $\theta \in \R^d$, the Fourier transform matrices $\widehat{\mu}(\theta)$ and $^g \widehat{\mu}(\theta)$ have the same eigenvalues. In particular, $^g k (\theta) = k(\theta)$.
    \item $ ^g \widehat{\mu}(0) = \widehat{\mu}(0)$, which means that the Markovian part of the $g$-changed process has the same transitions as the original one. In particular, the invariant distributions $^g \pi$ and $\pi$ are the same.
    \item\label{it:inv_change_section_3} The drifts $^g m$ and $m$ are the same.

    \item\label{it:supp_pas_inclus_hyperplan} There exists no affine hyperplane $H$ such that $\mathrm{supp}({}^g \mu) \subset H$.
    \end{enumerate}

\end{prop}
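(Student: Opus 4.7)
The plan is to observe that a change of section acts as a similarity in Fourier space. Applying the substitution $y = x + g_j - g_i$ in the definition of ${}^g\widehat{\mu}_{i,j}(\theta)$ yields
\[{}^g\widehat{\mu}_{i,j}(\theta) = e^{\mathbf{i}(g_i - g_j) \cdot \theta}\, \widehat{\mu}_{i,j}(\theta),\]
so that, with $D(\theta) := \diag(e^{\mathbf{i} g_1 \cdot \theta}, \ldots, e^{\mathbf{i} g_p \cdot \theta})$, one has ${}^g\widehat{\mu}(\theta) = D(\theta)\, \widehat{\mu}(\theta)\, D(\theta)^{-1}$. Item \ref{it:changement_preserve_vp_fourier} follows since similar matrices share the same spectrum (and in particular the simple leading eigenvalue is preserved). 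Item~(2) follows at once by specializing to $\theta = 0$, where $D(0) = \mathrm{I}_p$; by \eqref{eq:fourier_en_zero_matrice_transition} the transition matrices of the Markovian parts then coincide, and so do their unique invariant distributions.

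For item \ref{it:inv_change_section_3} I carry out a direct calculation: the same substitution yields ${}^g m_{i,j} = m_{i,j} + (g_i - g_j)\,\widehat{\mu}_{i,j}(0)$, so
\[{}^g m - m \;=\; \sum_i \pi_i g_i \Bigl( \sum_j \widehat{\mu}_{i,j}(0) \Bigr) \;-\; \sum_j g_j \Bigl( \sum_i \pi_i \widehat{\mu}_{i,j}(0) \Bigr).\]
The first parenthesized factor equals $1$ because $\widehat{\mu}(0)$ is stochastic, and the second equals $\pi_j$ by the invariance \eqref{eq:invariance_pi}; both sums therefore reduce to $\sum_i \pi_i g_i$ and cancel.

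For item \ref{it:supp_pas_inclus_hyperplan} I would argue by contradiction: suppose $\supp({}^g \mu) \subset H$ for some affine hyperplane $H = \{y \in \R^d : a \cdot y = b\}$ with $a \neq 0$. Any $n$-step trajectory of the original process from $(0,i)$ to $(x,i)$, with jumps $x_1, \ldots, x_n$ along modulating states $i = i_0, i_1, \ldots, i_n = i$, corresponds to $g$-changed jumps $y_k = x_k + g_{i_{k-1}} - g_{i_k}$ which telescope to $\sum_k y_k = x$; if each $y_k$ lies in $H$, summation forces $a \cdot x = n b$. By irreducibility every $x \in \Z^d$ is reachable from $(0,i)$ in some finite number $n(x)$ of steps; by aperiodicity it is also reachable in $n(x) + m$ steps for every sufficiently large $m$. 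Equating $n(x) b = (n(x) + m) b$ forces $b = 0$, and then $a \cdot x = 0$ for every $x \in \Z^d$ forces $a = 0$, a contradiction. I expect item \ref{it:supp_pas_inclus_hyperplan} to be the main obstacle, as it is the only one requiring a genuinely structural argument combining irreducibility with aperiodicity; the first three items reduce to the similarity identity and a line of bookkeeping.
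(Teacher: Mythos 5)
Your items \ref{it:changement_preserve_vp_fourier}--\ref{it:inv_change_section_3} are exactly the paper's argument: the conjugation ${}^g\widehat{\mu}(\theta) = D(\theta)\widehat{\mu}(\theta)D(\theta)^{-1}$ with $D(\theta) = \diag(e^{\mathbf{i}g_1\cdot\theta},\ldots,e^{\mathbf{i}g_p\cdot\theta})$, specialization at $\theta=0$, and the same $\pi$-weighted cancellation for the drift. Where you genuinely diverge is item \ref{it:supp_pas_inclus_hyperplan}. The paper argues in Fourier space: if $\supp({}^g\mu)\subset H$, pick $\theta \in H^{\perp}\setminus\{0\}$ small; then $\theta\cdot s$ is constant (say $\alpha$) on the support, so the row sums of ${}^g\widehat{\mu}(\theta)$ all equal $e^{\mathbf{i}\alpha}$, i.e.\ $\mathds{1}$ is an eigenvector with an eigenvalue of modulus $1$, which by item \ref{it:changement_preserve_vp_fourier} contradicts Lemma~\ref{lemme:spec_rad_aper}. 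You instead give a direct trajectory argument: writing $H=\{y: a\cdot y = b\}$, the $g$-changed jumps along any length-$n$ path from $(0,i)$ to $(x,i)$ telescope to $x$ (the $g$-terms cancel because the path starts and ends at the same modulating state), forcing $a\cdot x = nb$; irreducibility plus aperiodicity give two admissible lengths for the same $x$, hence $b=0$ and then $a=0$. Your argument is correct — the only step worth one extra sentence is that aperiodicity at $(0,i)$ yields loops of \emph{every sufficiently large} length (the set of loop lengths is closed under addition and has gcd $1$), which is the standard numerical-semigroup fact. The trade-off is clear: the paper's route is shorter because it recycles Lemma~\ref{lemme:spec_rad_aper}, which already packages the irreducibility-plus-aperiodicity mechanism in spectral form, whereas your route is self-contained, probabilistically transparent, and makes explicit how each hypothesis is used — at the cost of essentially re-proving, in path language, the special case of that lemma corresponding to the eigenvector $\mathds{1}$.
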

\begin{proof}
We first prove \ref{it:changement_preserve_vp_fourier}.
By definition of the Fourier transform, for all $\theta \in \R^d$, ${i,j \in \{1, \ldots, p \}}$,
\begin{align*}
    ^g \widehat{\mu}_{i,j} (\theta) &= \sum_{x \in \R^{d}}^{} {}^g \mu_{i,j}(x) e^{ \mathbf{i} x \cdot \theta}\\
    &= \sum_{x \in \R^{d}}^{} \mu_{i,j} \left( x + g_j -g_i \right) e^{ \mathbf{i} x \cdot \theta}\\
    &= \sum_{y \in \Z^{d}}^{} \mu_{i,j} \left(y \right) e^{ \mathbf{i} (y - g_j + g_i) \cdot \theta} \\
    &= \widehat{\mu}_{i,j} (\theta) e^{\mathbf{i} (g_i- g_j) \cdot \theta}.
\end{align*}
Therefore, $^g \widehat{\mu}_{i,j} (\theta)$ and $\widehat{\mu}_{i,j} (\theta)$ are similar: $^g \widehat{\mu}_{i,j} (\theta) = D(\theta) \widehat{\mu}_{i,j} D(\theta)^{-1}$ with ${D(\theta) = \diag{\left( e^{\mathbf{i} g_1 \cdot \theta}, \ldots ,  e^{\mathbf{i} g_p \cdot \theta}\right)}}$. Thus, they have the same eigenvalues.

\vspace{0.3cm}

We now prove the second point. By proving \ref{it:changement_preserve_vp_fourier}, we showed that for all $\theta \in \R^d$ and $i,j \in \{1, \ldots, p \}$, $^g \widehat{\mu}_{i,j}(\theta) = \widehat{\mu}_{i,j} (\theta) e^{\mathbf{i} (g_i- g_j) \cdot \theta}$. Taking $\theta = 0$, we get $^g \widehat{\mu}(0) = \widehat{\mu}(0)$. Since the transition matrix of the Markovian part is the Fourier transform at $0$, it means that the $g$-changed Markovian part has the same transition matrix as the original one, hence the same invariant distribution.

\vspace{0.3cm}

Since the $g$-change of section adds $g_i - g_j$ to the jumps from $i$ to $j$ but does not modify their probabilities, we get 
\begin{equation}\label{eq:drift_change_sect}
^g m_{i,j} = m_{i,j} + \widehat{\mu}_{i,j}(0) (g_i - g_j).
\end{equation}
The term $\widehat{\mu}_{i,j}(0)$ is the total probability to go from $i$ to $j$. It is here because $\mu_{i,j}$ are only \emph{sub}-probabilities.
Weighting \eqref{eq:drift_change_sect} by $^g \pi = \pi$ leads to 
\begin{align*}
    ^g m &= \sum_{i,j = 1}^p \pi_i \left( m_{i,j} + \widehat{\mu}_{i,j}(0) (g_i -g_j) \right)\\
    &= m + \sum_{ i=1}^{p} \pi_i g_i \underbrace{\sum_{ j=1}^p  \widehat{\mu}_{i,j}(0)}_{=1}  - \sum_{ j=1}^{p} g_j \underbrace{\sum_{ i=1}^p \pi_i \widehat{\mu}_{i,j}(0)}_{=\pi_j} \\
    &= m + \sum_{ i=1}^{p} \pi_i g_i - \sum_{j=1}^{p} \pi_j g_j \\
    &=m,
\end{align*}
hence \ref{it:inv_change_section_3}.

\vspace{0.3cm}

To prove the last point, we assume by contradiction that there is such a hyperplane~$H$ and take \sloppy ${\theta \in H^\perp \setminus \{0\}}$ such that $\theta \in (- \pi, \pi)^d \setminus \{0\}$. Then $s \in \mathrm{supp}\left( {}^g \mu\right) \mapsto \theta \cdot s$ is constant; we denote $\alpha$ its value. For all $i,j \in \{ 1, \ldots, p \}$, \[^g \widehat{\mu}_{i,j} (\theta) = e^{\mathbf{i} \alpha} \sum_{x \in \R^d} {}^g \mu_{i,j}(x).\] Summing over $j$, we get \[\sum_{j=1}^p {}^g \widehat{\mu}_{i,j}(\theta) = e^{\mathbf{i}\alpha}  \sum_{j=1}^p \sum_{x \in \R^d} \mu_{i,j}(x) = e^{\mathbf{i} \alpha},\] which means that the vector $\mathds{1}$ with all entries equal to $1$ is an eigenvector of $^{g} \widehat{\mu}_{i,j}(\theta)$ associated with the eigenvalue $e^{\mathbf{i} \alpha}$. Because of \ref{it:changement_preserve_vp_fourier}, this implies that $e^{\mathbf{i}\alpha}$ is also en eigenvalue of $\widehat{\mu}(\theta)$, which contradicts Lemma~\ref{lemme:spec_rad_aper}.\qedhere
\end{proof}

In \cite{MR0978023, MR0890364}, the process is called aperiodic if, for every change of section $g$, there exists no affine hyperplane $H$ such that $\mathrm{supp}({}^g \mu) \subset H$. Since we obtained this result in \ref{it:supp_pas_inclus_hyperplan}~of Proposition~\ref{prop:inv_change_sect}, our Assumptions \ref{assump:main_assumptions} are stronger, but do not require any knowledge on changes of sections to be made.

We now choose a specific change of section $g$ that will enable us to compute the Hessian of $k$. That is an adaptation in our lattice case of (1.16) of \cite{MR0978023}.

\begin{prop}\label{prop:specific_g}
    There exists a change of section $g$ such that for all $i \in \{1 , \ldots, p \}$, $\displaystyle \sum_{j=1}^p {}^g m_{i,j} = m$. We call those \emph{appropriate changes of section}. Besides, the appropriate $g$-changed process is unique.

\end{prop}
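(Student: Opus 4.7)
The plan is to rewrite the condition $\sum_{j=1}^p {}^g m_{i,j} = m$ (for every $i$) as a single linear equation in the unknown $g$, and then to solve it using the description of $\im(\mathrm{I}_p - \widehat{\mu}(0))$ provided by Lemma~\ref{lem:ker_im_perp_jordan}.

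First I would plug formula \eqref{eq:drift_change_sect} into the sum over $j$ and use that the rows of the transition matrix $\widehat{\mu}(0)$ sum to $1$; the condition becomes
\begin{equation*}
(\mathrm{I}_p - \widehat{\mu}(0))\,g \;=\; \mathds{1}\,m - M,
\end{equation*}
where $g$ is viewed as the $p \times d$ matrix with rows $g_i$, $M$ is the $p \times d$ matrix with rows $M_i := \sum_{j=1}^p m_{i,j}$, and $\mathds{1}\,m$ denotes the $p \times d$ matrix each of whose rows equals the global drift $m$.

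For existence, I would invoke Lemma~\ref{lem:ker_im_perp_jordan}: $\im(\mathrm{I}_p - \widehat{\mu}(0)) = \pi^\perp$, so it suffices to check that $\pi(\mathds{1}\,m - M) = 0$. This is immediate: $\pi \mathds{1} = \sum_i \pi_i = 1$ because $\pi$ is a probability distribution, and $\pi M = \sum_{i,j} \pi_i m_{i,j} = m$ by the very Definition~\ref{def:global_drift} of the global drift. Hence the linear system admits a solution $g$.

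For uniqueness of the associated $g$-changed process, I would use Perron--Frobenius: $1$ is a simple eigenvalue of $\widehat{\mu}(0)$ with eigenvector $\mathds{1}$, so $\ker(\mathrm{I}_p - \widehat{\mu}(0)) = \spn(\mathds{1})$. Any two solutions $g, g'$ therefore differ by $g' = g + \mathds{1}\,v^\top$ for some $v \in \R^d$, meaning that every row $g_i$ is translated by the same vector $v$; since ${}^g\mu_{i,j}(x) = \mu_{i,j}(x + g_j - g_i)$ depends only on the differences $g_j - g_i$, such a common translation leaves the changed process unchanged. The only real obstacle is the existence step, and it rests precisely on the compatibility condition $\pi M = m$, which is nothing else than the definition of the global drift.
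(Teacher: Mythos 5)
Your proof is correct and follows essentially the same route as the paper: rewrite the appropriateness condition via \eqref{eq:drift_change_sect} as the linear equation $(\mathrm{I}_p - \widehat{\mu}(0))g = \mathds 1 m - \mathcal{M}$, check solvability by showing the right-hand side is annihilated by $\pi$ (using $\im(\mathrm{I}_p - \widehat{\mu}(0)) = \pi^\perp$ from Lemma~\ref{lem:ker_im_perp_jordan}), and get uniqueness of the changed process from $\ker(\mathrm{I}_p - \widehat{\mu}(0)) = \spn(\mathds 1)$, since adding a common translation to all rows of $g$ does not affect the differences $g_j - g_i$. No gaps.
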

In other words, after an appropriate change of section, for any $i \in \{1, \ldots, p\}$, the drift of the sum of the jumps starting from $i$ is equal to the global drift, without needing to weight by $\pi$.

\begin{proof}
    In this proof, the local and global drifts, $m_{i,j}$ and $m$, are seen as row vectors of $\R^d$ and $\mathds 1$ denotes the column vector of $\R^p$ with all entries equal to $1$. Therefore, $\mathds 1 m$ is the $p \times d$ matrix whose all rows are equal to $m$.
    Denoting $\mathcal{M}$ the $p \times d$ matrix whose $i^{\mathrm{th}}$ row is $\sum_{j=1}^p m_{i,j}$ and $^g \mathcal{M}$ the same matrix for the $g$-changed process, we see that $g$ is appropriate if and only if $^g \mathcal{M} = \mathds 1 m$.
    
    Remember Equation \eqref{eq:drift_change_sect}: \[^g m_{i,j} = m_{i,j} + \widehat{\mu}_{i,j}(0) (g_i - g_j).\] Summing over $j$, it leads to  \[^g \mathcal{M} = \mathcal{M} + \left( \mathrm{I}_{p} - \widehat{\mu}(0) \right) g.\] Therefore, $g$ is appropriate if and only if 
    \begin{equation}\label{eq:appropriate_change_section}
    \mathds 1 m = \mathcal{M} + \left( \mathrm{I}_p - \widehat{\mu}(0)\right)g.
    \end{equation}
    Thus, in order to find an appropriate change of section $g$, we have to prove that all columns of $\mathcal{M} - \mathds 1 m$ belong to $\im(\mathrm{I}_p - \widehat{\mu}(0))$, i.e., to $\pi^\perp$ because of \eqref{eq:ker_im_perp_jordan}.
     Let $k \in \{1, \ldots, d \}$ and let $C_k$ be the the $k^{\mathrm{th}}$ column of $ \mathcal{M} - \mathds 1 m$.  We have \[\pi  C_k = \sum_{i,j =1}^p \pi_i \left( m_{i,j} \right)_k - \sum_{i=1}^p \pi_i (m)_k = (m)_k - (m)_k = 0,\] hence the existence of an appropriate change of section.

    For uniqueness, \eqref{eq:appropriate_change_section} shows that if $g_1$ and $g_2$ are appropriate changes of section, then \sloppy ${\left( \mathrm{I}_p - \widehat{\mu}(0) \right) \left( g_1 - g_2\right) = 0}$. Since $\ker \left( \mathrm{I}_p - \widehat{\mu}(0) \right) = \spn (\mathds 1) $, the columns of $g_1 - g_2$ are multiples of $\mathds 1$. Therefore, the difference between two rows of $g_1$ are equal to the difference between the same two rows of $g_2$. Since the Definition \ref{defi:change_section} of the $g$-changed process involves differences between rows of $g$, the $g_1$-changed process and the $g_2$-changed process are the same.
\end{proof}

From now on, $g$ will denote an appropriate change of section.

Following \cite[(1.19)]{MR0978023}, we define the energy matrix $\sigma$ that will turn out to be the Hessian of $k$ at $0$. It involves the second order moments $^g \Sigma_{i,j}$, introduced in Definition~\ref{def:moments}, of the $g$-changed process.

\begin{defi}\label{def:energy_matrix}
    The \emph{energy matrix} $\sigma$ is the matrix $\sum_{i,j = 1}^p \pi_i {}^g \Sigma_{i,j}$. More explicitly, 
    \[\forall k,l \in \{1, \ldots, d\},~~~~~\sigma_{k,l} =  \displaystyle \sum_{x\in \R ^d} x_k x_l \sum_{i,j = 1}^p  \pi_i {}^g \mu_{i,j}(x)  .\]
    
\end{defi}

The uniqueness of the appropriate $g$-changed process shows that $\sigma$ is independent from the appropriate change of section $g$. The following property of $\sigma$ is crucial.

\begin{prop}\label{prop:energy_mat_def_pos}
    The energy matrix $\sigma$ is symmetric, positive-definite.
\end{prop}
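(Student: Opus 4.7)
The plan is to unwrap the definition of $\sigma$ and reduce both claims to elementary sign considerations and item \ref{it:supp_pas_inclus_hyperplan} of Proposition \ref{prop:inv_change_sect}. Symmetry is immediate: the entries $\sigma_{k,l} = \sum_{x} x_k x_l \sum_{i,j} \pi_i {}^g\mu_{i,j}(x)$ are invariant under swapping $k$ and $l$ since $x_k x_l = x_l x_k$, so it is really positive-definiteness which is the content.

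For positive-definiteness, the key observation is the quadratic-form identity
\begin{equation}
u^{\mathsf T} \sigma u \;=\; \sum_{k,l=1}^d u_k u_l \sigma_{k,l} \;=\; \sum_{x \in \R^d} (u \cdot x)^2 \sum_{i,j=1}^p \pi_i \, {}^g\mu_{i,j}(x),
\end{equation}
valid for every $u \in \R^d$. Each summand is non-negative (since $\pi_i > 0$ by Perron--Frobenius and ${}^g\mu_{i,j}(x) \geq 0$), which already gives $\sigma \succeq 0$. This is essentially the statement that $\sigma$ is the second-moment matrix of the jump distribution $\sum_{i,j} \pi_i \, {}^g\mu_{i,j}$ on $\R^d$ — the probabilistic interpretation mentioned in Section \ref{subsec:contributions}.

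It remains to rule out the degenerate case. Suppose $u \in \R^d \setminus \{0\}$ satisfies $u^{\mathsf T} \sigma u = 0$. Then every summand in the display above vanishes, so $u \cdot x = 0$ for every $x$ lying in the support of some ${}^g\mu_{i,j}$, i.e. for every $x \in \supp({}^g\mu)$. Equivalently, $\supp({}^g\mu)$ is contained in the linear (hence affine) hyperplane $H = u^{\perp}$. This contradicts item \ref{it:supp_pas_inclus_hyperplan} of Proposition \ref{prop:inv_change_sect}, which was proved precisely to forbid such a collapse. Hence $u^{\mathsf T} \sigma u > 0$ for every $u \neq 0$, completing the proof.

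The only non-trivial ingredient is the no-hyperplane property, which in turn relied on the aperiodicity contained in Assumptions \ref{assump:main_assumptions}; once that is available, the argument is a direct reshuffle of the definition.
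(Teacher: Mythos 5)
Your proof is correct and follows essentially the same route as the paper: both identify $u \cdot \sigma u$ as the second moment of $u \cdot X$ under the probability measure $\sum_{i,j}\pi_i\,{}^g\mu_{i,j}$ (the paper phrases this with an explicit random vector $X$), and both rule out degeneracy via the positivity of $\pi$ and item \ref{it:supp_pas_inclus_hyperplan} of Proposition~\ref{prop:inv_change_sect}. No gaps to report.
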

\begin{proof}
    By definition, $\sigma$ is symmetric. Besides, since all the $\sum_{j=1}^p {}^g \mu_{i,j}$ for $i \in \{1, \ldots, p \}$ are probability measures, $\nu := \displaystyle \sum_{i,j=1}^p \pi_i {}^g \mu_{i,j}$ is also a probability measure on $\R^d$, and the positivity of the entries of $\pi$ implies that $\supp(\nu) = \supp\left( ^g \mu \right)$. If a random vector $X$ has distribution $\nu$, then $\sigma = \left( \mathbb{E}\left[ X_k X_l \right] \right)_{1\leq k,l \leq d}$. Therefore, for any $y \in \R^d \setminus \{0\}$, \[y \cdot \sigma y = \sum_{k,l =1}^d y_k y_l \mathbb{E}\left[ X_k X_l \right] = \mathbb{E}\left[ \left(\sum_{k=1}^d y_k X_k\right)^2 \right] \geq 0.\] If $y \cdot \sigma y =0$, then $\supp(\nu) = \supp(^g \mu)$ is included in the linear hyperplane of equation~$\sum_{k=1}^d y_k x_k = 0$, which contradicts \ref{it:supp_pas_inclus_hyperplan} of Proposition~\ref{prop:inv_change_sect}.
\end{proof}

We are now able to compute the two first derivatives of $k$ at $0$. Our proof is different from the proof of Guivarc'h in \cite[Lemme 3]{MR0890364}, which Babillot cited in \cite{MR0978023}. It has the advantage of avoiding the use of some convergences that could be tricky to justify.

\begin{thm}\label{thm:diff_k}
The leading eigenvalue $k$ of $\widehat{\mu}$ is twice differentiable at $0$ and:
\begin{enumerate}
    \item its gradient is $\nabla k(0) = \mathbf{i} m$;
    \item its Hessian is $\mathbf{H}_k(0) = - \sigma$.
\end{enumerate}
\end{thm}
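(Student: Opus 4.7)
My plan is to exploit analytic perturbation theory for the simple eigenvalue $k$, and then to absorb the awkward contribution of the derivative of the eigenvector via the change-of-section device. Since $k(0)=1$ is a simple eigenvalue of $\widehat{\mu}(0)$, one can pick on a neighbourhood of $0$ an analytic right eigenvector $\theta\mapsto u(\theta)$ with $u(0)=\mathds{1}$ satisfying $\widehat{\mu}(\theta)u(\theta)=k(\theta)u(\theta)$. Left-multiplying by $\pi$, which is a left $1$-eigenvector of $\widehat{\mu}(0)$ by \eqref{eq:invariance_pi}, yields the scalar identity
\[
\pi\,\widehat{\mu}(\theta)\,u(\theta)=k(\theta)\,\pi u(\theta),
\]
from which both derivatives of $k$ at $0$ will be extracted by successive differentiation.

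Differentiating once in $\theta_l$ at $0$ and using $\pi\widehat{\mu}(0)=\pi$ together with $k(0)=1$, the two contributions in $\pi\,\partial_l u(0)$ cancel and only $\partial_l k(0)=\pi\,\partial_l\widehat{\mu}(0)\,\mathds{1}$ survives. Since $\partial_l\widehat{\mu}_{i,j}(0)=\mathbf{i}(m_{i,j})_l$, this reads $\partial_l k(0)=\mathbf{i}\sum_{i,j}\pi_i(m_{i,j})_l=\mathbf{i}m_l$, i.e.\ $\nabla k(0)=\mathbf{i}m$.

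Differentiating twice and cancelling again via $\pi\widehat{\mu}(0)=\pi$ and $k(0)=1$ produces an identity of the form
\[
\partial_l\partial_m k(0)=\pi\,\partial_l\partial_m\widehat{\mu}(0)\,\mathds{1}+\pi\bigl(\partial_l\widehat{\mu}(0)-\partial_l k(0)\,\mathrm{I}_p\bigr)\partial_m u(0)+\pi\bigl(\partial_m\widehat{\mu}(0)-\partial_m k(0)\,\mathrm{I}_p\bigr)\partial_l u(0).
\]
The main obstacle is the two cross terms: they involve the unknown $\partial u(0)$ and prevent reading the Hessian directly off the second moments of $\mu$. I deal with this by redoing the computation for the $g$-changed process, using that $^g k=k$ (item~\ref{it:changement_preserve_vp_fourier} of Proposition~\ref{prop:inv_change_sect}) and that an appropriate $g$ exists (Proposition~\ref{prop:specific_g}). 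The defining property $\sum_j {}^g m_{i,j}=m$ translates to the vector identity ${}^g\partial_l\widehat{\mu}(0)\,\mathds{1}=\mathbf{i}m_l\,\mathds{1}=\partial_l k(0)\,\mathds{1}$; substituted into the first-order relation $(\widehat{\mu}(0)-\mathrm{I}_p)\,{}^g\partial_l u(0)=\partial_l k(0)\,\mathds{1}-{}^g\partial_l\widehat{\mu}(0)\,\mathds{1}$, it forces ${}^g\partial_l u(0)\in\ker(\widehat{\mu}(0)-\mathrm{I}_p)=\spn(\mathds{1})$ by Lemma~\ref{lem:ker_im_perp_jordan}, and the same identity then annihilates each cross term in the $g$-version of the formula above. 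What remains is $\partial_l\partial_m k(0)=\pi\,{}^g\partial_l\partial_m\widehat{\mu}(0)\,\mathds{1}=-\sigma_{l,m}$ by Definition~\ref{def:energy_matrix}, hence $\mathbf{H}_k(0)=-\sigma$.
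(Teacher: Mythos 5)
Your proof is correct, and it takes a recognizably different technical route from the paper's, even though both rest on the same two pillars (perturbation of the simple Perron eigenvalue sandwiched between $\pi$ and $\mathds 1$, and the appropriate change of section of Proposition~\ref{prop:specific_g} to neutralize the first-order eigenvector terms). You parametrize the perturbation by an analytic right eigenvector $u(\theta)$ normalized by $u(0)=\mathds 1$ and differentiate the scalar identity $\pi\widehat\mu(\theta)u(\theta)=k(\theta)\pi u(\theta)$, whereas the paper works with the spectral projection $p(\theta)$ of Proposition~\ref{prop:jordan_chevalley} and the matrix identity $\mathsf p\mathsf M'\mathsf p=\mathsf k'\mathsf p$. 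The more substantial difference is in how the cross terms are killed: you observe that for the appropriate $g$ the first-order eigenvalue equation gives $\left(\widehat\mu(0)-\mathrm I_p\right){}^g\partial_l u(0)=0$, hence ${}^g\partial_l u(0)\in\spn(\mathds 1)$, which annihilates both cross terms at once; the paper instead differentiates two identities and compares \eqref{eq:second_diff_2} with \eqref{eq:second_diff_4} to conclude that the scalar $\pi\left({}^g\mathsf M'(0)\,{}^g\mathsf p'(0)\mathds 1\right)$ vanishes. Your elimination is more direct and pinpoints the geometric reason (the eigenvector moves only along $\mathds 1$ after the change of section), at the price of choosing and normalizing an eigenvector, while the paper's projection-based computation avoids any such choice. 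Two small points to tidy up: the existence of the analytic $u(\theta)$ should be justified, e.g.\ by taking $u(\theta)=p(\theta)\mathds 1$, which is analytic and nonzero near $0$; and the fact $\ker\left(\widehat\mu(0)-\mathrm I_p\right)=\spn(\mathds 1)$ is not literally the statement of Lemma~\ref{lem:ker_im_perp_jordan} but follows from the simplicity of the eigenvalue $1$ (it is the same fact the paper invokes in the proof of Proposition~\ref{prop:specific_g}). Neither point is a gap.
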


\begin{proof}
    In this proof, we identify the stationary distribution $\pi$ with a row vector of $\R^p$.
    
    The differentiability of $k$ at $0$ is a direct consequence of the implicit function theorem (it is even $C^\infty$).
    To compute its derivatives, we use the functions $p$ and $Q$ of Proposition~\ref{prop:jordan_chevalley}. A classical result about eigenprojections ensures that $p$ is smooth. The eigenprojection for a certain eigenvalue is even a (locally) analytic function, as stated in \cite[Chapter Two, \S 1, 4]{kato}.

    We fix a unit vector $e$ of $\R^d$ and define $\mathsf{M}(t) = \widehat{\mu}(t e)$, $\mathsf{k}(t) = k(te)$, $\mathsf{p}(t) = p ( t e)$ and $\mathsf{Q}(t) = Q( t e)$ for a real parameter $t$ around $0$, in order to work with functions of one variable.
    
\vspace{0.3cm}

    Let us start with the computation of the gradient.
     We have $\mathsf k'(0) = \nabla k (0) \cdot e$ and \[\mathsf M '(0) = \mathbf{i} \left( \displaystyle \sum_{x \in \Z^d} x \cdot e  \mu_{i,j} (x)  \right)_{1 \leq i,j \leq p} = \mathbf{i} \left( m_{i,j} \cdot e \right)_{1\leq i,j \leq p}.\]
    Since $\mathsf{M}(t) = \mathsf{k}(t) \mathsf{p}(t) + \mathsf{Q}(t)$ with $\mathsf{p} \mathsf{Q} = \mathsf{Q}\mathsf{p}=0$ and $\mathsf p ^2 = \mathsf p$ because of Proposition~\ref{prop:jordan_chevalley}, we obtain \[\mathsf p (t) \mathsf M(t) = \mathsf{M}(t) \mathsf{p}(t) = \mathsf{k}(t) \mathsf{p}(t).\]
    Differentiating the second equality leads to
    \begin{equation}\label{eq:deriv_jordan}
    \mathsf{M}'(t) \mathsf{p}(t)  = \mathsf{k}'(t) \mathsf{p}(t) + \mathsf{k}(t) \mathsf{p}'(t) - \mathsf{M}(t) \mathsf{p}'(t).
    \end{equation}
    Then we left-multiply by $\mathsf p(t)$ and get 
    \begin{equation}\label{eq:deriv_k}
    \mathsf p(t)\mathsf{M}'(t) \mathsf{p}(t)  = \mathsf k'(t) \mathsf p(t) + \mathsf{k}(t) \mathsf p(t)  \mathsf{p}'(t) - \mathsf p(t) \mathsf{M}(t) \mathsf{p}'(t) = \mathsf k'(t) \mathsf p(t)
    \end{equation}
    because $\mathsf p(t) \mathsf{M}(t) = \mathsf k (t) \mathsf p (t)$, so $ \mathsf p(t) \mathsf{M}(t) \mathsf{p}'(t) = \mathsf k(t) \mathsf p(t) \mathsf p'(t)$.
    Finally we right-multiply by~$\mathds 1$ and left-multiply by $\pi$ at $t = 0$ to obtain 
    \[  \pi  \left( \mathsf p(0) \mathsf M'(0) \mathsf p(0) \mathds 1 \right) = \mathsf k'(0) \pi  \mathsf p(0) \mathds 1 .\]
    Since $p(0) \mathds 1 = \mathds 1$ by \eqref{eq:im_p_0}, $\pi  \mathds 1 = 1$ as $\pi$ is a probability and $\pi p(0) = \pi$ by \eqref{eq:pi_p_0}, it simply becomes \[\mathsf k'(0) = \pi  \mathsf M'(0) \mathds 1. \] Therefore, $\nabla k(0) \cdot e = \mathsf k'(0) = \pi  \mathsf M '(0) \mathds{1} = \mathbf{i} \sum_{i,j=1}^p \pi_i m_{i,j}\cdot e = \mathbf{i} m\cdot e$. Since it is true for any unit vector $e$, $\nabla k(0) = \mathbf{i}m$.

\vspace{0.3cm}
    
    For the Hessian, the result involves the appropriate change of section $g$. We have $^g k=k$ and $^g \pi = \pi$ thanks to Proposition~\ref{prop:inv_change_sect}. We start from \eqref{eq:deriv_k}, which is still valid for the $g$-changed process:
    \[{}^g \mathsf p(t){}^g \mathsf{M}'(t) {}^g \mathsf{p}(t)  =  {}^g \mathsf{k}'(t) {}^g \mathsf{p}(t) .\]
    Differentiating at $t = 0$, right-multiplying by the vector $\mathds{1}$, left-multiplying by $^g \pi = \pi$ and simplifying using $^g p(0) \mathds 1 = p(0) \mathds 1 = \mathds 1$, $\pi  \mathds 1 = 1$ and $\pi ^g p(0) = \pi p(0) = \pi$, we get: 
    \begin{equation}\label{eq:second_diff}
        \pi  \left({}^g \mathsf p'(0) {}^g \mathsf M'(0) \mathds 1\right) + \pi  \left( {}^g \mathsf M''(0) \mathds 1 \right) + \pi  \left( {}^g \mathsf M'(0) {}^g \mathsf p'(0) \mathds 1\right) = {}^g \mathsf k''(0) + {}^g \mathsf k'(0) \pi  \left( {}^g \mathsf p'(0) \mathds 1 \right). 
    \end{equation}
    Since $^g \mathsf M '(0) =  \left( \mathbf{i} e \cdot {}^g m_{i,j} \right)_{1 \leq i,j \leq p}$, we have \[^g \mathsf M '(0) \mathds 1 = \mathbf{i}\left( e \cdot \displaystyle \sum_{j = 1}^p {}^g m_{i,j} \right)_{1 \leq j \leq p}.\] Since $g$ is an appropriate change of section, $\sum_{j = 1}^p {}^g m_{i,j} = m$ for every $i \in \{1, \ldots, p\}$, thus $^g \mathsf M '(0) \mathds 1 = (\mathbf{i} e \cdot m) \mathds 1 = {}^g \mathsf k '(0) \mathds 1$. Therefore, the first term of the left-hand side of \eqref{eq:second_diff} and the second term of the right-hand side are equal, which leads to 
    \begin{equation}\label{eq:second_diff_2}
        ^g \mathsf k''(0) = \pi  \left( {}^g \mathsf M''(0) \mathds 1 \right) + \pi  \left( {}^g \mathsf M'(0) {}^g \mathsf p'(0) \mathds 1\right).
    \end{equation}
    We now prove that $\pi  \left( {}^g \mathsf M'(0) {}^g \mathsf p'(0) \mathds 1\right) = 0$ in \eqref{eq:second_diff_2}.
    Differentiating \eqref{eq:deriv_jordan} at $t=0$, right-multiplying by $\mathds 1$, left-multiplying by $\pi$ and then simplifying using ${}^g \mathsf  p(0) \mathds 1 = \mathds 1$ and $\pi  \mathds 1 = 1$, we get:
    \begin{equation*}
          \pi  \left({}^g \mathsf  M ''(0) \mathds 1\right) + 2  \pi  \left({}^g \mathsf  M'(0) {}^g \mathsf  p'(0) \mathds 1 \right) + \pi  \left( ^g \mathsf M(0) {}^g \mathsf  p ''(0) \mathds 1 \right) = {}^g \mathsf  k''(0) + 2 \pi  \left({}^g \mathsf  k'(0) {}^g \mathsf  p'(0) \mathds 1\right) +  \pi  \left({}^g \mathsf  p''(0) \mathds 1\right).
    \end{equation*}
    Since $\pi ^g \mathsf M(0) = \pi \widehat{\mu}(0) = \pi$, we get $\pi  \left( {}^g \mathsf  M (0) {}^g \mathsf  p ''(0) \mathds 1 \right) = \pi  \left({}^g \mathsf  p''(0) \mathds 1\right)$, hence
    \begin{equation}\label{eq:second_diff_3}
         \pi  \left({}^g \mathsf  M ''(0) \mathds 1\right) + 2  \pi  \left({}^g \mathsf  M'(0) {}^g \mathsf  p'(0) \mathds 1 \right)  = {}^g \mathsf  k''(0) + 2 {}^g \mathsf  k'(0)  \pi \left( {}^g \mathsf  p'(0) \mathds 1\right) .
    \end{equation}
    By differentiating ${}^g \mathsf p (t) ^2 = {}^g \mathsf p(t)$ at $t = 0$ and left-multiplying by $\mathds 1$, we get ${}^g \mathsf p'(0) \mathds 1 + {}^g \mathsf p (0) {}^g \mathsf p'(0) \mathds 1 = {}^g \mathsf p'(0) \mathds 1$ (we used ${}^g \mathsf p (0) \mathds 1 = \mathds 1$), so ${}^g \mathsf p (0) {}^g \mathsf p'(0) \mathds 1 = 0$, meaning that ${ {}^g \mathsf p'(0) \mathds 1 \in \ker {}^g \mathsf p (0) = \ker p(0) = \pi^\perp}$ (the last equality comes from \eqref{eq:ker_im_perp_jordan}).  Therefore, \eqref{eq:second_diff_3} becomes 
    \begin{equation}\label{eq:second_diff_4}
        {}^g \mathsf  k''(0) = \pi  \left({}^g \mathsf  M ''(0) \mathds 1\right) + 2  \pi  \left({}^g \mathsf  M'(0) {}^g \mathsf  p'(0) \mathds 1 \right).
    \end{equation}
    Equalities \eqref{eq:second_diff_2} and \eqref{eq:second_diff_4} yield $ \pi  \left({}^g \mathsf  M'(0) {}^g \mathsf  p'(0) \mathds 1 \right) = 2 \pi \left({}^g \mathsf  M'(0) {}^g \mathsf  p'(0) \mathds 1 \right)$, hence $\pi  \left({}^g \mathsf  M'(0) {}^g \mathsf  p'(0) \mathds 1 \right) =0$, thus \eqref{eq:second_diff_2} leads to \[\mathsf k''(0) = {}^g \mathsf k ''(0) = \pi \left( {}^g \mathsf M''(0) \mathds 1 \right).\]
    By definition of $\mathsf k$ and $^g \mathsf M$, we have $\mathsf k''(0) = e \cdot \mathbf{H}_k(0) e$ and \[ ^g \mathsf M''(0) = - \left( \displaystyle \sum_{x \in \R^d} (x \cdot e)^2~  {}^g \mu_{i,j}(x) \right)_{1 \leq i,j \leq p},\] so \[\pi  \left( {}^g \mathsf M''(0) \mathds 1 \right) = - \displaystyle \sum_{i,j =1}^p \pi_i \displaystyle \sum_{x \in \R^d} (x \cdot e)^2~  {}^g \mu_{i,j}(x) = - e \cdot \sigma e.\] Thus \[e \cdot \mathbf{H}_k(0) e = - e \cdot \sigma e.\] Since these equalities are true for any $e$, we obtain \[\mathbf{H}_k(0) = - \sigma.\qedhere\]
\end{proof}

\subsection{Laplace transform and Doob transform} \label{sec:laplace_doob}

In this Section, we study the restriction to $\R^d$ of the Laplace transform $L\mu$ defined in \eqref{eq:def_laplace}. The goal of this study is to  define a \emph{Doob transform} of the process, that is, a modification of the process of a different nature from the change of section: in the change of section, we changed the jumps but kept the same probabilities while in the Doob transform, we keep the same jumps but change their probabilities by multiplying them by exponential weights, which comes down to using harmonic functions. Since we keep the same jumps, Assumptions \ref{assump:main_assumptions} will automatically stay valid for the Doob transform.

\begin{lemma}\label{lem:c_doob_transf}
    Let $c \in \R^d$. We assume that $1$ is an eigenvalue of the Laplace transform~$L\mu(c)$ with an eigenvector $\varphi_c$ whose all entries are positive. We define the measures~$\left(\mu_c\right)_{i,j}$  by \[\forall i,j \in \{1, \ldots, p \},~~~~\forall x \in \Z^d,~~~~\left(\mu_c\right)_{i,j} (x) = \frac{\left(\varphi_c\right)_j}{ \left(\varphi_c\right)_i}e^{c \cdot x} \mu_{i,j}(x).\] Then for all $i \in \{1, \ldots, p\}$, $\sum_{j=1}^p  {(\mu_c)}_{i,j}$ is a \emph{probability} measure on $\Z^d$.
\end{lemma}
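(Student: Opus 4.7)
The plan is to check the two defining properties of a probability measure separately: non-negativity is immediate, and total mass one reduces, after a rearrangement of sums, to the eigenvector equation $L\mu(c)\,\varphi_c = \varphi_c$.

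First, for every $i,j$ and every $x \in \Z^d$, each factor in the definition of $(\mu_c)_{i,j}(x)$ is non-negative: $(\varphi_c)_j/(\varphi_c)_i > 0$ by the positivity assumption on $\varphi_c$, $e^{c\cdot x} > 0$, and $\mu_{i,j}(x) \geq 0$ since $\mu_{i,j}$ is a sub-probability measure. Hence $\sum_{j=1}^p (\mu_c)_{i,j}$ is a non-negative measure on $\Z^d$ for every $i$.

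For the total mass, I would compute, for fixed $i \in \{1,\ldots,p\}$,
\begin{equation}
\sum_{j=1}^p \sum_{x \in \Z^d} (\mu_c)_{i,j}(x) = \frac{1}{(\varphi_c)_i} \sum_{j=1}^p (\varphi_c)_j \sum_{x \in \Z^d} e^{c\cdot x} \mu_{i,j}(x).
\end{equation}
The inner sum over $x$ is, by definition \eqref{eq:def_laplace}, exactly $(L\mu(c))_{i,j}$, and the interchange of summations is justified by the finite exponential moments assumption (item \ref{it:assump:expo_moments} of Assumptions \ref{assump:main_assumptions}) which makes every term absolutely summable. Therefore the right-hand side equals
\begin{equation}
\frac{1}{(\varphi_c)_i} \sum_{j=1}^p (L\mu(c))_{i,j} \, (\varphi_c)_j = \frac{1}{(\varphi_c)_i} \bigl( L\mu(c)\,\varphi_c \bigr)_i = \frac{(\varphi_c)_i}{(\varphi_c)_i} = 1,
\end{equation}
using in the last step that $\varphi_c$ is an eigenvector of $L\mu(c)$ for the eigenvalue $1$.

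There is essentially no obstacle here: the lemma is a direct consequence of how the harmonic Doob transform is built out of a positive eigenvector, and the exponential moment assumption takes care of all convergence issues. The real content of the construction — existence of such a $c$ and of a positive eigenvector $\varphi_c$, which requires a Perron–Frobenius argument applied to the non-negative matrix $L\mu(c)$ — lies outside of this lemma and will presumably be handled in the results that follow.
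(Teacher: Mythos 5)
Your proof is correct and follows essentially the same route as the paper: non-negativity from the positivity of $\varphi_c$, then the total-mass computation via the eigenvector equation $L\mu(c)\varphi_c = \varphi_c$. The only cosmetic difference is that you explicitly invoke the exponential moment assumption to swap the sums, while the paper does so tacitly (the terms being non-negative, Tonelli suffices in any case).
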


\begin{proof}
    The positivity of $\varphi_c$ ensures that $\sum_{j=1}^p  {(\mu_c)}_{i,j}$ is non-negative, and
\begin{equation*}
    \sum_{x\in \Z^d} \sum_{j=1}^p \left(\mu_c\right)_{i,j}(x)  = \sum_{j=1}^p \sum_{x \in \Z^d} \frac{\left(\varphi_c\right)_j}{\left(\varphi_c\right)_i} e^{ c\cdot x } \mu_{i,j}(x) = \frac{1}{\left(\varphi_c \right)_i} \left(L\mu(c) \varphi_c\right)_i = \frac{1}{\left(\varphi_c\right)_i} \left(\varphi_c\right)_i = 1.\qedhere
\end{equation*}
\end{proof}

The assumption of irreducibility from Assumptions~\ref{assump:main_assumptions} ensures that the real Laplace transform is a non-negative irreducible matrix. If its spectral radius is $1$, then the existence of an eigenvector $\varphi_c$ as in the previous lemma is a consequence of the Perron-Frobenius theorem. Moreover, this eigenvector is unique up to multiplication by a constant. This motivates the following definitions.

\begin{defi}\label{def:level_set_laplace}
    Let $c \in \R^d$. The spectral radius of $L\mu(c)$ is denoted $\rho(c)$. We define, in $\R^d$, $\mathcal{C} := \rho^{-1}([0,1])$ and $\partial \mathcal{C} := \rho^{-1}(\{1\})$.
\end{defi}
Note that $0 \in \mathcal{C}$. Indeed, $L\mu(0)$ is the transition matrix of the Markovian part, thus its spectral radius is~$1$. 
The notation $\partial \mathcal{C}$ is consistent, as we will see in Proposition~\ref{prop:prop_doob_transform} that it is indeed the topological boundary of $\mathcal{C}$.

\begin{figure}[htbp]
    \centering
    \includegraphics[scale =0.7]{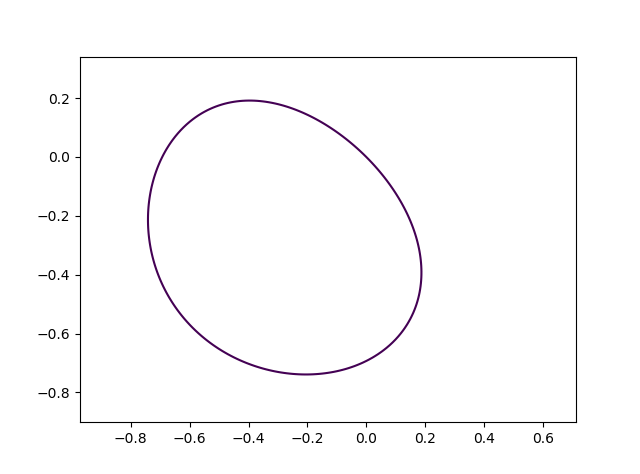}
    \caption{The set $\partial \mathcal{C}$ in a case where $d=p=2$.}
\end{figure}

\begin{defi}\label{def:doob_transform}
    Let $c \in \partial \mathcal{C}$ and $\varphi_c$ as in Lemma \ref{lem:c_doob_transf}. The \emph{Doob transform} of parameter $c$ of the process ${(Z_n)}_{n \geq 0}$ is the Markov-additive process whose jump matrix is $\mu_c$ defined by 
    \[\forall i,j \in \{1, \ldots, p \},~~~~\forall x \in \Z^d,~~~~\left(\mu_c\right)_{i,j} (x) = \frac{\left(\varphi_c\right)_j}{ \left(\varphi_c\right)_i}e^{c \cdot x} \mu_{i,j}(x).\] 
\end{defi}

Every quantity defined earlier also exists for the Doob transform, and will be denoted with an index $c$ (e.g. $\mu_c$, $m_c$, $\sigma_c$, $k_c$, $\rho_c$, $G_c$).

The interest of the Doob transform is that it removes the exponential decay from the Green function in Theorem \ref{thm:main}, as stated in the following proposition, which leaves us to prove a polynomial decay.

\begin{prop}\label{prop:link_green_functions}
    Let $c \in \partial \mathcal{C}$, $x \in \Z^d$ and $i,j \in \{1, \ldots, p\}$. The Green function of the Doob transform is given by 
    \[G_c \big( (0,i), (x,j) \big) = \frac{ \left(\varphi_c\right)_j}{\left(\varphi_c\right)_i} e^{c \cdot x} G\big( (0,i), (x,j) \big)\] where $\varphi_c$ is the Perron-Frobenius eigenvector introduced in Lemma~\ref{lem:c_doob_transf}.
\end{prop}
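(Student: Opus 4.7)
The plan is to lift the one-step identity defining $\mu_c$ to arbitrary convolution powers by induction, then sum the resulting identity over $n$ to get the Green function statement. The key algebraic phenomenon is that the factor $(\varphi_c)_j/(\varphi_c)_i$ telescopes through a composition of jumps because each intermediate index $k$ contributes a ratio $(\varphi_c)_k/(\varphi_c)_k = 1$, while the exponential weights $e^{c \cdot x}$ are multiplicative along any decomposition $x = y + (x-y)$.

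More precisely, I will show by induction on $n \geq 0$ that
\begin{equation}
\bigl(\mu_c^{\star n}\bigr)_{i,j}(x) \;=\; \frac{(\varphi_c)_j}{(\varphi_c)_i}\, e^{c \cdot x}\, \bigl(\mu^{\star n}\bigr)_{i,j}(x)
\end{equation}
for every $i,j \in \{1,\ldots,p\}$ and $x \in \Z^d$. The base case $n=0$ holds because both sides equal $\delta_{i,j}\delta_{x,0}$ (the factor evaluates to $1$ at $i=j$, $x=0$, and the statement is trivial otherwise), and the case $n=1$ is exactly Definition~\ref{def:doob_transform}. For the inductive step I will apply the matrix convolution formula
\[
\bigl(\mu_c^{\star(n+1)}\bigr)_{i,j}(x) \;=\; \sum_{k=1}^p \sum_{y \in \Z^d} \bigl(\mu_c^{\star n}\bigr)_{i,k}(y)\, (\mu_c)_{k,j}(x-y),
\]
substitute the inductive hypothesis and the definition of $\mu_c$, and observe that the ratios telescope to $(\varphi_c)_j/(\varphi_c)_i$ while the exponentials combine to $e^{c \cdot x}$.

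Once this identity is established, the conclusion follows by summing over $n$. Using \eqref{eq:loi_conv} applied to both processes, we have
\begin{align}
G_c\bigl((0,i),(x,j)\bigr) &= \sum_{n=0}^{+\infty} \bigl(\mu_c^{\star n}\bigr)_{i,j}(x) \\
&= \frac{(\varphi_c)_j}{(\varphi_c)_i}\, e^{c \cdot x} \sum_{n=0}^{+\infty} \bigl(\mu^{\star n}\bigr)_{i,j}(x) \\
&= \frac{(\varphi_c)_j}{(\varphi_c)_i}\, e^{c \cdot x}\, G\bigl((0,i),(x,j)\bigr),
\end{align}
where interchanging the sum and the constant factor is legitimate term by term (all summands are non-negative).

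There is essentially no obstacle here: the only subtlety worth flagging is that Lemma~\ref{lem:c_doob_transf} only guarantees $\mu_c$ is a sub-stochastic jump matrix (in fact row-stochastic along the Markovian part), so both Green functions a priori take values in $[0,+\infty]$; however, the identity is an equality of non-negative quantities, so it remains meaningful regardless of finiteness, and conveniently shows that $G_c$ and $G$ are simultaneously finite.
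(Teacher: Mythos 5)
Your proof is correct and is essentially the paper's argument: the paper establishes the same identity $\bigl(\mu_c^{\star n}\bigr)_{i,j}(x) = \frac{(\varphi_c)_j}{(\varphi_c)_i} e^{c \cdot x}\bigl(\mu^{\star n}\bigr)_{i,j}(x)$ by writing the convolution power as an explicit sum over paths and telescoping the $\varphi_c$-ratios, which is just the unrolled form of your induction, and then sums over $n$ exactly as you do.
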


\begin{proof}Let $x \in \Z^d$, $i,j \in \{1, \ldots, p \}$ and $n\geq 0$.
    By definition, \[\left(\mu^{\star n}\right)_{i,j}(x) = \sum_{(0,i) = (x_0, i_0) \to (x_1,i_1) \to \ldots \to (x_n,i_n) = (x,j)} \mu_{i_0,i_1}(x_1) \mu_{i_1, i_2}(x_2-x_1) \ldots \mu_{i_{n-1},i_n}(x_n-x_{n-1})\] where the sum is taken on all paths from $(0,i)$ to $(x,j)$ of length $n$ in the graph associated with the Markov chain. Similarly, 
    \begin{align*}
     \left(\mu_c^{\star n}\right)_{i,j}(x) &=  \sum_{(0,i) =(x_0, i_0) \to \ldots \to (x_n,i_n) = (x,j)} \frac{\left(\varphi_c\right)_{i_1}}{\left(\varphi_c\right)_{i_0}} e^{ c \cdot x_1 }\mu_{i_0,i_1}(x_1)   \frac{\left(\varphi_c\right)_{i_2}}{\left(\varphi_c\right)_{i_1}} e^{ c \cdot (x_2 - x_1) }\mu_{i_1, i_2}(x_2-x_1)  \ldots \\
     & \frac{\left(\varphi_c\right)_{i_n}}{\left(\varphi_c\right)_{i_{ n-1 }}} e^{ c \cdot (x_n - x_{n-1}) } \mu_{i_{n-1},i_n}(x_n-x_{n-1}) \\
     &= \sum_{(0,i) =(x_0, i_0) \to \ldots \to (x_n,i_n) = (x,j)} \frac{\left(\varphi_c\right)_{j}}{\left(\varphi_c\right)_{i}} e^{ c \cdot x }  \mu_{i_0,i_1}(x_1) \mu_{i_1, i_2}(x_2-x_1) \ldots \mu_{i_{n-1},i_n}(x_n-x_{n-1})\\
     &= \frac{\left(\varphi_c\right)_{j}}{\left(\varphi_c\right)_{i}} e^{ c \cdot x } \left(\mu^{\star n}\right)_{i,j}(x).
    \end{align*}
    Summing over $n \geq 0$, we obtain the result of Proposition~\ref{prop:link_green_functions}.
\end{proof}

In order to work with the Doob transform, we study the set $\mathcal{C}$, or equivalently the function $\rho$. The following properties of $\rho$ are stated in \cite[(1.25)]{MR0978023}. In our lattice case, we can provide simpler proofs.

\begin{prop}\label{prop:usual_prop_spectral_radius}
The function $\rho$ is:
    \begin{enumerate}
        \item convex (and even logarithmically convex);
        \item\label{it:prop:norm_coercivity} norm-coercive, i.e., $\rho(c) \xrightarrow[\|c\| \to + \infty]{}+\infty$;
        \item\label{it:derivee_rayon_spectral_laplace} twice differentiable at $0$, with $\nabla \rho(0) = m$ and $\mathbf{H}_{\rho}(0) = \sigma$.
    \end{enumerate}
\end{prop}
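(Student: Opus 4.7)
My plan is to handle the three properties in turn, using quite different ingredients.

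For log-convexity, I would apply Gelfand's formula $\log \rho(c) = \lim_{n\to\infty} \frac{1}{n} \log \|L\mu(c)^n\|_\infty$. Each entry of $L\mu(c)^n$ equals $\sum_{x \in \Z^d} e^{c\cdot x}(\mu^{\star n})_{i,j}(x)$, i.e., a non-negative combination of the log-affine functions $c \mapsto e^{c \cdot x}$, and is therefore log-convex in $c$. Since log-convexity is preserved under non-negative sums and under finite maxima, $\|L\mu(c)^n\|_\infty = \max_i \sum_j (L\mu(c)^n)_{i,j}$ is log-convex, hence $\frac{1}{n}\log \|L\mu(c)^n\|_\infty$ is convex. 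Passing to the limit preserves convexity, yielding convexity of $\log \rho$; since $\exp$ is convex and increasing, $\rho$ itself is then convex as well.

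For norm-coercivity, my plan relies on the lower bound $\rho(A) \geq (A^n_{i,i}/p)^{1/n}$, valid for any non-negative $p \times p$ matrix $A$ and any $n, i$. Indeed, the eigenvalues of $A^n$ have modulus at most $\rho(A)^n$ so $\mathrm{tr}(A^n) \leq p \rho(A)^n$, while $A^n_{i,i} \leq \mathrm{tr}(A^n)$ since $A^n$ has non-negative entries. Applied to $A = L\mu(c)$, irreducibility of the process ensures that for every $y \in \Z^d$ there exist $n \geq 1$ and $i$ with $(\mu^{\star n})_{i,i}(y) > 0$, so $\rho(c)^n \geq p^{-1} e^{c \cdot y}(\mu^{\star n})_{i,i}(y)$. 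Fixing any $u \in \mathbb{S}^{d-1}$ and picking $y \in \Z^d$ with $y \cdot u > 0$, this forces $\rho(tu) \to +\infty$ as $t \to +\infty$. A convex function on $\R^d$ that tends to $+\infty$ along every ray from the origin is automatically norm-coercive: its sublevel sets are convex and bounded in every direction through $0$, hence bounded.

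For the two first derivatives at $0$, I would pass to the holomorphic extension of $L\mu$ to $\mathbb{C}^d$, which is entire thanks to the finite exponential moments assumption. Since $1$ is a simple eigenvalue of $L\mu(0) = \widehat{\mu}(0)$, the analytic perturbation argument already used in Proposition \ref{prop:jordan_chevalley} produces a holomorphic function $\kappa$ on a complex neighborhood of $0$ with $\kappa(0) = 1$ and $\kappa(z)$ the unique eigenvalue of $L\mu(z)$ close to $1$. For real $c$ near $0$, continuity together with Perron-Frobenius applied to the non-negative irreducible matrix $L\mu(c)$ force $\kappa(c) = \rho(c)$; for real $\theta$ near $0$, the construction directly gives $k(\theta) = \kappa(\mathbf{i}\theta)$. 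Differentiating through the change of variable yields $\nabla k(0) = \mathbf{i}\,\nabla \kappa(0)$ and $\mathbf{H}_k(0) = -\mathbf{H}_\kappa(0)$, and plugging in $\nabla k(0) = \mathbf{i} m$ and $\mathbf{H}_k(0) = -\sigma$ from Theorem \ref{thm:diff_k} produces the announced formulas $\nabla \rho(0) = m$ and $\mathbf{H}_\rho(0) = \sigma$.

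The main delicate point is the identification $\kappa|_{\R^d} = \rho$ near $0$, i.e., the assertion that near the origin the spectral radius of $L\mu(c)$ is precisely the analytic Perron branch singled out by $\kappa$; once this is available, item \ref{it:prop:norm_coercivity} combines the Perron lower bound with convex analysis, and item \ref{it:derivee_rayon_spectral_laplace} reduces to Theorem \ref{thm:diff_k} via the complex change of variable $\theta \mapsto \mathbf{i}\theta$.
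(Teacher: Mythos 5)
Your proposal is correct, and for the first two items it follows a genuinely different route from the paper, while the third is essentially the paper's argument. For log-convexity, the paper invokes the classical result of \cite{MR0138632} (spectral radius of a matrix with log-convex entries is log-convex) and has to inspect that proof to handle the zero entries of $L\mu$; your argument via Gelfand's formula $\log\rho(c)=\lim_n \frac1n\log\|L\mu(c)^n\|_\infty$, together with closure of log-convexity under non-negative sums and maxima and of convexity under pointwise limits, sidesteps both the external citation and the zero-entry issue (note the row sums are strictly positive, so no $-\infty$ problems arise). For norm-coercivity, the paper reduces to coercivity along rays in the same way you do, but then uses the Perron--Frobenius lower bound $\rho(c)^n\geq\min_i\sum_j(L\mu(c)^n)_{i,j}$ and needs irreducibility \emph{and} aperiodicity to get a common $n$ with all entries of $\mu^{\star n}$ positive at a chosen $x$; your trace/diagonal bound $\rho(c)^n\geq p^{-1}(L\mu(c)^n)_{i,i}\geq p^{-1}e^{c\cdot y}(\mu^{\star n})_{i,i}(y)$ is more elementary and uses only irreducibility. (Your justification that ray-coercivity plus convexity gives norm-coercivity is stated at the same level of detail as in the paper; it is the standard convex-analysis fact.) For the derivatives at $0$, your argument coincides with the paper's: realize $\rho$ near $0$ as the analytic leading-eigenvalue branch $\kappa$ of the complex Laplace transform and transfer Theorem \ref{thm:diff_k} through $\widehat\mu(\theta)=L\mu(\mathbf i\theta)$, which is exactly the paper's use of \eqref{eq:link_deriv_fourier_laplace}; you are in fact somewhat more explicit than the paper on the one delicate point, namely that for small real $c$ the Perron root $\rho(c)$ is an eigenvalue close to $1$ and hence must equal the unique analytic branch $\kappa(c)$.
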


\begin{proof}
    The proof of the convexity relies on the result of \cite{Ki-61}. If $\mu_{i,j}(x) >0$, then $c \mapsto e^{c \cdot x} \mu_{i,j}(x)$ is logarithmically convex. Therefore, the entries of $L\mu$ are $0$ if $\mu_{i,j} = 0$, or logarithmically convex functions, as sums of logarithmically convex functions. Besides, by irreducibility and aperiodicity, for $n$ large enough, all entries of $\left(L\mu\right) ^n$ are positive.
        If all entries are logarithmically convex, then the result of \cite{Ki-61} ensures that $\rho$ is logarithmically convex. To deal with the $0$ entries, we take a closer look at the proof of \cite{Ki-61} and notice that if $\Tr \left( (L\mu)^n \right) > 0$ for $n$ large enough, which is the case, then the result holds.
        
        \vspace{0.3cm}

         Let us move to the proof of norm-coercivity. When a function is convex, its norm-coercivity is equivalent to coercivity in every direction.  Hence we only need to prove that for every $u \in \mathbb{S}^{d-1}$, $\rho(tu) \xrightarrow[t \to +\infty]{}+\infty$. 
         According to the Perron-Frobenius theorem, see for example \cite[Corollary 1, page 8]{Se-06}, for every $c \in \R^d$ and $n \in \N$,
        \begin{equation}
            \rho(c)^n \geq \min_{i \in \{1, \ldots, p \}} \sum_{j=1}^p \left(L\mu (c)^n \right)_{i,j},
    \end{equation}
        so it is enough to prove that for every $u \in  \mathbb{S}^{d-1}$, there exists $n \in \N$ such that  for every $i,j \in \{1, \ldots, p \}$, $\left(L\mu (tu)^n \right)_{i,j} \xrightarrow[t \to + \infty]{}+\infty$.
        Let $u \in \mathbb{S}^{d-1}$ and $i,j \in \{1, \ldots,p\}$. Let $x \in \Z^d$ such that $x \cdot u >0$. Then as a consequence of aperiodicity and irreducibility from Assumptions \ref{assump:main_assumptions}, there exists $n_{i,j}\in \N$ such that for every $n \geq n_{i,j}$, $\left(\mu^{\star n}\right)_{i,j}(x)>0$. Let $n = \displaystyle \max_{i,j \in \{1, \ldots, p \}} n_{i,j}$. Then \[\left(L\mu(tu)^n\right)_{i,j} = \left(L\left( \mu^{\star n}\right)\right)_{i,j}(tu) \geq e^{t x \cdot u} \left(\mu^{\star n}\right)_{i,j}(x) \xrightarrow[t \to +\infty]{}+\infty,\] which proves the norm-coercivity.

        \vspace{0.3cm}

        Finally, using the analytic implicit function theorem, we can show that the leading eigenvalue of the complex Laplace transform is an analytic function around $0$. The formula from the derivatives then comes from the derivatives of the function $k$ in Theorem~\ref{thm:diff_k} and from \eqref{eq:link_deriv_fourier_laplace} which links the derivatives of the Laplace transform with the derivatives of the Fourier transform.
\end{proof}

As a corollary of the properties of $\rho$, we get the following properties of $\mathcal{C}$.

\begin{prop}\label{prop:ensemble_C}
The set $\mathcal{C}$ is compact and convex. Besides, under the non-centering Assumption~\ref{assump:non_centered}, the interior of $\mathcal{C}$ is non-empty.
\end{prop}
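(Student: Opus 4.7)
The plan is to derive the three claims directly from the three properties of $\rho$ established in Proposition \ref{prop:usual_prop_spectral_radius}, together with the fact that $\rho(0) = 1$ (which holds because $L\mu(0) = \widehat\mu(0)$ is the transition matrix of the Markovian part, hence has spectral radius $1$).

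For compactness, I would first observe that $\rho$ is continuous on the whole of $\R^d$: by item~\ref{it:prop:norm_coercivity} of the previous proposition, $\rho$ is finite on $\R^d$, and since it is convex on $\R^d$ it is automatically continuous there. Therefore $\mathcal{C} = \rho^{-1}([0,1])$ is closed as the preimage of a closed set. Boundedness is immediate from the norm-coercivity: there exists $R > 0$ such that $\|c\| > R$ forces $\rho(c) > 1$, so $\mathcal{C} \subset \overline{B(0,R)}$. Closed plus bounded gives compactness in $\R^d$.

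Convexity is even simpler: since $\rho$ is non-negative (as a spectral radius) we have $\mathcal{C} = \{c \in \R^d : \rho(c) \leq 1\}$, which is a sublevel set of the convex function $\rho$ and hence convex.

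For the last claim, under Assumption~\ref{assump:non_centered} we have $m \neq 0$. By item~\ref{it:derivee_rayon_spectral_laplace} of Proposition~\ref{prop:usual_prop_spectral_radius}, the gradient of $\rho$ at $0$ equals $m$. A first-order Taylor expansion at $0$ in the direction $-m$ gives
\[
\rho(-t m) = 1 - t \|m\|^2 + o(t) \quad \text{as } t \to 0^+,
\]
so for $t > 0$ sufficiently small, $\rho(-t m) < 1$. By continuity of $\rho$, a whole open ball around $-t m$ is contained in $\mathcal{C}$, showing that the interior of $\mathcal{C}$ is non-empty. There is no real obstacle here; the only minor subtlety is justifying the continuity of $\rho$ on $\R^d$ (which follows from convexity and finiteness), and the rest is a direct application of the already-established properties.
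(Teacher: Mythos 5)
Your proposal is correct and follows essentially the same route as the paper: compactness from continuity plus norm-coercivity of $\rho$, convexity as a sublevel set of the convex function $\rho$, and non-empty interior via $\nabla\rho(0)=m\neq 0$ and a first-order Taylor expansion in the direction $-m$. The only difference is cosmetic — you justify the continuity of $\rho$ via convexity and finiteness, which the paper takes for granted.
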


\begin{proof}
    The set $\mathcal{C} = \rho^{-1}([0,1])$ is a closed subset of $\R^d$ because $\rho$ is continuous, and bounded because $\rho$ is norm-coercive; thus it is a compact subset of $\R^d$. It is convex because $\rho$ is convex.
    
    The set $\rho^{-1}((-\infty, 1))$ is an open subset of $\R^d$ because $\rho$ is continuous. Besides, $\rho(0) = 1$ and, since $\nabla \rho(0) = m \neq 0$ under the non-centering assumption, a Taylor expansion leads to $\rho\left(-t \nabla\rho(0)\right) < \rho(0) = 1$ for $t>0$ small enough, so $\rho^{-1}((-\infty, 1))$ is non-empty. Therefore, $\mathcal{C}$ contains a non-empty open subset of $\R^d$.
\end{proof}

The study of $\rho$ tells us more about the Doob transform of the process, as seen in the following proposition. We remind that an index $c$ denotes an object associated with the Doob transform of parameter $c$, for example $m_c$ is the drift of the Doob transform.

\begin{prop}\label{prop:prop_doob_transform}
    Let $c \in  \partial \mathcal{C}$. Then:
    \begin{enumerate}
        \item\label{it:doob_transf_1} $m_c = \nabla \rho (c)$ and $\sigma_c = \mathbf{H}_{\rho}(c)$;
        \item $\sigma_c$ is positive-definite;
        \item \label{it:non_centered_doob} under the non-centering Assumption~\ref{assump:non_centered}, $m_c \neq 0$;
        \item $\partial \mathcal{C}$ is indeed the topological boundary of $ \mathcal{C}$.
    \end{enumerate}
\end{prop}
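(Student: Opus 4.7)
The plan is to exploit the fact that the Doob-transformed process $\mu_c$ is itself a Markov-additive process satisfying Assumptions~\ref{assump:main_assumptions} (the multiplication by exponential weights preserves the support, and hence the finite exponential moments, the irreducibility and the aperiodicity), so every result already proved for $\mu$ applies to $\mu_c$. The key identity, which does all the heavy lifting, is
\begin{equation*}
L\mu_c(c') = D_c^{-1} \, L\mu(c + c') \, D_c, \qquad D_c := \diag\bigl((\varphi_c)_1, \ldots, (\varphi_c)_p\bigr), \qquad c' \in \R^d,
\end{equation*}
which follows directly from Definition~\ref{def:doob_transform}. Since similar matrices have the same spectrum, this yields $\rho_c(c') = \rho(c + c')$.

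For item~1, I apply item~\ref{it:derivee_rayon_spectral_laplace} of Proposition~\ref{prop:usual_prop_spectral_radius} to the Doob-transformed process, giving $\nabla \rho_c(0) = m_c$ and $\mathbf{H}_{\rho_c}(0) = \sigma_c$. Differentiating $\rho_c(\cdot) = \rho(c + \cdot)$ at $0$ then yields $m_c = \nabla \rho(c)$ and $\sigma_c = \mathbf{H}_\rho(c)$. Item~2 is immediate: $\sigma_c$ is the energy matrix of the Doob-transformed process, hence symmetric positive-definite by Proposition~\ref{prop:energy_mat_def_pos}.

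For item~3, I argue by contradiction. If $m_c = 0$, then $0$ is a critical point of the convex function $\rho_c$, hence a global minimum, so $\rho_c \geq \rho_c(0) = 1$ on $\R^d$. Via $\rho_c(c') = \rho(c + c')$, this translates into $\rho \geq 1$ on $\R^d$, contradicting the fact, established in the proof of Proposition~\ref{prop:ensemble_C} under Assumption~\ref{assump:non_centered}, that $\rho^{-1}\bigl((-\infty, 1)\bigr)$ is non-empty.

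Item~4 is the step I expect to be the most delicate. The inclusion of the topological boundary of $\mathcal{C}$ in $\partial \mathcal{C}$ is immediate from the continuity of $\rho$: a topological boundary point lies in the closed set $\mathcal{C}$ and is a limit of points of its complement, so $\rho$ must equal $1$ there. For the reverse inclusion, suppose for contradiction that some $c \in \partial \mathcal{C}$ lies in the interior of $\mathcal{C}$. Then a small ball $B(c, \epsilon_0)$ is contained in $\mathcal{C}$, so $\rho \leq 1 = \rho(c)$ on this ball. For any unit vector $u$ and $t \in (0, \epsilon_0)$, convexity of $\rho$ gives $1 = \rho(c) \leq \tfrac{1}{2}\bigl(\rho(c + tu) + \rho(c - tu)\bigr) \leq 1$, which forces $\rho(c \pm tu) = 1$; varying $u$ and $t$ yields $\rho \equiv 1$ on $B(c, \epsilon_0)$. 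This contradicts the positive-definiteness of $\mathbf{H}_\rho(c) = \sigma_c$ obtained from items~1 and~2. The main conceptual obstacle throughout is simply setting up cleanly the passage from $\rho$ at $c$ to $\rho_c$ at $0$; once the similarity identity above is in hand, each item reduces to an already-established result for the base process.
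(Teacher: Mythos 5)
Your proposal is correct, and for items~1 and~2 it is exactly the paper's argument: the similarity $L\mu_c = D_c^{-1}\,L\mu(c+\cdot)\,D_c$, hence $\rho_c = \rho(c+\cdot)$, combined with item~\ref{it:derivee_rayon_spectral_laplace} of Proposition~\ref{prop:usual_prop_spectral_radius} and Proposition~\ref{prop:energy_mat_def_pos} applied to the Doob transform. For items~3 and~4 you take slightly different, but equally valid, routes. For item~3 the paper notes that $m_c=0$ together with the positive-definiteness of $\mathbf{H}_\rho(c)=\sigma_c$ would make $c$ a strict global minimum of $\rho$, and since $\rho(c)=\rho(0)=1$ this forces $c=0$ and then $m=0$, contradicting Assumption~\ref{assump:non_centered}; you instead deduce $\rho\geq 1$ everywhere from the critical point of the convex function $\rho_c$ and contradict the non-emptiness of $\rho^{-1}\bigl((-\infty,1)\bigr)$, which indeed was established (under non-centering) in the proof of Proposition~\ref{prop:ensemble_C} — note you are citing a fact from inside a proof rather than a stated result, which is fine but worth flagging. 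For item~4 the paper exhibits points $\tilde c$ arbitrarily close to $c$ with $\rho(\tilde c)>1$ via a first-order (or, if $\nabla\rho(c)=0$, second-order) Taylor expansion, whereas you argue by contradiction that interiority would force $\rho\equiv 1$ on a ball by midpoint convexity, contradicting $\mathbf{H}_\rho(c)=\sigma_c$ positive-definite; your version avoids any case distinction on $m_c$ and does not use non-centering for this item, while the paper's version is more constructive in that it directly produces points outside $\mathcal{C}$. Both variants rest on the same ingredients (items~1--2 plus convexity of $\rho$), so the overall structure matches the paper.
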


    \begin{proof}
    As mentioned in the introduction of Section \ref{sec:laplace_doob}, Assumptions~\ref{assump:main_assumptions} still hold for the Doob transform, which allows us to apply previous results.

        The result \ref{it:derivee_rayon_spectral_laplace} of Proposition~\ref{prop:usual_prop_spectral_radius} applied to the Doob transform immediately leads to the differentiability of $\rho_c$ around $0$, with $m_c = \nabla \rho_c(0)$. Besides, we have $L\mu_c = D^{-1} L\mu(c + \cdot) D$ where $D = \diag{\left( \left(\varphi_c\right)_1, \ldots, \left(\varphi_c\right)_p\right)}$, so $\rho_c = \rho(c+\cdot)$, therefore $\nabla \rho_c(0) = \nabla \rho (c)$, hence $m_c = \nabla\rho(c)$.
        Similarly, $\sigma_c = \mathbf{H}_{\rho_c}(0) = \mathbf{H}_\rho(c)$.

        The positive-definiteness of the energy matrix in Proposition~\ref{prop:energy_mat_def_pos} still holds for the Doob transform, as it satisfies Assumptions \ref{assump:main_assumptions}, so $\sigma_c$ is positive-definite.

        Let us move to the proof of \ref{it:non_centered_doob}. We remind that $\sigma_c = \mathbf{H}_\rho(c)$ is positive-definite and $m_c = \nabla \rho(c)$. Therefore, $m_c = 0$ would imply that $\rho$ has a local strict minimum at $c$, which would actually be global because of the convexity of $\rho$. But $\rho(c) = \rho(0) = 1$, so we would have $c = 0$, hence $m_c = m = 0$, which contradicts the non-centering Assumption \ref{assump:non_centered}.

         For the last point, since $\rho$ is continuous,  $\partial \left(\rho^{-1} ((-\infty, 1])\right) \subset \rho^{-1} (\{1\})$.
            Conversely, if $c \in \rho^{-1}(\{1\})$, a Taylor expansion of order $1$ in the direction of $\nabla \rho(c)$ (if $m_c = \nabla \rho(c) \neq 0$) or of order $2$ in any direction (if $m_c = \nabla \rho(c) = 0$, which cannot happen in the non-centered case) shows that there are $\tilde c$ arbitrarily close to $c$ such that $\rho(\tilde c) > 1$, therefore $c$ is not in the interior of $\rho^{-1}((-\infty, 1 ])$, and thus $c \in \partial \left( \rho^{-1}((-\infty, 1 ]) \right)$. 
    \end{proof}

    We are now able to adapt Proposition 4.4 of \cite{He-63}, which is the most important result of this section.
    \begin{thm}\label{thm:hennequin} 
   Under the non-centering assumption~\ref{assump:non_centered}, the function \[ \Gamma: \begin{array}[t]{ccl}
         \partial \mathcal{C}& \longrightarrow & \mathbb{S}^{d-1}  \\
         c& \longmapsto & \frac{1}{\|m_c\|}m_c = \frac{1}{\|\nabla \rho(c)\|} \nabla \rho(c)
    \end{array}\] is a homeomorphism between $\partial \mathcal{C}$ and the sphere $\mathbb{S}^{d-1}$ of $\R^d$. 
\end{thm}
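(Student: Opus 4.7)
The plan is to show that $\Gamma$ is a continuous bijection from the compact space $\partial \mathcal{C}$ onto the Hausdorff sphere $\mathbb{S}^{d-1}$; the homeomorphism property then follows from the classical compact-to-Hausdorff criterion. Continuity is straightforward: the leading eigenvalue $\rho$ is real-analytic near $\partial \mathcal{C}$ by the analytic implicit function theorem applied to the simple Perron--Frobenius eigenvalue of the irreducible nonnegative matrix $L\mu(c)$, so $\nabla \rho$ is continuous, and item \ref{it:non_centered_doob} of Proposition \ref{prop:prop_doob_transform} guarantees that $\nabla \rho$ does not vanish on $\partial \mathcal{C}$.

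For surjectivity, I would use a support-function argument. Given $u \in \mathbb{S}^{d-1}$, the linear form $c \mapsto u \cdot c$ attains its maximum on the compact convex set $\mathcal{C}$ at some $c^{*}$, which must lie in $\partial \mathcal{C}$ since $u \neq 0$. The Lagrange multiplier theorem applied to this constrained optimization produces $u = \lambda \nabla \rho(c^{*})$ for some $\lambda \in \R$, and $\lambda > 0$ because $\nabla \rho(c^{*})$ points outward (in the direction in which $\rho$ increases past $1$). Hence $\Gamma(c^{*}) = u$.

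The main obstacle will be injectivity, which I plan to deduce from strict convexity of $\mathcal{C}$. Suppose for contradiction that $\partial \mathcal{C}$ contains a non-degenerate segment $[c_1, c_2]$. Then $\rho \equiv 1$ on this segment, so, setting $v = c_2 - c_1 \neq 0$, the second derivative of $t \mapsto \rho(c_1 + tv)$ on $[0,1]$ vanishes, yielding $v \cdot \mathbf{H}_{\rho}(c) v = 0$ for every $c$ on the segment. But $\mathbf{H}_{\rho}(c) = \sigma_c$ is positive-definite by item 2 of Proposition \ref{prop:prop_doob_transform}, a contradiction. Once strict convexity is established, injectivity is automatic: if $\Gamma(c_1) = \Gamma(c_2) = u$, the supporting hyperplane at each $c_i$ forces $\mathcal{C} \subset \{c : u \cdot c \leq u \cdot c_i\}$, so $u \cdot c_1 = u \cdot c_2$ and both points maximize $c \mapsto u \cdot c$ over $\mathcal{C}$; by convexity so does the whole segment $[c_1, c_2]$, which would then lie in $\partial \mathcal{C}$ and contradict strict convexity unless $c_1 = c_2$.

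Putting these three ingredients together, $\Gamma$ is a continuous bijection between the compact set $\partial \mathcal{C}$ and $\mathbb{S}^{d-1}$, hence a homeomorphism.
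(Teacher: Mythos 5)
Your proof is correct, and it follows the same overall skeleton as the paper (continuous bijection from the compact set $\partial \mathcal{C}$ onto $\mathbb{S}^{d-1}$, using convexity of $\rho$ together with the positive-definiteness of $\mathbf{H}_\rho(c)=\sigma_c$ on $\partial\mathcal{C}$), but the two key steps are organized differently. For surjectivity the paper follows Hennequin: it projects $\mathcal{C}$ orthogonally onto the line $\spn(M)$, applies the Lagrange multiplier theorem to the quadratic objective $x\mapsto\|p(x)-U\|^2$ on the level set $\rho^{-1}(\{1\})$, obtains \emph{two} candidate points with gradients collinear to $M$, and then needs the already-proved injectivity to decide which of them has the correct orientation. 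You instead maximize the linear functional $c\mapsto u\cdot c$ over $\mathcal{C}$ and fix the sign of the multiplier directly (if $\lambda<0$ one could move into $\mathcal{C}$ while strictly increasing $u\cdot c$), so your surjectivity argument is independent of injectivity — a genuine simplification. For injectivity the paper argues locally: it uses strict convexity of $\rho$ near $c_1$ and the first-order inequality to produce a point of $(c_1,c_2]$ outside $\mathcal{C}$; you instead derive global strict convexity of $\partial\mathcal{C}$ (no nondegenerate segment, since the second directional derivative of $\rho$ along such a segment would vanish while $\sigma_c$ is positive-definite there) and conclude via supporting hyperplanes. Both routes work; yours leans a bit more on standard convex-geometry facts (the supporting half-space $\{x:\nabla\rho(c_i)\cdot(x-c_i)\le 0\}\supset\mathcal{C}$, and the identification of $\rho^{-1}(\{1\})$ with the topological boundary from Proposition~\ref{prop:prop_doob_transform}), which you use implicitly and should cite, while the paper's is more self-contained at those points.
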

In other words, the Doob transform allows the drift to take any direction, in a bicontinuous way.

\begin{proof}
    The function $\Gamma$ is well defined because $m_c \neq 0$ for all $c \in \partial \mathcal{C}$ according to \ref{it:non_centered_doob} of Proposition \ref{prop:prop_doob_transform}. 

    We first prove the injectivity. Assume by contradiction that there are two distinct points $c_1, c_2 \in \partial \mathcal{C}$ such that $\Gamma(c_1) = \Gamma(c_2)$, i.e., $\nabla\rho(c_1)$ and $\nabla \rho(c_2)$ have the same direction. Since $\nabla\rho(c_1)$ and $\nabla \rho(c_2)$ have the same direction, either $(c_2-c_1) \cdot \nabla \rho(c_1) \geq 0$ or $(c_1-c_2) \cdot \nabla \rho(c_2) \geq 0$.
    Without loss of generality, we assume that
    \begin{equation}\label{eq:injectivite_hennequin}
    (c_2-c_1) \cdot \nabla \rho(c_1) \geq 0.
    \end{equation}
    Since $\rho$ is smooth and its Hessian at $c_1$ is positive-definite, $\rho$ is locally strictly convex in a convex neighborhood $V$ of $c_1$. If we take $c \in (c_1, c_2] \cap V$, then the local strict convexity of $\rho$ and \eqref{eq:injectivite_hennequin} imply that \[\rho(c) > \rho(c_1) +   (c-c_1) \cdot \nabla \rho(c_1) \geq \rho(c_1) = 1,\]  hence $c \notin \mathcal{C}$, which contradicts the convexity of $\mathcal{C}$. Thus, $\Gamma$ is injective.

    We now prove the surjectivity, following Hennequin in \cite{He-63}. Let $M \in \mathbb{S}^{d-1}$ and $p$ be the orthogonal projection on the line $\spn(M)$. Then $p(\mathcal{C})$ is a convex compact subset of the line, i.e., a line segment $[A,B]$, with $A\neq B$ because $\mathcal{C}$ has a non-empty interior. Let $c_1, c_2 \in \partial \mathcal{C}$ such that $p(c_1)= A$ and $p(c_2)=B$. Then we claim that $\nabla \rho(c_1)$ or $\nabla \rho(c_2)$ has the same direction as $M$. Indeed, if we fix $U \in (A,B)$ (we can take $U = 0$ like in Figure \ref{fig:proof_hennequin.}, except if $A=0$ or $B=0$), the function $f : x \mapsto \| p(x) - U \|^2$ restricted to the set $\partial \mathcal{C} = \rho^{-1}(\{1\})$ has a local maximum at $c_1$. Since $\nabla \rho(c_1) = m_{c_1} \neq 0$, the Lagrange multiplier theorem asserts that $\nabla \rho (c_1)$ and $\nabla f (c_1)$ are collinear. But an easy computation leads to $\nabla f(c_1) = 2 (A-U) \in \spn(M) \setminus\{0\}$, hence $\nabla \rho (c_1)$ and $M$ are collinear. Similarly, $\nabla \rho (c_2)$ and $M$ are collinear. Since the vectors $\nabla \rho(c_1)$ and $\nabla \rho(c_2)$ are both collinear with $M$ but cannot have the same direction because $\Gamma$ is injective, one of them has the same direction as $M$ and the other one has the opposite direction, let us say $\nabla \rho(c_1)$ has the same direction as $M$. Then $\Gamma(c_1) = M$, and $ \Gamma$ is surjective.

    Being a continuous bijection between compact sets, $\Gamma$ is a homeomorphism.
    \end{proof}

    \begin{figure}[htbp]
        \centering
\begin{tikzpicture}[line cap=round,line join=round,>=triangle 45,x=1.0cm,y=1.0cm]
\clip(-4.,-1.5) rectangle (3.4,2.5);
\draw [line width=0.8pt] (0.,0.) circle (1.cm);
\draw [rotate around={165.3527408595892:(-0.45303300343394715,1.1084935778939065)},line width=0.8pt] (-0.45303300343394715,1.1084935778939065) ellipse (2.3650046313211237cm and 1.005449736509424cm);
\draw [line width=0.8pt,domain=-4.:3.4] plot(\x,{(-0.--0.5472871900715321*\x)/0.836944879656723});
\draw [line width=0.4pt,dotted,domain=-4.:3.4] plot(\x,{(--1.526332228747553--0.836944879656723*\x)/-0.5472871900715321});
\draw [line width=0.4pt,dotted,domain=-4.:3.4] plot(\x,{(-1.9821382992377856--0.836944879656723*\x)/-0.5472871900715321});
\draw (-3.5560807644691983,0.7121001170975991) node[anchor=north west] {$\nabla \rho(c_1)$};
\draw (2,1.3173914790646584) node[anchor=north west] {$\nabla \rho(c_2)$};
\draw [->,line width=0.8pt] (-2.639902091464356,1.2481934930896659) -- (-3.7821472980676956,0.5012671605562483);
\draw [->,line width=0.8pt] (1.7299538798962795,0.9762009181075175) -- (3.0406870981775787,1.8333033598606854);
\begin{scriptsize}
\draw [fill=black] (0.,0.) circle (2.0pt);
\draw[color=black] (0.026853587819765937,-0.21048107170703145) node {$0$};
\draw[color=black] (-0.2,0.7950836102705024) node {$\mathbb{S}^{d-1}$};
\draw[color=black] (1.3253011868781643,0.3167081790384717) node {$\mathcal{C}$};
\draw [fill=black] (-0.836944879656723,-0.5472871900715321) circle (2.5pt);
\draw[color=black] (-0.6272515936607806,-0.6205171556202005) node {$M$};
\draw [fill=black] (-1.2774559435052986,-0.8353420765868673) circle (2.5pt);
\draw[color=black] (-1.2813567751413273,.-0.610754391717506) node {$A$};
\draw [fill=black] (1.6589405003185504,1.0847989001230134) circle (2.5pt);
\draw[color=black] (1.696286215180564,1.292984569307922) node {$B$};
\draw [fill=black] (-2.639902091464356,1.2481934930896659) circle (2.0pt);
\draw[color=black] (-2.545634700540294,1.4345446458969924) node {$c_1$};
\draw [fill=black] (1.7299538798962795,0.9762009181075175) circle (2.5pt);
\draw[color=black] (1.6133027220076586,0.7902022283191551) node {$c_2$};
\end{scriptsize}
\end{tikzpicture}
        \caption{Illustration of the proof of the surjectivity in Theorem~\ref{thm:hennequin}}
        \label{fig:proof_hennequin.}
    \end{figure}
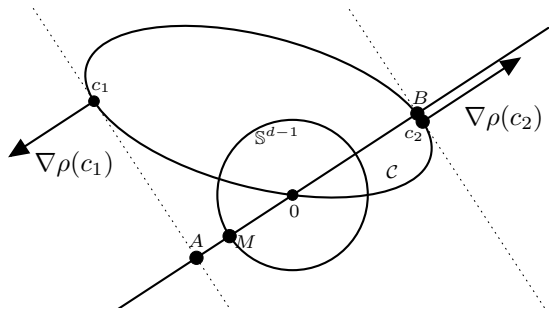

For a direction $u \in \mathbb{S}^{d-1}$, we denote $c(u) = \Gamma^{-1}(u)$, that is, the only $c \in \partial \mathcal{C}$ such that $m_c$ is a positive multiple of $u$. This function $c$ is the one that appears in the exponential decay of the main Theorem \ref{thm:main}. Proposition \ref{prop:vraie_decroissance_expo} ensures that, except in the direction of the drift, the decay of \eqref{eq:main} is actually exponential.

\begin{prop}\label{prop:vraie_decroissance_expo}
    For every $u \in \mathbb{S}^{d-1} \setminus \left\{ \frac{m}{\|m\|} \right\}$, $c(u) \cdot u > 0$.
\end{prop}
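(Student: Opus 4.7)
The plan is to exploit the convexity of $\rho$ (from Proposition \ref{prop:usual_prop_spectral_radius}) together with the fact that $\rho(0) = \rho(c(u)) = 1$ and that $\nabla \rho(c(u)) = m_{c(u)}$ is a positive multiple of $u$ by definition of $c(u)$. The weak inequality $c(u)\cdot u \geq 0$ comes from the convexity inequality, and the strict inequality is then obtained by using local strict convexity at $c(u)$, which is provided by the positive-definiteness of $\mathbf{H}_\rho(c(u)) = \sigma_{c(u)}$ given by Proposition \ref{prop:prop_doob_transform}.

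More precisely, first I would apply the subgradient inequality for the convex function $\rho$ at the point $c(u)$, evaluated at $0$:
\[
1 = \rho(0) \geq \rho(c(u)) + (0 - c(u)) \cdot \nabla \rho(c(u)) = 1 - c(u) \cdot m_{c(u)}.
\]
Since $m_{c(u)} = \|m_{c(u)}\| u$ with $\|m_{c(u)}\| > 0$ (by item \ref{it:non_centered_doob} of Proposition \ref{prop:prop_doob_transform}), this gives $c(u) \cdot u \geq 0$.

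Next, I would argue by contradiction: suppose $c(u) \cdot u = 0$, i.e., $c(u) \cdot m_{c(u)} = 0$. The hypothesis $u \neq m/\|m\|$ forces $c(u) \neq 0$, since $c(0) = 0$ would correspond to $u = m/\|m\|$ by the definition of $\Gamma$ in Theorem \ref{thm:hennequin}. Now I would consider points on the segment from $c(u)$ toward $0$, namely $x_t := (1-t) c(u)$ for small $t > 0$. On the one hand, convexity of $\rho$ and $\rho(0) = \rho(c(u)) = 1$ yield $\rho(x_t) \leq 1$ for all $t \in [0,1]$. On the other hand, since $\mathbf{H}_\rho(c(u)) = \sigma_{c(u)}$ is positive-definite, $\rho$ is strictly convex in a neighborhood of $c(u)$, so for $t > 0$ small enough, the strict subgradient inequality gives
\[
\rho(x_t) > \rho(c(u)) + (x_t - c(u)) \cdot \nabla \rho(c(u)) = 1 - t\, c(u) \cdot m_{c(u)} = 1,
\]
using the assumption $c(u) \cdot m_{c(u)} = 0$. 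This contradicts $\rho(x_t) \leq 1$, so $c(u) \cdot u > 0$.

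The main (and only) delicate point will be to justify the \emph{strict} subgradient inequality cleanly from the positive-definiteness of the Hessian; this is straightforward by a second-order Taylor expansion of $\rho$ at $c(u)$, where the quadratic term is bounded below by a positive multiple of $\|x_t - c(u)\|^2 = t^2 \|c(u)\|^2 > 0$. Everything else is a direct application of results already established in the section.
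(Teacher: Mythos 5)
Your proof is correct and takes essentially the same route as the paper: the weak inequality comes from the convexity (gradient) inequality $\rho(0) \geq \rho(c(u)) + \bigl(0 - c(u)\bigr) \cdot \nabla \rho(c(u))$, and strictness from the positive-definiteness of $\mathbf{H}_\rho(c(u)) = \sigma_{c(u)}$ combined with $c(u) \neq 0$ (which, as you note, follows from $u \neq m/\|m\|$ via the bijection $\Gamma$). The paper packages the same argument geometrically, saying that by local strict convexity the supporting hyperplane of $\mathcal{C}$ at $c(u)$ touches $\mathcal{C}$ only at $c(u)$ while $0 \in \mathcal{C}$, whereas you unpack it with a second-order Taylor expansion along the segment from $c(u)$ toward $0$; the content is identical.
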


\begin{proof}
    Using the bijection $\Gamma$, it comes down to proving that for every $c \in \partial \mathcal{C} \setminus \{0\}$, $c \cdot \nabla \rho(c) >0$.
    By convexity of $\rho$, for every $c \in \partial \mathcal{C}$, $\mathcal{C}$ is included in the affine half-space~${H_c = \{x \in \R^d~|~(x-c) \cdot \nabla \rho(c) \leq 0\}}$. Besides, since $\sigma_c$ is positive-definite, $\partial H_c \cap  \mathcal{C} = \{c\}$. Therefore, if $c \neq 0$, then $0 \in H_c \setminus \partial H_c$, i.e., $c \cdot \nabla \rho(c) >0$.
\end{proof}

\section{Integral formula and proof of the main theorem}\label{sec:proof_main}

    From now on, in addition to Assumptions \ref{assump:main_assumptions}, we will always make Assumption \ref{assump:non_centered} of non-centering.
\subsection{Integral formula for the Green function}\label{subsec:integral_formula}
We are now able to prove the integral formula hinted in \eqref{eq:integral_formula}. We will actually need a more general formula that works for the Doob transform, but one can recover \eqref{eq:integral_formula} by taking $c=0$.
\begin{thm}\label{th:form_int}
    We assume~\ref{assump:main_assumptions} and~\ref{assump:non_centered}.
    Let $c \in \partial \mathcal{C}$, $x \in \Z^d$ and $i,j \in \{1, \ldots, p\}$.
    The Green function of the Doob transform has the following integral formula: 
    \begin{equation}\label{eq:form_int_doob}
     G_c\big( (0,i), (x,j) \big) = \frac{1}{(2 \pi)^d} \int_{[-\pi, \pi]^d} \left(\left( \mathrm{I}_p - \widehat{\mu}_c(\theta) \right)^{-1}\right)_{i,j} e^{- \mathbf{i}x \cdot \theta} \mathrm{d}\theta. 
     \end{equation}
     This integral is finite, so the process is transient.
\end{thm}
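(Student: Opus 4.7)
The plan is to make rigorous the formal computation sketched after \eqref{eq:integral_formula_finite}, working with the Doob-transformed process $\mu_c$ throughout and justifying the passage to the limit under the integral by dominated convergence. Since $\mu_c$ still satisfies Assumptions~\ref{assump:main_assumptions} (only probabilities are reweighted by positive factors, not the support), every spectral result of Section~\ref{subsec:spectra_rad_fourier} transfers to $\mu_c$; in particular Lemma~\ref{lemme:spec_rad_aper} and the Jordan--Chevalley decomposition of Proposition~\ref{prop:jordan_chevalley} are available, yielding functions $k_c$, $p_c$, $Q_c$ in a neighborhood of $0$.

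First I would write down the finite Fourier identity obtained from Fourier inversion on $[-\pi,\pi]^d$ combined with \eqref{eq:fourier_convol_link},
\begin{equation*}
    \sum_{n=0}^N(\mu_c^{\star n})_{i,j}(x) = \frac{1}{(2\pi)^d}\int_{[-\pi,\pi]^d}\left(\sum_{n=0}^N\widehat\mu_c^{\,n}(\theta)\right)_{\!\!i,j}e^{-\mathbf i x\cdot\theta}\,\mathrm d\theta.
\end{equation*}
The left-hand side is a nondecreasing sequence of nonnegative terms converging to $G_c((0,i),(x,j))\in[0,+\infty]$, so establishing the convergence of the right-hand side to an absolutely convergent integral will simultaneously yield the formula \eqref{eq:form_int_doob} and the finiteness $G_c((0,i),(x,j))<+\infty$, hence the transience statement.

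To pass to the limit in the integral I would split the torus $[-\pi,\pi]^d$ into a small closed ball $B$ around $0$ and its complement. On the complement, Lemma~\ref{lemme:spec_rad_aper} applied to $\mu_c$ together with compactness produces a uniform bound $\rho(\widehat\mu_c(\theta))\leq r<1$, so the partial sums are dominated by a constant. On $B$, I would use the Jordan--Chevalley decomposition together with $p_cQ_c=Q_cp_c=0$ and $p_c^2=p_c$ to write
\begin{equation*}
    \sum_{n=0}^N\widehat\mu_c^{\,n}(\theta) = \frac{1-k_c(\theta)^{N+1}}{1-k_c(\theta)}\,p_c(\theta)+\sum_{n=0}^NQ_c(\theta)^n.
\end{equation*}
Shrinking $B$ if necessary, the spectral radius of $Q_c(\theta)$ stays bounded away from $1$ on $B$ by continuity, so the second sum is bounded uniformly in $N$ and $\theta$. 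Since $|k_c(\theta)|\leq 1$, the first term is bounded by $2\|p_c(\theta)\|/|1-k_c(\theta)|$, and everything reduces to the local integrability of $1/|1-k_c(\theta)|$ near the origin.

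This integrability check is the main obstacle, and it is where the preparatory work of Section~\ref{subsec:spectra_rad_fourier} pays off. By Theorem~\ref{thm:diff_k} applied to $\mu_c$,
\begin{equation*}
    1-k_c(\theta) = -\mathbf i\, m_c\cdot\theta+\tfrac12\,\theta\cdot\sigma_c\theta+o(\|\theta\|^2),
\end{equation*}
with $m_c\neq0$ by item~\ref{it:non_centered_doob} of Proposition~\ref{prop:prop_doob_transform} and $\sigma_c$ positive-definite by Proposition~\ref{prop:energy_mat_def_pos}. The imaginary part of $1-k_c$ thus vanishes only linearly in the direction of $m_c$, whereas on the hyperplane $m_c\cdot\theta=0$ the real part is bounded below by a positive definite quadratic form. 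Integrating first along the axis of $m_c$ (where the imaginary part saves us) and then transversally in a small tubular neighborhood of $\{m_c\cdot\theta=0\}$ (where the quadratic real part takes over) gives the desired integrability, and dominated convergence produces \eqref{eq:form_int_doob}. Absolute convergence of the integral forces $G_c((0,i),(x,j))<+\infty$, from which transience of the original process follows via Proposition~\ref{prop:link_green_functions}.
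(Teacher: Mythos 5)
Your proposal is correct and follows essentially the same route as the paper's proof: the finite Fourier identity, dominated convergence using Lemma~\ref{lemme:spec_rad_aper} away from the origin, the Jordan--Chevalley decomposition near the origin, and the second-order expansion of $k_c$ (with $m_c\neq 0$ and $\sigma_c$ positive-definite) to reduce everything to the local integrability of $1/|1-k_c(\theta)|$. The only cosmetic differences are that the paper dominates globally by $2\bigl\|\left( \mathrm{I}_p - \widehat{\mu}_c(\theta) \right)^{-1}\bigr\|$ instead of splitting off a ball, makes your uniform bound on $\sum_{n\le N}Q_c(\theta)^n$ precise by fixing an operator norm with $\|Q_c(0)\|<1$ and using continuity (a spectral-radius bound alone needs this extra step), and settles the integrability near $0$ via the explicit bound $|1-k_c(\theta)|\geq c'\|\theta'\|^{2/3}|\theta_1|^{2/3}$ from Young's inequality plus Tonelli, rather than your iterated integration along and transverse to $m_c$.
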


\begin{proof}
    The Doob transform satisfies the same assumptions as the original process: it is automatic for Assumptions~\ref{assump:main_assumptions} and \ref{it:non_centered_doob}~of Proposition~\ref{prop:prop_doob_transform} ensures that it satisfies Assumption~\ref{assump:non_centered}. Therefore, we only need to prove the formula for a non-transformed process that satisfies those assumptions.

    By \eqref{eq:loi_conv} and the definition of $G$, 
        \[G\big( (0,i), (x,j) \big)  = \sum_{n=0}^{+\infty} \left(\mu^{\star n}\right)_{i,j} (x).\]
    Using $\widehat{\mu^{\star n}} = \widehat{\mu}^n$ (see \eqref{eq:fourier_convol_link}) and inverse Fourier, we obtain \[\left(\mu^{\star n}\right)_{i,j} (x) = \frac{1}{(2 \pi)^d} \int_{\left[ - \pi , \pi \right]^d} \left(\widehat{\mu} ^n(\theta)\right)_{i,j} e^{- \mathbf{i} x \cdot \theta} \mathrm{d}\theta.\]
    Therefore, 
    \begin{equation}\label{eq:integral_formula_finite_sum}
    \sum_{n=0}^N (\mu^{\star n})_{i,j} (x) = \frac{1}{(2\pi)^d} \int_{\left[- \pi,\pi\right]^d} \sum_{n=0}^N (\widehat{\mu}^n (\theta))_{i,j} e^{- \mathbf{i} x \cdot \theta} \mathrm{d}\theta.
    \end{equation}
    
    The limit of the left-hand side of \eqref{eq:integral_formula_finite_sum} when $N$ goes to $+\infty$ is $G\big( (0,i), (x,j) \big)$. We prove that the right-hand side converges to \[\frac{1}{(2 \pi)^d} \displaystyle\int_{[-\pi, \pi]^d} \left(\left( \mathrm{I}_p - \widehat{\mu}(\theta) \right)^{-1}\right)_{i,j} e^{- \mathbf{i}x \cdot \theta} \mathrm{d}\theta\] using the dominated convergence theorem.
    For $\theta \in \left[- \pi,  \pi\right]^d \setminus \{0\}$, the spectral radius of $\widehat{\mu}(\theta)$ is smaller than $1$ (Lemma~\ref{lemme:spec_rad_aper}), so $\mathrm{I}_p - \widehat{\mu}(\theta)$ is invertible  and $\widehat{\mu}(\theta)^{N+1} \xrightarrow[N \to +\infty]{} 0$. Thus, we get 
    \begin{equation}\label{eq:formule_int_cv_dom}
    \displaystyle\sum_{n=0}^N \widehat{\mu}^n(\theta) = \left( \mathrm{I}_p - \widehat{\mu}(\theta) \right)^{-1} \left( \mathrm{I}_p - \widehat{\mu}(\theta)^{N+1} \right) \xrightarrow[N \to +\infty]{} \left( \mathrm{I}_p - \widehat{\mu}(\theta) \right)^{-1},
    \end{equation}
    hence the pointwise convergence of the integrand of \eqref{eq:integral_formula_finite_sum}.
    Now we need to dominate 
    \begin{equation}\label{eq:form_int_somme_geom}
    \displaystyle\sum_{n=0}^N \widehat{\mu}^n(\theta) = \left( \mathrm{I}_p - \widehat{\mu}(\theta) \right)^{-1} \left( \mathrm{I}_p - \widehat{\mu}(\theta)^{N+1} \right).
    \end{equation}
    The operator norm $\|\cdot \|$ on $\mathcal{M}_p(\mathbb C)$ associated with the norm $\| \cdot \|_\infty$ on $\mathbb C^p$ is the maximum $\ell_1$ norm of the rows, so $\left\| \widehat{\mu}(\theta) \right\| \leq 1$ for all $\theta \in \R^d$. Therefore, \[\left\| \mathrm{I}_p - \widehat{\mu}(\theta)^{N+1} \right\| \leq 1 + \left\| \widehat{\mu}(\theta)^{N+1} \right\| \leq 1 + \left\| \widehat{\mu}(\theta) \right\|^{N+1} \leq 2,\]
    so \eqref{eq:form_int_somme_geom} leads to \[\left\|\displaystyle\sum_{n=0}^N \widehat{\mu}^n(\theta) \right\| \leq 2 \left\| \left( \mathrm{I}_p - \widehat{\mu}(\theta) \right)^{-1} \right\|.\]
    Thus, in order to dominate $\displaystyle\sum_{n=0}^N \widehat{\mu}^n(\theta)$, we only need to prove the integrability of $\left\| \left( \mathrm{I}_p - \widehat{\mu}(\theta) \right)^{-1} \right\| $. Since it is a continuous function except at $0$ and we integrate on a compact set, we only need to prove its integrability on a neighborhood of $0$. Using the decomposition $\widehat{\mu}(\theta) = k(\theta) p(\theta) + Q(\theta)$ of Proposition~\ref{prop:jordan_chevalley}, which holds in a neighborhood of $0$, the equalities $p(\theta)  Q(\theta) = Q(\theta) p(\theta) = 0$ lead to $\widehat{\mu}(\theta)^n = k(\theta)^n p(\theta) + Q(\theta)^n$, hence 
    \begin{equation}\label{eq:inv_jordan_chevalley}
    \left( \mathrm{I}_p - \widehat{\mu}(\theta) \right)^{-1}  = \sum_{n=0}^{+ \infty} \widehat{\mu}(\theta)^n = \frac{1}{1-k(\theta)} p(\theta) + \sum_{n=0}^{+ \infty} Q(\theta)^n
    \end{equation}
    (the geometric series are valid for $\theta \neq 0$, once again thanks to Lemma~\ref{lemme:spec_rad_aper}). Notice that, since the spectral radius of $Q(0)$ is smaller than $|k(0)| = 1$, there exists an operator norm~$\| \cdot \|_\mathrm{op}$ such that $\|Q(0)\|_\mathrm{op} < 1$. By continuity of $Q$, there exists a neighborhood $U$ of $0$ such that $ \alpha := \sup \left\{ \|Q(\theta)\|_\mathrm{op}~|~\theta \in U \right\} < 1$, and we obtain: \[\forall \theta \in U,~~~~\left \| \sum_{n=0}^{+ \infty} Q(\theta)^n \right\|_\mathrm{op} \leq \sum_{n=0}^{+ \infty} \left \|Q(\theta)\right\|_\mathrm{op}^n \leq \frac{1}{1-\alpha}.\] Moreover, $p$ is a continuous function, so it is bounded around $0$. Therefore, going back to Equation \eqref{eq:inv_jordan_chevalley}, we see that it is sufficient to prove the integrability of $\frac{1}{1-k}$ on a neighborhood of $0$. 
    
    To do so, we use the asymptotics of $k$ coming from Theorem~\ref{thm:diff_k}: \[k(\theta) = 1+ \mathbf{i}  m \cdot \theta  -\frac{1}{2}  \theta \cdot \sigma \theta  + o\left(\|\theta\|^2\right).\] It leads to \[|1 - k(\theta)| = \left| -\mathbf{i}    m \cdot \theta  + \frac{1}{2}  \theta \cdot \sigma \theta  + o\left( \|\theta\|^2 \right) \right| \geq \left| -\mathbf{i}   m \cdot \theta  + \frac{1}{2}  \theta \cdot \sigma \theta  \right| -  o\left( \|\theta\|^2 \right)\] and therefore, for $\theta$ small enough, \[|1 - k(\theta)| \geq \frac{1}{2} \left| -\mathbf{i}   m \cdot \theta  + \frac{1}{2}  \theta \cdot \sigma \theta  \right| = \frac{1}{2} \sqrt{\frac{1}{4}  (\theta \cdot \sigma \theta) ^2 + ( m\cdot \theta) ^2} .\]
     Using Proposition~\ref{prop:energy_mat_def_pos}, we obtain that $\theta \mapsto \sqrt{ \theta \cdot \sigma \theta }$ is a norm. By equivalence of norms in finite dimension, there exists a constant $c>0$ such that $\frac{1}{4}  (\theta \cdot\sigma \theta)^2  \geq c \|\theta\|^4$ for all $\theta \in \R^d$. Besides, writing $\theta = \theta_1 e_1 + \ldots + \theta_d e_d$ where $(e_1, \ldots, e_d)$ is an orthonormal basis such that $e_1 = \frac{1}{\|m\|}m$, we get $ m \cdot \theta = \|m\| \theta_1$. It leads to \[ |1 - k(\theta)| \geq \frac{1}{2}\sqrt{c \|\theta\|^4 + \|m\|^2 \theta_1^2} \geq \Tilde{c} \sqrt{\| \theta \|^4 + \theta_1^2},\] where $\Tilde{c} = \frac{\min (\sqrt{c}, \|m\|)}{2}$. Applying Young's inequality, namely $ab \leq \frac{a^p}{p} + \frac{b ^q}{q}$ where $a,b \geq 0$ and $\frac{1}{p} + \frac{1}{q} = 1$, to $a = \|\theta\|^{4/3}$, $b = |\theta_1|^{4/3}$, $p = 3$ and $q = \frac{3}{2}$, we get $ \| \theta \|^{4/3} |\theta_1|^{4/3} \leq \frac{\|\theta\|^4}{3} + \frac{2}{3} \theta_1^2 \leq \frac{2}{3} \left( \|\theta\|^4 + \theta_1^2 \right)$, i.e., $\sqrt{\|\theta\|^4 + \theta_1^2} \geq \sqrt{\frac{3}{2}} \|\theta\|^{2/3} |\theta_1|^{2/3}$. 
    Besides, writing $\theta' = \theta - \theta_1 e_1$, we have $\|\theta'\| \leq \|\theta\|$, which finally leads to \[ |1-k(\theta)| \geq c' \|\theta'\|^{2/3} |\theta_1|^{2/3}\] with $c' = \sqrt{\frac{3}{2}}\Tilde{c}$, hence \[ \frac{1}{|1-k(\theta)|} \leq \frac{1}{c' \|\theta'\|^{2/3} |\theta_1|^{2/3}} .\] The function $\theta \mapsto \frac{1}{\|\theta'\|^{2/3} |\theta_1|^{2/3}}$ is integrable on a neighborhood of $0$ by Tonelli's theorem, because $\theta_1 \mapsto \frac{1}{|\theta_1|^{2/3}}$ and $\theta' \mapsto \frac{1}{\|\theta'\|^{2/3}}$ both are, since in dimension $n$, $x \mapsto \frac{1}{\|x\|^\alpha}$ is integrable on a neighborhood of $0$ if and only if $\alpha < n$. Therefore, $\frac{1}{|1 - k(\theta)|}$ is integrable on a neighborhood of $0$. This allows us to use the dominated convergence theorem to take the limit of \eqref{eq:integral_formula_finite_sum} and concludes the proof of the integral formula.

    The domination we made in the proof shows that the Green function is finite, hence the transience of the process.
    \end{proof}

\subsection{Dominating part of the integral}\label{sec:dominating_part_without_proofs}

In this section, we summarize the main stages in the identification of the dominating part of the integral formula from Theorem~\ref{th:form_int}. The goal is to obtain 
\begin{equation}\label{eq:dom_part}
    G_c\big( (0,i), (x,j) \big) = \left(\frac{1}{(2 \pi)^d} \int_{\R^d} \frac{f(\theta)}{1-k_c( \theta)} e^{-\mathbf{i} x \cdot \theta} \mathrm{d} \theta\right) {p_c(0)}_{i,j} + \underset{\|x\| \to + \infty}{o} \left( \|x\|^{- \frac{d-1}{2}} \right).
\end{equation}
In \eqref{eq:dom_part}, the little-$o$ is uniform in $c \in \partial \mathcal{C}$, $k_c$ and $p_c$ come from Proposition~\ref{prop:jordan_chevalley} and $f$ is any smooth, compactly supported (with small enough support), radial function, locally constant to $1$ on a neighborhood of $0$. 

The technical details and proofs are postponed to Appendix~\ref{sec:dominating_part}. They come from a mix between Woess's and Babillot's techniques in \cite{Wo-00} and \cite{MR0978023} respectively.

\subsubsection{Restriction around $0$}\label{subsubsec:restriction_0}

We first restrict the integral \eqref{eq:form_int_doob} to any neighborhood of $0$, in order to use forthcoming decompositions of $\widehat{\mu}_c(\theta)$ in the integral formula. To do so, we multiply the integrand by a compactly supported function and show that the difference between the original integral and the new one can be neglected.
     \begin{prop}\label{prop:restriction_zero}
    Let $ \delta \in ]0,1[$ and $f$ a $\mathcal{C}^\infty$, radial function on $[- \pi, \pi]^d$ such that $f(\theta) = 1$ when $\|\theta\| \leq \frac{\delta}{2}$ and $f(\theta) = 0$ when $\|\theta\| \geq \frac{2}{3}  \delta$. Then for every $c \in \partial C$ and $i,j \in \{1 , \ldots, p\}$, \[\int_{[- \pi, \pi]^d} (1-f(\theta)) \left( (\mathrm{I}_p - \widehat{\mu}_c(\theta))^{-1} \right)_{i,j} e^ {\mathbf{i}x \cdot \theta} \mathrm{d}\theta = \underset{\|x\| \to + \infty}{O} \left( \|x\|^{-k}\right)\] for every $k \in \N$. Besides, the big-O is uniform in $c \in \partial C$.    
    \end{prop}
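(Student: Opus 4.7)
The plan is to reinterpret the integral as a Fourier coefficient (indexed by $x \in \Z^d$) of a smooth function on the torus $\mathbb{T}^d := \R^d/(2\pi\Z^d)$. Once we have that, the classical fact that Fourier coefficients of $C^\infty$ functions decay faster than any polynomial gives the $O(\|x\|^{-k})$ bound; the uniformity in $c \in \partial \mathcal{C}$ will follow from the compactness of $\partial \mathcal{C}$ together with joint continuity of the integrand in $(c,\theta)$.

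Set $g_c(\theta) := (1-f(\theta))\bigl((\mathrm{I}_p - \widehat{\mu}_c(\theta))^{-1}\bigr)_{i,j}$. The first step is to show that $g_c$ descends to a $C^\infty$ function on $\mathbb{T}^d$. Since $\delta \in ]0,1[$ gives $2\delta/3 < \pi$, the support of $f$ lies in the interior of $[-\pi,\pi]^d$, so $f$ (extended by zero) is smooth on $\mathbb{T}^d$, while $\widehat{\mu}_c$ is automatically $2\pi$-periodic and analytic in $\theta$. The Doob transform still satisfies Assumptions~\ref{assump:main_assumptions} (as recalled in Section~\ref{sec:laplace_doob}), so Lemma~\ref{lemme:spec_rad_aper} applied to $\mu_c$ says that $\widehat{\mu}_c(\theta)$ has spectral radius strictly less than $1$ on $\mathbb{T}^d \setminus \{0\}$. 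Hence $(\mathrm{I}_p - \widehat{\mu}_c(\theta))^{-1}$ is analytic in $\theta$ away from $0$, while $1-f$ vanishes on a neighborhood of $0$; the product $g_c$ is therefore $C^\infty$ on the whole torus.

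Next I would argue the uniformity in $c$. Writing $(\widehat{\mu}_c)_{i,j}(\theta) = \tfrac{(\varphi_c)_j}{(\varphi_c)_i}(L\mu)_{i,j}(c + \mathbf{i}\theta)$ and fixing a normalization (say $(\varphi_c)_1 = 1$), the simplicity of the eigenvalue $\rho(c)=1$ of $L\mu(c)$ makes $c \mapsto \varphi_c$ smooth by standard analytic perturbation theory, so $(c,\theta) \mapsto \widehat{\mu}_c(\theta)$ is jointly $C^\infty$ on $\partial \mathcal{C} \times \mathbb{T}^d$. Combining Lemma~\ref{lemme:spec_rad_aper} with continuity of the spectrum and the compactness of $\partial \mathcal{C} \times \{\theta \in \mathbb{T}^d : \|\theta\| \geq \delta/2\}$, the spectral radius of $\widehat{\mu}_c(\theta)$ stays uniformly bounded away from $1$ on this set. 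It then follows that $g_c$ together with all its $\theta$-derivatives are uniformly bounded in $(c,\theta) \in \partial \mathcal{C} \times \mathbb{T}^d$.

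The last step is a routine iterated integration by parts. For $x \in \Z^d\setminus\{0\}$ pick $j(x)$ with $|x_{j(x)}| \geq \|x\|/\sqrt{d}$. Integrating by parts $k$ times in $\theta_{j(x)}$, the boundary terms vanish by $2\pi$-periodicity and give
\[
\int_{[-\pi,\pi]^d} g_c(\theta)\, e^{\mathbf{i} x\cdot\theta}\,\mathrm{d}\theta = (-\mathbf{i}\, x_{j(x)})^{-k}\int_{[-\pi,\pi]^d}\partial_{\theta_{j(x)}}^k g_c(\theta)\, e^{\mathbf{i} x\cdot\theta}\,\mathrm{d}\theta,
\]
whose modulus is at most $(\sqrt d/\|x\|)^k \cdot (2\pi)^d \sup_{(c,\theta)}|\partial_{\theta_{j(x)}}^k g_c(\theta)|$, yielding the required uniform $O(\|x\|^{-k})$ estimate. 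The main obstacle is the middle step: establishing joint smoothness in $(c,\theta)$ together with the uniform spectral gap, which relies on the smooth dependence of the Perron eigenvector $\varphi_c$ on $c$ and on compactness of $\partial \mathcal{C}$. Everything else is textbook harmonic analysis on the torus.
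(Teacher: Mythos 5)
Your proposal is correct and follows essentially the same route as the paper: interpret the integral as a Fourier coefficient of the smooth periodic function $g_c(\theta)=(1-f(\theta))\left((\mathrm{I}_p-\widehat{\mu}_c(\theta))^{-1}\right)_{i,j}$ (smooth because $1-f$ kills the singularity allowed by Lemma~\ref{lemme:spec_rad_aper}), and get uniformity in $c$ from joint continuity of $g_c$ and its $\theta$-derivatives in $(\theta,c)$ together with compactness of $\partial\mathcal{C}$, via regularity of $\varphi_c$ in $c$. The only cosmetic difference is that you perform the iterated integration by parts in a single well-chosen coordinate instead of invoking the paper's Lemma~\ref{lem:decrease_fourier_coeff}.
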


We now explain how to choose a suitable $\delta$ in the previous proposition. Given our goal \eqref{eq:dom_part}, we need at least $k_c$ to exist. To do so, we need a version of the Jordan-Chevalley decomposition from Proposition~\ref{prop:jordan_chevalley} for the Doob transform in a neighborhood of $0$ which is \emph{independent of $c$}. Although the domain of the integral is a subset of $\R^d$, we will temporarily go to $\C^d$ since analyticity will be useful to prove easily that some functions are smooth.

\begin{lemma}\label{lem:jordan_chevalley_uniform}
        There exists a neighborhood $V$ of $0$ in $\C^d$ such that for all $c \in \partial \mathcal{C}$ and $\theta \in V$, the decomposition 
        \begin{equation}\label{eq:jordan_chevalley_uniform}
           \widehat{\mu}_c(\theta) = k_c(\theta) p_c(\theta) + Q_c(\theta) 
        \end{equation}
         is valid, where:
    \begin{itemize}
        \item $k_c(\theta) \in \mathbb{C}$ is the simple leading eigenvalue of $\widehat{\mu}_c(\theta)$;
        \item $p_c(\theta)$ is the projection onto the eigenspace associated with the eigenvalue $k_c(\theta)$, along the sum of the other generalized eigenspaces;
        \item $p_c(\theta) Q_c(\theta) = Q_c(\theta)p_c(\theta) = 0$;
        \item the spectral radius of $Q_c(\theta)$ is smaller than $|k_c(\theta)|$.
    \end{itemize}
        
        Hence for $\theta \in \left( V \cap \R ^d\right)  \setminus \{0\}$, \[\left( \mathrm{I}_p - \widehat{\mu}_c(\theta) \right)^{-1} = \frac{1}{1-k_c(\theta)} p_c(\theta) + \sum_{n = 0}^{+ \infty} Q_c(\theta) ^n.\]

        Besides, the functions $\widehat{\mu}_c$, $k_c$, $p_c$ and $Q_c$ are analytic.
    \end{lemma}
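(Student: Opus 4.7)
The plan is to view this as a parameter-uniform version of Proposition \ref{prop:jordan_chevalley}. Since $\partial \mathcal{C}$ is compact (Proposition \ref{prop:ensemble_C}), the local decomposition available for each individual $c$ should glue into a joint one. The cleanest way to set things up is to relate $\widehat{\mu}_c$ to the Laplace transform of $\mu$. From the definition of the Doob transform we have
\[
\widehat{\mu}_c(\theta) \;=\; D_c^{-1}\, L\mu(c + \mathbf{i}\theta)\, D_c, \qquad D_c := \diag\bigl((\varphi_c)_1, \ldots, (\varphi_c)_p\bigr),
\]
so $\widehat{\mu}_c$ and $L\mu(c+\mathbf{i}\,\cdot\,)$ are similar, share their eigenvalues, and their spectral projections are conjugate by $D_c$. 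Since $L\mu$ is entire on $\C^d$ by the finite-exponential-moments assumption, this already gives the analytic extension of $\theta \mapsto \widehat{\mu}_c(\theta)$ to $\C^d$ for each fixed $c$.

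Next I would establish a uniform spectral gap at $\theta = 0$. For $c \in \partial \mathcal{C}$, the matrix $L\mu(c)$ is irreducible with spectral radius $\rho(c) = 1$, and by Perron--Frobenius this top eigenvalue is simple. Setting
\[
\gamma(c) \;:=\; \min\bigl\{\, |1 - \lambda| : \lambda \in \Spec(L\mu(c)),\ \lambda \neq 1 \,\bigr\},
\]
one has $\gamma(c) > 0$ for every $c$, and $\gamma$ is continuous on $\partial \mathcal{C}$ (the spectrum depends continuously on the matrix entries). Compactness of $\partial \mathcal{C}$ then yields $\gamma_0 := \inf_{c \in \partial \mathcal{C}} \gamma(c) > 0$. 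Joint continuity of $(c,\theta) \mapsto L\mu(c+\mathbf{i}\theta)$ together with compactness of $\partial \mathcal{C}$ provides a neighborhood $V$ of $0$ in $\C^d$ such that, for every $(c,\theta) \in \partial \mathcal{C} \times V$, exactly one eigenvalue of $L\mu(c+\mathbf{i}\theta)$ lies inside the disk $\{|z-1| < \gamma_0/2\}$ and the rest of the spectrum lies outside $\{|z-1| \leq \gamma_0/2\}$.

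On this uniform $V$, I would then define $k_c$, $p_c$, $Q_c$ by contour integral formulas, which automatically deliver analyticity in $\theta$:
\[
p_c(\theta) \;=\; D_c^{-1} \left( \frac{1}{2\pi \mathbf{i}} \oint_{|z-1| = \gamma_0/2} \bigl(z\mathrm{I}_p - L\mu(c+\mathbf{i}\theta)\bigr)^{-1}\,\mathrm{d}z \right) D_c,
\]
with $k_c(\theta) = \Tr\bigl(\widehat{\mu}_c(\theta) p_c(\theta)\bigr)/\Tr\bigl(p_c(\theta)\bigr)$ and $Q_c(\theta) = \widehat{\mu}_c(\theta) - k_c(\theta) p_c(\theta)$. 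The pointwise algebraic properties ($p_c^2 = p_c$, $p_c Q_c = Q_c p_c = 0$, and the spectral radius of $Q_c(\theta)$ being $< |k_c(\theta)|$) hold fiberwise by Proposition \ref{prop:jordan_chevalley} applied to the Doob-transformed process with parameter $c$; uniqueness of the Jordan--Chevalley splitting ensures that the $k_c$, $p_c$, $Q_c$ constructed by the contour integrals coincide with the ones provided fiberwise, so their analyticity in $\theta$ transfers. The inverse formula on $(V \cap \R^d) \setminus \{0\}$ follows exactly as in the proof of Theorem \ref{th:form_int} from the geometric-series identity $\widehat{\mu}_c(\theta)^n = k_c(\theta)^n p_c(\theta) + Q_c(\theta)^n$.

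The main obstacle is the uniformity, not the fiberwise decomposition, which is already Proposition \ref{prop:jordan_chevalley}. The key point to be careful about is that the same contour $|z-1| = \gamma_0/2$ and the same neighborhood $V$ work for \emph{every} $c \in \partial \mathcal{C}$; this is what requires compactness of $\partial \mathcal{C}$ and the continuous dependence of $L\mu(c+\mathbf{i}\theta)$ (hence of its spectrum) on $(c,\theta)$. A minor bookkeeping point is that the Perron--Frobenius eigenvector $\varphi_c$ should be normalized so that $c \mapsto \varphi_c$ is continuous on $\partial \mathcal{C}$; this ensures $D_c$ is continuous and so $\widehat{\mu}_c(\theta)$ depends continuously on both variables, but it plays no role in the analyticity in $\theta$.
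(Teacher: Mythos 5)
Your proof is correct and, at its core, follows the same route as the paper: the uniform neighborhood $V$ is obtained from the compactness of $\partial\mathcal{C}$ together with the joint continuity of $(c,\theta)\mapsto\widehat{\mu}_c(\theta)$ (equivalently of $L\mu(c+\mathbf{i}\theta)$) and of its spectrum, and the fiberwise algebra is just the Jordan--Chevalley decomposition of Proposition \ref{prop:jordan_chevalley}. What you do differently is to make the analyticity self-contained: the paper simply cites Kato for the analyticity of $k_c$, $p_c$, $Q_c$, while you construct $p_c(\theta)$ as a Riesz projection by a contour integral over the fixed circle $|z-1|=\gamma_0/2$, which gives an explicit uniform-in-$c$ construction and, as a bonus, joint continuity in $(c,\theta)$ of the kind the appendix needs later; your use of the similarity $\widehat{\mu}_c(\theta)=D_c^{-1}L\mu(c+\mathbf{i}\theta)D_c$ and the remark on normalizing $\varphi_c$ match what the paper does elsewhere (proof of Proposition \ref{prop:restriction_zero}). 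One point to tighten: isolating exactly one eigenvalue in the disk $\{|z-1|<\gamma_0/2\}$ gives simplicity and isolation near $1$, but not by itself the two assertions that $k_c(\theta)$ is the \emph{leading} eigenvalue and that the spectral radius of $Q_c(\theta)$ is smaller than $|k_c(\theta)|$ for \emph{every} $\theta\in V$; you defer these to Proposition \ref{prop:jordan_chevalley} applied fiberwise, but that proposition only provides them on a $c$-dependent neighborhood, which is precisely the non-uniformity the lemma is meant to remove. The fix is the same compactness argument you already use: at $\theta=0$ the non-Perron eigenvalues of $L\mu(c)$ have modulus at most $1-\delta_0$ uniformly in $c$ (continuity of the spectrum, compactness of $\partial\mathcal{C}$, and primitivity of $\widehat{\mu}_c(0)$), so after shrinking $V$ the eigenvalue captured by your contour has modulus greater than $1-\delta_0/2$ while all the others have modulus smaller than $1-\delta_0/2$, which yields both assertions on all of $V$.
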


From now on, $f$ will denote a function like in Proposition~\ref{prop:restriction_zero}, with $\delta >0$ small enough so that the support of $f$ is included in $\frac{1}{2}V$, where $V$ is the neighborhood from Lemma~\ref{lem:jordan_chevalley_uniform}.

\subsubsection{Identification of the leading term via dyadic splitting of integrals}\label{subsubsec:leading_term}

From the integral formula \eqref{eq:form_int_doob} and Proposition~\ref{prop:restriction_zero}, we get 
\begin{equation*}
    G_c\big( (0,i), (x,j) \big) = \frac{1}{(2 \pi)^d} \int_{\R^d} f(\theta) \left( (\mathrm{I}_p - \widehat{\mu}_c(\theta))^{-1} \right)_{i,j} e^ {\mathbf{i}x \cdot \theta} \mathrm{d}\theta + \underset{\|x\| \to + \infty}{o} \left( \|x\|^{- \frac{d-1}{2}} \right)
\end{equation*}
with a little-$o$ uniform in $c$, hence using Lemma~\ref{lem:jordan_chevalley_uniform}, we obtain the following asymptotics, uniformly in $c$:
\begin{equation} \label{eq:integral_formula_restricted_around_0}
    G_c\big( (0,i), (x,j) \big) = \frac{1}{(2 \pi)^d} \int_{\R^d}  \left(  \frac{f(\theta)}{1-k_c(\theta)} p_c(\theta) + f(\theta) \sum_{n = 0}^{+ \infty} Q_c(\theta) ^n \right)_{i,j} e^{\mathbf{i} x \cdot \theta} \mathrm{d}\theta + \underset{\|x\| \to + \infty}{o} \left( \|x\|^{- \frac{d-1}{2}} \right).
\end{equation}

We will proceed in two steps to obtain \eqref{eq:dom_part}. The first one, rather elementary, is to get rid of the part of the integral with the infinite sum. 

\begin{prop}\label{prop:Qc_part_neglect}
        For all $k \in \N$ and $c \in \partial \mathcal{C}$, \[\displaystyle \int_{[- \pi, \pi]^d} f(\theta) \sum_{n=0}^{+ \infty} Q_c(\theta)^n e^{\mathbf{i}x \cdot \theta} \mathrm{d}\theta = \underset{\|x\| \to + \infty}{O}\left( \|x\|^{-k} \right).\] Besides, the big-$O$ is uniform in $c \in \partial \mathcal{C}$.
    \end{prop}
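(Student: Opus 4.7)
The plan is to recognize the integrand as the Fourier transform of a smooth, compactly supported matrix-valued function of $\theta$, uniformly in $c$, and then invoke the standard rapid-decay argument via integration by parts.

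First I would check that, on the support of $f$, the series $\sum_{n \geq 0} Q_c(\theta)^n$ converges and equals $(\mathrm{I}_p - Q_c(\theta))^{-1}$. Since the jump matrix $\mu_c$ of the Doob transform yields a stochastic Markovian part, $k_c(0) = 1$, so the spectral radius of $Q_c(0)$ is strictly less than $1$ (by Lemma~\ref{lem:jordan_chevalley_uniform}). By continuity of $(c,\theta) \mapsto Q_c(\theta)$ together with compactness of $\partial\mathcal{C}$ (Proposition~\ref{prop:ensemble_C}), there exists an operator norm $\|\cdot\|_{\mathrm{op}}$ and a constant $\alpha < 1$ such that
\[
\sup_{c \in \partial\mathcal{C},\ \theta \in \mathrm{supp}(f)} \|Q_c(\theta)\|_{\mathrm{op}} \leq \alpha,
\]
after possibly shrinking the support of $f$ (which we may do without loss of generality, since Proposition~\ref{prop:restriction_zero} allowed any small enough $\delta$). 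Hence the series converges normally, and $\phi_c(\theta) := f(\theta)(\mathrm{I}_p - Q_c(\theta))^{-1}$ is a well-defined matrix-valued function, smooth in $\theta$ (even analytic in the interior of $\mathrm{supp}(f)$, using Lemma~\ref{lem:jordan_chevalley_uniform}), and supported in $\mathrm{supp}(f)$.

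Next I would apply the standard integration-by-parts trick for Fourier transforms of smooth compactly supported functions. For any $x \in \Z^d \setminus \{0\}$, choose a coordinate $\ell \in \{1,\ldots,d\}$ with $|x_\ell| \geq \|x\|/\sqrt{d}$. Since $\phi_c$ has compact support in the interior of $[-\pi,\pi]^d$, no boundary terms appear, and integrating by parts $k$ times in the $\theta_\ell$ variable yields
\[
\int_{[-\pi,\pi]^d} \phi_c(\theta)\, e^{\mathbf{i} x \cdot \theta}\, \mathrm{d}\theta = \frac{(-1)^k}{(\mathbf{i} x_\ell)^k} \int_{[-\pi,\pi]^d} \partial_{\theta_\ell}^k \phi_c(\theta)\, e^{\mathbf{i} x \cdot \theta}\, \mathrm{d}\theta.
\]
The absolute value is bounded by $\|\partial_{\theta_\ell}^k \phi_c\|_{L^1} / |x_\ell|^k \leq d^{k/2}\, \|\partial_{\theta_\ell}^k \phi_c\|_{L^1}\, \|x\|^{-k}$.

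The only delicate point is the uniformity in $c$, which is the obstacle to address. It is obtained by noting that $c \mapsto \mu_c$ is continuous (each entry is an analytic function of $c$ modulated by the Perron-Frobenius eigenvector $\varphi_c$, which itself depends continuously on $c$ by simplicity of the leading eigenvalue), hence so is $c \mapsto \widehat{\mu}_c(\theta)$, and by the smoothness in Lemma~\ref{lem:jordan_chevalley_uniform}, $c \mapsto Q_c$ is jointly continuous in the $C^k$-topology on $\mathrm{supp}(f)$. Together with compactness of $\partial\mathcal{C}$, this yields
\[
\sup_{c \in \partial\mathcal{C}}\, \|\partial_{\theta_\ell}^k \phi_c\|_{L^1} < +\infty,
\]
which gives the claimed $O(\|x\|^{-k})$ bound uniformly in $c \in \partial\mathcal{C}$ and concludes the proof.
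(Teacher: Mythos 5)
Your argument is correct and is essentially the paper's proof: you rewrite the series as $\left( \mathrm{I}_p - Q_c(\theta) \right)^{-1}$, observe that $f(\theta)\left( \mathrm{I}_p - Q_c(\theta) \right)^{-1}$ is a smooth compactly supported function whose Fourier transform decays faster than any power (integration by parts, i.e.\ Lemma~\ref{lem:decrease_fourier_coeff}), and get uniformity in $c$ from joint continuity of $Q_c$ and its $\theta$-derivatives together with compactness of $\partial \mathcal{C}$ (which the paper justifies via Lemma~\ref{lem:continuity_derivatives_holom}). Only a cosmetic remark: your claim of a \emph{single} operator norm with $\sup_{c,\theta}\|Q_c(\theta)\|_{\mathrm{op}} \leq \alpha < 1$ is not needed (and would require extra justification), since the pointwise bound on the spectral radius of $Q_c(\theta)$ already guarantees invertibility of $\mathrm{I}_p - Q_c(\theta)$ and the identification of the sum with the inverse.
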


The last step to obtain \eqref{eq:dom_part}, that is to replace $p_c(\theta)$ with $p_c(0)$, is more technical. The goal is to prove that 
\begin{equation}\label{eq:result_dyadic_integrals}
    \int_{\R^d} \frac{f(\theta)}{1 - k_c(\theta)} e^{- \mathbf{i}x \cdot \theta} \left( p_c(\theta) - {p_c(0)} \right) \mathrm{d}\theta = \underset{\|x\| \to + \infty}{o} \left( \|x\|^{-\frac{d-1}{2}} \right)
\end{equation} with a little-$o$ uniform in $c$. The first and natural idea is to show that this is a Fourier coefficient of a smooth enough function. However, $\frac{d-1}{2}$ is not necessarily an integer, so the usual link between smooth functions and the decay of their Fourier coefficients is not that simple; and above all, the function is not smooth because of the $1-k_c(\theta)$ at the denominator, which leads to a singularity at $0$. The method of dyadic splitting of integrals used by Babillot in \cite{MR0978023} and later by Guibourg in \cite{these_guibourg} allows to overcome those difficulties. It consists in splitting the space of integration into smaller and smaller concentric annuli with adapted shapes, avoiding the singularity at $0$, and using the decay of Fourier coefficient on each annulus. The details and proofs are given in Appendix~\ref{subsec:dyadic_integrals}.

From \eqref{eq:integral_formula_restricted_around_0}, Proposition~\ref{prop:Qc_part_neglect} and \eqref{eq:result_dyadic_integrals}, we obtain \eqref{eq:dom_part}.

\subsection{The complex analysis ingredient}\label{sec:complex_analysis_ingredient_without_proofs}

In this section, we state the complex analysis results that will be useful in the proof of the main result. These techniques come from Woess's proof of the Ney and Spitzer theorem in his book \cite{Wo-00}. Proofs are postponed to Appendix \ref{sec:app:complex_analysis}

 Following Woess in \cite[(25.19)]{Wo-00}, we define the rotated complex Laplace transform of $\mu$ 
 \begin{equation}\mathcal{L}\mu_u(z) = \left( \sum_{x \in \Z^d} e^{z \cdot R_u x} {\left(\mu_{c(u)}\right)}_{i,j}(x) \right)_{1 \leq i,j \leq p} 
 \end{equation}\label{eq:rotated_laplace_transform}
        where:
        \begin{itemize}
            \item $z \in \mathbb C ^d$;
            \item $u$ is in the sphere $\mathbb{S}^{d-1}$;
            \item $R_u$ is the rotation of $\R^d$ that sends $u$ onto the first vector $e_1$ of the canonical basis and leaves the orthogonal of $\{u,e_1\}$ invariant;
            \item $c(u)$ was defined after Theorem~\ref{thm:hennequin}.
        \end{itemize}
        We denote $\kappa_u(z)$ its leading eigenvalue, provided it exists.
    The next lemma ensures that $\kappa_u$ is defined in a neighborhood of $0$ \emph{independent of $u$}.

     \begin{lemma}\label{lem:analytic_uniformly}
        There exists a neighborhood $V$ of $0$ such that for all $u \in \mathbb{S}^{d-1}$ and $z \in V$, $\mathcal{L}\mu_u(z)$ has a unique leading eigenvalue $\kappa_u(z)$, which is simple. Besides, for all $u \in \mathbb{S}^{d-1}$, $\kappa_u$ is analytic on $V$.
    \end{lemma}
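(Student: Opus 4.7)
The plan is to realize $\kappa_u$ as a holomorphic extension of the simple Perron eigenvalue of $\mathcal{L}\mu_u(0)$, using classical analytic perturbation theory on simple eigenvalues, and then to use compactness of $\mathbb{S}^{d-1}$ to make the radius of convergence uniform in $u$.

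First I would analyze the unperturbed matrix $\mathcal{L}\mu_u(0)$. Since $z\cdot R_u x$ vanishes at $z=0$, $\mathcal{L}\mu_u(0) = L\mu_{c(u)}(0)$ is simply the transition matrix of the Markovian part of the Doob-transformed process $\mu_{c(u)}$. The Doob transform inherits Assumptions~\ref{assump:main_assumptions}, so by Perron--Frobenius, $1$ is its simple leading eigenvalue, and there is a spectral gap: every other eigenvalue $\lambda$ of $\mathcal{L}\mu_u(0)$ satisfies $|\lambda|\leq 1-\delta_u$ for some $\delta_u>0$. By Theorem~\ref{thm:hennequin}, $u\mapsto c(u)$ is continuous on $\mathbb{S}^{d-1}$; since the entries of the Doob jump matrix $\mu_c$ depend continuously on $c\in\partial\mathcal{C}$ (the exponential moments assumption making $c\mapsto L\mu(c)$ continuous and $\varphi_c$ depending continuously on $c$ by simplicity of the Perron eigenvalue), the map $u\mapsto \mathcal{L}\mu_u(0)$ is continuous from $\mathbb{S}^{d-1}$ into $\mathcal{M}_p(\C)$. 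Continuity of the spectrum in the matrix, together with compactness of $\mathbb{S}^{d-1}$, then yields a \emph{uniform} spectral gap: there exists $\delta>0$ such that for every $u\in\mathbb{S}^{d-1}$, all eigenvalues of $\mathcal{L}\mu_u(0)$ other than $1$ lie in $\{|\lambda|\leq 1-\delta\}$.

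Next I would define, for every $u\in\mathbb{S}^{d-1}$ and every $z$ near $0$ in $\C^d$, the Dunford spectral projection
\[
P_u(z) := \frac{1}{2\pi\mathbf{i}} \oint_{\gamma} \bigl( \lambda \mathrm{I}_p - \mathcal{L}\mu_u(z) \bigr)^{-1} d\lambda,
\]
where $\gamma$ is the circle of center $1$ and radius $\delta/3$. Because $\mu_{c(u)}$ has finite exponential moments (item~\ref{it:assump:expo_moments} of Assumptions~\ref{assump:main_assumptions}), the map $z\mapsto \mathcal{L}\mu_u(z)$ is entire, so the integrand is analytic in $z$ wherever $\lambda\in\gamma$ lies in the resolvent set of $\mathcal{L}\mu_u(z)$. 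At $z=0$ the uniform spectral gap ensures that $\gamma$ encircles only the eigenvalue $1$ for every $u$. By compactness of $\gamma\times\mathbb{S}^{d-1}$ and joint continuity of $(\lambda,u,z)\mapsto \lambda\mathrm{I}_p-\mathcal{L}\mu_u(z)$, one can find a neighborhood $V$ of $0$ in $\C^d$, \emph{independent of $u$}, on which $\gamma$ stays in the resolvent set of $\mathcal{L}\mu_u(z)$ and encircles exactly one eigenvalue of algebraic multiplicity one. Hence $P_u(z)$ is a well-defined rank-one projection, analytic in $z\in V$, and the unique eigenvalue of $\mathcal{L}\mu_u(z)$ inside $\gamma$ is
\[
\kappa_u(z) = \Tr\bigl( \mathcal{L}\mu_u(z) P_u(z) \bigr),
\]
which is therefore analytic on $V$. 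By continuity in $(u,z)$ of the integrand, shrinking $V$ if necessary guarantees that $|\kappa_u(z)-1|<\delta/3$ while all other eigenvalues stay inside $\{|\lambda|\leq 1-2\delta/3\}$, so $\kappa_u(z)$ is indeed the unique leading eigenvalue.

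The only real obstacle is the uniformity in $u$: a pointwise application of the analytic implicit function theorem to each $\det(\lambda\mathrm{I}_p-\mathcal{L}\mu_u(z))=0$ would give a neighborhood depending on $u$, and one needs to prevent its radius from collapsing as $u$ varies. This is exactly what the uniform spectral gap (itself a consequence of compactness of $\mathbb{S}^{d-1}$ and continuity of $u\mapsto c(u)$) provides, by allowing a single contour $\gamma$ to work simultaneously for all $u$.
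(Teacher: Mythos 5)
Your proof is correct, and its skeleton is the same as the paper's: Perron--Frobenius applied to $\mathcal{L}\mu_u(0)=\widehat{\mu}_{c(u)}(0)$ (the transition matrix of the Markovian part of the Doob transform, which inherits Assumptions~\ref{assump:main_assumptions}), followed by continuity in $(z,u)$ and compactness of $\mathbb{S}^{d-1}$ to make the neighborhood of $0$ independent of $u$. Where you genuinely diverge is in the mechanism: the paper keeps the qualitative statement ``simple leading eigenvalue persists'' on a product of neighborhoods $V_u\times W_u$, extracts a finite subcover, and then invokes the analytic implicit function theorem for the analyticity of $\kappa_u$; you instead extract a uniform spectral gap $\delta>0$ from compactness, fix a single Riesz contour $\gamma$ around $1$, and read off everything from the Dunford projector $P_u(z)$ and the trace formula $\kappa_u(z)=\Tr\left(\mathcal{L}\mu_u(z)P_u(z)\right)$. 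The contour-integral route costs a little more bookkeeping (joint continuity of $(z,u)\mapsto\mathcal{L}\mu_u(z)$, which indeed holds because $c(u)$ ranges over the compact set $\partial\mathcal{C}$ and the exponential-moment assumption gives locally uniform convergence of the Laplace series, plus continuity of $u\mapsto c(u)$ from Theorem~\ref{thm:hennequin} and of $c\mapsto\varphi_c$ by simplicity of the Perron eigenvalue), but it buys you the analyticity of the eigenprojection $P_u(z)$ at the same time --- which is exactly what the paper needs separately elsewhere (Lemma~\ref{lem:jordan_chevalley_uniform} and the Kato citations) --- and it makes the uniformity quantitative through $\delta$, whereas the paper's finite-cover argument is purely qualitative. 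Your final shrinking of $V$ so that $|\kappa_u(z)-1|<\delta/3$ while the rest of the spectrum stays in $\{|\lambda|\leq 1-2\delta/3\}$ again needs a compactness/finite-cover argument in $(z,u)$ rather than a one-line appeal to continuity, but it is the same routine argument you already used for the contour, so there is no real gap.
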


    The main result on the rotated Laplace transform is the following Weierstrass preparation theorem, which is an adaptation of Proposition 25.20 of Woess in \cite{Wo-00}. It requires the following definition. For $u \in \mathbb{S}^{d-1}$, we define the matrix $\sigma_u$ as the energy matrix of the process obtained by rotation $R_u$ of the Doob transform with parameter $c(u)$, and the matrix $\sigma_u^{(1)}$ as the matrix obtained from $\sigma_u$ by deleting its first row and column.

    \begin{prop}\label{prop:weierstrass_prep}
        \begin{enumerate}
            \item There is $\varepsilon >0$ such that for every $u \in \mathbb{S}^{d-1}$, there exist functions $\mathcal{A}_u$ and $\mathcal{B}_u$, analytic in the balls $B(0, \varepsilon)_{\mathbb C ^{d-1}}$ and $B(0, \varepsilon)_{\mathbb C ^d}$, such that in $B(0, \varepsilon)_{\mathbb C ^d}$ one can decompose 
        \begin{equation}\label{eq:weierstrass_decomp}
        1 - \kappa_u(z) = \left(z_1 - \mathcal{A}_u(z')\right) \mathcal{B}_u(z),
        \end{equation}
        where $z = (z_1, z') \in \C^d$, $z_1 \in \C$ and $z' \in \C^{d-1}$. Besides, $\mathcal{B}_u$ does not vanish on $B(0, \varepsilon)_{\mathbb C^d}$.
        
         \item \label{it:weierstrass_2} The function $\mathcal{A}_u(z')$ (resp. $\mathcal{B}_u(z)$) and its derivatives are continuous with respect to the variables $(z',u)$ (resp. $(z,u)$). 

        \item \label{it:weierstrass_3}$\mathcal{A}_u(0) = 0$, $\nabla \mathcal{A}_u(0) = 0$, the Hessian of $\mathcal{A}_u$ is $- \frac{1}{\|m_{c(u)}\|} \sigma_u^{(1)}$ and $\mathcal{B}_u(0) = -\left\| m_{c(u)}\right\|$. 

        \item \label{it:weierstrass_4} Therefore, $\mathcal{A}_u(\mathbf{i}z') =  \frac{1}{2\|m_{c(u)}\|} z' \cdot \sigma_u^{(1)} z' + O\left( \left\|z'^3\right\|\right)$, with a big-$O$ uniform in $u$, and we can choose $\varepsilon$ small enough so that when $z'$ is in the \emph{real} ball $B(0, \varepsilon)_{\R ^{d-1}}$, the modulus of the big-$O$ is smaller than $\frac{1}{4\|m_{c(u)}\|} z' \cdot \sigma_u^{(1)} z'$.
        \end{enumerate}

    \end{prop}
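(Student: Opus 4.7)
Our plan is to derive the decomposition \eqref{eq:weierstrass_decomp} from the analytic implicit function theorem, applied with the parameter $u$, after verifying that $1 - \kappa_u(z)$ has a simple zero in $z_1$ at the origin, uniformly in $u$. Since $c(u) \in \partial \mathcal{C}$, we have $\kappa_u(0) = 1$. The analog of Theorem \ref{thm:diff_k} applied to the rotated Doob-transformed process whose jump matrix defines $\mathcal{L}\mu_u$ (or equivalently Proposition \ref{prop:usual_prop_spectral_radius}\ref{it:derivee_rayon_spectral_laplace} in the real-variable version) gives $\nabla \kappa_u(0) = R_u m_{c(u)} = \|m_{c(u)}\| e_1$, so $\partial_{z_1} \kappa_u(0) = \|m_{c(u)}\|$ while the other first partials vanish. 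By Theorem \ref{thm:hennequin} the map $u \mapsto c(u)$ is continuous on the compact sphere $\mathbb{S}^{d-1}$ and $m_{c(u)} \neq 0$ for every $u$, so $\|m_{c(u)}\|$ is bounded below by a positive constant and $\partial_{z_1}\kappa_u(0)$ is uniformly nonzero in $u$. Similarly we get $\mathbf{H}_{\kappa_u}(0) = \sigma_u$, which is positive-definite uniformly in $u$ by Propositions \ref{prop:energy_mat_def_pos} and \ref{prop:prop_doob_transform} together with compactness of $\mathbb{S}^{d-1}$.

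Starting from Lemma \ref{lem:analytic_uniformly}, which provides a $u$-independent complex neighborhood $V$ on which $\kappa_u$ is analytic, we apply the analytic implicit function theorem to define $\mathcal{A}_u(z')$ as the unique root (in a small ball) of $\kappa_u(\mathcal{A}_u(z'),z') = 1$. The uniform lower bound on $|\partial_{z_1}\kappa_u(0)|$, combined with Cauchy estimates on the $(z,u)$-continuous family $\kappa_u$ to bound its partials uniformly, yields a radius $\varepsilon$ of the domain of $\mathcal{A}_u$ that can be chosen independent of $u$, and makes $\mathcal{A}_u$ and all its $z'$-derivatives continuous in $(z',u)$. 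We then set
\begin{equation*}
\mathcal{B}_u(z) = -\int_0^1 \partial_{z_1}\kappa_u\bigl(\mathcal{A}_u(z') + t(z_1 - \mathcal{A}_u(z')),\,z'\bigr)\,dt,
\end{equation*}
which is analytic in $z$ and, together with its derivatives, continuous in $(z,u)$. The fundamental theorem of calculus in the $z_1$-variable, combined with $\kappa_u(\mathcal{A}_u(z'),z') = 1$, yields the factorization $1 - \kappa_u(z) = (z_1 - \mathcal{A}_u(z'))\mathcal{B}_u(z)$. At the origin $\mathcal{B}_u(0) = -\partial_{z_1}\kappa_u(0) = -\|m_{c(u)}\|$, which is bounded away from $0$ uniformly in $u$, so shrinking $\varepsilon$ uniformly keeps $\mathcal{B}_u$ non-vanishing on $B(0,\varepsilon)_{\C^d}$.

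The identities in \ref{it:weierstrass_3} and \ref{it:weierstrass_4} are then routine implicit differentiations of $\kappa_u(\mathcal{A}_u(z'),z') = 1$. The first derivative at $z' = 0$ gives $\partial_{z_1}\kappa_u(0)\,\partial_{z_i}\mathcal{A}_u(0) + \partial_{z_i}\kappa_u(0) = 0$ for $i \geq 2$, hence $\nabla \mathcal{A}_u(0) = 0$. Differentiating once more and using $\nabla\mathcal{A}_u(0) = 0$, one finds $\mathbf{H}_{\mathcal{A}_u}(0) = -\sigma_u^{(1)}/\|m_{c(u)}\|$, because $\sigma_u^{(1)}$ is precisely the $(i,j)_{i,j \geq 2}$ block of $\mathbf{H}_{\kappa_u}(0) = \sigma_u$. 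The Taylor expansion of $\mathcal{A}_u$ at $0$ then yields $\mathcal{A}_u(\mathbf{i}z') = \tfrac{1}{2\|m_{c(u)}\|} z' \cdot \sigma_u^{(1)} z' + O(\|z'\|^3)$, where the big-$O$ is uniform in $u$ because the third-order $z'$-derivatives of $\mathcal{A}_u$ depend continuously on $u$ on the compact sphere. Uniform positive-definiteness of $\sigma_u^{(1)}$ then allows $\varepsilon$ to be shrunk one last time so that the error is dominated by $\tfrac{1}{4\|m_{c(u)}\|} z' \cdot \sigma_u^{(1)} z'$ on the real ball $B(0,\varepsilon)_{\R^{d-1}}$. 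The main obstacle throughout is the bookkeeping of uniformity in $u$: every constant and neighborhood must be extracted from $\mathbb{S}^{d-1}$ via continuity and compactness arguments grounded in Lemma \ref{lem:analytic_uniformly} and Cauchy's formula, so that a single $\varepsilon$ works for all directions.
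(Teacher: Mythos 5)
Your proposal is correct, but it reaches the factorization by a genuinely different route than the paper. The paper invokes H\"ormander's proof of the Weierstrass preparation theorem and then spends most of its effort re-checking that the radii $r$ and $\delta$ appearing in that proof can be chosen independently of $u$ (via a complex Rolle theorem to ensure $1-\kappa_u(z_1,0)\neq 0$ on a punctured disc, compactness covers of $\mathbb{S}^{d-1}$, and a subsequential-limit argument identifying $\mathcal{A}_u(z')$ as the unique small zero of $1-\kappa_u(\cdot,z')$ to get joint continuity, with derivative continuity from the Cauchy-formula lemma). You instead exploit the fact that the zero in $z_1$ is simple, so the full preparation theorem is unnecessary: you define $\mathcal{A}_u$ by the analytic implicit function theorem and $\mathcal{B}_u$ by the explicit Hadamard-type formula $\mathcal{B}_u(z) = -\int_0^1 \partial_{z_1}\kappa_u\bigl(\mathcal{A}_u(z')+t(z_1-\mathcal{A}_u(z')),z'\bigr)\,\mathrm{d}t$, which yields \eqref{eq:weierstrass_decomp} by the fundamental theorem of calculus along the segment, gives $\mathcal{B}_u(0)=-\|m_{c(u)}\|$ immediately, and makes the non-vanishing of $\mathcal{B}_u$ and its joint continuity transparent; your treatment of items \ref{it:weierstrass_3} and \ref{it:weierstrass_4} coincides with the paper's. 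What your approach buys is a self-contained, more elementary construction; what it costs is that the uniformity in $u$ of the implicit-function-theorem radius and the joint continuity of $(z',u)\mapsto\mathcal{A}_u(z')$ are only sketched (``quantitative IFT plus Cauchy estimates''). This is not a gap, but if you want it airtight, either run the contraction-mapping proof of the implicit function theorem with $(z',u)$ as parameters (the uniform lower bound $\inf_u\|m_{c(u)}\|>0$ and uniform Cauchy bounds on second derivatives give a uniform contraction on a uniform ball, and fixed points of parametrized uniform contractions depend continuously on the parameters), or borrow the paper's shortcut: $\mathcal{A}_u(z')$ is the unique zero of $1-\kappa_u(\cdot,z')$ in a fixed small disc, so joint continuity follows from a bounded subsequential-limit argument, and continuity of all derivatives then follows from Cauchy's integral formula on a fixed polydisc; also note that you should shrink $\varepsilon$ once so that $|\mathcal{A}_u(z')|$ stays within the domain of analyticity, ensuring the segment in your integral formula for $\mathcal{B}_u$ remains admissible.
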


    \subsection{Proof of the main result}
    We now can prove the main result, Theorem \ref{thm:main}. We first work on the Green function of the Doob transform, adapting the proof from \cite{Wo-00}.
    \begin{thm}\label{thm:main_with_doob_transform}
        For $x \in \Z^d$, we choose $c \in \partial \mathcal{C}$ such that $\frac{m_c}{\|m_c\|} = \frac{x}{\|x\|}$, that is, with the notations following Theorem~\ref{thm:hennequin}, $c =c \left(u \right)$ where $u = \frac{x}{\|x\|}$.
        Then \[G_{c(u)}\big( (0,i), (x,j) \big) = \frac{1}{(2\pi)^{\frac{d-1}{2}}} \frac{\left\| m_{c(u)} \right\|^{\frac{d-3}{2}}}{\sqrt{\det \left( \sigma_u^{(1)} \right)}} \left( p_{c(u)}(0) \right)_{i,j} \|x\|^{\frac{d-1}{2}} + \underset{\|x\| \to + \infty}{o}\left( \|x\|^{\frac{d-1}{2}} \right).\]
    \end{thm}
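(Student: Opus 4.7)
The plan is to begin from \eqref{eq:dom_part}, which reduces the statement (up to the stated error) to the asymptotic analysis of the scalar integral
\begin{equation*}
J(x) := \frac{1}{(2\pi)^d} \int_{\R^d} \frac{f(\theta)}{1 - k_{c(u)}(\theta)}\, e^{-\mathbf{i} x \cdot \theta} \, \mathrm{d}\theta,
\end{equation*}
multiplied by the entry $(p_{c(u)}(0))_{i,j}$. The decisive feature of the choice $u = x/\|x\|$ and $c = c(u)$ is that, by Theorem~\ref{thm:hennequin}, the drift $m_{c(u)}$ is collinear with $x$; this alignment is exactly what allows a Laplace-type analysis to collapse the problem onto a $(d-1)$-dimensional Gaussian over the hyperplane orthogonal to $x$.

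The first step is the rotation $\theta = R_u^{-1}\eta$ sending the direction of $x$ onto $e_1$. By Proposition~\ref{prop:inv_change_sect}\ref{it:changement_preserve_vp_fourier} applied to the rotated process, $k_{c(u)}(R_u^{-1}\eta) = \kappa_u(\mathbf{i}\eta)$, and since $R_u x = \|x\| e_1$ I obtain
\begin{equation*}
J(x) = \frac{1}{(2\pi)^d} \int_{\R^d} \frac{\tilde{f}(\eta)}{1 - \kappa_u(\mathbf{i}\eta)}\, e^{-\mathbf{i} \|x\| \eta_1} \, \mathrm{d}\eta,
\end{equation*}
with $\tilde{f}$ the rotated cutoff. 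On the neighborhood of $0$ given by Proposition~\ref{prop:weierstrass_prep}, I substitute the Weierstrass factorization $1 - \kappa_u(\mathbf{i}\eta) = (\mathbf{i}\eta_1 - \mathcal{A}_u(\mathbf{i}\eta'))\,\mathcal{B}_u(\mathbf{i}\eta)$, producing a simple pole in $\eta_1$ at $\eta_1 = -\mathbf{i}\mathcal{A}_u(\mathbf{i}\eta')$.

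Next I integrate first in $\eta_1$ by contour deformation. Because $\|x\| > 0$, the Fourier factor $e^{-\mathbf{i}\|x\|\eta_1}$ forces closing the contour in the lower half-plane, and by item~\ref{it:weierstrass_4} together with the positive-definiteness of $\sigma_u^{(1)}$, the pole does lie there. The residue theorem, combined with integration by parts in $\eta_1$ to show that the cutoff boundary contributions decay faster than any polynomial in $\|x\|$, reduces $J(x)$, modulo $O(\|x\|^{-N})$ for every $N$, to a constant times
\begin{equation*}
\frac{1}{(2\pi)^{d-1}} \int_{\R^{d-1}} \frac{e^{-\|x\|\,\mathcal{A}_u(\mathbf{i}\eta')}}{\mathcal{B}_u(-\mathbf{i}\mathcal{A}_u(\mathbf{i}\eta'), \mathbf{i}\eta')}\, \mathrm{d}\eta'.
\end{equation*}

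To conclude I use items \ref{it:weierstrass_3} and \ref{it:weierstrass_4}: $\mathcal{B}_u(0) = -\|m_{c(u)}\|$ and $\mathcal{A}_u(\mathbf{i}\eta') = \tfrac{1}{2\|m_{c(u)}\|}\,\eta' \cdot \sigma_u^{(1)}\eta' + O(\|\eta'\|^3)$. The rescaling $\eta' = (\|m_{c(u)}\|/\|x\|)^{1/2}\zeta$ transforms the leading integrand into $\exp(-\tfrac12\,\zeta \cdot \sigma_u^{(1)}\zeta)/(-\|m_{c(u)}\|)$ with Jacobian $(\|m_{c(u)}\|/\|x\|)^{(d-1)/2}$; the cubic error becomes $O(\|\zeta\|^3/\|x\|^{1/2})$ and is absorbed by the Gaussian weight, and the identity $\int_{\R^{d-1}} e^{-\tfrac12 \zeta \cdot A \zeta}\,\mathrm{d}\zeta = (2\pi)^{(d-1)/2}/\sqrt{\det A}$ produces the $1/\sqrt{\det\sigma_u^{(1)}}$ factor together with the remaining powers of $\|m_{c(u)}\|$ and $\|x\|$. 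The main obstacle will be ensuring that every error estimate is uniform in $u \in \mathbb{S}^{d-1}$: this is handled via the compactness of the sphere, the uniform-in-$u$ analyticity from Lemma~\ref{lem:analytic_uniformly}, and the joint continuity in $u$ of $\mathcal{A}_u$, $\mathcal{B}_u$ and their derivatives stated in item~\ref{it:weierstrass_2}. Multiplying by the matrix entry $(p_{c(u)}(0))_{i,j}$ coming from \eqref{eq:dom_part} then yields the claimed asymptotic.
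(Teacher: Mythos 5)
Your overall strategy is the same as the paper's: start from \eqref{eq:dom_part}, rotate by $R_u$ so that $R_ux=\|x\|e_1$, insert the Weierstrass factorization $1-\kappa_u(\mathbf{i}\omega)=(\mathbf{i}\omega_1-\mathcal{A}_u(\mathbf{i}\omega'))\mathcal{B}_u(\mathbf{i}\omega)$, rescale $\omega'$ by $\|x\|^{-1/2}$ to produce the Gaussian $\int_{\R^{d-1}}e^{-\frac{1}{2\|m_{c(u)}\|}\zeta\cdot\sigma_u^{(1)}\zeta}\,\mathrm{d}\zeta$, and obtain uniformity in $u$ from compactness of $\mathbb{S}^{d-1}$ together with items \ref{it:weierstrass_2}--\ref{it:weierstrass_4} of Proposition~\ref{prop:weierstrass_prep}. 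The final bookkeeping (value of the residue/constant via $\mathcal{B}_u(0)=-\|m_{c(u)}\|$, the determinant factor) is also right.

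The gap is in the $\eta_1$-integration. You propose to evaluate $\int_\R \frac{\tilde f(\eta_1,\eta')}{(\mathbf{i}\eta_1-\mathcal{A}_u(\mathbf{i}\eta'))\mathcal{B}_u(\mathbf{i}\eta)}e^{-\mathbf{i}\|x\|\eta_1}\,\mathrm{d}\eta_1$ by closing the contour in the lower half-plane and picking up the residue at $\eta_1=-\mathbf{i}\mathcal{A}_u(\mathbf{i}\eta')$. But the cutoff $\tilde f$ is a smooth, compactly supported function of the \emph{real} variable $\eta_1$; it has no analytic continuation off the real axis (a nonzero analytic function cannot have compact support), so the residue theorem simply does not apply to this integrand, and the obstruction is not a boundary term that integration by parts can kill. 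Your reduced integral silently replaces $\tilde f$ by its value $1$ at a complex point where it is undefined; making this rigorous requires a genuinely different device. The paper's route is to take the Fourier transform in $\omega_1$ first: writing $\frac{1}{\mathbf{i}\omega_1-\mathcal{A}_u(\mathbf{i}\omega')}=-\int_0^{+\infty}e^{s(\mathbf{i}\omega_1-\mathcal{A}_u(\mathbf{i}\omega'))}\,\mathrm{d}s$ (valid since $\Re\,\mathcal{A}_u(\mathbf{i}\omega')>0$ by item \ref{it:weierstrass_4}), applying Fubini to get $-\int_{-\|x\|}^{+\infty}\widehat f_u(t,\omega')e^{-(t+\|x\|)\mathcal{A}_u(\mathbf{i}\omega')}\,\mathrm{d}t$, and then using the rapid decay of $\widehat f_u(t,\omega')$ in $t$ (uniformly in $(u,\omega')$) together with $\int_\R\widehat f_u(t,0)\,\mathrm{d}t=2\pi f_u(0,0)=-2\pi/\|m_{c(u)}\|$ and dominated convergence after the rescaling. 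This is, in effect, the distributional version of your residue computation, and it also shows that the correction terms are only $o(\|x\|^{-\frac{d-1}{2}})$ rather than the $O(\|x\|^{-N})$ you claim for the reduction step. You need to replace the contour-deformation step by such an argument (or by an explicit decomposition of $\tilde f$ into an analytic main part plus a controlled remainder) for the proof to go through.
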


    \begin{proof}
            We start from \eqref{eq:dom_part} which we recall: \[  G_{c(u)}\big( (0,i), (x,j) \big) = \left(\frac{1}{(2 \pi)^d} \int_{\R^d} \frac{f(\theta)}{1-k_{c(u)}( \theta)} e^{-\mathbf{i} x \cdot \theta} \mathrm{d} \theta\right) {p_{c(u)}(0)}_{i,j} + \underset{\|x\| \to + \infty}{o} \left( \|x\|^{- \frac{d-1}{2}} \right).\]  Note that $c(u) = c \left( \frac{x}{\|x\|} \right)$ depends on $x$, however the little-$o$ of \eqref{eq:dom_part} is uniform in $c$, so we may use \eqref{eq:dom_part}.
            
            We remind that in \eqref{eq:dom_part}, $f$ is any smooth, compactly supported in a ball $B(0, \delta)$, radial function, locally constant to $1$ on a small enough neighborhood of $0$. Since the decompositions from Lemma~\ref{lem:jordan_chevalley_uniform} and Proposition~\ref{prop:weierstrass_prep} are uniform in $c$ and $u$, we choose a function $f$ with support allowing those decompositions, independently of $x$. Therefore, if $\theta \in \supp f$,
            \begin{equation}
            1 - k_{c(u)}(\theta) = 1 - \kappa_u\left( \mathbf{i} R_u \theta \right) = \left( \mathbf{i}\left(R_u \theta\right)_1 - \mathcal{A}_u \left(\mathbf{i} \left( R_u \theta \right) ' \right) \right) \mathcal{B}_u \left( \mathbf{i}R_u \theta \right)
            \end{equation}
            where the prime symbol denotes a vector in which we remove the first coordinate and the subscript ``$1$'' denotes this first coordinate, so that $\theta = (\theta_1, \theta')$. Hence
        \[\int_{\R^d} \frac{f(\theta)}{1-k_{c(u)}( \theta)} e^{-\mathbf{i} x \cdot \theta} \mathrm{d} \theta = \int_{\R^d} \frac{f(\theta)}{ \left( \mathbf{i} \left(R_u \theta\right)_1 - \mathcal{A}_u \left( \mathbf{i}\left( R_u \theta \right) ' \right) \right) \mathcal{B}_u \left(\mathbf{i} R_u \theta \right) } e^{-\mathbf{i} x \cdot \theta} \mathrm{d} \theta.\]
        The linear change of variable $\omega = R_u \theta$ and the invariance of $f$ by rotation lead to 
        \begin{align*}
            \int_{\R^d} \frac{f(\theta)}{1-k_{c(u)}( \theta)} e^{-\mathbf{i} x \cdot \theta} \mathrm{d} \theta &= \int_{\R^d} \frac{f\left(  \omega \right)}{ \left( \mathbf{i}\omega_1 - \mathcal{A}_u \left(\mathbf{i}\omega '\right)\right) \mathcal{B}_u(\mathbf{i}\omega)} e^{- \mathbf{i} x \cdot \left( R_u^{-1} \omega \right)} \mathrm{d} \omega\\
            &= \int_{\R^d} \frac{f\left( \omega \right)}{ \left( \mathbf{i}\omega_1 - \mathcal{A}_u \left(\mathbf{i}\omega '\right)\right) \mathcal{B}_u(\mathbf{i}\omega)} e^{- \mathbf{i} (R_u x) \cdot  \omega } \mathrm{d} \omega\\
            &= \int_{\R^d} \frac{f\left(  \omega \right)}{ \left( \mathbf{i}\omega_1 - \mathcal{A}_u \left(\mathbf{i}\omega '\right)\right) \mathcal{B}_u(\mathbf{i}\omega)} e^{- \mathbf{i} \|x\|  \omega_1 } \mathrm{d} \omega,
            \end{align*}
        where the last equality uses $R_u x =  \|x\| e_1$.
        We denote $f_u(\omega) = \frac{f(\omega)}{\mathcal{B}_u(\mathbf{i} \omega)}$, which is a smooth, compactly supported function, so that 
        \[\int_{\R^d} \frac{f(\theta)}{1-k_{c(u)}( \theta)} e^{-\mathbf{i} x \cdot \theta} \mathrm{d} \theta = \int_{\R^d} \frac{f_u \left(  \omega \right)}{  \mathbf{i}\omega_1 - \mathcal{A}_u \left(\mathbf{i}\omega '\right)} e^{- \mathbf{i} \|x\|  \omega_1 } \mathrm{d} \omega.\]
        
        Since $\mathcal{A}_u(\mathbf{i}z') =  \frac{1}{2\left\|m_{c(u)}\right\|} z' \cdot \sigma_u^{(1)} z' + O\left( z'^3\right)$ where the modulus of the big-$O$ is smaller than $\frac{1}{4\left\|m_{c(u)}\right\|} {z' \cdot \sigma_u^{(1)} z'}$ according to Proposition \ref{prop:weierstrass_prep}, we have $\Re \left( \mathcal{A}_u(\mathbf{i}z') \right) >0$, therefore $\Re \left( \mathbf{i}\omega_1 - \mathcal{A}_u \left(\mathbf{i}\omega '\right) \right)<0$, which leads to 
        \[\frac{1}{\mathbf{i}\omega_1 - \mathcal{A}_u \left(\mathbf{i}\omega '\right)} = - \int_{0}^{+ \infty} \exp\left( s \left( \mathbf{i}\omega_1 - \mathcal{A}_u \left(\mathbf{i}\omega '\right) \right)  \right) \mathrm{d}s.\]
        Therefore,
        \begin{align}\label{eq:int_formula_fourier_first_variable}
        \int_{\R^d} \frac{f(\theta)}{1-k_{c(u)}( \theta)} e^{-\mathbf{i} x \cdot \theta} \mathrm{d} \theta &=  - \int_{\R^d} \int_0^{+\infty} f_u(\omega) e^{\mathbf{i}\omega_1 \left(s- \|x\|\right)} e^{-s \mathcal{A}_u \left( \mathbf{i}\omega' \right)} \mathrm{d}s~\mathrm{d}\omega \nonumber\\
        &=  -\int_{\R^{d-1}} \int_{0}^{+\infty} \int_{\R} f_u\left(\omega_1, \omega'\right) e^{\mathbf{i}\omega_1 \left(s- \|x\|\right)} \mathrm{d}\omega_1~ e^{-s \mathcal{A}_u \left( \mathbf{i}\omega' \right)} \mathrm{d}s~\mathrm{d}\omega' \nonumber \\
        &=- \int_{\R^{d-1}} \int_{-\|x\|}^{+\infty} \int_{\R} f_u\left(\omega_1, \omega'\right) e^{\mathbf{i}\omega_1 t} \mathrm{d}\omega_1~ e^{-\left( t + \|x\| \right) \mathcal{A}_u \left( \mathbf{i}\omega' \right)} \mathrm{d}t~\mathrm{d}\omega' \nonumber \\
        &=- \int_{B(0,\delta)_{\R^{d-1}}} \int_{-\|x\|}^{+\infty} \int_{\R} f_u\left(\omega_1, \omega'\right) e^{\mathbf{i}\omega_1 t} \mathrm{d}\omega_1~ e^{-\left( t + \|x\| \right) \mathcal{A}_u \left( \mathbf{i}\omega' \right)} \mathrm{d}t~\mathrm{d}\omega'  \nonumber \\
        & = -\int_{B(0,\delta)_{\R^{d-1}}} \int_{- \|x\|}^{+\infty} \widehat{f}_u(t, \omega') e^{-\left( t + \|x\| \right) \mathcal{A}_u \left( \mathbf{i}\omega' \right)} \mathrm{d}t~\mathrm{d}\omega', 
        \end{align}
        where $\widehat{f}_u(t, \omega') = \int_\R f_u(\omega_1, \omega') e^{\mathbf{i}t \omega_1} \mathrm{d}\omega_1$ is the Fourier transform of $f_u$ in its first coordinate.

        For fixed $\omega'$, $t \mapsto \widehat{f}_u(t, \omega')$ is the Fourier transform of a smooth function, so $\widehat{f}_u (t, \omega') = \underset{t \to \pm \infty}{O}\left( t^{-l} \right)$ for every $l \in \N$, and the constant in the $O\left( t^{-l}\right)$ is controlled by the Fourier transform of $\frac{\partial^l f_u}{\partial \omega_1^l}(\cdot, \omega')$. We use this control to prove that the big-$O$ is uniform in $(u, \omega')$. Thanks to \ref{it:weierstrass_2} of Proposition~\ref{prop:weierstrass_prep}, $(u, \omega) \mapsto \frac{\partial^l f_u}{\partial \omega_1^l} (\omega)$ is continuous (as a consequence of the result on $\mathcal{B}_u(z)$), so the compactness of $\mathbb{S}^{d-1} \times \overline{B(0, \delta)_{\R^d}}$ allows us to bound $\frac{\partial^l f_u}{\partial \omega_1^l} (\omega)$ uniformly in $(u, \omega)$, and thus, using $\mathrm{supp}(f_u) \subset B(0, \delta)_{\R^d}$, the Fourier transform of $\frac{\partial^l f_u}{\partial \omega_1^l}(\cdot, \omega')$ can be bounded uniformly in $(u, \omega')$, so that for every $l \in \N$, 
        \begin{equation}\label{eq:decrease_fourier_first_variable}
        \text{$\widehat{f}_u(t, \omega') = \underset{t \to \pm \infty}{O}\left( t^{-l} \right)$ with a big-$O$ uniform in $(u, \omega')$.}
        \end{equation}
        We denote $C_l$ a positive constant such that for every $u \in \mathbb{S}^{d-1}$, $\omega' \in B(0, \delta)_{\R^{d-1}}$ and $t \in \R$, $\left| \widehat{f}_u(t,\omega') \right| \leq C_l |t|^{-l}.$  
        
        In \eqref{eq:int_formula_fourier_first_variable}, we split the inner integral into two parts, the first part for $s \in \left[ - \|x\|, - \frac{\|x\|}{2} \right]$ and the second one for $s \in \left[-\frac{\|x\|}{2}, +\infty\right)$. We do this is because the quantity $\left( \frac{t}{\|x\|} + 1 \right)^{-\frac{d-1}{2}}$ will appear, so by taking $t > - \frac{\|x\|}{2}$ in a part of the integral, we make sure it will not explode. We get:
        \begin{multline}\label{eq:int_splitted}
             \int_{\R^d} \frac{f(\theta)}{1-k_{c(u)}( \theta)} e^{-\mathbf{i} x \cdot \theta} \mathrm{d} \theta = - \int_{B(0,\delta)_{\R^{d-1}}} \int_{- \|x\|}^{- \frac{\|x\|}{2}} \widehat{f}_u(t, \omega') e^{-\left( t + \|x\| \right) \mathcal{A}_u \left( \mathbf{i}\omega' \right)} \mathrm{d}t~\mathrm{d}\omega'  \\
             - \int_{B(0,\delta)_{\R^{d-1}}} \int_{- \frac{\|x\|}{2}}^{+\infty} \widehat{f}_u(t, \omega') e^{-\left( t + \|x\| \right) \mathcal{A}_u \left( \mathbf{i}\omega' \right)} \mathrm{d}t~\mathrm{d}\omega'.
        \end{multline}
        In the first part of \eqref{eq:int_splitted}, the decay comes from \eqref{eq:decrease_fourier_first_variable}. Indeed, the real part in the exponential is negative, so the modulus of the exponential is smaller than $1$, therefore, for all $l \in \N$,
        \[\left| \int_{B(0,\delta)_{\R^{d-1}}} \int_{- \|x\|}^{- \frac{\|x\|}{2}} \widehat{f}_u(t, \omega') e^{-\left( t + \|x\| \right) \mathcal{A}_u \left( \mathbf{i}\omega' \right)} \mathrm{d}t~\mathrm{d}\omega' \right| \leq \mathrm{Leb}\left( B(0,\delta)_{\R^{d-1}} \right) \frac{\|x\|}{2} C_l \left( \frac{\|x\|}{2} \right)^{-l}.\] Thus, the first integral of \eqref{eq:int_splitted} is a $O\left( \|x\|^{-(l-1)} \right)$ for every $l \in \N$.
        In particular, it is a $o\left( \|x\|^{- \frac{d-1}{2}}\right)$.\\
        We now look at the second part of \eqref{eq:int_splitted}, in which we made sure that the $t+\|x\|$ in the exponential is far enough from $0$. The change of variable $\lambda' = \sqrt{t + \|x\|} \omega'$ leads to 
        \begin{align*}
        \MoveEqLeft[3] {\int_{B(0,\delta)_{\R^{d-1}}} \int_{- \frac{\|x\|}{2}}^{+ \infty} \widehat{f}_u(t, \omega') e^{-\left( t + \|x\| \right) \mathcal{A}_u \left( \mathbf{i}\omega' \right)} \mathrm{d}t~\mathrm{d}\omega'}\\
        &=  \int_{- \frac{\|x\|}{2}}^{+ \infty} \int_{B(0,\delta)_{\R^{d-1}}} \widehat{f}_u(t, \omega') e^{-\left( t + \|x\| \right) \mathcal{A}_u \left( \mathbf{i}\omega' \right)}\mathrm{d}\omega' ~  \mathrm{d}t \\
        &= \int_{- \frac{\|x\|}{2}}^{+ \infty} \left( t + \|x\| \right)^{-\frac{d-1}{2}}\int_{B\left( 0, \sqrt{t + \|x\|} \delta \right)_{\R^{d-1}}} \widehat{f}_u \left(t, \frac{\lambda '}{\sqrt{t + \|x\|}} \right)  e^{-\left( t + \|x\| \right) \mathcal{A}_u \left( \mathbf{i}\frac{\lambda'}{\sqrt{t + \|x\|}} \right)} \mathrm{d} \lambda' ~\mathrm{d}t\\
        &= \|x\|^{- \frac{d-1}{2}} \int_{- \frac{\|x\|}{2}}^{+ \infty} \left( \frac{t}{\|x\|} + 1 \right)^{-\frac{d-1}{2}}\int_{B\left( 0, \sqrt{t + \|x\|} \delta \right)_{\R^{d-1}}}  \widehat{f}_u \left(t, \frac{\lambda '}{\sqrt{t + \|x\|}} \right)
        \\
        &~~~~e^{-\left( t + \|x\| \right) \mathcal{A}_u \left( \mathbf{i}\frac{\lambda'}{\sqrt{t + \|x\|}} \right)} \mathrm{d} \lambda' ~\mathrm{d}t.
        \end{align*}
        Therefore, \eqref{eq:int_splitted} becomes
        \begin{align}
            \MoveEqLeft[3] {\int_{\R^d} \frac{f(\theta)}{1-k_{c(u)}( \theta)} e^{-\mathbf{i} x \cdot \theta} \mathrm{d} \theta} \notag\\
            &= \|x\|^{- \frac{d-1}{2}} \int_{- \frac{\|x\|}{2}}^{+ \infty} \left( \frac{t}{\|x\|} + 1 \right)^{-\frac{d-1}{2}}\int_{B\left( 0, \sqrt{t + \|x\|} \delta \right)_{\R^{d-1}}}  \widehat{f}_u \left(t, \frac{\lambda '}{\sqrt{t + \|x\|}} \right) 
         \\
        &e^{-\left( t + \|x\| \right) \mathcal{A}_u \left( \mathbf{i}\frac{\lambda'}{\sqrt{t + \|x\|}} \right)} \mathrm{d} \lambda' ~\mathrm{d}t + o\left( \|x\|^{- \frac{d-1}{2}} \right).
        \end{align}

        We now use the dominated convergence theorem to prove that 
        \begin{equation}\label{eq:dom_conv_end_proof}
        \begin{split}
            \int_{- \frac{\|x\|}{2}}^{+ \infty} \left( \frac{t}{\|x\|} + 1 \right)^{-\frac{d-1}{2}} \int_{B\left( 0, \sqrt{t + \|x\|} \delta \right)_{\R^{d-1}}} \widehat{f}_u \left(t, \frac{\lambda '}{\sqrt{t + \|x\|}} \right) e^{-\left( t + \|x\| \right) \mathcal{A}_u \left( \mathbf{i}\frac{\lambda'}{\sqrt{t + \|x\|}} \right)} \mathrm{d} \lambda' ~\mathrm{d}t\\ 
            - \int_{- \infty}^{+ \infty} \int_{\R^{d-1}} \widehat{f}_u(t,0) e^{- \frac{1}{2\left\| m_{c(u)} \right\|} \lambda' \cdot \sigma_u^{(1)} \lambda'} \mathrm{d}\lambda'~ \mathrm{d}t \xrightarrow[\|x\| \to + \infty]{} 0.
            \end{split}
        \end{equation}
        
        We first use it in the inner integral. We fix $t$ and consider the function \[g: (x,\lambda') \mapsto \mathds{1}_{B\left( 0, \sqrt{t + \|x\|} \delta \right)_{\R^{d-1}}} \widehat{f}_u \left(t, \frac{\lambda '}{\sqrt{t + \|x\|}} \right) e^{-\left( t + \|x\| \right) \mathcal{A}_u \left( \mathbf{i}\frac{\lambda'}{\sqrt{t + \|x\|}} \right)}. \]
        The continuity of $\widehat{f}_u$ and the expansion $\mathcal{A}_u \left( \mathbf{i}\frac{\lambda'}{\sqrt{t + \|x\|}} \right) = \frac{1}{2 \left\|m_{c(u)} \right\| }\frac{1}{t + \|x\|} \lambda' \cdot \sigma_u^{(1)} \lambda' + O\left(\left(\frac{\lambda'}{\sqrt{t + \|x\|}}\right)^3 \right)$ coming from Proposition~\ref{prop:weierstrass_prep} show that for every $\lambda'$, 
        \begin{equation}\label{eq:pointwise_cv_first_DCT}
        g(x, \lambda') - \widehat{f}_u(t,0) e^{- \frac{1}{2\left\| m_{c(u)} \right\|} \lambda' \cdot \sigma_u^{(1)} \lambda'} \xrightarrow[\|x\| \to + \infty]{}0  .
        \end{equation}
        Notice that, since $u$ depends on $x$, we used the uniformity in $u$ of the big-$O$ and the equicontinuity of the family $\left(\widehat{f}_u (t, \cdot)\right)_{u \in \mathbb{S}^{d-1}}$ at $0$ to get \eqref{eq:pointwise_cv_first_DCT}. This family is even equi-lipschitz around $0$ since its derivative is bounded around $0$, uniformly in $u$, because of the usual argument of continuity of the functions $(u, \omega) \mapsto \frac{\partial \widehat{f}_u(t,\omega)}{\partial \omega_i}$, $i \geq 2$, and compactness of $\mathbb{S}^{d-1}$.
        To prove the domination assumption of the dominated convergence theorem, first notice that $\widehat{f}_u$ is bounded uniformly in $u$ (it is a consequence of \eqref{eq:decrease_fourier_first_variable} for $l = 0$).
        Besides, by \ref{it:weierstrass_4}~of Proposition~\ref{prop:weierstrass_prep}, $\Re \left( \mathcal{A}_u(\mathbf{i}z') \right) \geq \frac{1}{4 \left\|m_{c(u)}\right\|} z' \cdot \sigma_u^{(1)} z'$, so 
        \begin{align*} 
        \Re \left( -(t+\|x\|) \mathcal{A}_u\left( \mathbf{i} \frac{\lambda'}{\sqrt{t+\|x\|}} \right) \right)&\leq - \frac{1}{4 \left\|m_{c(u)}\right\|} \lambda' \cdot \sigma_u^{(1)} \lambda' \\
        &\leq - \frac{1}{4 \left\|m_{c(u)}\right\|} \min\left( \mathrm{Sp} \left(\sigma_u^{(1)}\right)\right) \left\|\lambda'\right\|^2 \tag{$\mathrm{Sp}$ denotes the spectrum of a matrix}\\
        & \leq - \inf_u \left(\frac{1}{4 \left\|m_{c(u)}\right\|} \right) \inf_u \left( \min\left( \mathrm{Sp} \left(\sigma_u^{(1)}\right)\right)\right) \left\| \lambda' \right\|^2
        \end{align*}
        where the infima are strictly positive by continuity in the variable $u$ and compactness of $\mathbb{S}^{d-1}$.
        Denoting $\alpha$ this upper bound, we can dominate $\left| g(x, \lambda') - \widehat{f}_u(t,0) e^{- \frac{1}{2\left\| m_{c(u)} \right\|} \lambda' \cdot \sigma_u^{(1)} \lambda'} \right|$ by $2\displaystyle\sup_{(u,\omega)} |\widehat{f}_u(\omega)| e^{- \alpha \left\| \lambda' \right\|^2}$, which is integrable. The dominated convergence theorem yields the convergence of the inner integral in \eqref{eq:dom_conv_end_proof}, i.e., 
        \begin{equation}\label{eq:inner_int}
        \begin{split}
        \int_{B\left( 0, \sqrt{t + \|x\|} \delta \right)_{\R^{d-1}}} \widehat{f}_u \left(t, \frac{\lambda '}{\sqrt{t + \|x\|}} \right) e^{-\left( t + \|x\| \right) \mathcal{A}_u \left( \mathbf{i}\frac{\lambda'}{\sqrt{t + \|x\|}} \right)} \mathrm{d} \lambda' - \int_{\R^{d-1}} \widehat{f}_u(t,0) e^{- \frac{1}{2\left\| m_{c(u)} \right\|} \lambda' \cdot \sigma_u^{(1)} \lambda'} \mathrm{d}\lambda' \\ 
        \xrightarrow[\|x\| \to +\infty]{}0 .
        \end{split}
        \end{equation}

        We use once again the dominated convergence theorem to prove the convergence \eqref{eq:dom_conv_end_proof}. We thus define \[h(x,t) = \mathds{1}_{\left[ - \frac{\|x\|}{2}, + \infty \right[}(t) \left( \frac{t}{\|x\|} + 1 \right)^{-\frac{d-1}{2}} \int_{B\left( 0, \sqrt{t + \|x\|} \delta \right)_{\R^{d-1}}} \widehat{f}_u \left(t, \frac{\lambda '}{\sqrt{t + \|x\|}} \right) e^{-\left( t + \|x\| \right) \mathcal{A}_u \left( \mathbf{i}\frac{\lambda'}{\sqrt{t + \|x\|}} \right)} \mathrm{d} \lambda' .\] Thanks to \eqref{eq:inner_int}, it is straightforward that for every $t \in \R$, \[h(x,t) - \int_{\R^{d-1}} \widehat{f}_u(t,0) e^{- \frac{1}{2\left\| m_{c(u)} \right\|} \lambda' \cdot \sigma_u^{(1)} \lambda'} \mathrm{d}\lambda' \xrightarrow[\|x\| \to + \infty]{}0.\]
        Besides, the previous inequality on the real part of the exponential and the elementary inequality $1+ \frac{t}{\|x\|} \geq \frac{1}{2}$ when $t \geq - \frac{\|x\|}{2}$ show that
        \begin{align*}
            |h(x,t)| & \leq 2^{\frac{d-1}{2}}\displaystyle\sup_{(u,\omega') \in \mathbb{S}^{d-1} \times B(0, \delta)} \left|\widehat{f}_u(t, \omega')\right| \int_{\R^{d-1}} e^{- \alpha \left\| \lambda' \right\|^2} \mathrm{d}\lambda', 
        \end{align*} and $\displaystyle \int_\R \displaystyle\sup_{(u,\omega') \in \mathbb{S}^{d-1} \times B(0, \delta)} \left|\widehat{f}_u(t, \omega')\right| \mathrm{d}t < \infty$ as a consequence of \eqref{eq:decrease_fourier_first_variable}, so we obtain the domination assumption that allows to use the dominated convergence theorem to prove \eqref{eq:dom_conv_end_proof}.\\
        
        At this point, \eqref{eq:dom_part}, \eqref{eq:int_splitted} and \eqref{eq:dom_conv_end_proof} prove that
        \begin{equation}\label{eq:result_without_explicit_computations}
        \begin{split}
            G_{c(u)}\big( (0,i), (x,j) \big) = - \frac{1}{(2 \pi)^d} \|x\|^{- \frac{d-1}{2}} \int_{-\infty}^{+\infty} \int_{\R^{d-1}} \widehat{f}_u(t,0) e^{- \frac{1}{2\left\| m_{c(u)} \right\|} \lambda' \cdot \sigma_u^{(1)} \lambda'} \mathrm{d}\lambda'~ \mathrm{d}t ~~{p_{c(u)}(0)}_{i,j}\\
            +  \underset{\|x\| \to + \infty}{o}\left( \|x\|^{-\frac{d-1}{2}} \right).
            \end{split}
        \end{equation}
        To conclude, remind that for any symmetric, positive-definite matrix $A$ of size $n$, $\displaystyle \int_{\R^n} e^{- \frac{1}{2} x \cdot Ax} \mathrm{d}x = \frac{(2 \pi)^{n/2}}{ \sqrt{\det(A)}}$. Therefore $\displaystyle\int_{\R^{d-1}}  e^{- \frac{1}{2\left\| m_{c(u)} \right\|} \lambda' \cdot \sigma_u^{(1)} \lambda'} \mathrm{d}\lambda' = \frac{\left(2 \pi \left\| m_{c(u)} \right\|\right)^{\frac{d-1}{2}}}{\sqrt{\det \left( \sigma_u^{(1)} \right) }}$. Besides, inverse Fourier and the equality \sloppy ${\mathcal{B}_u(0) = - \left\| m_{c(u)} \right\|}$ from \ref{it:weierstrass_3} of Proposition~\ref{prop:weierstrass_prep} yield \[\displaystyle \int_{-\infty} ^{+\infty} \widehat{f}_u(t,0) \mathrm{d}t =  2 \pi f_u(0,0) = 2 \pi \frac{f(0)}{\mathcal{B}_u(0)} =  \frac{-2 \pi }{\left\|m_{c(u)}\right\|}.\]

        Therefore, \eqref{eq:result_without_explicit_computations} becomes
        \[G_{c(u)}\big( (0,i), (x,j) \big) = \frac{1}{(2\pi)^{\frac{d-1}{2}}} \frac{\left\| m_{c(u)} \right\|^{\frac{d-3}{2}}}{\sqrt{\det \left( \sigma_u^{(1)} \right)}} \|x\|^{-\frac{d-1}{2}} {p_{c(u)}(0)}_{i,j} + \underset{\|x\| \to + \infty}{o}\left( \|x\|^{-\frac{d-1}{2}} \right).\qedhere\]
    \end{proof}

 The main result, Theorem~\ref{thm:main}, is then a direct application of Theorem~\ref{thm:main_with_doob_transform} and Proposition~\ref{prop:link_green_functions}, that expresses the Green function from the Green function of the Doob transform. We give a more precise formulation that explicitly describes the functions $\chi_{i,j}$ of Theorem~\ref{thm:main}.

 \begin{thm}\label{thm:main_precise}
     Let $i,j \in \{1, \ldots, p\}$. As $\|x\|$ tends to infinity in a fixed direction, \[\frac{G \big( (0,i), (x,j) \big)}{\|x\|^{-\frac{d-1}{2}} e^{- c \left( \frac{x}{\|x\|} \right) \cdot x}}\]
     converges, and the convergence is uniform in the directions. More precisely, with previous notations, as $\|x\|$ tends to infinity,
     \[G \big( (0,i), (x,j) \big) \sim \frac{1}{(2\pi)^{\frac{d-1}{2}}} \frac{\left( \varphi_{c\left( \frac{x}{\|x\|} \right)} \right)_i \left\| m_{c\left( \frac{x}{\|x\|} \right)} \right\|^{\frac{d-3}{2}} {p_{c\left( \frac{x}{\|x\|} \right)}(0)}_{i,j}}{\left( \varphi_{c\left( \frac{x}{\|x\|} \right)} \right)_j \sqrt{ \det \left( \sigma_{\frac{x}{\|x\|}}^{(1)} \right)} } \|x\|^{-\frac{d-1}{2}} e^{- c \left( \frac{x}{\|x\|} \right) \cdot x}.\]
 \end{thm}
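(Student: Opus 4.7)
The plan is to combine Proposition~\ref{prop:link_green_functions}, which relates the Green function of the original process to that of the Doob transform, with the asymptotics of the transformed Green function established in Theorem~\ref{thm:main_with_doob_transform}. The key observation is that the constant appearing in Theorem~\ref{thm:main_with_doob_transform} depends on the parameter $c$ of the Doob transform, and that we are allowed to let this parameter depend on $x$ because the $o(\|x\|^{-(d-1)/2})$ error in \eqref{eq:dom_part} was established uniformly in $c \in \partial\mathcal{C}$.

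For $x \in \Z^d \setminus \{0\}$, set $u = x/\|x\| \in \mathbb{S}^{d-1}$ and pick $c = c(u) \in \partial\mathcal{C}$, i.e., the unique point in $\partial\mathcal{C}$ whose drift $m_{c(u)}$ is a positive multiple of $u$, as provided by Theorem~\ref{thm:hennequin}. Inverting the relation of Proposition~\ref{prop:link_green_functions} gives
\[
G\big((0,i),(x,j)\big) = \frac{(\varphi_{c(u)})_i}{(\varphi_{c(u)})_j}\, e^{-c(u)\cdot x}\, G_{c(u)}\big((0,i),(x,j)\big).
\]
Substituting the asymptotic expansion of Theorem~\ref{thm:main_with_doob_transform} into the right-hand side immediately yields the claimed formula, once one checks that the implicit constant in the $o(\|x\|^{-(d-1)/2})$ of Theorem~\ref{thm:main_with_doob_transform} is still controlled after the $x$-dependent choice $c = c(u)$.

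The main (and only nontrivial) point is therefore to verify this uniformity. Following the chain of estimates leading to Theorem~\ref{thm:main_with_doob_transform}, the error is uniform in $c$ because of: the uniform-in-$c$ big-$O$ in Proposition~\ref{prop:restriction_zero}; the uniform Jordan--Chevalley decomposition of Lemma~\ref{lem:jordan_chevalley_uniform}; the uniform-in-$c$ estimate of Proposition~\ref{prop:Qc_part_neglect}; and the uniform Weierstrass decomposition of Proposition~\ref{prop:weierstrass_prep}, whose ingredients ($\mathcal{A}_u$, $\mathcal{B}_u$ and their derivatives) depend continuously on $u$ on the compact sphere $\mathbb{S}^{d-1}$. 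Combined with the two dominated convergence arguments at the end of the proof of Theorem~\ref{thm:main_with_doob_transform}, which already produced $o(\|x\|^{-(d-1)/2})$ error terms uniform in the direction $u$, this gives the desired uniformity.

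To finally match with the form of Theorem~\ref{thm:main}, one sets
\[
c\!\left(\tfrac{x}{\|x\|}\right) := c(u), \qquad
\chi_{i,j}(u) := \frac{1}{(2\pi)^{(d-1)/2}}\,\frac{(\varphi_{c(u)})_i\, \|m_{c(u)}\|^{(d-1)/2}\, (p_{c(u)}(0))_{i,j}}{(\varphi_{c(u)})_j\,\sqrt{\det\sigma_u^{(1)}}},
\]
and observes that $\chi_{i,j}$ is continuous on $\mathbb{S}^{d-1}$ and bounded above and away from zero. Continuity follows from the continuity of $u \mapsto c(u)$ (Theorem~\ref{thm:hennequin}), of the Perron--Frobenius eigenvector $c \mapsto \varphi_c$ (by simplicity of the leading eigenvalue and the implicit function theorem), of $c \mapsto p_c(0)$ (Lemma~\ref{lem:jordan_chevalley_uniform}) and of $u \mapsto \sigma_u^{(1)}$. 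Positivity and the two-sided bound then follow from compactness of $\mathbb{S}^{d-1}$ together with: positivity of the entries of $\varphi_{c(u)}$ (Perron--Frobenius), positive-definiteness of $\sigma_u^{(1)}$ (Proposition~\ref{prop:energy_mat_def_pos} applied to the rotated Doob-transformed process), non-vanishing of $m_{c(u)}$ (item~\ref{it:non_centered_doob} of Proposition~\ref{prop:prop_doob_transform}), and positivity of $(p_{c(u)}(0))_{i,j}$ (which, by \eqref{eq:pi_p_0} and \eqref{eq:im_p_0}, equals the $j$-th entry of the stationary distribution of the Doob-transformed Markovian part).
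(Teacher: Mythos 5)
Your proposal is correct and takes essentially the same route as the paper, which obtains Theorem~\ref{thm:main_precise} as a direct combination of Theorem~\ref{thm:main_with_doob_transform} (whose error term is uniform in the direction, so the $x$-dependent choice $c=c(x/\|x\|)$ is legitimate) with Proposition~\ref{prop:link_green_functions}. One cosmetic remark: in your displayed definition of $\chi_{i,j}$ the exponent of $\left\|m_{c(u)}\right\|$ should be the one inherited from Theorem~\ref{thm:main_with_doob_transform}, not $\frac{d-1}{2}$; this does not affect the argument.
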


\section{Appendix: Proofs of Section \ref{sec:dominating_part_without_proofs}}\label{sec:dominating_part}
In this appendix, we prove the results from Section~\ref{sec:dominating_part_without_proofs}. 

\subsection{Uniform decay of Fourier coefficients}\label{subsec:annex_A1}

The key argument of the proofs of Appendix \ref{subsec:annex_A1} consists in using the decay of Fourier coefficients of smooth functions. We remind here the proof of this classical fact, as it shows how to control the constants in the little-$o$, which will be crucial to obtain asymptotic expansions that are uniform in $c \in \partial \mathcal{C}$, like in Proposition~\ref{prop:restriction_zero}.

\begin{lemma}\label{lem:decrease_fourier_coeff}
        Let $g : \R^d \longrightarrow \mathbb{C} $ a $2\pi$-periodic in each variable function, integrable on $[-\pi, \pi]^d$. For $x \in \Z^d$, we note $c_x(g) = \frac{1}{(2 \pi)^d} \displaystyle \int_{[- \pi, \pi] ^d} g(\theta) e^{\mathbf{i} x \cdot \theta} \mathrm{d}\theta$ its $x^{\mathrm{th}}$ Fourier coefficient.
        If $g \in \mathcal{C}^k \left( \R ^d \right)$, then ${c_x(g) = \underset{\|x\| \to + \infty}{o}\left( \|x\|^{-k} \right)}$.
    \end{lemma}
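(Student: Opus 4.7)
The plan is to prove the Fourier-decay estimate by the standard integration-by-parts argument, with care taken to upgrade the resulting $O(\|x\|^{-k})$ bound to a true $o(\|x\|^{-k})$ via the Riemann--Lebesgue lemma. The point is that for any $C^k$ function on the torus, each differentiation of $g$ pulled out of the integral costs a factor of $(\mathbf{i}x_j)^{-1}$ in the chosen direction, and after $k$ such steps we are left with the Fourier coefficient of a \emph{continuous} (hence $L^1$) function, which tends to $0$ by Riemann--Lebesgue.

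First, for each $x \in \Z^d \setminus \{0\}$, I would choose $j = j(x) \in \{1,\dots,d\}$ such that $|x_j| = \|x\|_\infty$; note that $|x_j| \geq \|x\|/\sqrt{d}$. Then, using that $g$ is $2\pi$-periodic in each variable (so the boundary terms vanish in each one-variable integration by parts in the $\theta_j$ direction), I would integrate by parts $k$ times in $\theta_j$ inside the integral defining $c_x(g)$, obtaining
\begin{equation}
c_x(g) = \frac{1}{(\mathbf{i} x_j)^k} \cdot \frac{1}{(2\pi)^d} \int_{[-\pi,\pi]^d} \frac{\partial^k g}{\partial \theta_j^k}(\theta)\, e^{\mathbf{i} x \cdot \theta}\, \mathrm{d}\theta = \frac{1}{(\mathbf{i} x_j)^k}\, c_x\!\left(\frac{\partial^k g}{\partial \theta_j^k}\right).
\end{equation}

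Since $\partial^k g / \partial \theta_j^k$ is continuous on the compact torus, it lies in $L^1([-\pi,\pi]^d)$, and the Riemann--Lebesgue lemma gives $c_x(\partial^k g / \partial \theta_j^k) \to 0$ as $\|x\| \to +\infty$. As $j = j(x)$ depends on $x$ but takes only finitely many values, I would bound
\begin{equation}
|c_x(g)| \leq \frac{d^{k/2}}{\|x\|^k} \max_{1 \leq j \leq d} \left| c_x\!\left(\frac{\partial^k g}{\partial \theta_j^k}\right)\right|,
\end{equation}
and conclude that the right-hand side is $o(\|x\|^{-k})$, since the maximum of finitely many sequences each tending to $0$ also tends to $0$.

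There is no genuine obstacle here: the result is classical and the only subtle point is to remember that one needs Riemann--Lebesgue, not merely the trivial $L^1$ bound $\|\partial^k g / \partial \theta_j^k\|_{L^1}/(2\pi)^d$, to extract the little-$o$ rather than a big-$O$. That trivial bound is, however, exactly what will be used in Appendix~A to obtain the uniform-in-$c$ estimates: the proof simultaneously yields the quantitative inequality $|c_x(g)| \leq C_{d,k}\, \|x\|^{-k} \max_j \|\partial^k g / \partial \theta_j^k\|_{L^1}$, which is the form needed to ensure that the constants in the subsequent asymptotics are uniform in $c \in \partial \mathcal{C}$.
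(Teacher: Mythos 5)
Your proof is correct and follows essentially the same route as the paper: integrate by parts $k$ times and invoke the Riemann--Lebesgue lemma to upgrade the $O(\|x\|^{-k})$ bound to $o(\|x\|^{-k})$. The only difference is bookkeeping — you differentiate $k$ times in the single direction of the largest coordinate $|x_{j(x)}| = \|x\|_\infty$, whereas the paper expands $\|x\|_1^k$ by the multinomial theorem and uses mixed partials $\partial_{\theta_1}^{\alpha_1}\cdots\partial_{\theta_d}^{\alpha_d} g$ with $\|\alpha\|_1 = k$ — and both versions yield the explicit control of the constant by the derivatives of $g$ that the paper later needs for uniformity in $c$.
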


    \begin{proof}
        Since all the norms on $\R^d$ are equivalent, we can work with $\| \cdot \|_1$ defined by $\|x\|_1 = \sum_{i=1}^d |x_i|$.
        Repeated integrations by parts lead to: \[ \forall \alpha \in \N^d \text{ s.t. } \|\alpha\|_1 \leq k,~~~~c_x \left( \partial_{\theta_1}^{\alpha_1} \ldots \partial_{\theta_d}^{\alpha_d}  g \right) = x_1^{\alpha_1} \ldots x_d^{\alpha_d} c_x(g).\] We then develop $\|x\|_1^k$ using the multinomial theorem, as a sum of the form $\|x\|_1^k = \sum_{\|\alpha\|_1 = k } C_\alpha |x_1|^{\alpha_1} \ldots |x_d|^{\alpha_d}$. Therefore, 
        \begin{equation}\label{eq:decrease_fourier_coeff_explicit_constant}
        \|x\|_1 ^k |c_x(g)| = \sum_{\|\alpha\|_1 = k} C_\alpha \left| c_x \left( \partial_{\theta_1}^{\alpha_1} \ldots \partial_{\theta_d}^{\alpha_d}  g \right) \right|. 
        \end{equation}
        Because of the Riemann-Lebesgue lemma, the right-hand side tends to $0$ as $\|x\|_1$ goes to $+\infty$, hence ${c_x(g) = \underset{\|x\| \to + \infty}{o}\left( \|x\|^{-k} \right)}$.
    \end{proof}

We can now prove the results from Sections \ref{subsubsec:restriction_0} and \ref{subsubsec:leading_term}.

\begin{proof}[Proof of Proposition~\ref{prop:restriction_zero}]
        The function $f$ is smooth and identically zero outside a neighborhood of $0$ included in $(-\pi, \pi)^d$, so it can be extended into a smooth, $2 \pi$-periodic in each variable function, still noted $f$.
        
        Since ${\theta \mapsto  (\mathrm{I}_p - \widehat{\mu}_c(\theta))^{-1}}$  is smooth outside of $0$ and $1 - f$ is identically $0$ in a neighborhood of $0$, the function ${g_c : \theta \mapsto (1-f(\theta)) \left( (\mathrm{I}_p - \widehat{\mu}_c(\theta))^{-1} \right)_{i,j}}$ is smooth. Therefore, 
        \[\frac{1}{(2\pi)^d}\int_{[- \pi, \pi]^d} (1-f(\theta)) \left( (\mathrm{I}_p - \widehat{\mu}_c(\theta))^{-1} \right)_{i,j} e^ {\mathbf{i}x \cdot \theta} \mathrm{d}\theta = c_{x}(g_c)\] 
        is a Fourier coefficient of a smooth periodic function. Lemma~\ref{lem:decrease_fourier_coeff} implies that $c_{x}(g_c)= o \left(\|x\|^{-k}\right)$ for every $k \in \N$, hence $c_{x}(g_c)= O \left(\|x\|^{-k}\right)$ for every $k \in \N$. Besides, going back to \eqref{eq:decrease_fourier_coeff_explicit_constant}, we see that the constant in the $O\left(\|x\| ^{-k} \right)$ is controlled by the Fourier coefficients of the partial derivatives of $g_c$ of order $k$.
        Since we want a big-$O$ uniform in $c \in \partial \mathcal{C}$, we need to bound those partial derivatives \emph{uniformly in $c$}. To do so, we will prove that
        \begin{equation}\label{eq:g_c_and_der_continuous}
            \text{the function $g_c$ and its derivatives are continuous functions of the variables $(\theta,c)$.}
        \end{equation}
        With \eqref{eq:g_c_and_der_continuous}, the Fourier coefficients of the derivatives of $g_c$ of order $k$ are uniformly bounded by \sloppy $\displaystyle \max_{\|\alpha\|_1 = k} ~\max_{(\theta, c) \in [-\pi, \pi]^d \times \partial C} \left|\partial_{\theta_1}^{\alpha_1} \ldots \partial_{\theta_d}^{\alpha_d} g_c(\theta)\right|$. This bound is finite as a consequence of the continuity from \eqref{eq:g_c_and_der_continuous} and compactness of $[- \pi, \pi]^d \times \partial \mathcal{C}$, which comes from the compactness of $\partial \mathcal{C}$ in Proposition \ref{prop:ensemble_C}. Thus the constant in the $O\left( \|x\|^{-k} \right)$ is uniform, which ends the proof.

        To prove \eqref{eq:g_c_and_der_continuous}, we write explicitly $\widehat{\mu}_c(\theta)$:
            \begin{equation*}
            \widehat{\mu}_c(\theta) = \left(\sum_{x \in \Z^d} e^{\mathbf{i}x \cdot \theta} {(\mu_c)}_{i,j}(x)\right)_{1 \leq i,j \leq p}= \left(\sum_{x \in \Z^d} e^{\mathbf{i}x \cdot \theta} \frac{(\varphi_c)_j}{ (\varphi_c)i}e^{c \cdot x} \mu_{i,j}(x)\right)_{1 \leq i,j \leq p}.
            \end{equation*}
            Since $1$ is a simple eigenvalue of $L\mu(c)$, where $L\mu$ is a continuous function of $c$, the eigenvector $\varphi_c$ is continuous in the variable $c$ (at least if we choose $\varphi_c$ to be a unit vector), see \cite[Chapter Two, \textsection 5]{kato}.
            Therefore, it is straightforward that $\widehat{\mu}_c$ and its derivatives with respect to $\theta$ are continuous in the variables $(\theta,c)$. Besides, if we denote $\mathrm{adj}(A)$ the adjugate matrix of an invertible matrix $A$, the formula $A^{-1} = \frac{1}{\det(A)} \mathrm{adj}(A)$ shows that matrix inversion, i.e. $A \mapsto A^{-1}$, is a rational function of the coefficients of $A$. Therefore, the derivatives of $\left( \mathrm{I}_p - \widehat{\mu}_c(\theta) \right)^{-1}$ are rational functions of $\widehat{\mu}_c(\theta)$ and of its derivatives, which are continuous in the variables $(\theta,c)$. Consequently, $g_c$ and its derivatives are continuous in the variables $(\theta, c)$, hence \eqref{eq:g_c_and_der_continuous}.\end{proof}

\begin{proof}[Proof of Lemma~\ref{lem:jordan_chevalley_uniform}]
    To have the decomposition \eqref{eq:jordan_chevalley_uniform}, we only need $\widehat{\mu}_c(\theta)$ to have a single leading eigenvalue (the rest is a consequence of the Jordan-Chevalley decomposition). Thanks to the Perron-Frobenius theorem, we know it is the case for $\widehat{\mu}_c(0)$ for all $c \in \partial \mathcal{C}$, so the continuity of $(\theta, c) \mapsto \widehat{\mu}_c(\theta)$ and of the spectrum ensures that, for every $c \in \partial \mathcal{C}$, there exist open neighborhoods $V_c$ of $0$ and $U_c$ of $c$ such that for all $(\theta, c') \in V_c \times U_c$, $\widehat{\mu}_{c'}(\theta)$ has a simple leading eigenvalue. From the open cover of the compact set $\partial \mathcal{C}$ by $\left(U_c\right)_{c \in \partial \mathcal{C}}$, we extract a finite cover $U_{c_1}, \ldots, U_{c_m}$. Then $V := \bigcap_{l = 1}^m V_{c_l}$ is a neighborhood of $0$ independent of $c$, and for every $(\theta,c) \in V \times \partial \mathcal{C}$, $\hat{\mu}_c(\theta)$ as a simple leading eigenvalue.

    The analyticity of $\widehat{\mu}_c$ follows from the definition, and the analyticity of the other terms of the decomposition is classical (see \cite[Chapter 2, \textsection 1]{kato}).
\end{proof}

\begin{proof}[Proof of Proposition~\ref{prop:Qc_part_neglect}]
        The idea is similar to  Proposition~\ref{prop:restriction_zero}: we prove that $\displaystyle \int_{[- \pi, \pi]^d} f(\theta) \sum_{n=0}^{+ \infty} Q_c(\theta)^n e^{\mathbf{i}x \cdot \theta} \mathrm{d}\theta$, $x \in \Z^d$, are Fourier coefficients of a smooth periodic function whose derivatives are bounded uniformly in $c \in \partial \mathcal{C}$. Writing $\sum_{n=0}^{+ \infty} Q_c(\theta)^n = \left( \mathrm{I}_p - Q_c(\theta) \right)^{-1}$, which is possible because the spectral radius of $Q_c(\theta)$ is smaller than $1$, the same argument of rationality of the inversion as in the proof of Proposition~\ref{prop:restriction_zero} shows that we only have to prove that the functions $Q_c$ are smooth and that their derivatives are continuous in the variables $(\theta,c)$ on $\supp(f) \times \partial \mathcal{C}$.
        
        The functions $Q_c$ are analytic in the neighborhood $V$ from Lemma~\ref{lem:jordan_chevalley_uniform}, and therefore on the polydisc $D(0, 2 \delta)$, where $\delta$ was chosen after Lemma~\ref{lem:jordan_chevalley_uniform}. Lemma~\ref{lem:continuity_derivatives_holom} tells us that the derivatives of $Q_c$ are continuous in the variables $(\theta,c)$ on $D(0, \delta) \times \partial \mathcal{C}$, hence on $\supp(f) \times \partial \mathcal{C}$.
            \end{proof}

    \subsection{Dyadic splitting of integrals method} \label{subsec:dyadic_integrals}
        In this section, we prove the formula \eqref{eq:result_dyadic_integrals} which we remind here: \begin{equation*}
    \int_{\R^d} \frac{f(\theta)}{1 - k_c(\theta)} e^{- \mathbf{i}x \cdot \theta} \left( p_c(\theta) - {p_c(0)} \right) \mathrm{d}\theta = \underset{\|x\| \to + \infty}{o} \left( \|x\|^{-\frac{d-1}{2}} \right),
\end{equation*} uniformly in $c \in \partial \mathcal{C}$. We use the dyadic splitting of integrals, which we adapt from Babillot in \cite{MR0978023}.

        We first define concentric annuli with shapes adapted to the denominator $1-k_c(\theta)$. To understand what is an adapted shape, we use the following lemma from \cite[(2.38)]{MR0978023}, for which we remind that since the beginning of Section \ref{sec:proof_main}, we work under the non centering Assumption \ref{assump:non_centered}.
        \begin{lemma}
            There exist a neighborhood $U$ of $0$ in $\R^d$ and two constants $C_1,C_2>0$ such that for every $c \in \partial \mathcal{C}$ and $\theta \in U$, \[C_1 \left| \mathbf{i} \theta \cdot \frac{m_c}{\|m_c\|} + \left\| \theta'_c \right\|^2 \right| \leq |1-k_c(\theta)| \leq C_2 \left| \mathbf{i} \theta \cdot \frac{m_c}{\|m_c\|} + \left\| \theta'_c \right\|^2 \right|, \] where $\theta'_c$ denotes the orthogonal projection of $\theta$ onto the orthogonal of $m_c$.
            
        \end{lemma}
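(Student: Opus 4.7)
The plan is to Taylor-expand $1 - k_c(\theta)$ to second order at $0$, identify its quadratic principal part with a quantity comparable to $\mathbf{i}\theta \cdot m_c/\|m_c\| + \|\theta'_c\|^2$ up to uniform constants, and then absorb the cubic remainder.

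By Theorem~\ref{thm:diff_k} applied to the Doob transform of parameter $c$, we have $k_c(0) = 1$, $\nabla k_c(0) = \mathbf{i} m_c$ and $\mathbf{H}_{k_c}(0) = -\sigma_c$, so I would write
\[1 - k_c(\theta) = -\mathbf{i}\, m_c \cdot \theta + \tfrac{1}{2}\theta \cdot \sigma_c \theta + R_c(\theta).\]
Lemma~\ref{lem:jordan_chevalley_uniform} furnishes a complex neighborhood of $0$ independent of $c$ on which $k_c$ is analytic and jointly continuous in $(\theta, c)$ together with its derivatives. Combined with compactness of $\partial\mathcal{C}$ (Proposition~\ref{prop:ensemble_C}), this gives a uniform cubic bound $|R_c(\theta)| \leq K\|\theta\|^3$ on some neighborhood of $0$, as well as uniform constants $0 < M_1 \leq \|m_c\| \leq M_2$ (using $m_c \neq 0$ from item~\ref{it:non_centered_doob} of Proposition~\ref{prop:prop_doob_transform}) and $0 < \lambda_1 \leq \lambda_2$ with $\lambda_1\|\eta\|^2 \leq \eta \cdot \sigma_c \eta \leq \lambda_2\|\eta\|^2$ (using positive-definiteness of $\sigma_c$ from Proposition~\ref{prop:energy_mat_def_pos} applied to the Doob transform).

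I would then decompose $\theta = \alpha e_c + \theta'_c$ with $e_c := m_c/\|m_c\|$ and $\alpha := \theta \cdot e_c$, and introduce $w_c := \mathbf{i}\alpha + \|\theta'_c\|^2$ (the right-hand quantity of the lemma) and $A_c(\theta) := -\mathbf{i}\alpha\|m_c\| + \tfrac{1}{2}\theta \cdot \sigma_c \theta$ (the principal part of $1 - k_c(\theta)$). Separating real and imaginary parts gives $|A_c(\theta)|^2 = \alpha^2\|m_c\|^2 + \tfrac{1}{4}(\theta \cdot \sigma_c \theta)^2$. The lower bound $|A_c(\theta)| \geq \min(M_1, \lambda_1/2)|w_c|$ then follows at once via $(\theta \cdot \sigma_c \theta)^2 \geq \lambda_1^2\|\theta'_c\|^4$; the matching upper bound $|A_c(\theta)| \leq C|w_c|$ uses $\alpha^2 \leq |\alpha|$ near $0$ together with the elementary inequality $|\alpha|+\|\theta'_c\|^2 \leq \sqrt{2}|w_c|$ (from $2|\alpha|\|\theta'_c\|^2 \leq \alpha^2 + \|\theta'_c\|^4$).

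The main delicate step is to absorb $R_c(\theta)$ into $|w_c|$, since $\|\theta\|^2$ is not directly dominated by $|w_c|$ (only $\|\theta'_c\|^2$ is). A short case analysis on whether $|\alpha| \geq \|\theta'_c\|^2$ or $|\alpha| \leq \|\theta'_c\|^2$, together with $|w_c| \geq \max(|\alpha|, \|\theta'_c\|^2)$, yields $\|\theta\|^3 \leq 2\sqrt{2}\|\theta\|^{1/2}|w_c|$ for $\|\theta\|$ small; in particular the prefactor vanishes as $\|\theta\| \to 0$ uniformly in $c$. Shrinking the neighborhood $U$ so that $2\sqrt{2}K\|\theta\|^{1/2} \leq \tfrac{1}{2}\min(M_1, \lambda_1/2)$ on $U$, the triangle inequality gives the two-sided comparison $C_1|w_c| \leq |1 - k_c(\theta)| \leq C_2|w_c|$ with constants independent of $c$. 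The main obstacle is this uniformity in $c$, handled throughout by compactness of $\partial\mathcal{C}$ and the uniform analyticity from Lemma~\ref{lem:jordan_chevalley_uniform}.
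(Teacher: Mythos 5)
Your proof is correct and takes essentially the same route as the paper's, which simply invokes the second-order Taylor expansion of $k_c$ at $0$ together with compactness of $\partial\mathcal{C}$ for the uniformity of $U$, $C_1$, $C_2$; you have only filled in the details (two-sided comparison of the principal part $-\mathbf{i}\,\alpha\|m_c\|+\tfrac12\theta\cdot\sigma_c\theta$ with $\mathbf{i}\alpha+\|\theta'_c\|^2$, and absorption of the cubic remainder via $\|\theta\|^2\leq 2\bigl|\mathbf{i}\alpha+\|\theta'_c\|^2\bigr|$ near $0$). Those details check out, and the uniform bounds you use on $\|m_c\|$, on the spectrum of $\sigma_c$ and on the Taylor remainder are obtained exactly as the paper does elsewhere, by analyticity, Lemma~\ref{lem:continuity_derivatives_holom} and compactness of $\partial\mathcal{C}$.
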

        \begin{proof}
            This is a simple consequence of the Taylor expansion of $k_c$ around $0$. The neighborhood $U$ and the constants $C_1, C_2$ can be chosen independently of $c$ because $c$ lies in the compact set $\partial \mathcal{C}$.
        \end{proof}
        In other words, if $R_c$ denotes the rotation that sends $\frac{m_c}{\|m_c\|}$ onto the first vector $e_1$ of the canonical basis and leaves the orthogonal of $\{m_c,e_1\}$ invariant, we have 
            \begin{equation}\label{eq:motiv_annulus}
            C_1 \left| \mathbf{i} \left(R_c \theta\right)_1 + \left\| \left(R_c \theta \right)' \right\|^2 \right| \leq |1-k_c(\theta)| \leq C_2 \left| \mathbf{i} \left(R_c \theta\right)_1 + \left\| \left(R_c \theta \right)' \right\|^2 \right|
            \end{equation}
            where the prime symbol denotes the orthogonal projection onto the orthogonal of $e_1$, so that $\theta = (\theta_1, \theta')$.
        
        In light of \eqref{eq:motiv_annulus}, \cite[(2.38)]{MR0978023} defines the concentric annuli
        \begin{equation}
            \mathscr{C}_k := \left\{ y \in \R^d~|~ 4^{-k-1} \leq \left| \mathbf{i} y_1 + \|y'\|^2 \right| < 4^{-k} \right\}
        \end{equation}
        and 
        \begin{equation}
            \mathscr{C}^c_k := R_c^{-1} \mathscr{C}_k.
        \end{equation}

        \begin{figure}[htbp]
            \centering
            \includegraphics[scale=0.8]{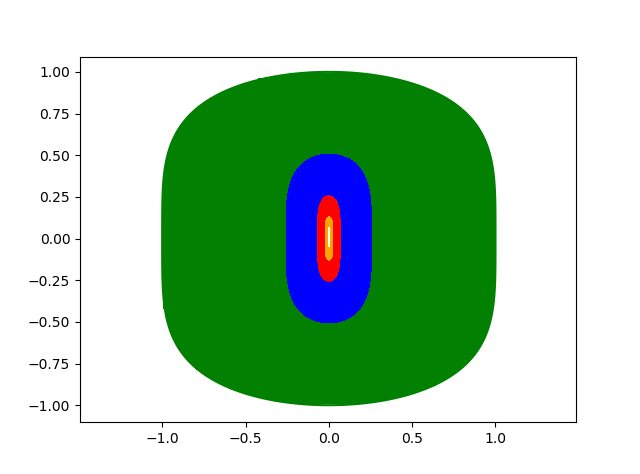}
            \caption{The annuli $\mathscr{C}_k$ for $0 \leq k \leq 3$ when $d = 2$.}
        \end{figure}
        
        We have $\mathscr{C}_{k+1} = D \mathscr{C}_k$ where \begin{equation}
            D = \diag\left( \frac{1}{4}, \frac{1}{2}, \ldots, \frac{1}{2}\right)
        \end{equation}
        and $\mathscr{C}^c_{k+1} = D_c \mathscr{C}^c_k$ where
        \begin{equation}
            D_c = R_c^{-1} D R_c.
        \end{equation}

        Splitting the integral \[\int_{\R^d} \frac{f(\theta)}{1 - k_c(\theta)} e^{- \mathbf{i}x \cdot \theta} \left( p_c(\theta) - {p_c(0)} \right) \mathrm{d}\theta\] as a sum of integrals onto the annuli $\mathscr{C}_k$ comes down to writing $\mathds 1_{\bigcup_{k \geq 0} \mathscr C_k} (\theta) = \sum_{k \geq 0} \mathds{1}_{\mathscr{C}_k}(\theta) = \sum_{k \geq 0} \mathds 1_{\mathscr C_0}\left( D^{-k} \theta\right)$. However, the indicator function $\mathds 1_{\mathscr C_0}$ is not smooth, so \cite{MR0978023} introduced a regularized version of $\mathds 1_{\mathscr C_0}$ like in the following lemma. Details of the construction of such a partition of unity can be found in Proposition A.5 of Guibourg's PhD thesis \cite{these_guibourg}.
        \begin{lemma}
            There exists a smooth, non-negative, compactly supported in a neighborhood of $\mathscr{C}_0$ that avoids $0$, function $\rho$ such that:
            \[\forall \theta \in \bigcup_{k \geq 0} \mathscr C_k,~~~~\sum_{k \geq 0} \rho \left( D^{-k} \theta \right) = 1\]
            and
            \begin{equation}\label{eq:couronne_lisse}
               \forall k \geq 1,~~~~ \forall y \in D^k (\supp \rho),~~~~ \frac{1}{2} 4^{-k-1} \leq \left| \mathbf{i} y_1 + \|y'\|^2 \right| \leq 2 \cdot 4^{-k}.
            \end{equation}
        \end{lemma}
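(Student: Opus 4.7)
The plan is to reduce the construction to a one-dimensional partition of unity via the $D$-homogeneous function
\[
\psi(y) := \bigl|\mathbf{i} y_1 + \|y'\|^2\bigr| = \sqrt{y_1^2 + \|y'\|^4},
\]
which is smooth and positive on $\R^d \setminus \{0\}$ and satisfies $\psi(Dy) = \tfrac{1}{4}\psi(y)$, so that $\mathscr{C}_k = \{4^{-k-1} \leq \psi < 4^{-k}\}$. In the logarithmic variable $u := -\log_4 \psi(y)$, the action of $D^{-k}$ becomes translation by $k$, and I look for $\rho$ of the form $\rho(y) = \tilde g(-\log_4 \psi(y))$ for a smooth, non-negative, compactly supported $\tilde g : \R \to [0,1]$. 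The two required properties then become (i) $\sum_{k \geq 0} \tilde g(u - k) = 1$ for every $u > 0$, and (ii) $\supp \tilde g \subset [-\tfrac{1}{2}, \tfrac{3}{2}]$, which transfers back to $\supp \rho \subset \{\tfrac{1}{8} \leq \psi \leq 2\}$ and will yield \eqref{eq:couronne_lisse}.

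I would build $\tilde g$ by the classical telescoping trick. Fix a $C^\infty$ cutoff $h : \R \to [0,1]$ with $h \equiv 0$ on $(-\infty, -\tfrac{1}{4}]$ and $h \equiv 1$ on $[0, +\infty)$ — for instance $h(s) = \phi(s + \tfrac{1}{4}) / [\phi(s + \tfrac{1}{4}) + \phi(-s)]$ with the non-analytic bump $\phi(t) = e^{-1/t}\mathds{1}_{t>0}$ — and set $\tilde g(s) := h(s) - h(s - 1)$. Then $\tilde g$ is smooth, non-negative, and supported in $[-\tfrac{1}{4}, 1]$, and the telescoping identity $\sum_{k=0}^N \tilde g(u-k) = h(u) - h(u - N - 1)$ converges as $N \to \infty$ to $h(u)$, which equals $1$ for every $u > 0$; this gives (i). Back in the $y$-variable, $\rho(y) := \tilde g(-\log_4 \psi(y))$ is smooth on $\R^d \setminus \{0\}$ and vanishes in a neighbourhood of $0$ (where $-\log_4 \psi \to +\infty$ and $\tilde g \equiv 0$), hence extends $C^\infty$ to all of $\R^d$, with $\supp \rho = \{\tfrac{1}{4} \leq \psi \leq \sqrt{2}\}$, a compact neighbourhood of $\overline{\mathscr{C}_0}$ bounded away from $0$. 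The bound \eqref{eq:couronne_lisse} then follows at once from the $D$-homogeneity: for $y \in D^k(\supp \rho)$ one has $\psi(y) \in [4^{-k-1}, \sqrt{2}\cdot 4^{-k}] \subset [\tfrac{1}{2}\cdot 4^{-k-1}, 2\cdot 4^{-k}]$.

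The only genuinely delicate step is producing an $h$ that is identically $0$ on one half-line and identically $1$ on another while remaining $C^\infty$ at the two transition points; this forces the use of a non-analytic cutoff such as $\phi(t) = e^{-1/t}\mathds{1}_{t>0}$, whose derivatives all vanish at $0$, rather than polynomial interpolation. Everything else — the partition-of-unity identity, the support inclusions, and the bound \eqref{eq:couronne_lisse} — reduces to elementary one-variable computations combined with the homogeneity $\psi(Dy) = \tfrac{1}{4}\psi(y)$.
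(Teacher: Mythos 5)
Your construction is correct, and since the paper does not prove this lemma in the text --- it refers to Proposition A.5 of Guibourg's thesis for the construction of the partition of unity --- your argument supplies the details rather than mirroring an in-text proof. The reduction through the gauge $\psi(y)=\left|\mathbf{i}y_1+\|y'\|^2\right|=\sqrt{y_1^2+\|y'\|^4}$, which is smooth and positive off the origin and satisfies $\psi(Dy)=\frac{1}{4}\psi(y)$, combined with the telescoped cutoff $\tilde g(s)=h(s)-h(s-1)$, is exactly the standard Littlewood--Paley-type regularization of $\mathds{1}_{\mathscr{C}_0}$ that the paper alludes to. All the checks go through: $\tilde g\ge 0$ because the transition interval of $h$ has length $\frac{1}{4}<1$, so $h(s-1)$ is nonzero only where $h(s)=1$; the sum $\sum_{k\ge0}\tilde g(u-k)$ is locally finite and equals $h(u)=1$ for $u>0$, i.e.\ on $\bigcup_{k\ge0}\mathscr{C}_k=\{0<\psi<1\}$; $\rho$ vanishes identically on the open set $\{\psi<1/4\}$ containing $0$, hence extends smoothly to all of $\R^d$ with compact support avoiding $0$; and $\supp\rho\subset\{1/4\le\psi\le\sqrt2\}\subset\{\frac18\le\psi\le2\}$ yields \eqref{eq:couronne_lisse} by the homogeneity $\psi(D^k z)=4^{-k}\psi(z)$ (in fact for every $k\ge0$, which is what is used later anyway). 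One phrase is slightly off: $\{1/4\le\psi\le\sqrt2\}$ is not literally a neighbourhood of $\overline{\mathscr{C}_0}$, since it fails to be one along the inner boundary $\{\psi=1/4\}$; but the lemma only asks that the support be compact, avoid $0$, and be contained in some neighbourhood of $\mathscr{C}_0$ avoiding $0$ --- for instance $\{\frac18<\psi<2\}$ --- so this does not affect the validity of your proof.
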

        Inequalities \eqref{eq:couronne_lisse} mean that, up to constants, $\supp \rho$ allows the same control on $\left| \mathbf{i} y_1 + \|y'\|^2 \right|$ as the annulus $\mathscr C_k$.

        As a consequence, if we define 
        \begin{equation}\label{eq:def_rho_c}
        \rho_c := \rho \circ R_c,
        \end{equation}
        we have 
        \begin{equation}\label{eq:partition_unite}
            \forall \theta \in \bigcup_{k \geq 0} \mathscr{C}^c_k,~~~~ \sum_{k \geq 0} \rho_c\left( D_c^{-k} \theta \right) = 1
        \end{equation}
        and, since $D_c^k (\supp \rho_c) = R_c^{-1} D^k (\supp \rho)$,
        \begin{equation}\label{eq:couronnes_regularisees_controle}
            \forall \theta \in D_c^k (\supp \rho_c),~~~~ \frac{1}{2} 4^{-k-1} \leq \left| \mathbf{i} \left(R_c \theta\right)_1 + \left\| \left(R_c \theta \right)' \right\|^2 \right| \leq 2 \cdot 4^{-k}.
        \end{equation}
        Note that, since $c \mapsto R_c$ is continuous, $\rho_c$ and its derivatives are continuous functions of the variables $(\theta,c)$, i.e., for every $k \geq 0$, $(\theta,c) \mapsto \rho_c^{(k)}(\theta)$ is continuous.
        
        We stick to Babillot's notation and define 
        \begin{equation}\label{eq:definition_h}
            h_c(\theta) = \frac{f(\theta)}{1 - k_c(\theta)}  \left( p_c(\theta) - {p_c(0)} \right)
        \end{equation}
        so that the asymptotics \eqref{eq:result_dyadic_integrals} we want to prove becomes $\widehat{h}_c(x) = o\left( \|x\|^{- \frac{d-1}{2}} \right)$, uniformly in $c$. The splitting of the integral of \eqref{eq:result_dyadic_integrals} using the smooth partition of unity \eqref{eq:partition_unite} can then be written the following way.
        
        \begin{prop}\label{prop:hat{h}_into_sum}
            The set $V := \left(\bigcap_{c\in \partial \mathcal{C}}\bigcup_{k \geq 0} \mathscr{C}_k^c \right) \cup \{0\}$ is a neighborhood of $0$ in $\R^d$.
            If the support of $f$ is included in $V$, then for every $c \in \partial \mathcal{C}$ and $x \in \Z^d$,
            \begin{equation}\label{eq:decoupage_dyadique}
                 \widehat{h}_c(x) = \sum_{k \geq 0} \frac{1}{2^{k(d+1)}} \widehat{\rho}_{c,k}\left( D_c^k x \right)
            \end{equation}
            where $\rho_{c,k}(\theta) = \rho_c(\theta) h_c\left( D_c^k \theta\right)$.
        \end{prop}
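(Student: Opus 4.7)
The plan has two independent parts: first, showing that $V$ is a neighborhood of $0$; second, splitting the integral defining $\widehat h_c(x)$ according to the dyadic partition of unity and performing a single change of variables in each piece.

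For the first part, I would exploit the fact that $R_c$ is a Euclidean isometry. The elementary bounds $|(R_c\theta)_1|\le \|R_c\theta\|$ and $\|(R_c\theta)'\|\le \|R_c\theta\|$ give
$$\bigl|\mathbf{i}(R_c\theta)_1+\|(R_c\theta)'\|^2\bigr|\le \|\theta\|+\|\theta\|^2,$$
which is strictly smaller than $1$ as soon as $\|\theta\|$ is small, \emph{uniformly in} $c$. Moreover, the complex number $\mathbf{i}(R_c\theta)_1+\|(R_c\theta)'\|^2$ vanishes only when its real part $\|(R_c\theta)'\|^2$ and its imaginary part $(R_c\theta)_1$ both vanish, forcing $R_c\theta=0$ and hence $\theta=0$. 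Consequently every $\theta\neq 0$ of sufficiently small norm lies in $\bigcup_{k\geq 0}\mathscr C_k^c$ for all $c\in\partial\mathcal C$, so a ball around $0$ sits inside $V$.

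For the second part, since $\mathrm{supp}(f)\subset V$, the function $h_c$ defined in \eqref{eq:definition_h} is supported in $V$. On $V\setminus\{0\}$ the partition of unity \eqref{eq:partition_unite} gives
$$h_c(\theta)=\sum_{k\geq 0}h_c(\theta)\,\rho_c(D_c^{-k}\theta) \quad\text{for almost every }\theta\in\R^d.$$
Because $\rho$ is supported in a region where $|\mathbf{i}y_1+\|y'\|^2|$ remains between two fixed positive constants, at each $\theta\neq 0$ only a bounded number $N$ of terms in this sum are nonzero, so $\sum_k|h_c(\theta)\rho_c(D_c^{-k}\theta)|\le N\|\rho\|_\infty|h_c(\theta)|$. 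The integrability of $1/|1-k_c|$ on a neighborhood of $0$ established during the proof of Theorem~\ref{th:form_int}, together with the compactness of $\mathrm{supp}(f)$, shows that $h_c\in L^1(\R^d)$, so Fubini--Tonelli legitimizes interchanging sum and integral in $\widehat h_c(x)=\int h_c(\theta)e^{-\mathbf{i}x\cdot\theta}d\theta$.

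The last step is the change of variables $\theta=D_c^k\phi$ in the $k$-th integral. Two observations conclude the proof: since $D=\mathrm{diag}(1/4,1/2,\ldots,1/2)$ has determinant $2^{-(d+1)}$ and $D_c=R_c^{-1}DR_c$ is conjugate to $D$, one has $|\det D_c^k|=2^{-k(d+1)}$; and $D_c$ is symmetric, being the conjugate of the symmetric matrix $D$ by the orthogonal matrix $R_c$, so $x\cdot D_c^k\phi=D_c^kx\cdot\phi$. Combining these transforms the $k$-th integral into $2^{-k(d+1)}\widehat\rho_{c,k}(D_c^kx)$, which gives \eqref{eq:decoupage_dyadique}. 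The only step that requires care is the interchange of summation and integration; everything else is routine once the locally finite partition of unity is in place.
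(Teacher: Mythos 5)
Your proof is correct and follows essentially the same route as the paper: a ball around $0$ lies in $V$ because the annuli are rotated copies of the $\mathscr C_k$, then the partition of unity \eqref{eq:partition_unite} splits $\widehat h_c(x)$ and the change of variables $\theta = D_c^k\lambda$, together with $|\det D_c^k| = 2^{-k(d+1)}$ and the symmetry of $D_c$, yields \eqref{eq:decoupage_dyadique}. The only difference is that you explicitly justify the interchange of sum and integral (local finiteness of the partition plus $h_c \in L^1$), a point the paper passes over silently, so this is a welcome addition rather than a deviation.
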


    In the previous proposition, the purpose of the set $V$ is to provide a neighborhood of $0$ \emph{independent of $c$} in which the equality \eqref{eq:decoupage_dyadique} holds.
        
        \begin{proof}
            Since $\mathscr{C}_k^c$ is the image of $\mathscr{C}_k$ by a rotation, if we note $r = \inf \left\{ \|\theta\|~|~\theta \in \mathscr{C}_0 \right\} > 0$, we have $B(0,r) \subset V$.

            Let $x \in \Z^d$ and $c \in \partial \mathcal{C}$. Using \eqref{eq:partition_unite} and the inclusion of the support of $h_c$ in $\bigcup_{k \geq 0} \mathscr{C}_k^c \cup \{0\}$, we get
                \[\widehat{h}_c(x) = \int_{\R^d} e^{-\mathbf{i}x\cdot \theta} h_c(\theta) \mathrm{d}\theta = \sum_{k \geq 0} \int_{\R^d} e^{-\mathbf{i}x\cdot \theta} h_c(\theta) \rho_c\left( D_c^{-k} \theta \right) \mathrm{d}\theta .\]
                With the change of variable $\lambda = D^k_c \theta$ and the symmetry of $D_c^k$,
                \begin{align*}
                \widehat{h}_c(x)&= \sum_{k \geq 0} \int_{\R^d} e^{-\mathbf{i}x\cdot D_c^k\lambda} h_c\left( D_c^k\lambda \right) \rho_c\left( \lambda \right) \left| \det \left( D_c^k \right) \right| \mathrm{d}\lambda\\
                &= \sum_{k \geq 0} \frac{1}{2^{k(d+1)}} \int_{\R^d} e^{-\mathbf{i} D_c^kx\cdot \lambda} h_c\left( D_c^k\lambda \right) \rho_c\left( \lambda \right)  \mathrm{d}\lambda \\
                &=\sum_{k \geq 0} \frac{1}{2^{k(d+1)}} \widehat{\rho}_{c,k}\left( D_c^k x \right). \qedhere
            \end{align*}
        \end{proof}
        From now on, we assume that $f$ was chosen with support included in $V$, so that we can use Proposition~\ref{prop:hat{h}_into_sum}.

        We remind the definition of finely Hölder continuous functions and their link with Fourier transform stated by Babillot in \cite{MR0978023}.
        A function $l $ is called finely Hölder continuous of order $\alpha$ if $f$ is Hölder continuous of order $\alpha$ and if $\displaystyle\frac{\|l(y) - l(x)\|}{\|y - x\|^\alpha} \xrightarrow[y \to x]{}0$ for every $x$. When $\alpha = 0$, it simply means that $l$ is continuous.

        Let $K$ be a compact subset of $\R^d$ and $r > 0$. The space $C^r(K)$ is defined as the set of $\lfloor r \rfloor$ times differentiable functions supported in $K$ whose $\lfloor r \rfloor^{\mathrm{th}}$ differential is finely Hölder continuous of order $r - \lfloor r \rfloor$.  When $r$ is an integer, $C^r(K)$ is simply the space of $r$ times continuously differentiable functions supported in $K$.
        
        The space $C^r(K)$ is normed with 
        \begin{equation}
        \|f\|_r = \left\{
        \begin{array}{ll}
            \displaystyle \sum_{n=0}^{\lfloor r \rfloor} \left\| f^{(n)} \right\|_\infty + \sup_{\|x-y\| \leq 1} \frac{\left\| f^{\left( \lfloor r \rfloor \right)} (y) - f^{\left( \lfloor r \rfloor \right)}(x)\right\|}{\|y - x \|^{r - \lfloor r \rfloor}} & \text{ if $r$ is not an integer,}  \\
            \displaystyle \sum_{n=0}^{r} \left\| f^{(n)} \right\|_\infty & \text{ if $r$ is an integer.}
        \end{array}\right.
        \end{equation}

    The two following lemmas on the spaces $C^r(K)$ will be useful in the proof of Proposition \ref{prop:majoration_norme_rho_ck}. They are implicitly used by Babillot in \cite[(2.42)]{MR0978023}.
        
        \begin{lemma}\label{lem:inclusions_Cr}
            Let $K$ be a compact subset of $\R^d$. If $r_1 \leq r_2$, then $C^{r_2}(K) \subset C^{r_1}(K)$, and if $f \in  C^{r_2}(K)$, then $\|f\|_{r_1} \leq \|f\|_{r_2}$.
        \end{lemma}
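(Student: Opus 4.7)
The plan is to prove both the inclusion and the norm inequality by a case analysis on whether $r_1$ and $r_2$ are integers (more precisely, on the relation between $\lfloor r_1 \rfloor$ and $\lfloor r_2 \rfloor$). In every case, the norm inequality $\|f\|_{r_1} \leq \|f\|_{r_2}$ automatically implies $f \in C^{r_1}(K)$ once we know $f$ is sufficiently differentiable, since $\|f\|_{r_1} < +\infty$ is exactly the defining condition. The support condition is trivially preserved. The only mild subtlety is, when $r_1$ is not an integer, checking that $f^{(\lfloor r_1 \rfloor)}$ is finely Hölder of the correct order, not merely Hölder.

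First I would handle the case $\lfloor r_1 \rfloor = \lfloor r_2 \rfloor =: n$. If both are integers, $\|f\|_{r_1} = \|f\|_{r_2}$ and there is nothing to do. Otherwise both are non-integers with $r_1 - n \leq r_2 - n$, and using $\|y - x\| \leq 1$ and $r_2 - r_1 \geq 0$,
\[
\frac{\|f^{(n)}(y) - f^{(n)}(x)\|}{\|y-x\|^{r_1 - n}}
= \frac{\|f^{(n)}(y) - f^{(n)}(x)\|}{\|y-x\|^{r_2 - n}} \cdot \|y-x\|^{r_2 - r_1}
\leq \frac{\|f^{(n)}(y) - f^{(n)}(x)\|}{\|y-x\|^{r_2 - n}},
\]
so the Hölder seminorms compare correctly and the inequality follows by summing the sup-norm terms, which are identical.

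Next I would handle the case $\lfloor r_1 \rfloor < \lfloor r_2 \rfloor$. Write $n_1 := \lfloor r_1 \rfloor$ and $n_2 := \lfloor r_2 \rfloor$, so $n_1 + 1 \leq n_2 \leq r_2$. Then $f^{(n_1+1)}$ exists, is continuous, and supported in $K$, hence bounded with $\|f^{(n_1+1)}\|_\infty \leq \|f\|_{r_2}$. By the mean value theorem, $f^{(n_1)}$ is Lipschitz with constant $\|f^{(n_1+1)}\|_\infty$. If $r_1$ is an integer, there is no Hölder seminorm to control in $\|f\|_{r_1}$ and we simply conclude $\|f\|_{r_1} = \sum_{n=0}^{r_1} \|f^{(n)}\|_\infty \leq \sum_{n=0}^{n_2} \|f^{(n)}\|_\infty \leq \|f\|_{r_2}$. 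If $r_1$ is not an integer, setting $\alpha := r_1 - n_1 \in (0,1)$, the Lipschitz estimate gives for $\|y-x\| \leq 1$
\[
\frac{\|f^{(n_1)}(y) - f^{(n_1)}(x)\|}{\|y-x\|^{\alpha}}
\leq \|f^{(n_1+1)}\|_\infty \, \|y-x\|^{1-\alpha}
\leq \|f^{(n_1+1)}\|_\infty,
\]
so the Hölder seminorm appearing in $\|f\|_{r_1}$ is bounded by $\|f^{(n_1+1)}\|_\infty$, and
\[
\|f\|_{r_1} \leq \sum_{n=0}^{n_1} \|f^{(n)}\|_\infty + \|f^{(n_1+1)}\|_\infty \leq \sum_{n=0}^{n_2} \|f^{(n)}\|_\infty \leq \|f\|_{r_2}.
\]
Moreover the same estimate shows that $\|y-x\|^{1-\alpha} \to 0$ as $y \to x$, so $f^{(n_1)}$ is finely Hölder continuous of order $\alpha$, establishing $f \in C^{r_1}(K)$.

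The main potential obstacle is purely bookkeeping: verifying fine Hölder continuity (not just Hölder continuity) in the case $\lfloor r_1 \rfloor < \lfloor r_2 \rfloor$ with $r_1 \notin \mathbb{N}$. This is handled cleanly by the Lipschitz bound above, which gives the required factor $\|y-x\|^{1-\alpha}$ with $1 - \alpha > 0$. All other cases reduce to comparing two finite sums of sup-norms, or to the elementary inequality $\|y-x\|^{r_2 - r_1} \leq 1$ on $\{\|y-x\| \leq 1\}$.
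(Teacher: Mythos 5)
Your proof is correct and follows essentially the same route as the paper's: split according to whether $\lfloor r_1\rfloor=\lfloor r_2\rfloor$ or $\lfloor r_1\rfloor<\lfloor r_2\rfloor$, use $\|y-x\|^{r_2-r_1}\leq 1$ in the first case and the mean value inequality with $\|f^{(n_1+1)}\|_\infty$ in the second, the latter also yielding the fine Hölder property via the factor $\|y-x\|^{1-\alpha}$. The only cosmetic remark is that in the equal-floor case the displayed domination by the $r_2$-quotient (which tends to $0$ as $y\to x$) is also what gives fine Hölder continuity of order $r_1-n$, a point worth stating explicitly, exactly as the paper does.
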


        \begin{proof}
            We write $r_1 = m_1 + \alpha_1$ and $r_2 = m_2 + \alpha_2$ where $m_1, m_2$ are integers and $0 \leq \alpha_1, \alpha_2 < 1$. When $\alpha_1 =0$, i.e., $r_1$ is an integer, the result of Lemma \ref{lem:inclusions_Cr} is clear. We now assume $\alpha_1 >0$. Let $f \in C^{r_2}(K)$. In particular, $f$ is $m_1$ times differentiable.
            
            If $m_1 = m_2$, then $\alpha_1 \leq \alpha_2$. Therefore, for every $x,y \in \R^d$ such that $\|x-y\|\leq 1$, we have  
            \begin{equation}\label{eq:inclusions_espaces_Cr}
            \frac{\left\| f^{(m_1)} (x) - f^{(m_1)}(y) \right\|}{\|x-y\|^{\alpha_1}} = \frac{\left\| f^{(m_1)} (x) - f^{(m_1)}(y) \right\|}{\|x-y\|^{\alpha_2}} \|x-y\|^{\alpha_2 - \alpha_1} \leq \frac{\left\| f^{(m_1)} (x) - f^{(m_1)}(y) \right\|}{\|x-y\|^{\alpha_2}},
            \end{equation}
            but $ \displaystyle \frac{\left\| f^{(m_1)} (x) - f^{(m_1)}(y) \right\|}{\|x-y\|^{\alpha_2}} \xrightarrow[y \to x]{}0$ because $f^{(m_1)}$ is $\alpha_2$-finely Hölder continuous, so \[{\displaystyle\frac{\left\| f^{(m_1)} (x) - f^{(m_1)}(y) \right\|}{\|x-y\|^{\alpha_1}} \xrightarrow[y \to x]{} 0},\] hence $f \in C^{r_1}(K)$. Besides, \eqref{eq:inclusions_espaces_Cr} yields \[\sup_{\|x-y\| \leq 1} \frac{\left\| f^{(m_1)} (x) - f^{(m_1)}(y) \right\|}{\|x-y\|^{\alpha_1}} \leq \sup_{\|x-y\| \leq 1} \frac{\left\| f^{(m_1)} (x) - f^{(m_1)}(y) \right\|}{\|x-y\|^{\alpha_2}},  \] so $\|f\|_{r_1} \leq \|f\|_{r_2}$.

            If $m_2 \geq m_1 + 1$, then $f$ is $m_1 + 1$~times continuously differentiable and  the mean value inequality leads to 
            \[\frac{\left\| f^{(m_1)} (x) - f^{(m_1)}(y) \right\|}{\|x-y\|^{\alpha_1}} = \frac{\left\| f^{(m_1)} (x) - f^{(m_1)}(y) \right\|}{\|x-y\|}\|x-y\|^{1 - \alpha_1} \leq \left\| f^{(m_1 + 1)} \right\|_\infty \|x-y\|^{1 - \alpha_1},\]
            hence $\displaystyle\frac{\left\| f^{(m_1)} (x) - f^{(m_1)}(y) \right\|}{\|x-y\|^{\alpha_1}} \xrightarrow[y \to x]{}0$, so $f \in C^{r_1}(K)$. Besides, $\displaystyle \sup_{\|x-y\| \leq 1} \frac{\left\| f^{(m_1)} (x) - f^{(m_1)}(y) \right\|}{\|x-y\|^{\alpha_1}} \leq \left\| f^{(m_1 + 1)} \right\|_\infty $, 
        so 
        \[\|f\|_{r_1} \leq \sum_{n=0}^{m_1} \left\| f^{(n)} \right\|_\infty + \left\| f^{(m_1 + 1)} \right\|_\infty \leq \|f\|_{r_2}. \qedhere \]
        \end{proof}

        \begin{lemma}\label{lem:espaces_C^r_produit}
            Let $K$ be a compact subset of $\R^d$ and $r$ an \emph{integer}. The set $C^r(K)$ is stable under multiplication and  for every $f,g \in C^r(K)$, \[\|fg\|_r \leq   \binom{r}{\left \lfloor \frac{r}{2} \right \rfloor} \|f\|_r \|g\|_r.\]
        \end{lemma}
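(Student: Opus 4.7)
The plan is to proceed by a direct application of the Leibniz rule, since $r$ is an integer and so $C^r(K)$ is just the usual space of $r$-times continuously differentiable functions supported in $K$. First I would observe that $fg$ is still compactly supported in $K$, and that for every integer $n$ with $0 \leq n \leq r$ the classical Leibniz formula gives
\begin{equation*}
(fg)^{(n)} = \sum_{k=0}^{n} \binom{n}{k} f^{(k)} g^{(n-k)},
\end{equation*}
whence, by the triangle inequality and sub-multiplicativity of $\|\cdot\|_\infty$,
\begin{equation*}
\bigl\|(fg)^{(n)}\bigr\|_\infty \;\leq\; \sum_{k=0}^{n} \binom{n}{k} \bigl\|f^{(k)}\bigr\|_\infty \bigl\|g^{(n-k)}\bigr\|_\infty.
\end{equation*}

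Summing over $n$ from $0$ to $r$ and using the elementary fact that the binomial coefficients $\binom{n}{k}$ for $0 \leq k \leq n \leq r$ are all bounded above by the central binomial coefficient $\binom{r}{\lfloor r/2\rfloor}$, I obtain
\begin{equation*}
\|fg\|_r \;\leq\; \binom{r}{\lfloor r/2 \rfloor}\sum_{n=0}^{r} \sum_{k=0}^{n} \bigl\|f^{(k)}\bigr\|_\infty \bigl\|g^{(n-k)}\bigr\|_\infty.
\end{equation*}
The double sum on the right-hand side runs over pairs of nonnegative integers $(k,l)$ with $k+l \leq r$, which is a subset of $\{(k,l) : 0 \leq k,l \leq r\}$. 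Since all summands are nonnegative, the sum is therefore bounded above by $\bigl(\sum_{k=0}^{r} \|f^{(k)}\|_\infty\bigr)\bigl(\sum_{l=0}^{r} \|g^{(l)}\|_\infty\bigr) = \|f\|_r \|g\|_r$, which yields the desired inequality and in particular shows $fg \in C^r(K)$.

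There is no real obstacle here: the only two ingredients are Leibniz's formula and the monotonicity of binomial coefficients on either side of the centre. The same strategy does not work verbatim for non-integer $r$, which is presumably why the statement is restricted to integer $r$ (the Hölder seminorm of the top derivative of a product would require additional estimates), and this restriction is sufficient for the application to the dyadic splitting.
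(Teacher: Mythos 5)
Your proof is correct and follows essentially the same route as the paper: the Leibniz rule, the bound of all binomial coefficients $\binom{n}{k}$ with $0\leq k\leq n\leq r$ by the central coefficient $\binom{r}{\lfloor r/2\rfloor}$, and the comparison of the resulting double sum over pairs $(k,l)$ with $k+l\leq r$ with the full product $\|f\|_r\|g\|_r$. Your closing remark about the non-integer case also matches the paper, which notes that a similar bound holds for non-integer $r$ at the cost of a worse constant.
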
 

        \begin{proof}
            The general Leibniz rule states that for any integer $n \leq r$, 
            \begin{equation}\label{eq:leibniz}
                (fg)^{(n)} = \sum_{k=0}^n \binom{n}{k} f^{(k)} g^{(n-k)}.
            \end{equation}
            Therefore, if we denote $C_r := \dbinom{r}{\left \lfloor \frac{r}{2} \right \rfloor} = \max \left\{ \binom{n}{k}~|~0 \leq k,n \leq r \right\}$, we have
            \begin{equation}
                \left\| (fg)^{(n)} \right\|_\infty \leq C_r \sum_{k=0}^n \left\| f^{(k)} \right\|_\infty \left\| g^{(n-k)} \right\|_\infty.
            \end{equation}
            Therefore,
            \begin{align*}
            \|fg\|_r &\leq C_r \sum_{n = 0}^r \sum_{k=0}^n \left\| f^{(k)} \right\|_\infty \left\| g^{(n-k)} \right\|_\infty\\
            &\leq C_r \sum_{i,j = 0}^r \left\| f^{(i)} \right\|_\infty \left\| g^{(j)} \right\|_\infty\\
            &= C_r \|f\|_r \|g\|_r.\qedhere
            \end{align*}
        \end{proof}
        It is possible to prove a similar result for non integer $r$. We obtained the constant $3 \dbinom{\lfloor r \rfloor}{\left \lfloor \frac{r}{2} \right \rfloor}$ instead of $\dbinom{r}{\left \lfloor \frac{r}{2} \right \rfloor}$.

        It is well known that if a function $f$ is $k$ times continuously differentiable with integrable derivatives, then as $|x|$ goes to infinity, $\widehat f(x) = o\left( |x|^{-k} \right)$. The two following propositions that come from \cite[(2.21)]{MR0978023}, extend this decay to functions of $C^r(K)$ for non integer $r$.
        
    \begin{prop}\label{prop:lien_Cr_decroissance_fourier}
        Let $K$ be a compact set of $\R^d$, $r >0$ and $f \in C^r(K)$. Then $\|x\|^r \left| \widehat{f}(x) \right| \xrightarrow[\|x \| \to + \infty]{}0$.
    \end{prop}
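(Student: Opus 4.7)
The plan is to split $r = m + \alpha$ with $m = \lfloor r \rfloor$ and $\alpha \in [0,1)$, and reduce the statement to the Fourier decay of an $\alpha$-finely Hölder continuous function with compact support. For any multi-index $\beta \in \N^d$ with $|\beta| = m$, repeated integration by parts is legitimate because $f$ has compact support and $m$ continuous derivatives, and yields $(\mathbf{i}x)^\beta \widehat{f}(x) = \widehat{\partial^\beta f}(x)$; moreover $\partial^\beta f$ belongs to $C^\alpha(K)$ by the very definition of $C^r(K)$. Combining this with the multinomial expansion of $\|x\|_1^m$ and the equivalence of norms on $\R^d$, exactly as in the proof of Lemma~\ref{lem:decrease_fourier_coeff}, one gets a bound of the form
\[
\|x\|^r |\widehat{f}(x)| \leq C \sum_{|\beta|=m} C_\beta \,\|x\|^\alpha \bigl|\widehat{\partial^\beta f}(x)\bigr|.
\]
Hence it suffices to prove that $\|x\|^\alpha |\widehat{g}(x)| \to 0$ as $\|x\| \to \infty$ for any $g \in C^\alpha(K)$.

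When $\alpha = 0$, the function $g$ is merely continuous with compact support and the conclusion is the classical Riemann--Lebesgue lemma. When $\alpha \in (0,1)$, I would apply the standard translation trick: using $e^{-\mathbf{i}\pi} = -1$ and the change of variable $z = y + \pi x/\|x\|^2$, one obtains
\[
2\widehat{g}(x) = \int_{\R^d} \bigl[g(z) - g\bigl(z - \pi x/\|x\|^2\bigr)\bigr] e^{-\mathbf{i}x\cdot z}\,dz,
\]
so that $2\|x\|^\alpha |\widehat{g}(x)|$ is controlled by the integral of $\|x\|^\alpha |g(z) - g(z - \pi x/\|x\|^2)|$. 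The fine Hölder assumption gives, for each fixed $z$, pointwise convergence of this integrand to $0$, so it remains to find a dominating function independent of $x$ in order to conclude by dominated convergence.

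The main obstacle is precisely producing this dominating function, because the \emph{fine} Hölder hypothesis only controls the quotient $|g(y)-g(z)|/\|y-z\|^\alpha$ locally. However, since $g$ is continuous with compact support it is globally bounded, and the norm on $C^\alpha(K)$ controls the Hölder quotient uniformly for $\|y-z\| \leq 1$. Splitting the two regimes (the small scale handled by the $C^\alpha$ seminorm and the large scale by $2\|g\|_\infty \leq 2\|g\|_\infty \|y-z\|^\alpha$) yields a global bound $|g(y)-g(z)| \leq C \|y-z\|^\alpha$ on all of $\R^d$. Thus, for $\|x\|$ large enough so that $\pi/\|x\| \leq 1$, the integrand is uniformly dominated by $C \pi^\alpha\, \mathds{1}_{K_1}(z)$, where $K_1$ is a fixed bounded neighborhood of $K$ containing every translate $K + \pi x/\|x\|^2$. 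This function is integrable, and the dominated convergence theorem gives $\|x\|^\alpha |\widehat{g}(x)| \to 0$, which combined with the reduction of the first paragraph completes the proof.
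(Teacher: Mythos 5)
Your proof is correct, and it fills in an argument that the paper itself does not spell out: Propositions~\ref{prop:lien_Cr_decroissance_fourier} and~\ref{prop:lien_norme_r_transfo_fourier} are simply quoted from Babillot \cite[(2.21)]{MR0978023}. Your route is the standard one and matches what that reference does: integration by parts (legitimate since $f$ is compactly supported with $\lfloor r\rfloor$ continuous derivatives) reduces the claim to the fractional order $\alpha = r - \lfloor r \rfloor$, and then the half-period translation $z \mapsto z - \pi x/\|x\|^2$ expresses $2\widehat g(x)$ as the Fourier integral of an increment of $g$. You use the hypotheses exactly where they are needed: the \emph{fine} Hölder condition gives the pointwise convergence to $0$ of $\|x\|^\alpha\bigl|g(z)-g(z-\pi x/\|x\|^2)\bigr|$ (this is what upgrades the obvious big-$O(\|x\|^{-r})$ bound, i.e.\ Proposition~\ref{prop:lien_norme_r_transfo_fourier}, to a little-$o$), while the Hölder seminorm in $\|\cdot\|_r$ together with boundedness and the compact support supplies the integrable dominating function $C\pi^\alpha \mathds{1}_{K_1}$; the $\alpha=0$ case is correctly dispatched by Riemann--Lebesgue. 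No gaps.
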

        
        \begin{prop}\label{prop:lien_norme_r_transfo_fourier}
            Let $K$ be a compact set of $\R^d$ and $r >0$. Then there exists a constant $C$ such that for every $f \in C^r(K)$, $\sup_{x} \|x\|^r \left|\widehat{f}(x) \right| \leq C \|f\|_r$.
        \end{prop}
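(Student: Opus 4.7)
The plan is to separate the proof according to whether $r$ is an integer or not, because the definition of $\|\cdot\|_r$ itself depends on this case distinction, and to reduce the non-integer case to a one-shot Fourier estimate on purely Hölder functions.

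When $r$ is a positive integer, I would start from the standard integration-by-parts identity $(\mathbf{i}x)^\alpha \widehat{f}(x) = \widehat{\partial^\alpha f}(x)$, valid for every multi-index $\alpha$ with $|\alpha| \leq r$ since $f$ is $C^r$ with support in $K$. The right-hand side is bounded by $\|\partial^\alpha f\|_{L^1} \leq \mathrm{Leb}(K)\,\|\partial^\alpha f\|_\infty \leq \mathrm{Leb}(K)\,\|f\|_r$. Expanding $\|x\|_1^r$ via the multinomial theorem exactly as in the proof of Lemma~\ref{lem:decrease_fourier_coeff} and invoking the equivalence of norms on $\R^d$ then yields $\|x\|^r |\widehat{f}(x)| \leq C\|f\|_r$ for a constant $C$ depending only on $K$, $d$ and $r$.

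When $r = m + \alpha$ with $m \in \N$ and $\alpha \in (0,1)$, I would reduce to the following subclaim: for any continuous, $\alpha$-finely Hölder function $g$ supported in $K$, one has $\sup_x \|x\|^\alpha |\widehat{g}(x)| \leq C_K \|g\|_\alpha$. Indeed, applying this subclaim to $g = \partial^\beta f$ for each $|\beta| = m$ (noting $\|\partial^\beta f\|_\alpha \leq \|f\|_r$) and recombining via the multinomial expansion of $\|x\|_1^m$ as in the integer case gives the estimate $\|x\|^r |\widehat{f}(x)| \leq C\|f\|_r$.

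The subclaim is proved by the classical translation trick. Setting $z = z(x) := \pi x/\|x\|^2$, so that $e^{-\mathbf{i}x\cdot z} = -1$, a change of variable shows
\[
2\widehat{g}(x) = \int_{\R^d} \bigl[g(y) - g(y+z)\bigr] e^{-\mathbf{i}x\cdot y}\,\mathrm{d}y.
\]
For $\|x\| \geq \pi$ one has $\|z\| \leq 1$, so the Hölder bound gives $|g(y) - g(y+z)| \leq \|g\|_\alpha \|z\|^\alpha = \|g\|_\alpha (\pi/\|x\|)^\alpha$ pointwise; moreover the integrand is supported in $K \cup (K - z) \subset K + \overline{B(0,1)}$, a set whose Lebesgue measure depends only on $K$. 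Integrating yields $\|x\|^\alpha |\widehat{g}(x)| \leq C_K \|g\|_\alpha$ for $\|x\| \geq \pi$. For $\|x\| < \pi$, the trivial bound $|\widehat{g}(x)| \leq \mathrm{Leb}(K) \|g\|_\infty \leq \mathrm{Leb}(K) \|g\|_\alpha$ combined with $\|x\|^\alpha < \pi^\alpha$ absorbs the remaining range into the same estimate.

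There is no serious obstacle here; the main point of care is the bookkeeping of the support when shifting in the Hölder case (handled by the crude inclusion $K \cup (K-z) \subset K + \overline{B(0,1)}$) and the verification that each step of the construction tracks back to a multiple of the full norm $\|f\|_r$ rather than some isolated seminorm. This same strategy, incidentally, also yields Proposition~\ref{prop:lien_Cr_decroissance_fourier}: the $o(\|x\|^{-r})$ decay follows by noting that for each $|\beta| = m$ the function $\partial^\beta f$ is \emph{finely} $\alpha$-Hölder, which in the translation identity replaces the uniform Hölder constant by a modulus of continuity vanishing as $\|z\| \to 0$, i.e.\ as $\|x\| \to +\infty$.
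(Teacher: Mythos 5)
Your proof of the stated norm bound is correct, and it is genuinely more informative than what the paper does: the paper gives no proof at all for Propositions~\ref{prop:lien_Cr_decroissance_fourier} and~\ref{prop:lien_norme_r_transfo_fourier}, simply citing (2.21) of \cite{MR0978023}. Your argument is the classical one and is sound in every step: the integer case by integration by parts plus the multinomial expansion already used in Lemma~\ref{lem:decrease_fourier_coeff}; the fractional case by applying the translation trick with $z=\pi x/\|x\|^2$ to each top-order derivative $\partial^\beta f$, $|\beta|=m$. Two details you handle implicitly but correctly are worth noting: since the H\"older seminorm entering $\|\cdot\|_r$ is only taken over pairs at distance at most $1$, the restriction $\|x\|\ge\pi$ (hence $\|z\|\le 1$) is exactly what makes the pointwise bound $|g(y)-g(y+z)|\le \|g\|_\alpha\|z\|^\alpha$ legitimate, and the support of the shifted integrand stays in the fixed compact $K+\overline{B(0,1)}$, so the constant depends only on $K$, $d$, $r$. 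The recombination $\|\partial^\beta f\|_\alpha\le C\|f\|_r$ and the norm-equivalence step cost only harmless constants. So for the statement under review there is no gap.

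The only caveat concerns your closing remark that the same computation ``also yields Proposition~\ref{prop:lien_Cr_decroissance_fourier}''. The fine H\"older condition in the paper is a \emph{pointwise} statement ($\|l(y)-l(x)\|/\|y-x\|^\alpha\to 0$ as $y\to x$, for each fixed $x$), and in the translation identity you need the \emph{uniform} modulus $\sup_{x}\sup_{0<\|y-x\|\le\delta}\|l(y)-l(x)\|/\|y-x\|^\alpha\to 0$ as $\delta\to 0$; the pointwise condition does not imply this uniform one (one can build compactly supported H\"older functions that are finely H\"older at every point while the uniform local H\"older constant at scale $\delta$ stays bounded away from $0$). The usual way to get the little-$o$ statement is instead by density: smooth compactly supported functions have rapidly decaying transforms, the finely H\"older functions are the closure of smooth ones in $\|\cdot\|_r$, and the uniform bound you just proved (i.e.\ continuity of the Fourier transform into the weighted sup-norm space) transports the $o(\|x\|^{-r})$ decay to the closure. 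This does not affect the correctness of your proof of Proposition~\ref{prop:lien_norme_r_transfo_fourier} itself.
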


        These two propositions can be summarized by the following statement.
        \begin{cor}\label{cor:fourier_continu_espace_Cr_Dr}
            The Fourier transform is a continuous linear function between the space $C^r(K)$ and the space $D^r$ of continuous functions that are $\underset{\|x\| \to + \infty}{o}\left( \|x\|^{-r} \right)$, normed with \[\|g\|_{D^r} := \sup_{x \in \R^d} \|x\|^r \left|g(x) \right| .\]
        \end{cor}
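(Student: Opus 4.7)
The plan is to observe that this corollary is essentially a repackaging of the two immediately preceding propositions combined with the basic fact that the Fourier transform of a compactly supported (hence $L^1$) function is continuous. There is no genuine obstacle — the only work is to verify that the Fourier transform indeed lands in $D^r$, that it is linear, and that the bound on $\|\widehat{f}\|_{D^r}$ gives the operator continuity.

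First I would check that $\widehat{f}$ belongs to $D^r$ whenever $f \in C^r(K)$. Since $r > 0$, Lemma \ref{lem:inclusions_Cr} gives $f \in C^0(K)$, so $f$ is continuous and compactly supported, in particular in $L^1(\R^d)$. By a standard dominated convergence argument applied to $\theta \mapsto e^{-\mathbf{i} x \cdot \theta} f(\theta)$, the Fourier transform $\widehat{f}$ is continuous. Proposition \ref{prop:lien_Cr_decroissance_fourier} then yields $\|x\|^r |\widehat{f}(x)| \xrightarrow[\|x\| \to +\infty]{} 0$, which is exactly the defining property of $D^r$; hence $\widehat{f} \in D^r$.

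Next, linearity of the map $f \mapsto \widehat{f}$ from $C^r(K)$ to $D^r$ is immediate from linearity of the Fourier integral. For continuity between the two normed spaces, Proposition \ref{prop:lien_norme_r_transfo_fourier} furnishes a constant $C$, independent of $f$, such that
\begin{equation}
\|\widehat{f}\|_{D^r} = \sup_{x \in \R^d} \|x\|^r |\widehat{f}(x)| \leq C \|f\|_r
\end{equation}
for every $f \in C^r(K)$. This is precisely the operator-norm bound which certifies continuity of the linear map $C^r(K) \to D^r$, completing the proof.
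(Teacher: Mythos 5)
Your proposal is correct and matches the paper's intent exactly: the paper presents this corollary as a direct summary of Propositions~\ref{prop:lien_Cr_decroissance_fourier} and~\ref{prop:lien_norme_r_transfo_fourier}, which is precisely how you argue (membership in $D^r$ from the first, the operator bound from the second, with linearity and continuity of $\widehat{f}$ being standard). No gaps.
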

        
        In light of \eqref{eq:decoupage_dyadique}, in order to prove \eqref{eq:result_dyadic_integrals}, we want to bound $\|x\|^{\frac{d-1}{2}} \left|\widehat{\rho}_{c,k}(x)\right|$. Because of Proposition \ref{prop:lien_norme_r_transfo_fourier}, it comes down to bounding $ \left\| \rho_{c,k} \right\|_{\frac{d-1}{2}}$. This is the goal of the following proposition, which is the analog of (2.42) of Babillot in \cite{MR0978023}.

        \begin{prop}\label{prop:majoration_norme_rho_ck}
            There exists a compact set $K$ of $\R^d \setminus \{0\}$ such that for every $c \in \partial \mathcal{C}$ and $k$, $\rho_{c,k} \in C^{\frac{d-1}{2}}(K)$, where $\rho_{c,k}$ was defined in Proposition \ref{prop:hat{h}_into_sum}. Besides, there exists $M >0$ such that for every $c \in \partial \mathcal{C}$ and $k$, 
            \begin{equation}\label{eq:norme_rho_ck}
                \left\| \rho_{c,k} \right\|_{\frac{d-1}{2}} \leq M 2^k.
            \end{equation}
        \end{prop}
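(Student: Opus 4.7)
The plan is to take $K$ to be a closed spherical annulus $\{y\in\R^d : R_1 \le \|y\| \le R_2\}$ with $R_1$ a positive number below $\inf_{y\in\supp\rho}\|y\|$ and $R_2$ above $\sup_{y\in\supp\rho}\|y\|$, so that every rotation $R_c^{-1}(\supp\rho) = \supp\rho_c$ lies inside $K$ and $0\notin K$. The function $\rho_{c,k}(\theta) = \rho_c(\theta)\,h_c(D_c^k\theta)$ is then supported in $K$ and smooth there. By Lemma~\ref{lem:espaces_C^r_produit} applied at the integer $r_0 := \lceil(d-1)/2\rceil$, followed by Lemma~\ref{lem:inclusions_Cr} to pass from $C^{r_0}$ down to $C^{(d-1)/2}$, it suffices to prove
\[
\|\rho_c\|_{C^{r_0}(K)} \le C_1 \qquad\text{and}\qquad \bigl\|\theta\mapsto h_c(D_c^k\theta)\bigr\|_{C^{r_0}(K)} \le C_2\,2^k
\]
uniformly in $c\in\partial\mathcal{C}$ and $k\ge 0$. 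The first bound is immediate from $\rho_c = \rho\circ R_c$ with $R_c$ orthogonal, so the entire content of the proposition lies in the second bound.

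For the second, pass to rotated coordinates $z = R_c\theta$: since $K$ is rotation-invariant, the problem reduces to bounding $\|y\mapsto \phi_c(\tilde D^k y)\|_{C^{r_0}(K)}$, where $\phi_c := h_c\circ R_c^{-1}$ and $\tilde D := \diag(1/4,1/2,\ldots,1/2)$. Choose $k_0$, independent of $c$, so that $|\tilde D^k y|\le \delta/2$ for every $y\in K$ and $k\ge k_0$; for such $k$ the cutoff satisfies $f(\tilde D^k y)=1$, and I will write $\phi_c(\tilde D^k y) = 2^k\widetilde H_{c,k}(y)$ with $\widetilde H_{c,k} = \widetilde N_{c,k}/\widetilde D_{c,k}$,
\[
\widetilde N_{c,k}(y) := 2^k\bigl(P_c(\tilde D^k y) - P_c(0)\bigr),\qquad \widetilde D_{c,k}(y) := 4^k\bigl(1-K_c(\tilde D^k y)\bigr),
\]
where $P_c := p_c\circ R_c^{-1}$ and $K_c := k_c\circ R_c^{-1}$. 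For the finitely many $k<k_0$, the set $D_c^k(K)$ lies in a fixed compact subset of $\R^d\setminus\{0\}$ (independent of $c$) on which $h_c$ has uniformly bounded $C^{r_0}$ norm, so the base case is trivial. The remaining task is to show $\|\widetilde H_{c,k}\|_{C^{r_0}(K)} = O(1)$ uniformly.

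This rests on the Taylor expansions of $K_c$ and $P_c$ at $0$, which are uniform in $c\in\partial\mathcal{C}$ by analyticity and compactness. First, Theorem~\ref{thm:diff_k} applied to the Doob transform gives $\widetilde D_{c,k}(y) = -\mathbf{i}\|m_c\|y_1 + \tfrac12\,y'\cdot\Sigma_c^{(1)} y' + O(2^{-k})$ with $\Sigma_c := R_c\sigma_c R_c^{-1}$; the leading part has nonzero modulus on $K$, because its imaginary part vanishes only at $y_1=0$ while its real part vanishes only at $y'=0$ (as $\Sigma_c^{(1)}$ is positive-definite uniformly in $c$, by Proposition~\ref{prop:prop_doob_transform} and compactness of $\partial\mathcal{C}$), and the two conditions together force $y=0\notin K$. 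Continuity and compactness yield $|\widetilde D_{c,k}(y)|\ge c_0>0$ uniformly. Second, every mixed partial derivative of $\widetilde D_{c,k}$ is bounded uniformly in $c,k$: by the chain rule, $\partial^\alpha\widetilde D_{c,k}(y) = -2^{k(2-2\alpha_1-|\alpha'|)}(\partial^\alpha K_c)(\tilde D^k y)$ for $\alpha\ne 0$, and the prefactor is $\le 1$ in every case except $\alpha=e_j$ for some $j\ge 2$, where it equals $2^k$; but then $(\partial_j K_c)(\tilde D^k y) = O(2^{-k})$ because $\nabla K_c(0) = \mathbf{i}\|m_c\|e_1$ has no component along $e_j$, so the product is again $O(1)$. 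The analogous computation gives $\widetilde N_{c,k}(y) = (\nabla P_c(0))'\cdot y' + O(2^{-k})$ with all derivatives uniformly bounded. Combining these estimates via the quotient rule — i.e.\ the elementary fact that $\|N/D\|_{C^{r_0}}$ is polynomially controlled by $\|N\|_{C^{r_0}}$, $\|D\|_{C^{r_0}}$, and $1/\inf|D|$ — yields $\|\widetilde H_{c,k}\|_{C^{r_0}(K)} = O(1)$.

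The main obstacle is the bookkeeping in the derivative-by-derivative estimate of $\widetilde D_{c,k}$ above: a naïve differentiation of $4^k(1-K_c(\tilde D^k y))$ appears to worsen the bound by a factor of $4^k$ per derivative, and only the precise alignment of $\nabla K_c(0)$ with the small ($4^{-k}$) eigendirection of $\tilde D^k$ prevents this. This alignment is exactly the reason why $D = \diag(1/4,1/2,\ldots,1/2)$ is built with its specific eigenvalue structure and why the annuli $\mathscr{C}_k$ are defined by the parabolic quantity $|\mathbf{i} y_1 + \|y'\|^2|$ in the first place.
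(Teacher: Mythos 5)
Your argument is correct and rests on the same three cancellations as the paper's proof: the numerator $p_c(D_c^k\theta)-p_c(0)$ vanishes at the origin and so gains a factor $2^{-k}$, the denominator $1-k_c(D_c^k\theta)$ is of size $4^{-k}$, and the transverse derivatives $\partial_j K_c(0)=0$ for $j\ge 2$ rescue the one problematic multi-index — this last point is exactly the paper's refined bound \eqref{eq:v_majoration_premiere_derivee}, and both proofs reduce to the integer norm $\lceil\frac{d-1}{2}\rceil$ and then invoke Lemma~\ref{lem:inclusions_Cr}. Where you genuinely deviate is the denominator estimate and the bookkeeping. The paper lower-bounds $|v_{c,k}|\gtrsim 4^{-k}$ by combining the comparison \eqref{eq:motiv_annulus} with the support property \eqref{eq:couronnes_regularisees_controle} built into the regularized annuli, and then pushes powers of $2^{-k}$ and $4^k$ through the derivative-of-inverse formula \eqref{eq:derivee_inverse}; you instead normalize ($\widetilde N_{c,k}$, $\widetilde D_{c,k}$) and obtain the lower bound from the uniform second-order Taylor expansion: the rescaled denominator converges, uniformly in $c$, to $-\mathbf{i}\|m_c\|y_1+\tfrac12\,y'\cdot\Sigma_c^{(1)}y'$, which vanishes only at $y=0$ and is therefore bounded below on $K\times\partial\mathcal{C}$ by compactness. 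This makes the proposition independent of the parabolic shape of $\supp\rho$ (only a fixed annulus avoiding $0$ is used, the anisotropy of $D$ remaining essential), at the price of handling the base cases $k<k_0$ and the finitely many intermediate $k$ before the $O(2^{-k})$ error is absorbed — for those you should add the one-line remark that $1-K_c$ is nonvanishing away from the origin (the Doob-transform version of Lemma~\ref{lemme:spec_rad_aper}), uniformly by compactness. Two further small points to tighten rather than gaps: Lemma~\ref{lem:espaces_C^r_produit} is stated for functions supported in $K$, and $\theta\mapsto h_c(D_c^k\theta)$ is not, so you need the same restricted-norm refinement the paper uses in \eqref{eq:rho_c_k_inegalite_produit}; and your passage to rotated coordinates uses that the integer $C^{r_0}$ norm is unchanged under precomposition with the isometry $R_c$, which is true but worth stating.
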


        Although our proof is close to Babillot's in \cite{MR0978023}, the function $h_c$ we use, which was defined in \eqref{eq:definition_h}, is slightly simpler than hers. Indeed, in the identification of the dominating part of the integral, Babillot showed that she could replace the denominator $1- k_c(\theta)$ by its second order expansion around $0$, which comes down to say that the analog of what Babillot proved is that \[ \int_{\R^d} f(\theta) \left(\frac{1}{1 - k_c(\theta)} p_c(\theta) - \frac{1}{-\mathbf{i}m_c \cdot \theta + \frac{1}{2} \theta \cdot \sigma_c \theta} p_c(0) \right) e^{- \mathbf{i}x \cdot \theta}  \mathrm{d}\theta = \underset{\|x\| \to + \infty}{o} \left( \|x\|^{-\frac{d-1}{2}} \right).\] Thus her function $h_c$ would be $h_c(\theta) = f(\theta) \left(\frac{1}{1 - k_c(\theta)} p_c(\theta) - \frac{1}{-\mathbf{i}m_c \cdot \theta + \frac{1}{2} \theta \cdot \sigma_c \theta} p_c(0) \right)$. 
        In our case, the complex analysis factorization of Proposition \ref{prop:weierstrass_prep} that we will use to prove the main theorem, coming from Woess's technique in \cite{Wo-00}, allows us to keep $1-k_c(\theta)$ at the denominator, hence the more simple function $h_c(\theta) = \frac{f(\theta)}{1- k_c(\theta)} (p_c(\theta) - p_c(0))$. This more simple function $h_c$ enables us to have $2^k$ in \eqref{eq:norme_rho_ck}, while \cite[(2.42)]{MR0978023} had $4^{k\left(1 - \frac{\varepsilon}{2}\right)}$ for some $\varepsilon>0$.

        \begin{proof}

        Let $c \in \partial \mathcal{C}$ and $k \geq 0$.
            The support of $\rho_{c,k}$ is included in the support of $\rho_c$, which is $R_c^{-1} (\supp \rho)$. Denoting $r = \inf \left\{ \|x\|~|~x \in \supp \rho \right\}$ and $R = \sup \left\{ \|x\|~|~x \in \supp \rho \right\}$, we thus have \[\supp \rho_{c} \subset \overline{B(0,R)} \setminus B(0,r).\] Besides, $r > 0$ and $R < + \infty$ because $\supp \rho$ is a compact subset of $\R^d \setminus\{0\}$, so the set $K = \overline{B(0,R)} \setminus B(0,r)$ is a compact subset of $\R^d \setminus \{0\}$ that contains the supports of all the functions $\rho_{c,k}$. Since $K$ avoids $0$, $\rho_{c,k}$ is smooth, hence $\rho_{c,k} \in C^{\frac{d-1}{2}}(K)$. 

            To prove \eqref{eq:norme_rho_ck}, we will prove that there exists a constant $M$ such that for every $c \in \partial C$ and $k$, 
            \[ \left\| \rho_{c,k} \right\|_{\left \lceil \frac{d-1}{2} \right \rceil } \leq M 2^k\] and then use Lemma \ref{lem:inclusions_Cr} to get \[\left\| \rho_{c,k} \right\|_{ \frac{d-1}{2}} \leq \left\| \rho_{c,k} \right\|_{\left \lceil \frac{d-1}{2} \right \rceil } \leq M 2^k.\]
            Following Babillot, we define \[\varepsilon_c(\theta) = f(\theta) (p_c(\theta) - p_c(0)),~~~~~~a_{c,k}(\theta) = \varepsilon_c \left( D_c^k \theta \right) ~~~~ \text{and} ~~~~v_{c,k}(\theta) = 1 - k_c\left( D_c^k \theta \right),\]  so that      
             \begin{equation}
                \rho_{c,k}(\theta) = \rho_c(\theta) \frac{a_{c,k}(\theta)}{v_{c,k}(\theta)}.
            \end{equation}
        Then we would like to apply Lemma \ref{lem:espaces_C^r_produit} to obtain a constant $M_1$ such that 
        \begin{equation*}
        \left\| \rho_{c,k} \right\|_{\left \lceil\frac{d-1}{2}\right\rceil} \leq M_1 \left\| \rho_c \right\|_{\left \lceil\frac{d-1}{2}\right\rceil} \left\|a_{c,k} \right\|_{\left \lceil\frac{d-1}{2}\right\rceil} \left\| \frac{1}{v_{c,k}} \right\|_{\left \lceil\frac{d-1}{2}\right\rceil}.
        \end{equation*}
        Note that these quantities are not well defined, as $\frac{1}{v_{c,k}}$ is not bounded around $0$. However, an easy refinement of Lemma \ref{lem:espaces_C^r_produit} leads to the existence of a constant $M_1$ such that
        \begin{equation}
            \label{eq:rho_c_k_inegalite_produit}
        \left\| \rho_{c,k} \right\|_{\left \lceil \frac{d-1}{2}\right\rceil} \leq M_1 \left\| \rho_c \right\|_{\left \lceil\frac{d-1}{2}\right\rceil} \left\|a_{c,k} \right\|_{\left \lceil\frac{d-1}{2}\right\rceil,\supp \rho_c} \left\| \frac{1}{v_{c,k}} \right\|_{\left \lceil\frac{d-1}{2}\right\rceil,\supp \rho_c},
        \end{equation}
        where the index ``$\supp \rho_c$'' means that the suprema that appear in the norm are taken for the restriction onto $ \supp \rho_c $ of the function. But $\supp \rho_c$ avoids $0$, so $\left\| \frac{1}{v_{c,k}} \right\|_{\left \lceil \frac{d-1}{2} \right \rceil,\supp \rho_c}$ is well defined.
        
        As noticed after equation \eqref{eq:partition_unite}, the function $\rho_c$ and its derivatives are continuous in the variables $(\theta, c)$,  and the set $K \times \partial \mathcal{C}$ is compact, so the derivatives of $\rho_c$ of order at most $\left\lceil \frac{d-1}{2} \right \rceil$ are bounded uniformly in $c$, hence the existence of a constant $M_2$ independent of $c$ such that for every $c \in \partial \mathcal{C}$, \[\|\rho_c\|_{\left \lceil \frac{d-1}{2} \right\rceil}  \leq M_2.\] 
        To conclude, we claim that, up to constants and uniformly in $c$, $\left\|a_{c,k} \right\|_{\frac{d-1}{2}, \supp \rho_c} \leq \frac{1}{2^k}$ and ${\left\| \frac{1}{v_{c,k}} \right\|_{\frac{d-1}{2}, \supp \rho_c} \leq 4^k}$, hence \eqref{eq:rho_c_k_inegalite_produit} yields \eqref{eq:norme_rho_ck}. Before proving these bounds, we give a little intuition. Indeed, from the definition $a_{c,k}(\theta) = \varepsilon_c \left( D_c^k \theta \right)$, we could expect $a_{c,k}$ to be of order $\left\|D_c\right\|^k = \frac{1}{2^k}$. As for $\frac{1}{v_{c,k}}(\theta) = \frac{1}{1-k_c\left( D_c^k \theta\right)} = \frac{1}{k_c(0) - k_c\left( D_c^k \theta\right)}$, we could expect $\frac{1}{v_{c,k}}$ to be of order $ \left\|D_c^{-k}\right\| = 4^k$.
        
        We start with the proof of the bound for $a_{c,k}$. The mean value inequality yields that for every $\theta \in \supp \rho_c$,
        \[\|a_{c,k}(\theta)\| \leq f\left(D_c^k \theta\right) \sup_{\tilde \theta \in \left[ 0, D_c^k \theta \right] }\left\| p_c' \left( \tilde \theta \right) \right\|  \left\|D_c^k \theta \right\| \leq \sup_{\theta \in \supp f}\left\| p_c' (\theta) \right\| R \left\|D_c\right\|^k,\]
        where $R$ was defined at the beginning of the proof.
        Besides, since $R_c$ is a rotation, we have $\|D_c\| = \|R_c^{-1} D R_c\| = \|D\| = \frac{1}{2}$, so we get 
        \[\|a_{c,k}\|_{\infty, \supp \rho_c} \leq \sup_{\theta \in \supp f}\left\| p_c' (\theta) \right\| R \frac{1}{2^k}. \]
        To bound $\|a_{c,k}\|_{\left \lceil\frac{d-1}{2}\right\rceil, \supp \rho_c}$, we also need to bound the differentials of $a_{c,k}$. When $n \geq 1$, the $n^{\mathrm{th}}$ differential $a_{c,k}^{(n)}(\theta)$ is the $n$-linear function $(h_1, \ldots, h_n) \mapsto \varepsilon_c^{(n)}(D_c^k \theta) \left( D_c^k h_1, \ldots, D_c^k h_n \right)$, so \[\left\| a_{c,k}^{(n)}(\theta) \right\| \leq \left\| \varepsilon_c^{(n)}(D_c^k\theta) \right\| \left\| D_c \right\|^{kn} \leq \left\| \varepsilon_c^{(n)}(D_c^k\theta) \right\| \frac{1}{2^k}.\]  
        Hence, if we note $M_3 := \displaystyle\sup_{(\theta,c) \in \supp f \times \partial \mathcal{C} }\left\| p_c' (\theta) \right\| R + \displaystyle \sum_{n=1}^{\left \lceil \frac{d-1}{2} \right\rceil} \sup_{(\theta,c) \in \supp f \times \partial \mathcal{C}} \left\| \varepsilon_c^{(n)} (\theta)\right\|$, we get 
        \[\left\|a_{c,k} \right\|_{\left \lceil \frac{d-1}{2} \right\rceil, \supp \rho_c} \leq M_3 \frac{1}{2^k}.\] 
        The constant $M_3$ does not depend on $c$ and it is finite because $(\theta,c) \mapsto p_c'(\theta)$ and $(\theta,c) \mapsto \varepsilon_c^{(n)}(\theta)$ are continuous functions (which comes from the analyticity of $p_c$ and Lemma \ref{lem:continuity_derivatives_holom}) and $\supp f \times \partial \mathcal{C}$ is a compact set.

        For $v_{c,k}$, the proof is the same as Babillot's in \cite[(2.42)]{MR0978023}. We remind it and precise one point of the proof. Differentiating $n$ times the inverse of a function $f$, we see that we obtain 
        \begin{equation*}
        \left(\frac{1}{f}\right)^{(n)} = \frac{P_n\left(f, f', \ldots, f^{(n)} \right)}{f^{n+1}}
        \end{equation*}
        for some multivariate polynomial $P_n$ independent of $f$. More precisely, one can prove by induction on $n$ that $\left(\frac{1}{f}\right)^{(n)}$ is a linear combination of terms of the form 
        \begin{equation}\label{eq:derivee_inverse}
            \frac{(f')^j (f'')^{\alpha_2} \ldots \left(f^{(n)}\right)^{\alpha_n}}{f^i}
        \end{equation}
            where $2i-j \leq n+2$ and $\sum_{\ell = 2}^n \ell \alpha_\ell = n-j$.
        Similarly to $a_{c,k}$, each time we differentiate $v_{c,k}$, the norm is multiplied by $\|D_{c}^k\|$, i.e., up to a constant, 
        \begin{equation}\label{v_majoration_derivee}
           \left\|v_{c,k}^{(\ell)}\right\|_{\infty, \rho_c} \leq \|D_{c}\|^{k\ell} = \frac{1}{2^{k\ell}}. 
        \end{equation}
          We even have a tighter bound on the first differential $v_{c,k}'$, which is, up to a constant, 
         \begin{equation}\label{eq:v_majoration_premiere_derivee}
            \left\| v_{c,k}' \right\|_{\infty, \supp \rho_c} \leq \frac{1}{4^k}.
         \end{equation} Details on \eqref{eq:v_majoration_premiere_derivee} will be given at the end of the proof.
          Besides, using inequalities \eqref{eq:motiv_annulus} and \eqref{eq:couronnes_regularisees_controle}, we obtain:
        \begin{equation*}
        \forall \theta \in \supp \rho_c,~~~~ \left|v_{c,k}(\theta) \right| \geq  C_1 \left| \mathbf{i} \left(R_c D_c^k \theta\right)_1 + \left\| \left(R_c D_c^k \theta \right)' \right\|^2 \right| \geq \frac{C_1}{2} 4^{-k-1},\end{equation*}
        so, up to a constant,
        \begin{equation}\label{eq:minoration_v}
            \left\| \frac{1}{v_{c,k}} \right\|_{\infty, \supp \rho_c} \leq 4^k
        \end{equation}
        Using the inequalities \eqref{v_majoration_derivee}, \eqref{eq:v_majoration_premiere_derivee} and \eqref{eq:minoration_v} in equality \eqref{eq:derivee_inverse}, we obtain that, up to a constant,
        \[\left\| \left(\frac{1}{v_{c,k} } \right)^{(n)} \right\|_{\infty, \supp \rho_c} \leq 4^{k(i-j)} 2^{-k\sum_{\ell = 2} ^n \ell \alpha_\ell} = 2^{k (2i -2j - n + j)} = 2^{k(2i-j-n)}.\] But $2i-j \leq n+2$, so we get $\left\| \left(\frac{1}{v_{c,k} } \right)^{(n)} \right\|_{\infty, \supp \rho_c} \leq 2^{2k} = 4^k$, up to a constant. Summing for $n \leq \left \lceil \frac{d-1}{2} \right \rceil$, we obtain a constant $M_4$ such that for every $k$, ${\left\| \frac{1}{v_{c,k}} \right\|_{\frac{d-1}{2}, \supp \rho_c} \leq M_4 4^k}$, which concludes the proof.

        To precise Babillot's proof, we give more details about \eqref{eq:v_majoration_premiere_derivee}. We have $v_{c,k}'(\theta) = - D_c^k \nabla k_c\left( D_c^k \theta \right)$ and \[D_c^k \nabla k_c(0) = D_c^k m_c = R_c^{-1} D^k R_c m_c = \|m_c\|R_c^{-1} D^k e_1 = \frac{\|m_c\|}{4^k} R_c^{-1} e_1,\] so 
         \[\forall \theta \in \supp \rho_c,~~~~ \left\|\frac{\|m_c\|}{4^k}R_c^{-1}e_1 - v'_{c,k}(\theta) \right\|  = \left\|D_c^k \left( \nabla k_c(0) - \nabla k_c \left( D_c^k \theta \right) \right)\right\|. \]
         The mean value inequality leads to
        \begin{align*}
        \forall \theta \in \supp \rho_c,~~~~ \left\|\frac{\|m_c\|}{4^k}R_c^{-1}e_1 - v'_{c,k}(\theta) \right\|   &\leq \left\| D_c^k\right\| \left( \sup_{\|\theta\| \leq R, c \in \partial \mathcal{C}} \|k_c''(\theta)\| \right) \left\|  D_c^k \theta \right\|\\
        & \leq \left\| D_c^k\right\| \left(\sup_{\|\theta\| \leq R, c \in \partial \mathcal{C}} \|k_c''(\theta)\| \right) \left\| D_c^k\right\| R\\
        &= \dfrac{\left(\displaystyle\sup_{\|\theta\| \leq R, c \in \partial \mathcal{C}} \|k_c''(\theta)\|\right) R}{4^k}
        \end{align*}
        where we remind that $R = \displaystyle\sup_{\theta \in \supp \rho} \|\theta\|$.
        Hence 
        \[\left\| v_{c,k}'(\theta) \right\| \leq \dfrac{\displaystyle \sup_{c \in \partial \mathcal{C}}\|m_c\| + \left(\displaystyle\sup_{\| \theta\| \leq R, c \in \partial \mathcal{C}} \|k_c''(\theta)\| \right) R}{4^k}. \qedhere\]
        \end{proof}
        
    \begin{prop}
            We have \eqref{eq:result_dyadic_integrals}, that is:
    \begin{equation}
    \int_{\R^d} \frac{f(\theta)}{1 - k_c(\theta)} e^{- \mathbf{i}x \cdot \theta} \left( p_c(\theta) - {p_c(0)} \right) \mathrm{d}\theta = \underset{\|x\| \to + \infty}{o} \left( \|x\|^{-\frac{d-1}{2}} \right)
\end{equation}
with a little-$o$ uniform in $c \in \partial \mathcal{C}$.
        \end{prop}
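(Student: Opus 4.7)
The starting point is the dyadic decomposition from Proposition~\ref{prop:hat{h}_into_sum},
\[\widehat{h}_c(x) = \sum_{k \geq 0} \frac{1}{2^{k(d+1)}} \widehat{\rho}_{c,k}\left(D_c^k x\right),\]
and the goal is to show that $\|x\|^{\frac{d-1}{2}} \widehat{h}_c(x) \to 0$ uniformly in $c \in \partial \mathcal{C}$. Throughout, I will rely on the key geometric estimate that $D_c^k = R_c^{-1} D^k R_c$ is orthogonally conjugate to $D^k = \diag(4^{-k}, 2^{-k}, \ldots, 2^{-k})$, whose smallest singular value is $4^{-k}$, so that $\|D_c^k x\| \geq 4^{-k}\|x\|$ uniformly in~$c$.

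My plan is to split the dyadic sum at a level $K = \lfloor \beta \log_2 \|x\| \rfloor$ with $\beta>0$ small and to bound the two pieces with two different regularity exponents: the tail at exponent $r = \frac{d-1}{2}$ via Proposition~\ref{prop:majoration_norme_rho_ck} as stated, and the head at an integer exponent $r > \frac{d-1}{2}$, for which I will take $r = d$. This forces me first to upgrade Proposition~\ref{prop:majoration_norme_rho_ck} to
\[\|\rho_{c,k}\|_r \leq M_r \, 2^k \qquad \text{uniformly in } c \in \partial \mathcal{C} \text{ and } k \geq 0,\]
for every integer $r \geq 0$. Inspecting its proof, this amounts to checking that the bounds $\|a_{c,k}^{(n)}\|_{\infty, \supp \rho_c} \leq C_n \, 2^{-k}$ and $\|(1/v_{c,k})^{(n)}\|_{\infty, \supp \rho_c} \leq C_n \, 4^k$ hold for every integer $n \geq 0$: the former is immediate because each differentiation of $a_{c,k}$ contributes a factor $\|D_c^k\| \leq 2^{-k}$, while the latter follows from the explicit expansion $(1/v)^{(n)} = \sum (v')^j \prod_{\ell \geq 2}(v^{(\ell)})^{\alpha_\ell}/v^i$ with $2i-j \leq n+2$ and $\sum_{\ell \geq 2} \ell \alpha_\ell = n-j$, which produces the $n$-independent exponent $2^{k(2i-j-n)} \leq 2^{2k} = 4^k$ once the powers of $2^{-k}$ and $4^k$ are collected. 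The integer case of Lemma~\ref{lem:espaces_C^r_produit} then combines these estimates into the announced bound on $\|\rho_{c,k}\|_r$, and Proposition~\ref{prop:lien_norme_r_transfo_fourier} turns it into the uniform Fourier decay $|\widehat{\rho}_{c,k}(y)| \leq C_r' \, 2^k \, \|y\|^{-r}$.

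Plugging these two estimates (at $r=\frac{d-1}{2}$ and $r=d$) into the dyadic decomposition and using $\|D_c^k x\| \geq 4^{-k}\|x\|$, the tail contribution is bounded by
\[\sum_{k > K} \frac{|\widehat{\rho}_{c,k}(D_c^k x)|}{2^{k(d+1)}} \leq \sum_{k > K} \frac{C \, 2^k \, 4^{k\frac{d-1}{2}}}{2^{k(d+1)} \, \|x\|^{\frac{d-1}{2}}} = \sum_{k > K} \frac{C}{2^k \, \|x\|^{\frac{d-1}{2}}} \leq \frac{2C}{2^K \, \|x\|^{\frac{d-1}{2}}},\]
and the head contribution by
\[\sum_{k=0}^{K} \frac{|\widehat{\rho}_{c,k}(D_c^k x)|}{2^{k(d+1)}} \leq \sum_{k=0}^{K} \frac{C' \, 2^k \, 4^{kd}}{2^{k(d+1)} \, \|x\|^{d}} = \sum_{k=0}^{K} \frac{C' \, 2^{kd}}{\|x\|^d} \leq \frac{2C' \, 2^{Kd}}{\|x\|^d}.\]
Choosing $\beta \in (0, 1/(2d))$ and multiplying by $\|x\|^{\frac{d-1}{2}}$, these two bounds become $O(\|x\|^{-\beta})$ and $O(\|x\|^{\beta d - \frac{d+1}{2}})$ respectively, both of which tend to $0$ since $\beta < \tfrac{1}{2d} < \tfrac{d+1}{2d}$. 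Uniformity in $c$ is automatic since every constant above reduces to a supremum of a continuous function over the compact set $\partial \mathcal{C}$ (Proposition~\ref{prop:ensemble_C}). The main obstacle of the approach is thus the uniform-in-$n$ upgrade of Proposition~\ref{prop:majoration_norme_rho_ck}: the delicate point is that the combinatorial identity controlling $(1/v_{c,k})^{(n)}$ must produce the same exponent $2$ in the bound $2i - j - n \leq 2$ regardless of $n$, as otherwise the constants would degrade with $r$ and the head estimate would fail.
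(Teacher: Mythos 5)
Your proposal is correct, and while it shares the paper's skeleton (the dyadic decomposition of Proposition~\ref{prop:hat{h}_into_sum}, the norm bound of Proposition~\ref{prop:majoration_norme_rho_ck}, and Proposition~\ref{prop:lien_norme_r_transfo_fourier}), the endgame is genuinely different. The paper keeps only the regularity $\frac{d-1}{2}$: the per-term bound $A\,2^{-k}$ gives a summable series, and the upgrade from $O$ to $o$ is done by fixing a tail level $N$ independent of $x$ and invoking, for the finitely many head terms, the \emph{fine} decay $\|y\|^{\frac{d-1}{2}}|\widehat{\rho}_{c,k}(y)|\to 0$ of Proposition~\ref{prop:lien_Cr_decroissance_fourier} (this is where the finely H\"older structure of $C^{\frac{d-1}{2}}(K)$ is used), together with a compactness/equicontinuity argument in $c$ to make that decay uniform on $\partial\mathcal{C}$. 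You instead avoid the fine-decay lemma and the uniformity-in-$c$ upgrade entirely: you extend the norm bound to an integer order $r=d$ and split the dyadic sum at an $\|x\|$-dependent level $K\sim\beta\log_2\|x\|$, so that only quantitative sup-bounds (automatically uniform in $c$) enter, and you even get a power saving $O\left(\|x\|^{-\frac{d-1}{2}-\varepsilon}\right)$ rather than a bare little-$o$. The price is the claimed upgrade $\|\rho_{c,k}\|_r\leq M_r 2^k$ for $r=d$, and you correctly identify why it holds: the bounds $\|a_{c,k}^{(n)}\|_{\infty,\supp\rho_c}\lesssim 2^{-k}$ and $\|(1/v_{c,k})^{(n)}\|_{\infty,\supp\rho_c}\lesssim 4^k$ are independent of the differentiation order $n$, because each extra derivative of $a_{c,k}$ or $v_{c,k}$ only brings factors $\|D_c^k\|\le 2^{-k}$ while the combinatorial identity for $(1/v)^{(n)}$ keeps $2i-j-n\leq 2$ for all $n$ (the quantity $2i-j-n$ is non-increasing under differentiation), exactly as in the paper's computation at order $\lceil\frac{d-1}{2}\rceil$; the constants depending on $n\le d$ are finite and uniform in $c$ by the same analyticity-plus-compactness argument (Lemma~\ref{lem:continuity_derivatives_holom}). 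With the integer-case product Lemma~\ref{lem:espaces_C^r_produit} (with the same restriction of norms to $\supp\rho_c$) and Proposition~\ref{prop:lien_norme_r_transfo_fourier} at $r=d$, your head/tail arithmetic is right (any $\beta<\frac{d+1}{2d}$ works, so your $\beta<\frac{1}{2d}$ is fine), and the conclusion follows.
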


        \begin{proof}
        As mentioned after the definition of $h_c$ in \eqref{eq:definition_h}, we must prove that $\widehat{h}_c(x) = o\left(\|x\|^{- \frac{d-1}{2}}\right)$, uniformly in $c \in \partial \mathcal{C}$. We chose a function $f$ whose support satisfies the assumption of Proposition~\ref{prop:hat{h}_into_sum}, thus 
        \begin{equation}\label{eq:ecriture_h_chapeau_somme}
        \widehat{h}_c(x) = \sum_{k \geq 0} \frac{1}{2^{k(d+1)}} \widehat{\rho}_{c,k}\left( D_c^k x \right).
        \end{equation}

        Propositions \ref{prop:lien_norme_r_transfo_fourier} and \ref{prop:majoration_norme_rho_ck} show that there exists a constant $A >0$ such that for every $k \in \N$, $c \in \partial \mathcal{C}$ and $x \in \Z^d$, \[ \left\| D_c^k x \right\|^{\frac{d-1}{2}} \left| \widehat{\rho}_{c,k}\left( D_c^k x \right) \right| \leq A 2^k,\]
        hence 
        \begin{align*}
             \frac{\|x\|^{\frac{d-1}{2}}}{2^{k(d+1)}} \left|  \widehat{\rho}_{c,k}\left( D_c^k x \right) \right| & \leq  \frac{A 2^k}{2^{k(d+1)}}  \frac{\|x\|^{\frac{d-1}{2}}}{\left\| D_c^k x \right\|^{\frac{d-1}{2}}} \\
            & \leq  \frac{A}{2^{kd}} \left\| D_c^{-1} \right\|^{k\frac{d-1}{2}} \\
            & \leq A \frac{4^{k\frac{d-1}{2} }}{2^{kd}} \\
            &= \frac{A}{2^{k}}.
        \end{align*}
        Let $\varepsilon >0$ and $N \in \N$ such that $\sum_{k \geq N}\frac{A}{2^k} \leq \varepsilon$.  Splitting the sum \eqref{eq:ecriture_h_chapeau_somme} between $k < N$ and $k \geq N$ leads to 
        \begin{equation}\label{eq:inegalite_apres_avoir_coupe}
        \|x\|^{\frac{d-1}{2}} \left|  \widehat{h}_c(x) \right| \leq \sum_{k = 0}^{N-1} \frac{1}{2^{k(d+1)}} \|x\|^{\frac{d-1}{2}} \left|\widehat{\rho}_{c,k}\left( D_c^k x \right)\right| + \varepsilon. 
        \end{equation}
        
        By Propositions~\ref{prop:lien_Cr_decroissance_fourier}~and~\ref{prop:majoration_norme_rho_ck}, if $c \in \partial \mathcal{C}$ is fixed, then for every $k \leq N-1$, 
        \begin{equation}\label{eq:cv_vers_0_dans_D_(d-1)/2}
        \|y\|^{\frac{d-1}{2}} \left|\widehat{\rho}_{c,k}\left( y \right)\right| \xrightarrow[\|y\| \to + \infty]{}0.
        \end{equation}
        Assume temporarily that we have shown that the convergences \eqref{eq:cv_vers_0_dans_D_(d-1)/2} are uniform in $c$, that is:
        \[ \forall \varepsilon >0,~~~~\exists A_\varepsilon>0,~~~~\forall y \in \R^d ~ \text{s.t.}~ \|y\| \geq A_\varepsilon,~~~~\forall c \in \partial \mathcal{C},~~~~ \forall k \leq N-1,~~~~\|y\|^{\frac{d-1}{2}} \left|\widehat{\rho}_{c,k}\left( y \right)\right| \leq \varepsilon.\]
        Then if $\|x\| \geq 4^{N-1} A_{\frac{\varepsilon}{N}}$, we have $\|D_c^k x\| \geq A_{\frac{\varepsilon}{N}}$ for every $k \leq N-1$ and every $c \in \partial \mathcal{C}$, so
        \begin{align*}
        \|x\|^{\frac{d-1}{2}} \left|\widehat{\rho}_{c,k}\left( D_c^k x \right)\right| &\leq \left\|D_c^{-k} \right\|^{\frac{d-1}{2}} \|D_c^k x\|^{\frac{d-1}{2}} \left|\widehat{\rho}_{c,k}\left( D_c^k x \right)\right|\\
        & \leq 4^{k \frac{d-1}{2}} \frac{\varepsilon}{N}\\
        &= 2^{k(d-1)}\frac{\varepsilon}{N},
        \end{align*}
        hence \[\sum_{k = 0}^{N-1} \frac{1}{2^{k(d+1)}} \|x\|^{\frac{d-1}{2}} \left|\widehat{\rho}_{c,k}\left( D_c^k x \right)\right| \leq \varepsilon\]
        and \eqref{eq:inegalite_apres_avoir_coupe} leads to \[\|x\|^{\frac{d-1}{2}} \left|  \widehat{h}_c(x) \right| \leq 2 \varepsilon,\] which ends the proof.

        We now prove that the convergences of \eqref{eq:cv_vers_0_dans_D_(d-1)/2} are uniform in $c$. Let $\varepsilon>0$. For $k \leq N-1$, we denote ${g_{c,k} : y \mapsto \|y\|^{\frac{d-1}{2}} \left| \widehat{\rho}_{c,k}(y) \right|}$. We first claim that the function $\begin{array}[t]{ccc}
            \partial \mathcal{C} & \to & L^\infty  \\
             c& \mapsto & g_{c,k}
        \end{array}$ is continuous. Indeed, the function $(c, \theta) \mapsto \rho_{c,k}(\theta)$ and all its derivatives with respect to $\theta$ are continuous in the variables $(c, \theta)$ (once again because of the analyticity of $p_c$ and $k_c$ and because of Lemma \ref{lem:continuity_derivatives_holom}) and compactly supported, so they are uniformly continuous in the variables $(c,\theta)$, hence the functions $\begin{array}[t]{ccc}
             \partial \mathcal{C}& \to & C^{\frac{d-1}{2}}(K)  \\
             c& \mapsto & \rho_{c,k}
        \end{array}$ are continuous. Therefore, by Corollary \ref{cor:fourier_continu_espace_Cr_Dr}, the function $\begin{array}[t]{ccc}
            \partial \mathcal{C} & \to & D^{\frac{d-1}{2}}  \\
             c& \mapsto & \widehat{\rho}_{c,k}
        \end{array}$ is continuous, which implies the continuity of $\begin{array}[t]{ccc}
            \partial \mathcal{C} & \to & L^\infty  \\
             c& \mapsto & g_{c,k}
        \end{array}$, hence its uniform continuity because $\partial \mathcal{C}$ is compact. Therefore, we have: \[\exists r>0,~~~~ \forall c \in \partial\mathcal{C},~~~~\forall \tilde{c} \in B(c, r), ~~~~ \forall x \in \R^d,~~~~| g_{\tilde c,k}(x) - g_{c,k}(x)| \leq \varepsilon.\]
        From the open cover of the compact set $\partial \mathcal{C}$ by the balls $B(c,r)$, we extract a finite subcover $\displaystyle \partial \mathcal{C} \subset \bigcup_{i=1}^n B(c_i,r)$. By \eqref{eq:cv_vers_0_dans_D_(d-1)/2}, we have $g_{c_i,k}(y) \xrightarrow[\|y\| \to + \infty]{}0$ for each $i$. Since we have a finite number of functions, it yields:
        \[\exists B_\varepsilon>0,~~~~ \forall y \in \R^d ~\text{s.t.}~ \|y\| \geq B_\varepsilon,~~~~\forall i \in \{1, \ldots,n \},~~~~\left| g_{c_i,k}(y) \right| \leq \varepsilon.\]
        Now we take $y$ such that $\|y\| \geq B_\varepsilon$ and any $c \in \partial \mathcal{C}$. Then, taking $i$ such that $c \in B(c_i, r)$, we have \[\left|g_{c,k}(y) \right| \leq \left|g_{c,k}(y) - g_{c_i,k}(y) \right| + \left|g_{c_i,k}(y)\right| \leq 2 \varepsilon.\]
        Since $B_\varepsilon$ does not depend on $c$, this proves that the convergences \eqref{eq:cv_vers_0_dans_D_(d-1)/2}
 are uniform in $c$.        
        \end{proof}

\section{Appendix: Proofs of the complex analysis ingredient}\label{sec:app:complex_analysis}

In this section, we prove the results involving complex analysis, that is, the results of Section \ref{sec:complex_analysis_ingredient_without_proofs} and the following lemma that was useful in Appendix \ref{sec:dominating_part} to get uniformity in $c$.

\begin{lemma}\label{lem:continuity_derivatives_holom}
            Let $K$ be a compact metric space, $D := D(0,r) := D_1 \times \ldots \times D_d$ the closed polydisc of center $0$ and radius $r> 0$ of $\mathbb C^d$ and ${(f_c)}_{c \in K}$ a family of analytic functions in a neighborhood of $D$. We assume that $(z,c) \mapsto f_c(z)$ is continuous. Then for every $\alpha \in \N^d$, \[(z,c) \mapsto \frac{\partial^{|\alpha|} f_c }{\partial z_1^{\alpha_1} \ldots \partial z_d^{\alpha_d} } (z_1, \ldots, z_d)\] is continuous on $ \frac{1}{2}D  \times K$ where $\frac{1}{2}D = D\left(0, \frac{r}{2}\right)$.
        \end{lemma}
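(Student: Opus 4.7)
The plan is to apply Cauchy's integral formula for derivatives on the distinguished boundary of the polydisc. Since each $f_c$ is analytic in a neighborhood of the closed polydisc $D = D_1 \times \cdots \times D_d$, for any $z = (z_1, \ldots, z_d)$ in the interior of $D$ and any multi-index $\alpha \in \mathbb{N}^d$, we have
\[
\frac{\partial^{|\alpha|} f_c}{\partial z_1^{\alpha_1} \cdots \partial z_d^{\alpha_d}}(z) = \frac{\alpha!}{(2\pi \mathbf{i})^d} \oint_{\partial D_1} \cdots \oint_{\partial D_d} \frac{f_c(\zeta_1, \ldots, \zeta_d)}{\prod_{j=1}^d (\zeta_j - z_j)^{\alpha_j + 1}} \, \mathrm{d}\zeta_1 \cdots \mathrm{d}\zeta_d,
\]
where $\alpha! = \alpha_1! \cdots \alpha_d!$. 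This reduces the problem to establishing continuity of a parameter integral.

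Next I would verify that the integrand is continuous in all variables jointly on a compact set. Parametrizing each circle by $\zeta_j = r e^{\mathbf{i}t_j}$ with $t_j \in [0, 2\pi]$, the integrand becomes a function of $(t, z, c) \in [0, 2\pi]^d \times \overline{\tfrac{1}{2}D} \times K$. For $z$ in the smaller polydisc $\tfrac{1}{2}D$, we have the uniform lower bound $|\zeta_j - z_j| \geq r - \tfrac{r}{2} = \tfrac{r}{2}$, so the denominators are bounded away from zero by $(r/2)^{|\alpha| + d}$. The numerator $(t, z, c) \mapsto f_c(re^{\mathbf{i}t_1}, \ldots, re^{\mathbf{i}t_d})$ is continuous by hypothesis (it only depends on $(\zeta, c)$ through the given continuous map), hence the full integrand is continuous on the compact set $[0,2\pi]^d \times \overline{\tfrac{1}{2}D} \times K$, and therefore uniformly continuous and uniformly bounded there.

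The conclusion then follows from a standard uniform-continuity argument: given $(z_0, c_0) \in \tfrac{1}{2}D \times K$ and $\varepsilon > 0$, pick $\delta > 0$ by uniform continuity of the integrand so that whenever $\|(z,c) - (z_0, c_0)\| \leq \delta$, the integrand changes by at most $\varepsilon$ pointwise in $t$; integrating over $[0, 2\pi]^d$ yields continuity of $(z, c) \mapsto \partial^\alpha f_c(z)$ at $(z_0, c_0)$.

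There is no substantial obstacle here; the only subtle point is to make sure the hypothesis ``$f_c$ is analytic in a neighborhood of $D$'' gives us enough room to apply Cauchy's formula with integration over $\partial D_1 \times \cdots \times \partial D_d$ itself, and that the joint continuity hypothesis on $(z,c) \mapsto f_c(z)$ extends to $\zeta$ on the distinguished boundary. Both are automatic from the statement. The restriction to $\tfrac{1}{2}D$ only serves to keep $z$ uniformly away from the contour so that the kernel is continuous and bounded.
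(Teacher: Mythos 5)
Your proof is correct and follows essentially the same route as the paper: Cauchy's integral formula on the distinguished boundary $\partial D_1 \times \ldots \times \partial D_d$, the uniform lower bound $|\zeta_j - z_j| \geq r/2$ for $z \in \frac{1}{2}D$, and continuity of the resulting parameter-dependent integral (your explicit uniform-continuity argument is just the proof of that standard theorem, which the paper invokes directly). As a minor aside, your constant $\alpha! = \alpha_1! \cdots \alpha_d!$ is in fact the correct one for the multivariate Cauchy formula; in any case the constant is irrelevant to the continuity claim.
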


        \begin{proof}
            By Cauchy's formula, for all $z = (z_1, \ldots, z_d) \in \frac{1}{2}D$ and $c \in \partial \mathcal{C}$,
            \begin{equation}
                \frac{\partial^{|\alpha|} f_c }{\partial z_1^{\alpha_1} \ldots \partial z_d^{\alpha_d} } (z_1, \ldots, z_d) = \frac{|\alpha|!}{(2 \mathbf{i}\pi)^d} \int_{\partial D_1 \times \ldots \times \partial D_d} \frac{f_c(\zeta_1, \ldots, \zeta_d)}{(\zeta_1 - z_1)^{\alpha_1 +1} \ldots (\zeta_d - z_d)^{\alpha_d + 1}} \mathrm{d} \zeta_1 \ldots \mathrm{d}\zeta_d.
            \end{equation}
            The integrand is continuous in the variables $(z,c)$. Besides, it is bounded by \[\dfrac{\displaystyle\max_{(\zeta_1, \ldots, \zeta_d,c) \in \partial D_1 \times \ldots \times \partial D_d \times K} \left| f_c(\zeta_1, \ldots, \zeta_d) \right| } {\left( \frac{r}{2}\right)^{|\alpha| + d}},\] which is finite by compactness of $\partial D_1 \times \ldots \times \partial D_d \times K$ and continuity. Therefore, the theorem of continuity of parameter-dependent integrals leads to the continuity of $\frac{\partial^{|\alpha|} f_c }{\partial z_1^{\alpha_1} \ldots \partial z_d^{\alpha_d} } (z_1, \ldots, z_d)$ with respect to variables $(z, c)$.
        \end{proof}

We now prove the results of Section \ref{sec:complex_analysis_ingredient_without_proofs}.

\begin{proof}[Proof of Lemma~\ref{lem:analytic_uniformly}]
    If $u \in \mathbb{S}^{d-1}$ is fixed, then $\mathcal{L}\mu_u(0) = \widehat{\mu}_{c(u)}(0)$ is the transition matrix of the Markovian part of the Doob transform, so Assumptions \ref{assump:main_assumptions}, which are still valid for the Doob transform, ensure that it is a non-negative, irreducible, aperiodic matrix. Therefore, the Perron-Frobenius theorem ensures that $1$ is the only leading eigenvalue of $\mathcal{L}\mu_u(0)$, and it is simple. The function $(z,u') \mapsto \mathcal{L}\mu_{u'}(z)$ is continuous and so is the spectrum, so there exist open neighborhoods $V_u$ of $0$ and $W_u$ of $u$ such that for all $(z,u') \in V_u \times W_u$, $\mathcal{L}\mu_{u'}(z)$ has a unique leading eigenvalue which is simple. From the open cover of the compact set $\mathbb{S}^{d-1}$ by the $W_u$ for $u \in \mathbb{S}^{d-1}$, we extract a finite cover. The finite intersection $V$ of the corresponding $V_u$ is an open neighborhood of $0$ such that for all $u' \in \mathbb{S}^{d-1}$, $\mathcal{L}\mu_{u'}(z)$ has a unique leading eigenvalue $\kappa_{u'}(z)$ which is simple. A direct application of the analytic implicit function theorem ensures that, for all $u \in \mathbb{S}^{d-1}$, $\kappa_u$ is analytic on $V$.
\end{proof}

\begin{proof}[Proof of Proposition~\ref{prop:weierstrass_prep}]
    The idea of the proof of \eqref{eq:weierstrass_decomp} is the same as in Woess's book \cite[(25.20)]{Wo-00}: using the proof of the Weierstrass preparation theorem from Hörmander's book \cite[Theorem 7.5.1]{hormander} to obtain the decomposition \eqref{eq:weierstrass_decomp} in a neighborhood of $0$ which is independent of $u$. 
        We have $1- \kappa_u(0) = 0$ and $\nabla \kappa_u(0) = \left\| m_{c(u)} \right\| e_1$. Indeed, without the rotation, the gradient would be $m_{c(u)}$ because of Proposition~\ref{prop:usual_prop_spectral_radius}, and $R_u$ sends $m_{c(u)}$ onto $\left\|m_{c(u)}\right\|e_1$. Therefore, $\frac{\partial \kappa_u}{\partial z_1}(0) = \left\| m_{c(u)} \right\| \neq 0$ because $m_{c(u)} \neq 0$ (Proposition~\ref{prop:prop_doob_transform}), so condition (7.5.1) of Hörmander is true for $k=1$, leading to the decomposition for a fixed $u$.
        
        To obtain the decomposition in a neighborhood of $0$ independent of $u$, we do like Woess and take a look at the proof of Hörmander. First of all, $\kappa_u$ is analytic in a neighborhood $V$ of $0$ independent of $u$ as a consequence of Lemma~\ref{lem:analytic_uniformly}. 
        Then we need 
        $r >0$ such that, for every in $u \in \mathbb{S}^{d-1}$, $\kappa_u$ is analytic at $(z_1,0')$ when $|z_1| < 2r$ and 
        \begin{equation}\label{eq:weierstrass_def_r}
            \text{$1 - \kappa_u(z_1,0) \neq 0$ when $0 < |z_1| <2r$.}
        \end{equation}
        The condition of analyticity is easily fulfilled as $V$ does not depend on $u$. As for \eqref{eq:weierstrass_def_r}, when there is only one analytic function, it comes from the isolated zeros theorem. In our case, with a family of functions indexed by $u$, the argument is the same as in Woess. Since $\Re \left(\frac{\partial \kappa_u}{\partial z_1}(0)\right) = \left\|m_{c(u)}\right\|  \neq 0$ and $(z,u) \mapsto \frac{\partial \kappa_u}{\partial z_1}(z)$ is continuous thanks to Lemma~\ref{lem:continuity_derivatives_holom}, we can find, for each $u \in \mathbb{S}^{d-1}$, open neighborhoods $U_u$ of $0$ and $W_u$ of $u$ such that for every $\left(z,\tilde{u}\right) \in U_u \times W_u$, $\Re \left(\frac{\partial \kappa_{\tilde u}}{\partial z_1}(z) \right) \neq 0$. From the open cover of $\mathbb{S}^{d-1}$ by the $W_u$, we can extract a finite cover. Taking the finite intersection of the corresponding $U_u$, we obtain a neighborhood $U$ of $0$ such that for every $(z,u) \in U \times \mathbb{S}^{d-1}$, $\Re \left(\frac{\partial \kappa_u}{\partial z_1}(z) \right) \neq 0$. We take $r>0$ such that the polydisc $D(0,2r)$ is included in $U$. Then \eqref{eq:weierstrass_def_r} is true, because otherwise, the complex Rolle's theorem of \cite[Theorem 2.1]{complex_rolle} would imply that $\Re \left(\frac{\partial \kappa_u}{\partial z_1}(z_1,0) \right) = 0$ for some $(z_1,0) \in D(0,2r)$.
        
        Finally, Hörmander needs 
        \begin{equation}\label{eq:weierstrass_def_delta}
        \text{$\delta>0$ such that for every $u \in \mathbb{S}^{d-1}$, $1-\kappa_u(z_1,z') \neq 0$ when $|z_1| = r$ and $\|z'\| < \delta$}.
        \end{equation}
         By \eqref{eq:weierstrass_def_r}, this is true when $z' = 0$. Thus, by continuity of $(z_1, z',u) \mapsto \kappa_u(z_1, z')$, if $u$ and $z_1$ are fixed with $|z_1| = r$, there exist open neighborhoods $U_{u, z_1}$, $V_{u, z_1}$ and $W_{u, z_1}$ of $z_1$, $0$ and $u$ such that for every $\left( \tilde z_1, z', \tilde u\right) \in U_{u, z_1} \times V_{u, z_1} \times W_{u, z_1}$, $1- \kappa_{\tilde u}\left( \tilde z_1, z' \right) \neq 0$. From the open cover of $S(0,r) \times \mathbb{S}^{d-1}$ by the $U_{u, z_1} \times W_{u, z_1}$, we extract a finite cover. The finite intersection of the corresponding $V_{u,z_1}$ is an open neighborhood $V'$ of $0$ such that for every $z_1$ such that $|z_1| = r$, $u \in \mathbb{S}^{d-1}$ and $z' \in  V'$, $1 - \kappa_{u}\left(z_1, z' \right) \neq 1$. To get \eqref{eq:weierstrass_def_delta}, we only need to choose $\delta>0$ such that the polydisc $D(0, \delta)$ of $\C^{d-1}$ is included in $V'$.
        
        Once we have these $r$ and $\delta$ that do not depend on $u$, Hörmander's proof works to obtain the decomposition \eqref{eq:weierstrass_decomp} in a neighborhood of $0$ which is independent of $u$.
        It is not explicitly stated in Hörmander's theorem that $\mathcal{B}_u$ does not vanish around $0$, but the proof shows that $z_1 - \mathcal{A}_u(z')$ captures all the zeros of $1- \kappa_u(z)$.

\vspace{0.3cm}

        We now prove item \ref{it:weierstrass_2}. Lemma~\ref{lem:continuity_derivatives_holom} shows that we only need to prove the continuity of $\left( z', u \right) \mapsto \mathcal{A}_u\left( z' \right)$. We fix $z' \in \mathbb C^{d-1}$ such that $\left| z' \right| < \delta$ (the same $\delta$ as in \eqref{eq:weierstrass_def_delta}), $u \in \mathbb{S}^{d-1}$, consider sequences $\left( z'_n \right)_{n \in \N} \in \left(\mathbb C^{d-1}\right)^\N$ and $(u_n)_{n \in \N} \in \left( \mathbb{S}^{d-1}\right)^{\N}$ that converge to $z'$ and $u$, and prove that $\mathcal{A}_{u_n}\left( z'_n \right) \xrightarrow[n \to + \infty]{} \mathcal{A}_u \left( z' \right)$. 
        Looking at the proof of Hörmander, we see that $\mathcal{A}_{u_n}\left(z_n'\right)$ is defined as the only zero of $1 - \kappa_{u_n} \left(\cdot, z_n'\right)$ whose modulus is smaller than $r$. Therefore, $\left(\mathcal{A}_{u_n}\left(z_n'\right)\right)_{n \in \N}$ is bounded by $r$. If $\ell$ is a subsequential limit of $\left(\mathcal{A}_{u_n}\left(z_n'\right)\right)_{n \in \N}$, then $|\ell| \leq r$, and taking the subsequential limit of $1 - \kappa_{u_n} \left( \mathcal{A}_{u_n}\left(z_n'\right), z'_n\right) = 0$, which is possible by continuity of $\left( z, u \right) \mapsto 1 - \kappa_u \left( z \right)$, yields \[1 - \kappa_{u} \left( \ell, z'\right) = 0.\] Therefore, by construction of $\delta$ in \eqref{eq:weierstrass_def_delta}, $|\ell| \neq r$, so $|\ell| < r$. Since $\mathcal{A}_u\left( z' \right)$ is the only zero of $1-\kappa_u\left( \cdot, z' \right)$ of modulus smaller than $r$, we conclude that $\ell = \mathcal{A}_u\left( z' \right)$. Thus $\left( \mathcal{A}_{u_n}\left(z_n'\right) \right)_{n \in \N}$ is a bounded sequence whose only subsequential limit is $\mathcal{A}_{u}\left(z'\right)$, so it converges to $\mathcal{A}_{u}\left(z'\right)$, which proves the continuity of $\left( z', u \right) \mapsto \mathcal{A}_u\left( z' \right)$.

        \vspace{0.3cm}

        Let us prove item \ref{it:weierstrass_3}. The equality $\mathcal{A}_u (0) = 0$ simply comes from \eqref{eq:weierstrass_decomp} at $z=0$.
        Differentiating the implicit equation $\kappa_u\left( \mathcal{A}_u (z'), z'  \right) = 1$ with respect to the variable $z_i$ ($i \in \{2, \ldots, d\}$) yields 
        \begin{equation}\label{eq:implic_eq_diff_1}
        \frac{\partial \mathcal{A}_u}{\partial z_i} (z') \partial_1 \kappa_u \left( \mathcal{A}_u (z'), z'  \right) + \partial_i \kappa_u \left( \mathcal{A}_u (z'), z'  \right) = 0.
        \end{equation}
        At $z' = 0$, we have $\mathcal{A}_u (0) = 0$ and $\nabla \kappa_u(0) = \left\| m_{c(u)} \right\| e_1$, so $\partial_1 \kappa_u(0) = \left\| m_{c(u)} \right\|$ and $\partial_i \kappa_u(0) = 0$, leading to $ \left\| m_{c(u)} \right\| \frac{\partial \mathcal{A}_u}{\partial z_i} (0) = 0 $, and since $m_{c(u)} \neq 0$, \[\frac{\partial \mathcal{A}_u}{\partial z_i} (0) = 0~~~~ \forall i \in \{2 , \ldots, d \}. \]

        Differentiating \eqref{eq:implic_eq_diff_1} with respect to the variable $z_j$ ($j \in \{2, \ldots, d\}$) at $0$ yields
        \begin{equation*}
                \frac{\partial^2 \mathcal{A}_u}{\partial z_j \partial z_i}(0) \partial_1  \kappa_u (0)
                + \frac{\partial \mathcal{A}_u}{\partial z_i}(0) \times \left( \frac{\partial \mathcal{A}_u}{\partial z_j}(0) \times \partial_1 \partial_1 \kappa_u\left(0 \right) + \partial_j \partial_1 \kappa_u \left(0 \right)\right)
            + \frac{\partial \mathcal{A}_u}{\partial z_j}(0) \times \partial_1 \partial_i \kappa_u \left( 0 \right) 
            + \partial_j \partial_i \kappa_u\left( 0\right) =0.
        \end{equation*}
        Thanks to $\nabla \mathcal{A}_u(0) = 0$ and $\nabla \kappa_u(0) = \left\| m_{c(u)} \right\| e_1$, it simplifies into 
        \[ \left\| m_{c(u)} \right\| \frac{\partial^2 \mathcal{A}_u}{\partial z_j \partial z_i}(0)  + \partial_j \partial_i \kappa_u(0) =0.  \] Since the Hessian of $\kappa_u$ at $z = 0$ is the rotated energy matrix $\sigma_u$ (it is an easy adaptation of \ref{it:doob_transf_1}~of Proposition~\ref{prop:prop_doob_transform}), we obtain the expected formula for the hessian of $\mathcal{A}_u$.

        We have $\nabla \kappa_u(0) = \left\| m_{c(u)} \right\| e_1$, so taking the first order expansion of \eqref{eq:weierstrass_decomp} around $0$ yields \[- \left\| m_{c(u)} \right\|z_1 = z_1 \mathcal{B}_u(0),\] hence $\mathcal{B}_u(0) = - \left\| m_{c(u)} \right\|$.

\vspace{0.3cm}

        We finally prove item \ref{it:weierstrass_4}. The expansion of $\mathcal{A}_u$ around $0$ is a simple Taylor expansion. The big-$O$ is uniform because it is controlled by the third derivatives of $\mathcal{A}_u$ at $0$, which are bounded uniformly in $u$ since those derivatives are continuous in $u$ and $u$ lies in the compact set $\mathbb{S}^{d-1}$.

        Finally, $\sigma_u$ is positive-definite, so $\sigma_u^{(1)}$, which is the matrix of the quadratic form $\sigma_u$ restricted to a hyperplane, stays positive-definite. Therefore, by continuity of $u \mapsto \sigma_u^{(1)}$ and compactness of $\mathbb{S}^{d-1}$, there exists a constant $\alpha>0$ such that for every $u \in \mathbb{S}^{d-1}$ and $z' \in B(0, \varepsilon)_{\R^{d-1}}$, $z' \cdot \sigma_u^{(1)} z' \geq \alpha \left\| z' \right\|^2$. Moreover, by continuity of $c \mapsto m_c$ and compactness of $\partial C$, $\frac{1}{\|m_{c(u)}\|}$ is bounded from below by a positive constant, so there exists a constant $\tilde\alpha >0$ such that for every $u \in \mathbb{S}^{d-1}$ and $z' \in B(0, \varepsilon)_{\R^{d-1}}$, $\frac{1}{4\|m_{c(u)}\|}z' \cdot \sigma_u^{(1)} z' \geq \tilde\alpha \left\| z' \right\|^2$. Since the big-$O$ is uniform, it is smaller than $\beta \left\|z' \right\|^3$ for some constant $\beta$ independent of $u$. Therefore, for $z' \in \R^{d-1}$ such that $\|z'\| \leq \frac{\tilde{\alpha}}{\beta}$, \[O\left( \left\|z'\right\|^3 \right) \leq \beta \left\|z'\right\|^3 \leq \tilde\alpha \left\|z'\right\|^2 \leq \frac{1}{4\|m_{c(u)}\|}z' \cdot \sigma_u^{(1)} z'.\qedhere\]
        \end{proof}

        \renewcommand{\abstractname}{Acknowledgments}
\begin{abstract}
The author would like to thank his PhD advisors, Cédric Boutillier and Kilian Raschel, for many helpful discussions, suggestions and corrections. The author is grateful to the anonymous referee for their comments.

This work was conducted within the France 2030 framework programme, Centre Henri Lebesgue ANR-11-LABX-0020-01.

The author is partially supported by the DIMERS project ANR-18-CE40-0033, the RAWABRANCH project ANR-23-CE40-0008, funded by the French National Research Agency, and by  the European Research Council (ERC) project COMBINEPIC under the European Union’s Horizon 2020 research and innovation programme under the Grant Agreement No. 759702.
\end{abstract}
\bibliography{biblio}
\bibliographystyle{plain}
\end{document}